\newtheorem{theorem}{Theorem}[section]
\newtheorem{lemma}[theorem]{Lemma}
\newtheorem{corollary}[theorem]{Corollary}
\newtheorem{claim}[theorem]{Claim}
\newtheorem{proposition}[theorem]{Proposition}
\newtheorem{maintheorem}{Theorem}
\theoremstyle{definition}
\newtheorem{definition}[theorem]{Definition}
\newtheorem{example}[theorem]{Example}
\newtheorem*{example*}{Example}
\newtheorem{remark}[theorem]{Remark}
\newtheorem*{remark*}{Remark}
\newtheorem*{standinghypotheses*}{Standing hypotheses}
\newtheorem*{pinchinghypothesis*}{Pinching hypothesis}
\numberwithin{equation}{section}
\newcommand{\eqdef}{\stackrel{\scriptscriptstyle\rm def}{=}}
\let\OldMarginpar\marginpar
\renewcommand{\marginpar}[1]{\OldMarginpar{\tiny#1}}
\def\ext{{\rm ext}}
\def\w{{\rm w}}
\def\h{{\rm h}}
\def\s{{\rm s}}
\def\u{{\rm u}}
\def\uu{{\rm uu}}
\def\cu{{\rm cu}}
\def\cs{{\rm cs}}
\DeclareMathOperator{\Diff}{Diff}
\DeclareMathOperator{\diam}{diam}
\DeclareMathOperator{\interior}{int}
\DeclareMathOperator{\card}{card}
\DeclareMathOperator{\var}{var}
\def\Cuu{\mathcal{C}^{\rm uu}}
\def\va{\mathbf{a}}
\def\bi{\mathbf{i}}
\def\bN{\mathbb{N}}
\def\bT{\mathbb{T}}
\def\bZ{\mathbb{Z}}
\def\bR{\mathbb{R}}
\def\cC{\mathscr{C}}
\def\cD{\mathcal{D}}
\def\cF{\mathcal{F}}
\def\cT{\mathcal{T}}
\def\ch{\EuScript{H}}
\def\cS{\EuScript{S}}
\def\cs{\mathcal{S}}
\def\cU{\EuScript{U}}
\def\cW{\mathscr{W}}
\def\cM{\EuScript{M}}
\def\cL{\EuScript{L}}
\def\fB{\mathfrak{B}}
\def\fR{\EuScript{R}}
\DeclareMathSymbol{\varnothing}{\mathord}{AMSb}{"3F}
\author{}\address{}\email{}\urladdr{}
\begin{document}

\title[Hyperbolic graphs]{Hyperbolic graphs:\\ critical regularity and box dimension}

\author[L.~J.~D\'iaz]{L. J. D\'\i az}\address{Departamento de Matem\'atica PUC-Rio, Marqu\^es de S\~ao Vicente 225, G\'avea, Rio de Janeiro 22451-900, Brazil}\email{lodiaz@mat.puc-rio.br}

\author[K.~Gelfert]{K. Gelfert}\address{Instituto de Matem\'atica Universidade Federal do Rio de Janeiro, Av. Athos da Silveira Ramos 149, Cidade Universit\'aria - Ilha do Fund\~ao, Rio de Janeiro 21945-909,  Brazil}\email{gelfert@im.ufrj.br}

\author[M.~Gr\"oger]{M. Gr\"oger}\address{Friedrich-Schiller-University Jena, Institute of Mathematics, Ernst-Abbe-Platz 2, 07743 Jena, Germany}\email{maik.groeger@uni-jena.de}

\author[T.~J\"ager]{T. J\"ager}\address{Friedrich-Schiller-University Jena, Institute of Mathematics, Ernst-Abbe-Platz 2, 07743 Jena, Germany}\email{tobias.jaeger@uni-jena.de}

\begin{abstract}
	We study fractal properties of invariant graphs of hyperbolic and partially hyperbolic skew product diffeomorphisms in dimension three.  
	We describe the critical (either Lipschitz or at all scales H\"older continuous) regularity of such graphs. We provide a formula for their box dimension given in terms of appropriate pressure functions. We distinguish three scenarios according to the base dynamics: Anosov, one-dimensional attractor, or Cantor set.  A key ingredient for the dimension arguments in the latter case will be the presence of a so-called fibered blender.
\end{abstract}

\begin{thanks}{This research has been supported, in part, by CNE-FaperjE/26/202.977/2015 and CNPq research grants 302879/2015-3 and 302880/2015-1 and Universal 474406/2013-0 and 474211/2013-4 (Brazil) and EU Marie-Curie IRSES Brazilian-European partnership in Dynamical Systems FP7-PEOPLE-2012-IRSES 318999 BREUDS and DFG Emmy-Noether grant Ja 1721/2-1 and DFG Heisenberg grant Oe 538/6-1. This project is also part of the activities of the Scientific Network ``Skew product dynamics and multifractal analysis'' (DFG grant Oe 538/3-1). Further, LD and KG thank ICERM (USA) and CMUP (Portugal) for their hospitality and financial support.}\end{thanks}
\keywords{box dimension, fibered blender, invariant graph, hyperbolicity, skew product, topological pressure}
\subjclass[2000]{%
37C45, 
37D20, 
37D35, 
37D30
}
\maketitle

\section{Introduction}\label{sec:introduction}

We study regularity properties and box dimension of fractal graphs appearing
as attractors, repellers, or saddle-sets in skew product dynamics. 

Our motivation is two-fold. 
First, there is an intrinsic interest in the fractal
properties of such graphs, which is best exemplified by the well-known and
paradigmatic examples of Weierstrass functions. Based on dynamical methods,
recent advances have allowed to obtain a detailed understanding of
their fractal structure including their Hausdorff dimension (thus
solving a long-standing conjecture) \cite{BarBarRom:14,Kel:14,She:}.

Second, there is a general motivation for these endeavors.
The investigation of fractal attractors, repellers, horseshoes, and other
types of hyperbolic sets has been a major driving force for many important
developments in ergodic theory and its interfaces with mathematical physics and
fractal geometry (see, for instance, \cite{LedYou:85,Pes:97,Fal:03} for more information).
Thereby, the situation is fairly well understood for
two-dimensional hyperbolic systems (see~\cite{McCMan:83,Tak:88, PalVia:88} and Theorem~\ref{teo:seminal} below), which is essentially a conformal setting and comparable to the study of conformal repellers (see \cite{PrzUrb:10}).
However, extending the theory to higher-dimensional and genuinely
nonconformal situations is well known to be difficult, and there exist
only few and  specific results in this direction (see, for example,~\cite{KapMalYor:84,Fal:94,SimSol:99,HasSch:04} and more details in Remark~\ref{rem:dimensions}). 
Amongst the different phenomena that complicate matters are:
\begin{itemize}
\item The possible loss of equality between Hausdorff and box  dimensions, 
\item both dimensions may not vary continuously with the dynamics.
\end{itemize}

A natural, and quite common, approach to proceed is to study gradually more complex (e.g.~higher-dimensional) systems. We proceed by studying  graphs in three-dimensional  skew product systems 
$$
T\colon\Xi\times\bR\to\Xi\times\bR, \quad T(\xi,x)=(\tau(\xi),T_\xi(x)),
$$
 with hyperbolic surface diffeomorphisms, or their restrictions to basic pieces $\tau\colon\Xi\to\Xi$, in the base,  building on previous results in \cite{KapMalYor:84,Bed:89,HadNicWal:02}. Summarizing our main results, except in a nongeneric case when the graph is Lipschitz, its box dimension is given by $d^\s+d$, where $d^\s$ is the dimension of stable slices of $\Xi$ and where $d$ is determined as the unique solution of the pressure equation
\[
	P_{\tau|_\Xi}(\varphi^\cu+(d-1)\varphi^\u)=0.
\]
Here $\varphi^\cu,\varphi^\u$ are appropriately defined geometric potentials taking into account the expansion rates in the fiber center unstable and the strong unstable directions, respectively.
The above formula will be established in three scenarios (Anosov in the base, one-dimensional attractors in the base, and fibered blenders).  
These results can be viewed as a natural step to address the corresponding technical and conceptual problems in a nontrivial, but still accessible setting.
Thereby, we focus on the box dimension as the most accessible quantity in a first instance.
Although eventually our approach could be instrumental for describing finer fractal properties like the Hausdorff dimension or carrying out a multifractal analysis as well, which is beyond the purposes of this paper. 

\subsection{Previous results on basic sets of surface diffeomorphisms}
Before stating our first main result, let us provide more details on what is known in the two-dimensional case. 
Let $\tau\colon M\to M$ be a $C^{1+\alpha}$ surface diffeomorphism.
Recall that a set $\Xi\subset M$ is \emph{basic} if it is compact, invariant, \emph{locally maximal} in the sense that there is an open neighborhood $U$ of $\Xi$ such that $\Xi=\bigcap_{k\in\bZ}\tau^k(U)$, topologically mixing, and \emph{hyperbolic} in the sense that there exist a $d\tau$-invariant splitting $F^\s\oplus F^\u=T_\Xi M$ and numbers $0<\mu<1<\kappa$ such that for every $\xi\in\Xi$ 
\[
	\lVert d\tau|_{F^\s_\xi}\rVert\le \mu
	\quad\text{ and }\quad
	\kappa\le \lVert d\tau|_{F^\u_\xi}\rVert
\]
(up to an equivalent change of metric),
where $d\tau|_{F^\s_\xi}$ and $d\tau|_{F^\u_\xi}$ denote the derivative of $\tau$ at $\xi$ in the stable and unstable directions, respectively.
Further, recall that basic sets have a (local) product structure, that is, they can locally be described as products of representative stable and unstable slices, given by the intersection of $\Xi$ with the local stable and unstable manifolds, respectively (see~\cite{KatHas:95}).
In dimension two, their Hausdorff and box dimensions 
coincide and are given by the following classical Bowen-Ruelle type formula which is a compilation of results in~\cite{McCMan:83,Tak:88, PalVia:88}.
Consider for a basic set $\Xi\subset M$ the functions $\varphi^\s,\varphi^\u\colon\Xi\to\bR$ (also called \emph{potentials}) 
\begin{equation}\label{eq:defbasicpotentialvarphius}
	\varphi^\s(\xi)\eqdef \log\,\lVert d\tau|_{F_\xi^\s}\rVert,\quad
		\varphi^\u(\xi)\eqdef - \log\,\lVert d\tau|_{F_\xi^\u}\rVert.
\end{equation}	
We denote by $P_{\tau|_\Xi}(\psi)$ the topological pressure of a potential $\psi\colon \Xi\to\bR$ (with respect to $\tau|_\Xi$) (see Section~\ref{sec:diment} for more details). 
Further, $\cW^\s_{\rm loc}(\xi,\tau)$ and $\cW^\u_{\rm loc}(\xi,\tau)$ denote the \emph{local stable} and the \emph{local unstable manifold} of $\xi$ (with respect to $\tau$), respectively (see Section~\ref{sec:2manif} for more details). 
Last, denote by $\dim_{\rm H}(E)$ the \emph{Hausdorff dimension} and by $\dim_{\rm B}(E)$ the \emph{box dimension} of a totally bounded subset $E$ in a metric space.
In general, we have $\dim_{\rm H}(E)\leq\dim_{\rm B}(E)$.
We recall the definition of box dimension and some properties in Section \ref{sec:defdim}; further information can be found in \cite{Fal:97}.

\begin{theorem}[\cite{McCMan:83,Tak:88, PalVia:88}]\label{teo:seminal}
	Consider a basic set $\Xi\subset M$ of a $C^{1+\alpha}$ surface diffeomorphism $\tau\colon M\to M$.
Let $d^\u$ and $d^\s$ be the unique real numbers for which 
\begin{equation}\label{eq:localdimension}
	P_{\tau|_\Xi}(d^\u\varphi^\u)=0 = P_{\tau|_\Xi}(d^\s\varphi^\s).
\end{equation}
Then for every $\xi\in\Xi$ we have
\begin{equation}\label{eq:localslicesMcCMan}
	\dim (\Xi\cap \cW^\u_{\rm loc}(\xi,\tau)) = d^\u\quad\textnormal{and}\quad
	\dim (\Xi\cap \cW^\s_{\rm loc}(\xi,\tau)) = d^\s,
\end{equation}
where $\dim$ stands either for $\dim_{\rm H}$ or $\dim_{\rm B}$.
Moreover, we have
\[
	\dim_{\rm H}( \Xi) = \dim_{\rm B}(\Xi) 
	= d^\s+d^\u.
\]
\end{theorem}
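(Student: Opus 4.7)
The plan is to build a Markov partition for $\tau|_\Xi$, reduce the computation of the dimension of the unstable (resp. stable) slices to a one-dimensional conformal repeller via bounded distortion, apply Bowen's formula, and finally use the local product structure together with Lipschitz regularity of the stable/unstable holonomies to pass from slices to the whole set.

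\textbf{Step 1 (Markov partition and symbolic coding).} First I would fix a Markov partition $\{R_1,\dots,R_N\}$ of $\Xi$ of sufficiently small diameter, so that each $R_i$ has the product form $R_i=(\Xi\cap \cW^\s_{\rm loc}(\xi_i,\tau))\times(\Xi\cap \cW^\u_{\rm loc}(\xi_i,\tau))$. The associated subshift of finite type $(\Sigma_A,\sigma)$ conjugates $\tau|_\Xi$ to a symbolic system, and the pressure $P_{\tau|_\Xi}(\psi)$ is invariant under this conjugacy. On each $n$-cylinder $[i_0,\dots,i_{n-1}]$, the piece $\tau^n$ restricted to the unstable slice is a uniformly expanding diffeomorphism onto a full unstable slice in $R_{i_{n-1}}$, and similarly $\tau^{-n}$ along the stable slices.

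\textbf{Step 2 (Moran construction on slices, Bowen's formula).} Each unstable slice $\Xi^\u(\xi)\eqdef\Xi\cap\cW^\u_{\rm loc}(\xi,\tau)$ is a nested Cantor set whose level-$n$ basic intervals correspond to unstable cylinders. The $C^{1+\alpha}$ hypothesis yields a bounded distortion estimate: there is $C\ge1$ such that for all $n$ and any admissible word, the ratio of the maximum to the minimum of $\lVert d\tau^n|_{F^\u}\rVert$ on the corresponding cylinder is bounded by $C$. Hence the diameters of the level-$n$ basic pieces are comparable to $\exp\bigl(\sum_{k=0}^{n-1}\varphi^\u(\tau^k(\cdot))\bigr)^{-1}$ up to a multiplicative constant. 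By a Moran-type cover argument, this implies that both Hausdorff and box dimensions of $\Xi^\u(\xi)$ coincide with the unique zero $d^\u$ of $t\mapsto P_{\tau|_\Xi}(t\varphi^\u)$, proving~\eqref{eq:localslicesMcCMan} for the unstable slices. The argument for stable slices is identical upon replacing $\tau$ by $\tau^{-1}$.

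\textbf{Step 3 (Assembling the global dimension).} For the full set $\Xi$, the easy inequalities give $\dim_{\rm H}(\Xi)\ge d^\s+d^\u$ (from Marstrand-type slicing, since every point has a product neighborhood in $\Xi$ whose transverse slice has dimension $d^\s$, and the unstable slices through it all have dimension $d^\u$) and $\dim_{\rm B}(\Xi)\le d^\s+d^\u$ (by covering each rectangle with products of optimal covers of a stable and an unstable slice). Combining with the general inequality $\dim_{\rm H}\le\dim_{\rm B}$, the proof will be complete once one shows the reverse box dimension inequality $\dim_{\rm B}(\Xi)\le d^\s+d^\u$ is tight, i.e.\ that these covers are essentially optimal.

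\textbf{Main obstacle.} The crux is the upper bound $\dim_{\rm B}(\Xi)\le d^\s+d^\u$, which is where the surface (two-dimensional) hypothesis is genuinely used. One needs the stable and unstable holonomies between pairs of transverse local manifolds to be \emph{Lipschitz}, so that the local product structure is bi-Lipschitz equivalent to a metric product of a stable and an unstable slice, for which the box dimension of the product is at most the sum of the box dimensions of the factors. In the $C^{1+\alpha}$ two-dimensional setting, the Lipschitz regularity of holonomies follows from Manning's argument: the stable and unstable foliations are one-dimensional and the induced holonomy is conjugate to a composition of $C^{1+\alpha}$ maps with geometrically contracting distortion. This is the delicate point of the proof, and the phenomenon that fails in higher dimension (where holonomies are generally only H\"older) is precisely what motivates the nonconformal difficulties alluded to in the introduction.
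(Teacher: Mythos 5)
The paper does not actually prove this theorem: it is quoted as a compilation of results from \cite{McCMan:83,Tak:88,PalVia:88}, with only the subsequent remark indicating the two ingredients (coincidence of Hausdorff and box dimension on slices, and bi-Lipschitz regularity of the holonomies) needed to pass from slices to the full set. Your proposal is a faithful reconstruction of exactly that standard strategy, and it also mirrors the route the paper itself takes for its Theorems A--C in Sections 7--9 (slice dimensions via Markov/Moran covers and a Bowen equation, then transfer through the local product structure using regular holonomies), so I regard it as essentially correct. Three small points. First, with the paper's sign convention $\varphi^\u=-\log\lVert d\tau|_{F^\u}\rVert$ the diameter of a level-$n$ unstable cylinder is comparable to $\exp(S_n\varphi^\u)$ itself, not to its reciprocal as you wrote. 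Second, the lower bound $\dim_{\rm H}(\Xi)\ge d^\s+d^\u$ comes from Marstrand's \emph{product} inequality $\dim_{\rm H}(A\times B)\ge\dim_{\rm H}(A)+\dim_{\rm H}(B)$ applied to the bi-Lipschitz local product structure, not from his slicing theorem (which bounds slice dimensions from above by the ambient dimension, i.e.\ goes the wrong way). Third, the closing sentence of your Step 3 is redundant: once you have $\dim_{\rm H}(\Xi)\ge d^\s+d^\u$, $\overline\dim_{\rm B}(\Xi)\le d^\s+d^\u$, and $\dim_{\rm H}\le\underline\dim_{\rm B}\le\overline\dim_{\rm B}$, the sandwich already forces all quantities to equal $d^\s+d^\u$, with no further tightness claim to verify; the only genuinely nontrivial input is the Lipschitz holonomy, which you correctly isolate in your final paragraph.
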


\begin{remark}
Formulas~\eqref{eq:localslicesMcCMan} were derived for the Hausdorff dimension in \cite{McCMan:83}.
That Hausdorff and box dimension coincide was shown in \cite{Tak:88} for $C^2$ diffeomorphisms and in \cite{PalVia:88} as stated above (in fact, \cite{PalVia:88} assumes $C^1$ only). 
To infer that the Hausdorff dimension of the (local) product is the sum of the dimensions of the
intersections in \eqref{eq:localslicesMcCMan} is conditioned to the fact that Hausdorff and box dimension coincide (see~\cite{Fal:97}). It requires the regularity of the stable/unstable holonomies, too. Yet, for
hyperbolic surface diffeomorphisms these holonomies are always bi-Lipschitz.
In \cite{PalVia:88}, the authors also establish the continuous dependence of the dimensions on the diffeomorphism.
\end{remark}

\begin{remark}\label{rem:dimensions}
In general, as already mentioned, in higher dimensions the above statements do not remain valid. 
For example, Hausdorff  and box dimension do not always coincide (confer the paradigmatic example in Remark~\ref{rem:coincidenceHausBox}, see also~\cite{PrzUrb:89,PolWei:94}).
Further, Hausdorff and box dimension may not vary continuously with the dynamics (see~\cite{BonDiaVia:95} and Remark~\ref{rem:disCantorbase}). 
Moreover, in general it is a difficult task to verify whether the dimensions of stable/unstable slices are constant (see~\cite{HasSch:04} for an investigation of the (three-dimensional and hyperbolic) solenoid). 
From a more technical point of view, in (non)conformal hyperbolic dynamics the study of dimensions is often based on a Markov partition and done by efficient coverings of cylinder sets. Notice that in a nonconformal setting, contrary to the conformal one, cylinder sets can be strongly distorted in directions of stronger contraction/expansion rates. This usually leads to a loss of distortion control of potential functions (see~\cite{Fal:94} for a rigorous treatment of nonconformal repellers assuming additionally a so-called bunching condition and~\cite{ManSim:07} for a discussion of counterexamples).
Last, in a higher-dimensional setting in general stable/unstable holonomies are not bi-Lipschitz but only H\"older continuous (see Section~\ref{sec:locpro} for further discussion), hence one cannot conclude about the dimensions of (local) products of slices.
\end{remark}

\subsection{Setting}\label{ss:setttting}

Unless stated otherwise, we will always assume that $\tau\colon M\to M$ is a $C^{1+\alpha}$ diffeomorphism on a Riemannian surface $M$ and that $T\colon M\times \bR\to M\times
\bR$ is a $C^{1+\alpha}$ diffeomorphism with skew product structure
\begin{equation} \label{e.skew-product-structure}
	T(\xi,x)  =  (\tau(\xi),T_\xi(x)) .
\end{equation}
Suppose that $\Xi\subset M$ is a basic set (with respect to $\tau$).
Moreover, assume that $T$ is \emph{fiberwise expanding (over $\Xi$)}, that is,  
\[
	\inf_{(\xi,x)\in\Xi\times\bR} \lvert T'_\xi(x)\rvert>1.
\]	 
Then there exists a unique graph $\Phi\colon\Xi\to\bR$ that
is invariant under the dynamics in the sense that 
\begin{equation} \label{e.invariant-graph}
  T_\xi(\Phi(\xi)) \ = \ \Phi(\tau(\xi)) 
\end{equation}
holds for all $\xi\in \Xi$ (see \cite{HirPug:70}). In our setting, $\Phi$ is
the global repeller (over $\Xi$)%
\footnote{Note that we do not distinguish here between the function $\Phi\colon\Xi\to\bR$
	and the associated point set $\{(\xi,\Phi(\xi))\colon\xi\in\Xi\}$,
	that is, we identify the function with its graph. This is consistent with the
	formal definition of a function as a special type of a relation.
}%
\textsuperscript{,}\footnote{For technical reasons, we only consider expansion in
	the fibers. The case of contracting fibers would just amount to use the
	inverse of a fiberwise expanding system and would not affect the existence
	of a unique invariant graph $\Phi$ (which is then an attractor).} 
in the sense that all initial conditions $(\xi,x)\in\Xi\times\bR$ converge exponentially fast to $\Phi$ under iteration by {the inverse of $T$.

\begin{standinghypotheses*}
Assume that there are numbers
\begin{equation}\label{eq:constants}
	0<\mu_\s\le\mu_\w<1<\lambda_\w\le\lambda_\s<\kappa_\w\le\kappa_\s
\end{equation}
such that  for all $\xi\in \Xi$ we have
\begin{equation}\label{eq:basehyp}
	\mu_\s\le \lVert d\tau|_{F^\s_\xi}\rVert\le \mu_\w ,
	\quad 
	\lambda_\w\le \lvert T_\xi'\circ\Phi\rvert\le \lambda_\s ,
	\quad 
	\kappa_\w\le\lVert d\tau|_{F^\u_\xi}\rVert\le \kappa_\s .
\end{equation}
\end{standinghypotheses*}

\begin{remark}
Conditions (\ref{eq:constants}) and (\ref{eq:basehyp}) imply that there exist three one-dim\-en\-sion\-al invariant bundles $E^\s,E^\cu,E^\uu$ (we refrain from giving their precise definitions). Using these bundles, we have that $\Phi$ (with respect to $T$) is at the same time hyperbolic (considering the   splitting into the two bundles $E^s$ and $E^\cu\oplus E^\uu$) and partially hyperbolic%
\footnote{This definition refers to what is  also known as \emph{absolute partial hyperbolicity} (see~\cite{HasPes:06}  or~\cite[Appendix B]{BonDiaVia:05}). There exist refined versions of partial hyperbolicity which require such type of norm separation satisfied only pointwise. } 
(considering the splitting into the three bundles $E^\s$, $E^\cu$, and $E^\uu$).  
This allows in particular to define the stable, unstable, center unstable, and strong unstable foliations of $T$ (see Section~\ref{sec:2manif}), which play a key role in all proofs. In our case, the center unstable foliation is naturally given by the fibers $\{\xi\}\times\bR$ of the skew product. 
\end{remark}
Similar to \eqref{eq:defbasicpotentialvarphius}, we consider the additional potential 
$\varphi^\cu\colon\Xi\to\bR$ defined by
\begin{equation}\label{e.potentials}
	\varphi^{\cu}(\xi)=-\log |T_\xi'(\Phi(\xi))| .
\end{equation}
Finally, we assume one additional technical hypothesis to simplify our exposition.

\begin{pinchinghypothesis*}
Suppose that $T$ is $C^2$ and satisfies
	\begin{equation}\label{eq:constants-holonomy}
		\kappa_\s\mu_\w\leq\lambda_\w.
	\end{equation}
\end{pinchinghypothesis*}

 \begin{remark}\label{r.pinchingholonomies}
The Pinching hypothesis is only required  to ensure that the holonomy map along the invariant manifolds of $T$ is bi-H\"older continuous with a H\"older constant arbitrarily close to $1$. See Section~\ref{sec:locpro}  for further details and discussion. Note that  we have  $\kappa_\s\mu_\w\leq \lambda_\w$ automatically when $\kappa_\s=\mu_\w^{-1}$, independently of $\lambda_\w$, as for example in the affine Anosov case in Section~\ref{sec:ex2}.
This allows us to compute the box dimension of
  $\Phi$ from the box dimensions of its restriction to local stable/unstable manifolds
  of the map $\tau$ in the base (which will be provided in Section~\ref{newsec:proofss}).
\end{remark}

\subsection{Anosov maps in the base}

Let us first consider the simplest case of $\tau$ being  an Anosov diffeomorphism and $\Xi=M$ the trivial basic piece.

\begin{maintheorem}\label{t.anosov}
	Let $T$ be a three-dimensional skew product diffeomorphism satisfying the Standing and Pinching hypotheses.
Assume that $\Xi=M$ and that $\tau\colon M\to M$ is an 
	Anosov diffeomorphism. Then
	\begin{itemize}
	\item
	either $\Phi$ is Lipschitz continuous and its box dimension is two, 
	\item
	or
	$\Phi$ is not $\gamma$-H\"older continuous at any point for any $\gamma>\log\lambda_\s/\log\kappa_\w$ and
	its box dimension is given by $\dim_{\rm B}(\Phi)= 1+d$, where $d$ is the  unique number such that	\begin{equation}\label{e.pressure_equality}
		P_{\tau}(\varphi^{\cu}+(d-1)\varphi^\u)=0 .
	\end{equation}
	\end{itemize}
\end{maintheorem}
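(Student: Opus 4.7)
The plan is to first settle the regularity dichotomy and then compute the box dimension in each alternative.

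\emph{Dichotomy.} This would follow from a Liv\v sic-type telescoping argument along the unstable foliation of $\tau$. Suppose $\Phi$ is $\gamma$-H\"older at a single point $\xi_0$ for some $\gamma>\log\lambda_\s/\log\kappa_\w$. For $\xi,\eta$ on a common local unstable leaf of $\xi_0$, iterate the invariance $\Phi\circ\tau=T_\xi\circ\Phi$ forward $n$ times, use $|(T^n_\xi)'\circ\Phi|\le\lambda_\s^n$ together with $d(\tau^n\xi,\tau^n\eta)\ge\kappa_\w^n d(\xi,\eta)$, and collect the base-distortion from $\xi\mapsto T_\xi$ into a convergent geometric sum to obtain
\[
|\Phi(\tau^n\xi)-\Phi(\tau^n\eta)|\;\le\;C\,(\lambda_\s\kappa_\w^{-\gamma})^n\,d(\tau^n\xi,\tau^n\eta)^\gamma,
\]
where $(\lambda_\s\kappa_\w^{-\gamma})^n\to 0$ by the choice of $\gamma$. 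Topological mixing of the Anosov $\tau$ propagates the estimate from $\xi_0$ to every local unstable leaf, and the bi-H\"older-close-to-one stable holonomies of $T$ granted by the Pinching hypothesis (Remark~\ref{r.pinchingholonomies}) transport it to all of $\Xi=M$. A standard bootstrap iterating the decay upgrades $\gamma$-H\"older regularity to Lipschitz continuity, proving the dichotomy. In the Lipschitz case the parametrization $\xi\mapsto(\xi,\Phi(\xi))$ is a bi-Lipschitz bijection from $M$ onto $\Phi$, so $\dim_{\rm B}(\Phi)=\dim_{\rm B}(M)=2$.

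\emph{Upper bound in the non-Lipschitz case.} Fix a Markov partition of $\tau$ and, for each $n$, let $\{R_\bi\}_{\bi\in\mathcal A_n}$ denote the forward cylinders of length $n$. Bounded distortion from the $C^{1+\alpha}$ assumption gives, for any reference point $\xi_\bi\in R_\bi$, an unstable diameter $\asymp e^{S_n\varphi^\u(\xi_\bi)}$ and stable diameter $O(1)$. Using $T^{-n}$ to telescope the invariance of $\Phi$ backward along fibers produces the key oscillation bound
\[
\mathrm{osc}(\Phi|_{R_\bi\cap\cW^\u_{\rm loc}})\;\le\;C\,e^{S_n\varphi^{\cu}(\xi_\bi)}.
\]
Covering $\Phi\cap(R_\bi\times\bR)$ by cubes of side $r_\bi\eqdef e^{S_n\varphi^\u(\xi_\bi)}$ requires $\asymp 1/r_\bi$ cubes in the stable direction (using bi-H\"older-close-to-one transverse holonomies from the Pinching hypothesis) and $\asymp e^{S_n\varphi^{\cu}(\xi_\bi)}/r_\bi$ cubes in the unstable-fiber plane, for a total of $\asymp e^{S_n\varphi^{\cu}(\xi_\bi)}/r_\bi^{\,2}$ per cylinder. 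Summing over $\bi\in\mathcal A_n$ and multiplying by $r^{1+d}$,
\[
N_r(\Phi)\cdot r^{1+d}\;\asymp\;\sum_{\bi\in\mathcal A_n}\exp S_n(\varphi^{\cu}+(d-1)\varphi^\u)(\xi_\bi)\;\asymp\;e^{nP_\tau(\varphi^{\cu}+(d-1)\varphi^\u)}\;=\;1
\]
by the pressure equation~\eqref{e.pressure_equality}. Hence $\overline{\dim}_{\rm B}(\Phi)\le 1+d$.

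\emph{Lower bound and main difficulty.} For the matching lower bound, take the unique equilibrium state $\nu_d$ of the H\"older potential $\varphi^{\cu}+(d-1)\varphi^\u$, which by the variational principle satisfies $h_{\nu_d}(\tau)+\int(\varphi^{\cu}+(d-1)\varphi^\u)\,d\nu_d=0$. Pushing $\nu_d$ forward to $\Phi$ via $\xi\mapsto(\xi,\Phi(\xi))$ and computing local dimensions via Brin--Katok matches the base and fiber expansion rates to the local scaling of the lifted measure, yielding local dimension $1+d$ at $\nu_d$-a.e.\ point, provided the oscillation estimate above is \emph{tight also from below} on a $\nu_d$-positive family of cylinders. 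A Frostman-type mass distribution argument then gives $\underline{\dim}_{\rm B}(\Phi)\ge 1+d$. The principal obstacle is precisely this lower-sharpness of the oscillation bound: the upper bound uses only fiber contraction, while matching it from below requires ruling out systematic cancellations in the telescoping formula for $\Phi$ along unstable leaves. This is where the non-Lipschitz alternative of the dichotomy becomes decisive---nowhere-$\gamma$-H\"older regularity prevents $\Phi$ from being smoother than the oscillation predicts on any positive-measure family of cylinders---and where the Pinching hypothesis is essential in ensuring that the two-dimensional box-counting factors cleanly as $1$ (stable) $+\,d$ (unstable-fiber graph), yielding the desired equality $\dim_{\rm B}(\Phi)=1+d$.
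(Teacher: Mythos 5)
Your overall architecture matches the paper's (regularity dichotomy via the strong unstable foliation; Markov/pressure computation for the unstable slices; bi-H\"older holonomies from the Pinching hypothesis to assemble the product), and the dichotomy sketch is essentially the paper's argument in Section~\ref{sec:crireg}. However, there are two genuine gaps at the core of the dimension computation.

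First, the upper bound as written is invalid. You cover each level-$n$ cylinder by cubes of side $r_\bi=e^{S_n\varphi^\u(\xi_\bi)}$ and then write $N_r(\Phi)\cdot r^{1+d}\asymp\sum_{\bi}\exp S_n(\varphi^\cu+(d-1)\varphi^\u)(\xi_\bi)$. But box dimension requires counting at a \emph{single} scale $r$, whereas the sides $r_\bi$ of your cubes vary over an exponentially large ratio across $\bi$ unless $\varphi^\u$ is cohomologous to a constant. There is no single $r$ for which the displayed identity holds, so the pressure sum does not bound $N_r(\Phi)$. This is precisely the nonconformality issue the paper confronts: one must build a Moran cover at a fixed scale $r$ (cylinders of \emph{variable} length $n(\underline i^+)$ with unstable width $\approx r$), and since the heights $e^{S_n\varphi^\cu}$ of these cylinders then depend on the local Birkhoff averages of $(\varphi^\u,\varphi^\cu)$ at varying levels $n$, one has to group the cylinders by approximate average pairs $\va=(a_1,a_2)$, count each group by the entropy $\ch(\psi,\va)$ of the corresponding level set, and only then recover $d$ through the variational identity $d-1=\sup_{\va}(\ch(\psi,\va)+a_2)/(-a_1)$, which is shown to be equivalent to the pressure equation (Proposition~\ref{prolem:lemma}). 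This multifractal bookkeeping is the substance of Sections~\ref{sec:shiftspace} and~\ref{newsec:proofss} and is not a routine repair of your computation.

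Second, your lower bound assumes exactly the point that needs proof. You correctly identify that everything hinges on the fiber oscillation of $\Phi$ over an $n$th level unstable rectangle being bounded \emph{below} by $c\,e^{S_n\varphi^\cu}$, but you then assert that the nowhere-H\"older alternative "prevents systematic cancellations" without a mechanism. The paper supplies one: the obstruction function $\Delta^\u_\delta(\xi)=\sup_\eta|\Phi(\eta)-\gamma^\u_\xi(\eta)|$ measuring the deviation of the graph from the strong unstable leaf is continuous, and in the non-Lipschitz case is bounded below by a uniform constant $C(\delta)>0$ (Corollary~\ref{cor:erika}); pulling this uniform deviation back by $T^{-n}$ using the Markov property and bounded distortion of the fiber contraction yields $\lvert R^\u_n(X)\rvert_\h\ge D^{-2}C(\delta)\,e^{S_n\varphi^\cu}$ (Proposition~\ref{pro:dis}). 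With that in hand, the paper's lower bound is a direct count on the single optimal level set of Birkhoff averages, using that in the Anosov case the graph has no gaps over a Markov rectangle; your alternative route via the equilibrium state, Brin--Katok and Frostman would additionally require converting Bowen balls to round balls in this nonconformal setting, which again reduces to the same oscillation estimate, so it does not bypass the difficulty.
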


We note that the particular case of skew product systems with affine fiber maps
and linear torus automorphisms in the base (see Section~\ref{sec:ex2}) is
already covered by the results of \cite{KapMalYor:84} using Fourier analysis. A
more general result that includes Theorem~\ref{t.anosov} has been announced in
\cite{Wal:07}. However, due to a serious flaw in the argument given in that
paper, a complete proof for the statement in \cite{Wal:07} does not exist so
far. We will discuss this issue in detail in Section~\ref{sub.hyperbolic_base} below.

\begin{remark}
%
 The fact that $\Phi$ is either Lipschitz or has a maximal H\"older
  exponent    is often referred to as {\em critical
    regularity} and has already been proven in our setting in
  \cite{HadNicWal:02}. We reproduce this result here both for the convenience
  of the reader and due to the fact that this will be a byproduct of the methods
  for computing the box dimension, and we have to introduce the respective
  concepts and estimates anyway.
\end{remark}

Theorem~\ref{t.anosov} treats the case of invariant graphs defined over the whole
mani\-fold $M$. In the broader context of the geometry of hyperbolic sets, it is
natural to consider also the restriction of such graphs to Cantor basic sets of $\tau$
in the base. However, before doing so, we consider  an intermediate case.

\subsection{One-dimensional hyperbolic attractors in the base}

Following the terminology coined in the 70s, we say that a basic set
$\Xi$ 
 is a \emph{one-dimensional hyperbolic attractor}  of $\tau$ if it is a hyperbolic attractor
 (i.e., $\Xi=\bigcap_{k\in\bN}\tau^k(U)$ for some neighborhood  $U$ of $\Xi$) 
and locally  homeomorphic  to a direct product of a Cantor set  and an interval (and hence the ``intervals'' are contained in the unstable manifolds of the attractor). Important examples of these attractors are the
derived from Anosov (DA) and Plykin attractors%
\footnote{The construction of the \emph{derived from Anosov} (\emph{DA}) \emph{diffeomorphism} of $\bT^2$ by Smale in \cite{Sma:67} 
starts with a linear hyperbolic automorphism of $\bT^2$ and considers a local perturbation introducing a repeller in the dynamics in such a way that the resulting diffeomorphism is axiom A and has 
two basic sets:  the repelling fixed point and a one-dimensional attractor.
A \emph{DA attractor} is any hyperbolic attractor which is conjugate to the attractor of some DA diffeomorphism. The construction of   \emph{Plykin attractors} is more involved and a key fact is that
they are defined on a two-dimensional disk (which hence can be embedded into any surface), see for instance~\cite{Ply:74}.}.
 
\begin{maintheorem}\label{the:one-dimensional}
Let $T$ be a three-dimensional skew product diffeomorphism satisfying the Standing and Pinching hypotheses.
Assume that the set $\Xi\subset M$ is a one-dimensional attractor  of $\tau$. Then 
\begin{itemize}
\item
either $\Phi$ is Lipschitz continuous and its box dimension is given by $\dim_{\rm B}(\Phi)= d^\s+1$, where $d^\s$ is as in~\eqref{eq:localdimension}, 
\item
or $\Phi$ is not $\gamma$-H\"older continuous at any point for any $\gamma>\log\lambda_\s/\log\kappa_\w$ and the box dimension of $\Phi$ is given by $\dim_{\rm B}(\Phi)=d^\s+d$, where $d$ is the  unique number such that	
\[
	P_{\tau|_\Xi}(\varphi^{\cu}+(d-1)\varphi^\u)=0 .
\]
\end{itemize}
\end{maintheorem}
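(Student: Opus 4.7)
\medskip
\noindent\textbf{Proof proposal.} The plan is to follow the same blueprint as for Theorem~\ref{t.anosov}, but replace the global Anosov base by its restriction to a one-dimensional hyperbolic attractor and exploit the fact that such a set has a local product structure in which the unstable factor is an honest interval, while the stable factor is a Cantor set of dimension $d^\s$. First I would establish the critical regularity dichotomy: either $\Phi$ is Lipschitz continuous on $\Xi$ or it fails to be $\gamma$-H\"older at every point for every $\gamma>\log\lambda_\s/\log\kappa_\w$. This is the content of the argument in \cite{HadNicWal:02} and relies only on the hyperbolic structure of $\tau$ together with the Standing hypothesis; in particular the argument goes through verbatim when $\tau$ is replaced by its restriction $\tau|_\Xi$ to a basic set, so no new input is required here.

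In the Lipschitz alternative the computation is essentially soft: a Lipschitz graph over $\Xi$ has box dimension equal to $\dim_{\rm B}(\Xi)$. Since $\Xi$ is locally homeomorphic to $C\times I$ with $C$ a Cantor set of dimension $d^\s$ and $I$ an interval, and since the stable/unstable holonomies of $\tau$ are bi-Lipschitz (we are on a surface, see Remark after Theorem~\ref{teo:seminal}), one obtains $\dim_{\rm B}(\Xi)=d^\s+1$. The lower bound $\dim_{\rm B}(\Phi)\ge d^\s+1$ follows from the Lipschitz projection $(\xi,\Phi(\xi))\mapsto\xi$ and the classical formula~\eqref{eq:localdimension}.

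In the non-Lipschitz alternative I would build the estimate $\dim_{\rm B}(\Phi)=d^\s+d$ in two steps. Fix a Markov partition $\{R_i\}$ of $\tau|_\Xi$ and consider the cylinders $R_{i_0\cdots i_{n-1}}$. Each such cylinder has stable diameter of order $\exp(S_n\varphi^\s)$ and unstable length of order $\exp(-S_n\varphi^\u)$, and by the invariance relation~\eqref{e.invariant-graph} the fiber oscillation of $\Phi$ over such a cylinder is controlled, from above via telescoping, by $\exp(-S_n\varphi^{\cu})$. Choosing boxes of side $r_n\sim\exp(-S_n\varphi^\u)$ in the unstable direction, one needs roughly $\exp(S_n(\varphi^{\cu}-\varphi^\u))$ boxes to cover the graph over each cylinder in the centre-unstable direction, and a further factor of order $r_n^{-d^\s}$ in the stable direction (here the Pinching hypothesis enters, ensuring that the stable/unstable holonomies of $T$ are bi-H\"older with exponent arbitrarily close to $1$, so that the stable contribution adds $d^\s$ cleanly). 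Summing over cylinders and invoking the variational principle, the number of boxes of size $r_n$ needed to cover $\Phi$ is bounded by $\exp(n(P_{\tau|_\Xi}(\varphi^{\cu}+(d^\s+d-1)\varphi^\u)+o(1)))$ for suitable scaling, and the defining pressure equation yields the upper bound $d^\s+d$.

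The lower bound is the main technical obstacle. The upper estimate on oscillations is easy, but for a matching lower bound one must show that, in the non-Lipschitz regime, the oscillations of $\Phi$ over cylinders are not only bounded by but actually comparable to $\exp(-S_n\varphi^{\cu})$, on a set large enough (in the pressure-theoretic sense) to force the claimed dimension. This is done by a bounded-distortion/Livsic-type argument: the $C^2$ regularity in the Pinching hypothesis produces bounded distortion for $T_\xi^n$ along the fibre, and combining this with the assumed failure of the coboundary relation that underlies the Lipschitz case, one extracts, on a positive-pressure-generic set of cylinders, oscillations of the prescribed order. Given such a lower estimate on oscillations, efficient box-counting within each unstable strip produces the matching lower bound for the box dimension on each unstable slice, and the bi-H\"older product structure (Pinching hypothesis, Remark~\ref{r.pinchingholonomies}) then transfers this to the global bound $\dim_{\rm B}(\Phi)\ge d^\s+d$, completing the proof.
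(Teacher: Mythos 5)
Your overall architecture matches the paper's: regularity dichotomy from \cite{HadNicWal:02}, separate dimensions for stable and unstable slices, and assembly via the bi-H\"older local product structure guaranteed by the Pinching hypothesis. The Lipschitz alternative is handled correctly. However, in the non-Lipschitz alternative there are two genuine problems. First, the pressure expression you write for the upper bound, $P_{\tau|_\Xi}(\varphi^{\cu}+(d^\s+d-1)\varphi^\u)$, is wrong: the stable exponent $d^\s$ cannot be folded into the unstable pressure equation (the stable slices are governed by $\varphi^\s$, not $\varphi^\u$), and inserting $d^\s+d-1$ in place of $d-1$ would produce a different root. In the correct argument the stable contribution enters only additively, through the product structure, after one has shown $\dim_{\rm B}(\Phi\cap\cW^\u_{\rm loc}(X,T))=d$ with $P_{\tau|_\Xi}(\varphi^{\cu}+(d-1)\varphi^\u)=0$.

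Second, and more importantly, ``summing over cylinders and invoking the variational principle'' skips the central nonconformal difficulty. To count boxes of a \emph{fixed} side $r$ one must use a Moran cover whose cylinders have \emph{variable} length $n(\underline i)$, and over these cylinders the Birkhoff averages of $\varphi^\u$ and $\varphi^{\cu}$ take different values, so the count is not a single pressure sum at a common level $n$. The paper resolves this by a joint multifractal analysis of the pair $(\varphi^\u,\varphi^{\cu})$ on the one-sided shift: one partitions the Moran cover according to the approximate joint averages $\va=(a_1,a_2)$, counts each class by $\exp(n\,\ch(\psi,\va))$, and then proves the nontrivial identity that $\sup_{\va}\bigl(\ch(\psi,\va)+a_2\bigr)/(-a_1)$ is exactly the root $t$ of $P_{\sigma^+}(t\psi_1+\psi_2)=0$ (Propositions~\ref{p.joint-limits} and~\ref{prolem:lemma}); the matching lower bound is obtained by choosing the optimal $\va$. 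Your proposal contains no substitute for this optimization. Relatedly, your oscillation lower bound via a ``Livsic-type'' argument on a ``positive-pressure-generic set of cylinders'' is weaker than what the counting requires: the paper shows (via continuity of the defect function $\Delta^\u_\delta$ and density of forward images of unstable pieces, Corollary~\ref{cor:erika} and Proposition~\ref{pro:dis}) that in the non-Lipschitz case the height of \emph{every} Markov unstable rectangle is bounded below by a uniform multiple of $\exp(S_n\varphi^{\cu})$, and it is this uniform bound, combined with the connectedness of unstable slices of a one-dimensional attractor (so the graph crosses every square in a vertical subdivision of each rectangle), that yields the lower bound.
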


\subsection{Hyperbolic Cantor sets in the base} \label{sub.hyperbolic_base}

For the next result we introduce an additional hypothesis that we call
\emph{fibered blender with the germ property} that we will discuss below.
First, let us observe that blenders appear in a very natural and ample form in
our setting and that they form an open class of examples (see
Proposition~\ref{pro:persistent}).  An informal discussion can be found
in~\cite{BonCroDiaWil:16}.  Naively, a \emph{blender} is a type of horseshoe
which ``geometrically'' behaves like something ``bigger'' than a usual
horseshoe. We provide details in Section~\ref{sec:Blenders} and give a
representative example in Section~\ref{sec:ex1}. In our particular setting, a
blender guarantees that the invariant graph appears as if it would have a
``two-dimensional stable set'' (instead of just a one-dimensional stable set by
assumption). In rough terms, when projecting onto fibers there are
superpositions at all levels in the sense that in any local unstable manifold,
the projection of the graph along strong unstable leaves onto a fiber always
results in a nontrivial interval.  Let us observe that a rather different
approach to the construction of ``blenders" is considered in \cite{MorSil:}
starting from hyperbolic sets (in dimension three or higher) whose fractal
dimension is sufficiently large. This construction relies on the notion of a
{\emph{compact recurrent set}} (see \cite{MorYoc:01}) which is a covering like
property with the same flavor as the germ property.

\begin{maintheorem}\label{the:1}
Let $T$ be a three-dimensional skew product diffeomorphism satisfying the Standing and Pinching hypotheses.
Assume that $\Xi\subset M$ is a Cantor set and that $\Phi$ is a fibered  blender with the germ property. 
Then $\Phi$ is not $\gamma$-H\"older continuous at any point for any $\gamma>\log\lambda_\s/\log\kappa_\w$  and its box dimension is given by $\dim_{\rm B}(\Phi) =d^\s+d$, where $d^\s$ is as in~\eqref{eq:localdimension} and
$d$ is the  unique number such that	
	\begin{equation}\label{e.pressure_basicpiece}
		P_{\tau|_{\Xi}}(\varphi^{\cu}+(d-1)\varphi^\u)=0.
	\end{equation}
\end{maintheorem}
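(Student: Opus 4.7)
The plan is to reduce Theorem~\ref{the:1} to a one-dimensional box-dimension count along a single local unstable leaf of $\tau$ in the base, exploiting that $\Phi$ has locally a product structure induced by the stable/unstable holonomies of $\tau$ and the fibre coordinate. Throughout I would work with a Markov partition $\{R_i\}_{i\in\cA}$ of $(\Xi,\tau)$: a level-$n$ cylinder $R_\omega$ has stable slice of diameter $\asymp\exp(S_n\varphi^\s(\xi))$, unstable slice of diameter $\asymp\exp(-S_n\varphi^\u(\xi))$, and the oscillation of $\Phi$ over $R_\omega$ is controlled by $\exp(S_n\varphi^{\cu}(\xi))$, where $S_n$ denotes Birkhoff sums along $\tau$-orbits. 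The critical regularity statement reproduces the argument of \cite{HadNicWal:02} in the Cantor base setting: either $\Phi$ is Lipschitz, or the fibre cocycle grows exponentially along typical backward orbits and a Birkhoff-sum comparison forces the pointwise H\"older exponent to be at most $\log\lambda_\s/\log\kappa_\w$. The blender hypothesis rules out the Lipschitz branch.

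For the upper bound $\dim_{\rm B}(\Phi)\le d^\s+d$, fix $r>0$ small and for each $\xi\in\Xi$ choose the largest level $n=n(\xi,r)$ with $\exp(-S_n\varphi^\u(\xi))\ge r$. Cover the portion of $\Phi$ over each such cylinder by axis-aligned boxes of side $r$ using the slice-thickness $\asymp\exp(S_n\varphi^\s(\xi))$ of the stable Cantor slice (which contributes via its dimension $d^\s$) and the oscillation $\asymp\exp(S_n\varphi^{\cu}(\xi))$ in the fibre direction. Summing the resulting counts over level-$n$ cylinders and using that the potentials $d^\s\varphi^\s$ and $\varphi^{\cu}+(d-1)\varphi^\u$ both have zero topological pressure collapses the total to $\lesssim r^{-(d^\s+d)}$. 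The Pinching hypothesis enters here to keep the nonconformal distortion constants controllable across iterates, so that the slice-by-slice counting is valid.

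The lower bound is the heart of the argument and is where the blender is used in an essential way. I would first reduce to estimating the lower box dimension of $\Phi|_{\Xi\cap\cW^\u_{\rm loc}(\xi_0,\tau)}$ for a fixed $\xi_0$, and then combine with the stable Cantor slice of dimension $d^\s$ via the bi-H\"older holonomies provided by the Pinching hypothesis (whose H\"older exponent arbitrarily close to one is exactly what makes the box dimension of the product add). Given a level-$n$ cylinder $R_\omega$, the germ property of the fibered blender produces a controlled collection of local pieces of $\Phi$ inside $R_\omega$ whose strong-unstable saturations project onto a fibre interval of uniform length. Pulling back $n$ steps under $T^{-n}$ and using the uniform fibre expansion shows that the projection of $\Phi\cap R_\omega$ onto the fibre direction contains an interval of length $\asymp\exp(S_n\varphi^{\cu}(\xi))$. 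This realises the graph's fibre-oscillation as an actual covered interval, forces the corresponding number of $r$-cubes in any cover, and resumming via the same pressure equations yields $\cN_r(\Phi)\gtrsim r^{-(d^\s+d)}$.

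The principal obstacle is this lower bound: without the germ property the projection onto the fibre would collapse to a Cantor image and one would only obtain $\dim_{\rm B}(\Phi)=d^\s+d^\u$, which is strictly smaller since $d>d^\u$ in the H\"older alternative. Verifying that the blender's topological covering property transfers into a quantitative fibre-covering uniformly in all cylinders and at all scales is the most delicate step, and it is exactly what the germ property is designed to deliver. A secondary difficulty is the bookkeeping of bounded distortion in the nonconformal regime, where the Pinching hypothesis $\kappa_\s\mu_\w\le\lambda_\w$ keeps the relevant holonomy constants under control and allows the product-of-slices decomposition to close the argument.
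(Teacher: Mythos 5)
Your proposal follows the same architecture as the paper's proof: critical regularity \`a la Hadjiloucas--Nicol--Walkden with the germ property excluding the Lipschitz alternative (correct, and a step the paper itself leaves implicit: the germ property forces the fibre spread of $\Phi$ over an $m$-th level unstable box to be $\gtrsim \delta\, e^{S_m\varphi^\cu}$ while its width is $O(e^{S_m\varphi^\u})$, and the ratio diverges); Moran covers of a local unstable leaf for the upper bound; the germ property --- via the fact that every germ plaque meets $\Phi$ (Corollary~\ref{c.germcorollary}) --- forcing the fibre projection of $\Phi$ over each Markov unstable rectangle to contain an interval of length $\asymp e^{S_n\varphi^\cu}$ for the lower bound; and assembly of stable and unstable slices through the local product structure and the bi-H\"older holonomies with exponent close to $1$ supplied by the Pinching hypothesis. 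The one place you genuinely diverge, and where your sketch is thinnest, is the conversion of the cylinder-by-cylinder counts into the pressure root $d$: you propose to ``resum via the pressure equations'' directly over the Moran cover, whereas the paper runs a joint multifractal analysis of the Birkhoff averages of $(\varphi^\u,\varphi^\cu)$, proves the variational identity $d-1=\sup_{\va\in D^+(\psi)}\big(\ch(\psi,\va)+a_2\big)/(-a_1)$ (Proposition~\ref{prolem:lemma}), and for the lower bound restricts to the cylinders realizing the optimal pair $\va$, which are homogeneous enough to be covered by rectangles of comparable widths and heights. Your direct resummation can be made rigorous (e.g.\ using the Gibbs measure of $\varphi^\cu+(d-1)\varphi^\u$ and the fact that a Moran cover is a section of the cylinder tree, so the Gibbs weights sum to $\asymp 1$), and would arguably be shorter, but as written it is an assertion rather than an argument, and it is exactly where the bulk of the paper's technical work (Sections~\ref{sec:shiftspace} and~\ref{newsec:proofss}) lives. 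Two minor slips: the Moran stopping rule should read $e^{S_n\varphi^\u(\xi)}\ge r$ (the potential $\varphi^\u$ is negative), and the side remark that without the germ property one ``would only obtain $d^\s+d^\u$'' is false --- in the example of Section~\ref{sec:ex1} with $\lambda\ge 2$ the answer is $d^\s+\log 2/\log\lambda$, strictly between $d^\s+d^\u$ and $d^\s+d$.
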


In the Cantor case there is one simple fact that is important to understand, namely, that not all cases can be covered by a single dimension formula. Instead, at least
two different regimes have to be distinguished, depending on the box dimension
of $\Xi$ and the parameters in (\ref{eq:constants}). 
The reason for this is the following  observation about two elementary upper bounds for the box dimension (see~\cite{Fal:97}, compare also~\cite[Section 4]{PrzUrb:89}).
 Given a $\gamma$-H\"older continuous function $\Phi\colon\Xi\to\bR$ on a metric space $\Xi$, on the one hand H\"older continuity implies
\begin{equation}\label{eq:opt2a}
	\dim_{\rm B}(\Phi)\leq \frac{\dim_{\rm B}(\Xi)}{\gamma} 
	=: D_1(\gamma) .
\end{equation}
On the other hand a covering argument gives (compare also the proof of the first claim in Proposition~\ref{pro:localdimension-unstable})
\begin{equation}\label{eq:opt2}
	\dim_{\rm B}(\Phi) \leq \dim_{\rm B}(\Xi)+1-\gamma =: D_2(\gamma).
\end{equation}
If $\dim_{\rm B}(\Xi)\geq 1$, then $D_2(\gamma)\leq D_1(\gamma)$ for all
$\gamma\in[0,1]$, so that the first bound does not play any role. When
$\dim_{\rm B}(\Xi)<1$, however, then there is an interval  $(d,1)\subset[0,1]$ such
that $D_1(\gamma)<D_2(\gamma)$ for all $\gamma\in (d,1)$ (see Figure~\ref{fig.comparison}). In this
case, the box dimension of $\Phi$ can obviously not be equal to $D_2(\gamma)$. In the
context of Theorem~\ref{the:1}, this implies that the box dimension cannot be
determined by the analogue of \eqref{e.pressure_basicpiece} in all cases. An
explicit example for this will be discussed in  Section~\ref{sec:ex1} (see Remark~\ref{rem:differences}).

\begin{figure}
 \begin{overpic}[scale=.3]{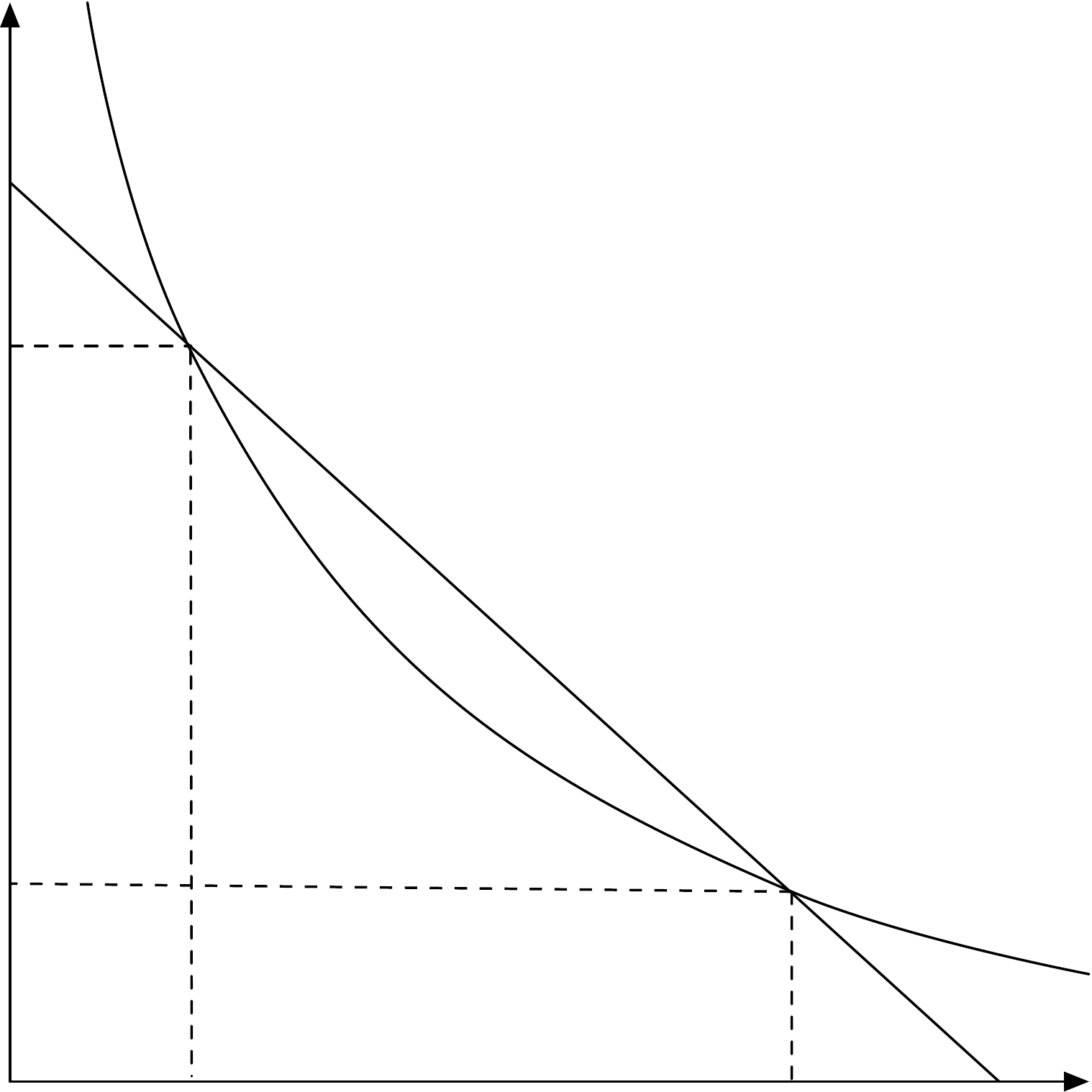} 
 	\put(102,0){\small $\gamma$}
	\put(-5,65){\small $1$}
	\put(17,-5){\small $d$}
	\put(70,-5){\small $1$}
	\put(90,17){\small $d/\gamma$}
	\put(45,45){\small $d+1-\gamma$}
	\put(-5,20){\small $d$}
 \end{overpic}
\caption{Comparison between the estimates $D_2(\gamma)$ (straight line) and
  $D_1(\gamma)$ (convex curve) for the case $d=\dim_{\rm B}(\Xi)<1$.} 
\label{fig.comparison}
\end{figure}

This also points out an error in \cite{Wal:07}, whose main result can be seen to
be false exactly because it asserts that the box dimension equals $D_2(\gamma)$
(and its generalized version corresponding to \eqref{e.pressure_basicpiece}) in
situations where $D_1(\gamma)<D_2(\gamma)$.  Specifically, compare
Section~\ref{sec:ex1}.  More precisely, one of the main problems in
\cite{Wal:07} is that the Intermediate Value Theorem (IVT) is applied to
continuous functions defined only on Cantor sets. Even for situations where the
formula for the box dimension in \cite{Wal:07} is expected to be the correct
one, we do not see how to fix this gap in a direct way.%
\footnote{It is interesting to compare this observation with the discussion and
  open problem in~\cite[Section 4, Remark 6]{PrzUrb:89}.}  On the contrary, this
rather leads to the concept of {\em fibered blenders}, that in certain
situations provides us with an analogue of the IVT.  The blender property is a
way to recover the one-dimensional structure which instead in the strong
unstable direction is now observed in the transverse center unstable (fiber)
direction.

Finally, the example in Section~\ref{sec:ex1} illustrates well that when such a
blender exists, then the strategy to use the upper estimate~\eqref{eq:opt2} is
essential and optimal.  In Section~\ref{ss.fiberedbh} we present a class of
examples of horseshoes which we call \emph{fibered blender-horseshoes} for which
we show that they are fibered blenders with a germ property.  We close this
section with two remarks about our main results.

\begin{remark}[Continuity of box dimension]
As long as the skew product structure is maintained, all objects and quantities in the above three theorems depend continuously on the map. Hence, the box dimension of the invariant graph depends continuously on the map (when restricting to skew product maps).
\end{remark}

\begin{remark}[Upper bound for dimension and dimension of slices]
In Section~\ref{newsec:proofss} we study the dimensions of slices of the graph
by stable and unstable manifolds (see
Propositions~\ref{pro:localdimension-stable}
and~\ref{pro:localdimension-unstable}). Note that the dimension value $d$ in any
of the three theorems is in fact an upper bound for the upper box dimension of
unstable slices just assuming the Standing hypotheses (see first claim in
Proposition~\ref{pro:localdimension-unstable}).
\end{remark}

\subsection{Main ingredients and organization}

Let us briefly sketch the main idea for proving the theorems, at the same time giving an overview of the content of the paper.  In Section~\ref{sec:dimentnew} we state some basic facts about entropy, pressure and box dimension.
In Section~\ref{Examples} we provide some paradigmatic examples.
In Section~\ref{sec:perlim} we provide preliminary results about (partially) hyperbolic systems and the Markov structure of our sets.
The latter provides a natural (semi-)conjugation between the dynamics on the graph and the dynamics on a corresponding shift space
(this is essential since all dynamical quantifiers of $T$ such as Birkhoff averages and pressure have their corresponding quantifier in the symbolic setting).
In Section~\ref{sec:crireg} we discuss the critical regularity of the hyperbolic graphs.
Section~\ref{sec:Blenders} is devoted to the presentation and study of blenders.
In Section~\ref{sec:shiftspace} we explain how to deal with the thermodynamic quantifiers when studying the dynamics on unstable manifolds only which amounts in studying the associate space of onesided infinite sequences. We also give a pedestrian approach to the multifractal analysis which is needed (see Remark~\ref{rem:pedest}). The results of this section then will be applied in Section~\ref{newsec:proofss} to determine the dimension of stable and unstable slices.
Here, we follow a strategy by Bedford~\cite{Bed:89} and perform a multifractal analysis of pairs of Lyapunov exponents of points in $\Phi$ by studying the weak expanding fiber direction (governed by Birkhoff averages of the potential $\varphi^\cu$) and the strong expanding direction tangent to the local strong unstable manifolds (governed by Birkhoff averages of the potential $\varphi^\u$). Then we choose the (uniquely determined) pair $(\alpha_1,\alpha_2$) such that the topological entropy of its level set $\cL(\alpha_1,\alpha_2)$ is maximal. 
The level set is close to being ``homogeneous'' in the sense that every point in it has the very same pair of exponents and hence one can  cover it by rectangles of approximately equal widths and heights. To argue that in a refining cover of those rectangles by squares indeed all squares are required, we distinguish two cases: 
\begin{itemize}
	\item  either $\tau$ is an Anosov surface (hence mixing) diffeomorphism on $\Xi=M$ or $\Xi$ is a one-dimensional hyperbolic attractor, 
	\item or we invoke the germ property of a fibered blender (see Section~\ref{sec:Blenders}). 
\end{itemize}
This will provide an estimate from below of the box dimension. The upper estimate is based on standard Moran cover arguments.
Finally, in Section~\ref{sec:locpro} we provide more details about stable/unstable holonomies and the dimension of (local) products.
The proofs of Theorems~\ref{t.anosov},~\ref{the:one-dimensional}, and~\ref{the:1} will be concluded at the end of Section~\ref{sec:locpro}.

\section{Preliminaries on entropy, pressure, and box dimension}\label{sec:dimentnew}

\subsection{Entropy and pressure}\label{sec:diment}

Consider a continuous map $\cT\colon X\to X$ of a compact metric space $(X,d)$.
Given $\varepsilon>0$ and $n\ge1$, a finite set of points $\{x_k\}\subset X$ is \emph{$(n,\varepsilon)$-separated} (with respect to $\cT$) if $\max_{m=0,\ldots,n-1}d(\cT^m(x_i),\cT^m(x_j))>\varepsilon$ for all $x_i,x_j$, $x_i\ne x_j$. 
Given  a continuous function $\psi\colon X\to\bR$ and $n\ge0$, the \emph{$n$th Birkhoff sum} of $\psi$ (with respect to $\cT$) is
\[
	S_n\psi
	\eqdef \psi+\psi\circ \cT+\cdots+\psi\circ \cT^{n-1}.
\]
The \emph{topological pressure} of $\psi$ (with respect to $\cT$) is defined by
\[
	P_{\cT}(\psi)
	\eqdef \lim_{\varepsilon\to0}\limsup_{n\to\infty}
	\frac1n\log\sup\sum_{k}e^{S_n\psi(x_k)},
\]
where the supremum is taken over all sets of points $\{x_k\}\subset X$ which are $(n,\varepsilon)$-separated (see~\cite{Wal:82} for properties of the pressure). Recall that $h(\cT)\eqdef P_\cT(0)$ is the \emph{topological entropy} of $\cT$.

Let $\cM(\cT)$ be the space of $\cT$-invariant probability measures. Given $\nu\in\cM(\cT)$, denote by $h(\nu)$ its \emph{(metric) entropy} (with respect to $\cT$), see for instance \cite{Wal:82}.
Recall that the topological pressure satisfies the following \emph{variational principle} 
\[
	P_{\cT}(\psi)
	=\max_{\nu\in\cM(\cT)}\Big(h(\nu)+\int\psi\,d\nu\Big).
\]

\subsection{Box dimension. Definition and properties}\label{sec:defdim}

We recall some standard definitions and facts (see~\cite{Fal:97}). Let $(X,d)$ be a  metric space and $A\subset X$ a totally bounded set.
Given $\delta>0$, denote by $N(\delta)\ge1$ the smallest number of $\delta$-balls which are needed to cover $A$. Define the \emph{lower} and \emph{upper box dimension} of $A$ by
\[
	\underline\dim_{\rm B}(A)\eqdef \liminf_{\delta\to0}\frac{\log N(\delta)}{-\log\delta}
	\quad\text{ and }\quad
	\overline\dim_{\rm B}(A)\eqdef \limsup_{\delta\to0}\frac{\log N(\delta)}{-\log\delta},
\]
respectively. If both limits coincide, then 
the \emph{box dimension} of $A$ is defined by 
$\dim_{\rm B}(A)\eqdef\underline\dim_{\rm B}(A)=\overline\dim_{\rm B}(A)$. 
Later on we will make us of the fact that in our setting we can also count by $N(\delta)$ the least number of squares of size $\delta$ needed to cover $A$, since after taking limits we obtain an equivalent definition of the corresponding dimensions (see~\cite{Fal:97}).

Note that $\dim_{\rm B}$ is \emph{stable} in the sense that for totally bounded sets $A,B\subset X$ satisfying $\underline\dim_{\rm B}(A)=\overline\dim_{\rm B}(A)$ and  $\underline\dim_{\rm B}(B)=\overline\dim_{\rm B}(B)$ the box  dimension of $A\cup B$ is well defined and 
\[
	\dim_{\rm B}(A\cup B)=\max\{\dim_{\rm B}(A),\dim_{\rm B}(B)\}.
\]

If $\pi\colon X\to Y$ with $(X,d_X)$, $(Y,d_Y)$ metric spaces is a H\"older continuous map with H\"older constant $\gamma$ and $A\subset X$ is totally bounded, then $\underline\dim_{\rm B}(\pi(A))\le\gamma^{-1}\underline\dim_{\rm B}(A)$ (the same holds for $\overline\dim_{\rm B}$ and $\dim_{\rm H}$, respectively). Hence,  box dimension (and  also Hausdorff dimension) is invariant under maps which are bi-H\"older continuous with H\"older exponents arbitrarily close to $1$  or which are bi-Lipschitz continuous. 

Finally, recall that if $A,B\subset X$ are two totally bounded sets for which the box dimension is well-defined, then the box dimension of the direct product $A\times B$  is as well  and
\begin{equation}\label{eq:product}
	 \dim_{\rm B}(A\times B)
	= \dim_{\rm B}(A)+\dim_{\rm B}(B).
\end{equation} 

\section{Examples}\label{Examples}

\subsection{Anosov  map in the base}\label{sec:ex2}

	Kaplan \emph{et al.}~\cite{KapMalYor:84} study the map 
\begin{equation}\label{eq:KapYor}
	\tilde T\colon
        \bT^2\times \bR\to\bT^2\times\bR,
	\quad
	(\xi,x)\mapsto(\tilde\tau(\xi) ,p(\xi)+\lambda^{-1} x),
\end{equation}
where $\bT^2$ is the two-dimensional torus and $\tilde\tau\colon\bT^2\to\bT^2$ is the linear hyperbolic (Anosov) torus automorphism induced by the matrix
\[
	A=\left(\begin{matrix}2&1\\1&1\end{matrix}\right)
\]
with eigenvalues $\kappa=(3+\sqrt5)/2$ and $\mu=\kappa^{-1}$, where $\lambda\in(1,\kappa)$, and $p\colon\bT^2\to\bR$ is a $C^3$ function of period $1$ in each coordinate. They show that for $\tilde\Phi\colon\bT^2\to\bR$ the invariant graph for~\eqref{eq:KapYor} either
\begin{itemize}
\item[(a)]  $\tilde\Phi$ is smooth (and hence $\dim_{\rm B}(\tilde\Phi)=2$), or
\item[(b)]  $\tilde\Phi$ is nowhere differentiable and
\begin{equation}\label{eq:ex2}
	\dim_{\rm B}(\tilde\Phi)
	=3-\frac{\log\lambda}{\log((3+\sqrt 5)/2)}.
\end{equation}
\end{itemize}
In particular, the box dimension does not depend on the map $p$.

Theorem~\ref{t.anosov} applies to the inverse of this system $T=\tilde T^{-1}$ and $\tau=\tilde\tau^{-1}$. In this case the potentials defined in~\eqref{eq:defbasicpotentialvarphius} and~\eqref{e.potentials} are constant and given by $\varphi^\cu= -\log \lambda$ and $\varphi^\u=-\log\kappa$. Observe that  the Lebesgue measure $m$ is an SRB measure which is, at the same time, a measure of maximal dimension and maximal entropy. 
Hence,
$h(T)=h(m)=\log\kappa=\lvert\log\mu\rvert$.
This implies that for every $d\in \bR$ it holds
\[\begin{split}
	P_\tau(\varphi^\cu+(d-1)\varphi^\u)
	&= \max_{\nu \in \cM (\tau) }\left(h(\nu)+\int_{\mathbb{T}^2} (\varphi^\cu+(d-1)\varphi^\u)\, d\nu \right)\\
	&=\log\kappa -\log\lambda - (d-1)\log\kappa .
\end{split}\]	
Hence, $d$ satisfies  (\ref{e.pressure_equality})
if, and only if,  $2-d=\log\lambda / \log\kappa$. Hence, the box dimension of the invariant graph can be computed explicitly and equals%
~\eqref{eq:ex2}. 

For the particular case  $\lambda=3/2$ and $p(\xi_1,\xi_2) = \sin(2\pi\xi_1)\sin(2\pi\xi_2) + \cos(4\pi\xi_2)$, the global attractor of \eqref{eq:KapYor} is depicted in Figure~\ref{Fig.CatAttractor}. Two-dimensional slices through the $\xi_1$- and $\xi_2$-axis are shown in Figure~\ref{Fig.CatAttractor_slices}. 
\begin{figure}[h!]
 \begin{overpic}[scale=.25]{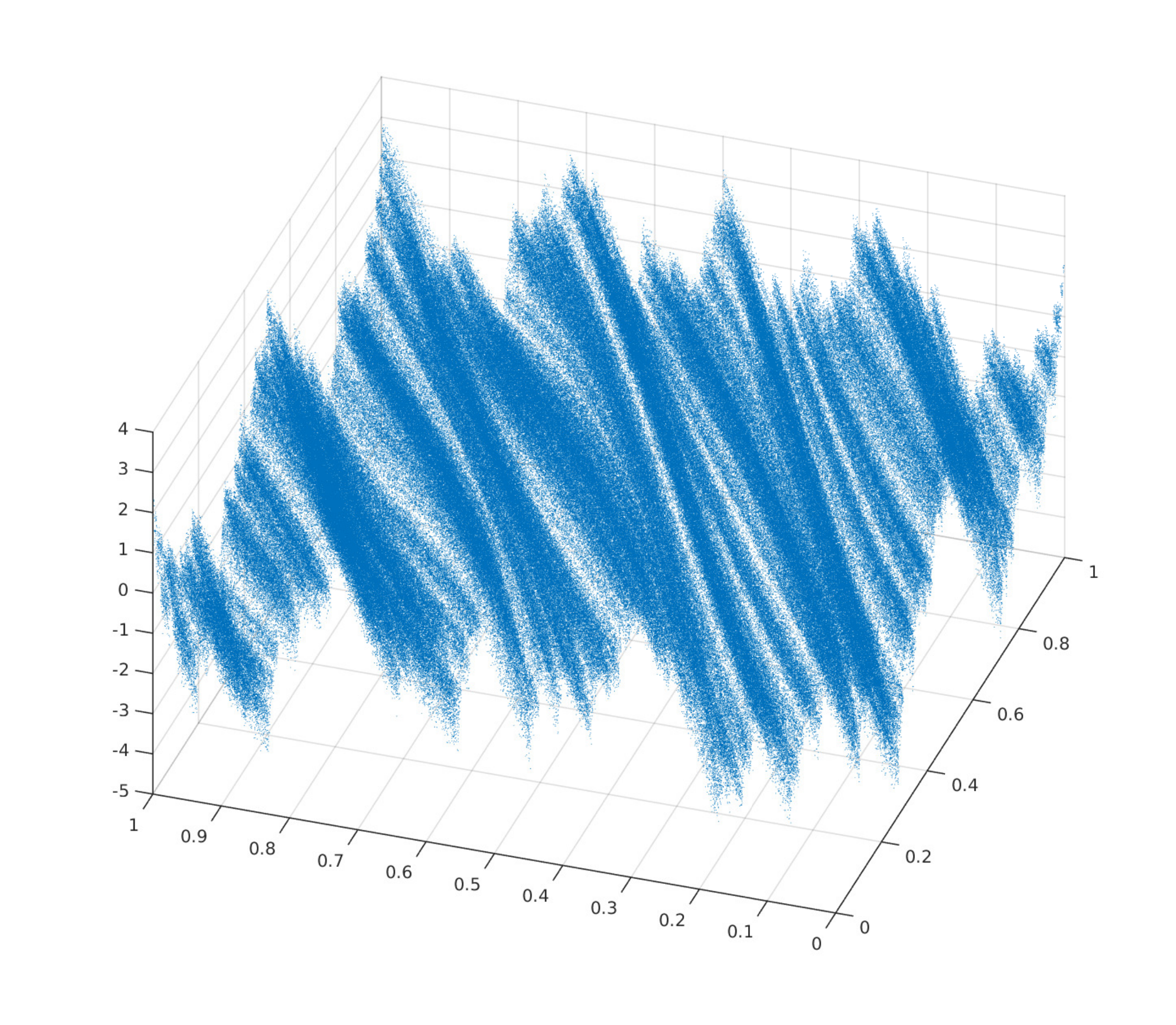}
 	\put(65,2){\small$\xi_1$}
 	\put(95,36){\small$\xi_2$}
 	\put(5,49){\small$x$}
\end{overpic}	 
 \begin{overpic}[scale=.25]{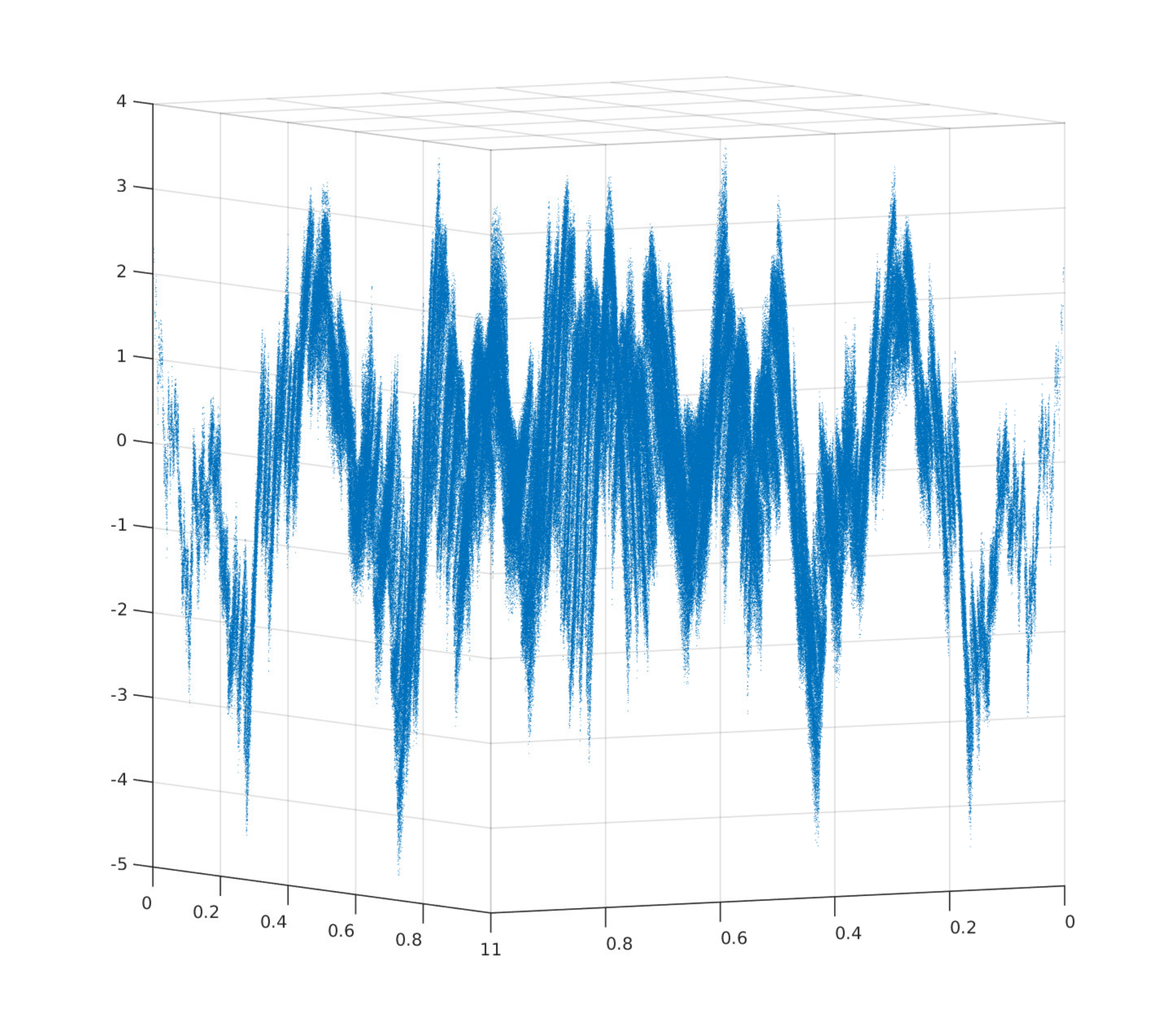}
 	\put(35,1){\small$\xi_1$}
 	\put(89,3){\small$\xi_2$}
 	\put(5,76){\small$x$}
\end{overpic}	
  \caption{The attractor of the skew product system \eqref{eq:KapYor} for the particular case considered, viewed from two different angles.}
  \label{Fig.CatAttractor}
\end{figure}
\begin{figure}[h!]
 \includegraphics[scale=.28]{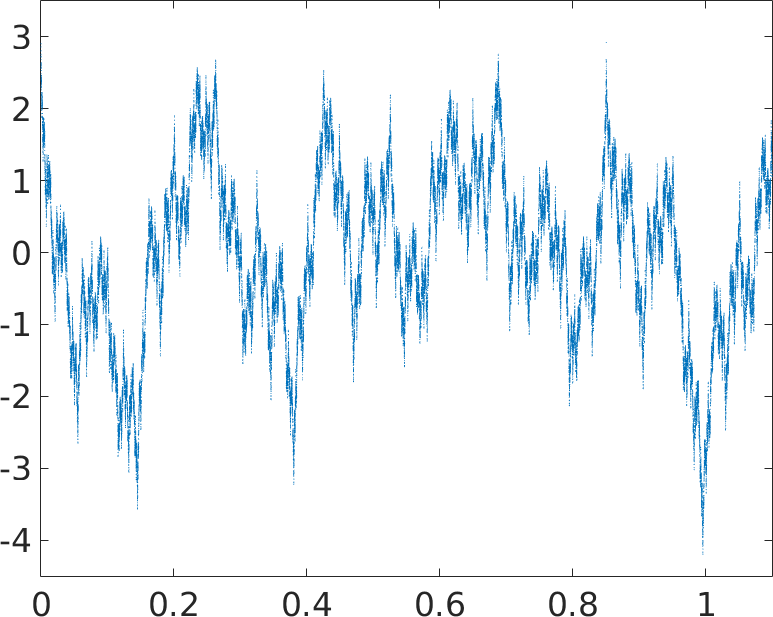}
\quad
  \includegraphics[scale=.28]{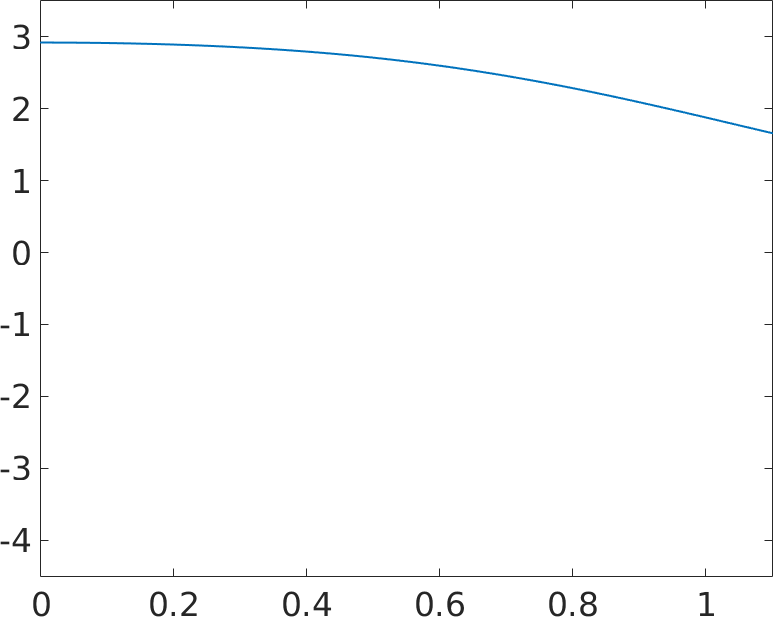}
  \caption{Slices through the attractor depicted in
    Figure~\ref{Fig.CatAttractor} along the unstable manifold (left)  and the stable manifold (right) through the point $\xi=(0,0)$, respectively.}
    \label{Fig.CatAttractor_slices}
\end{figure}
%
%
%

\subsection{Cantor set in the base}\label{sec:ex1}

In~\cite{BonDiaVia:95}  there are studied the following (very classical) models 
(see also~\cite{PolWei:94}) which are paradigmatic examples of blenders. For that reason we will provide
the detailed construction.
Fix numbers $0<\mu<1/2<1<\lambda<\kappa$, $\kappa>2$. 
Start with a surface diffeomorphism $\tau$ exhibiting a horseshoe $\Xi$. To simplify the exposition, let $\Xi=\bigcap_{n\in\bZ}\tau^n(R)\subset\bR^2$, $R=[-2,2]^2$,
 assume that $\tau^{-1}(R)\cap R$ consists of two connected components $\tilde D_1,\tilde D_2$, and that $\tau$ is affine in each of them and satisfies 
\[
	d\tau|_{\tilde D_1\cup\tilde D_2}
	=\left(\begin{matrix}\mu&0\\0&\kappa\end{matrix}\right).
\]
In particular, $\tilde D_i=[-2,2]\times D_i$ where $D_i$ is some interval in $[-2,2]$, $i=1,2$. Let $\xi=(\xi^\s,\xi^\u)$ be the usual coordinates in $R=[-2,2]^2$. Suppose that the fixed point of $\tau$ in $\tilde D_1$ is located at $(0,0)$ and that  the other fixed point of $\tau$ is located at $(1,1)\in\tilde D_2$. 
The set $\Xi$ is a direct product $\Xi=C^\s\times C^\u\subset[-2,2]^2$ of 
two 
Cantor sets (for each of them Hausdorff dimension and box dimension coincide) which satisfy $\dim( C^\s)=\log2/\lvert\log \mu\rvert$ and $\dim( C^\u)=\log2/\log\kappa$ and hence $\dim(\Xi)= \log2/\lvert\log\mu\rvert+\log2/\log\kappa$ (see~\cite{Fal:97}).

Consider a family $\{T_t\}_{t\in(-\delta,\delta)}$, $\delta$ small, of diffeomorphisms of $[-2,2]^2\times \bR$ 
satisfying
\[
	T_t(\xi^\s,\xi^\u,x)
	=\begin{cases} 
		\big(\tau(\xi^\s,\xi^\u),\lambda x\big)&\text{ if }\xi^u\in D_1,\\
		\big(\tau(\xi^\s,\xi^\u),\lambda x-t\big)&\text{ if }\xi^u\in D_2.
	\end{cases}	
\]
Note that $T_t$ has two hyperbolic fixed points, $P_0^t=(0,0,0)$ and $P_1^t=(1,1,t/(\lambda-1))$.

Consider the basic set $\Phi_0=\Xi\times\{0\}$ (with respect to $T_0$)
which is an invariant graph. With the above we have 
\[
	\dim_{\rm H}(\Phi_0)
	= \dim_{\rm B}(\Phi_0)
	= \frac{\log2}{\lvert\log\mu\rvert}+\frac{\log2}{\log\kappa}.
\]
Denote by $\Phi_t$ the continuation for $T_t$ ($t$ small) of $\Phi_0$ which is 
also an invariant graph. 
Also note that it is a direct product $\Phi_t=C^\s\times F_t$, where $F_t\subset[-2,2]\times\bR$ is a self-affine limit set of a (contracting) iterated function system of affine maps which map the rectangle $[0,1]\times[0, t/(\lambda-1)]$ to the rectangles
\begin{equation}\label{eq:rectanglesS01}
	S_0^t=[0,\kappa^{-1}]\times\left[0,\frac{t}{\lambda (\lambda-1)} \right],\quad
	S_1^t=[1-\kappa^{-1},1]\times\left[\frac{t}{\lambda},\frac{t}{\lambda-1} \right],
\end{equation}
respectively (compare Figure~\ref{fig.first}, see also~\cite{PolWei:94}). 

Let us now discuss the dimension in the case $t\ne0$.
There is a dichotomy between the cases $\lambda\in(1,2)$ and $\lambda\ge2$ (recall that we always require $\lambda<\kappa$ and $\kappa>2$).

\subsubsection{Fibered blenders: $\lambda\in (1,2)$}\label{sss.fiberedblenders}
Note that for $t\ne 0$ and $\lambda\in 
(1,2)$ the projections of the rectangles $S^t_0$ and $S^t_1$ to the vertical axis (fiber) overlap
in the nontrivial interval $[t/\lambda, t/(\lambda (\lambda-1))]$.
Following {\emph{ipsis litteris}} the construction in \cite{BonDiaVia:95} one can verify that for every $t\ne 0$ the set $\Phi_t$ is a fibered blender with the germ property. 
Note that in this case $\mu_\s=\mu_\w=\mu$,
$\lambda_\s=\lambda_\w=\lambda$, and $\kappa_\s=\kappa_\w=\kappa$. Thus, for appropriate choices of the constants $\mu,\lambda,\kappa$ the Pinching hypothesis holds.
Thus, Theorem~\ref{the:1} can be applied. Let us observe that in the case where the graph is a direct product and the holonomies are trivial and hence bi-Lipschitz continuous, the pinching restriction 
to the parameters in \eqref{eq:constants-holonomy} is in fact not required, recall Remark~\ref{r.pinchingholonomies}. 
The potentials~\eqref{eq:defbasicpotentialvarphius} and~\eqref{e.potentials} are constant and given by $\varphi^\s=\log\mu$,  $\varphi^\cu= -\log \lambda$, and $\varphi^\u=-\log\kappa$. Thus, for every $d\in\bR$
\[
	P_{\tau|_{\Xi}}(\varphi^\cu+(d-1)\varphi^\u)=
	\log2 -\log\lambda -(d-1)\log\kappa.
\]
Hence, $d$ satisfies~\eqref{e.pressure_basicpiece} if, and only if,
\[
	d
	= \frac{\log2}{\log\kappa} + 1-\frac{\log\lambda}{\log\kappa}.
\] 
Moreover, as $C^\s$ is a dynamically defined Cantor set, we have
$
	d^\s
	= \log2/\lvert\log\mu\rvert.
$
Hence, for $t\ne0$ we have
\[
	\dim_{\rm B}(\Phi_t)
	= \frac{\log2}{\lvert\log\mu\rvert} + \frac{\log2}{\log\kappa} + 1-\frac{\log\lambda}{\log\kappa}.
\]
In fact, this formula is a consequence of~\cite[Case 5]{PolWei:94}. 
We conclude the study of this case with some general remarks.

\begin{remark}[Discontinuity of dimension]\label{rem:disCantorbase}
The example above illustrates that in general Hausdorff dimension and box dimension do not continuously depend on the dynamics. Suppose that we have chosen the parameters $\mu,\kappa$ such that $\dim_{\rm H} (\Xi)=\dim_{\rm B}(\Xi)<1$ and hence $\dim_{\rm H}(\Phi_0)=\dim_{\rm B}(\Phi_0)<1$. Note that for every $\xi^\s\in C^\s$ the projection of the set $F_t$ onto the fiber contains the interval $[t/\lambda, t/(\lambda (\lambda-1))]$ implying that $\dim_{\rm H}(\Phi_t)>1$ for every $t\ne0$, $t$ small. This immediately implies discontinuity of the dimensions (in fact, this is the point of~\cite{BonDiaVia:95}).  
\end{remark}

\begin{figure}
 \begin{overpic}[scale=.22]{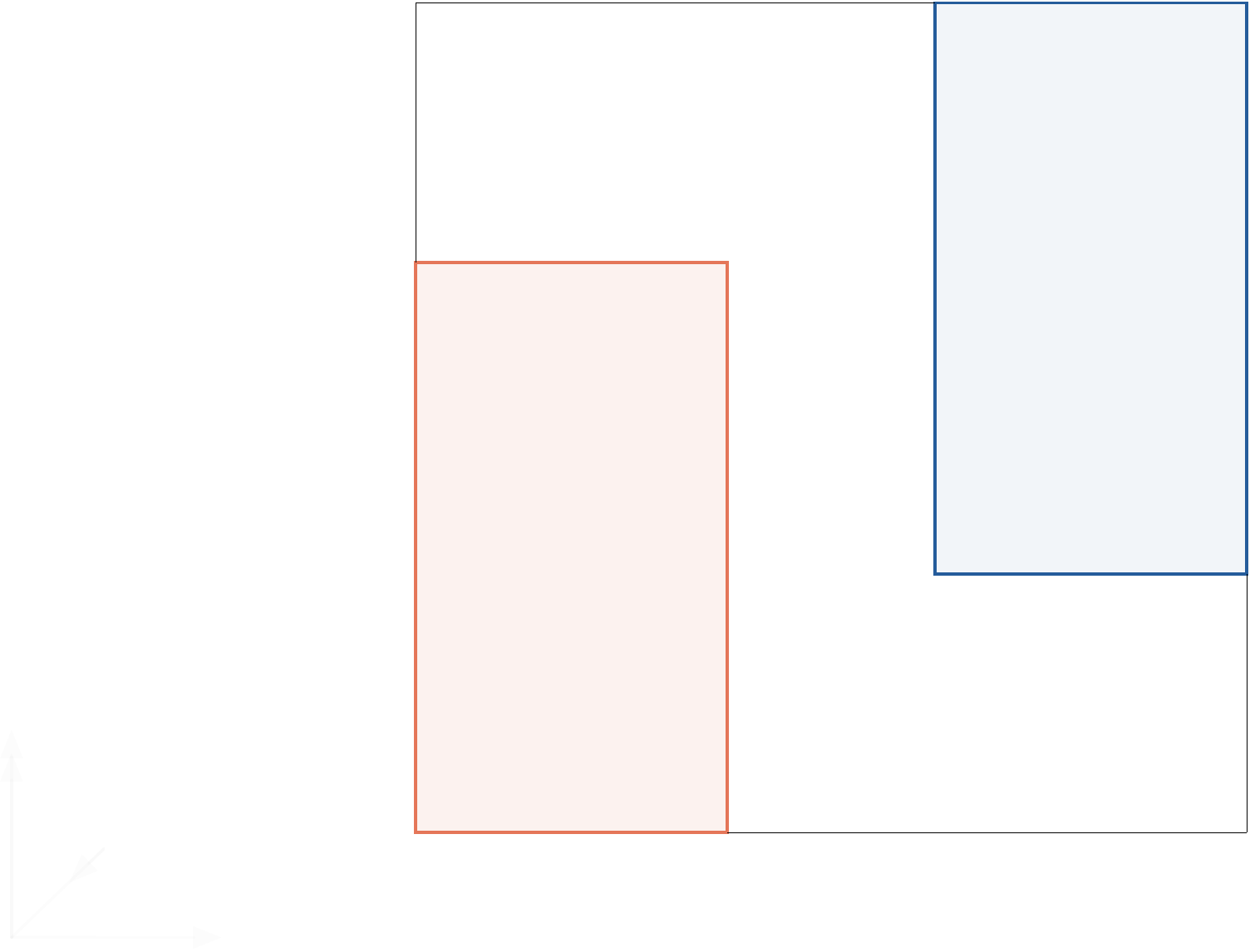} 
 	\put(36,25){\small $S_0^t$}
	\put(84,55){\small $S_1^t$}	
 \end{overpic}
 \hspace{0.5cm}
\begin{overpic}[scale=.22]{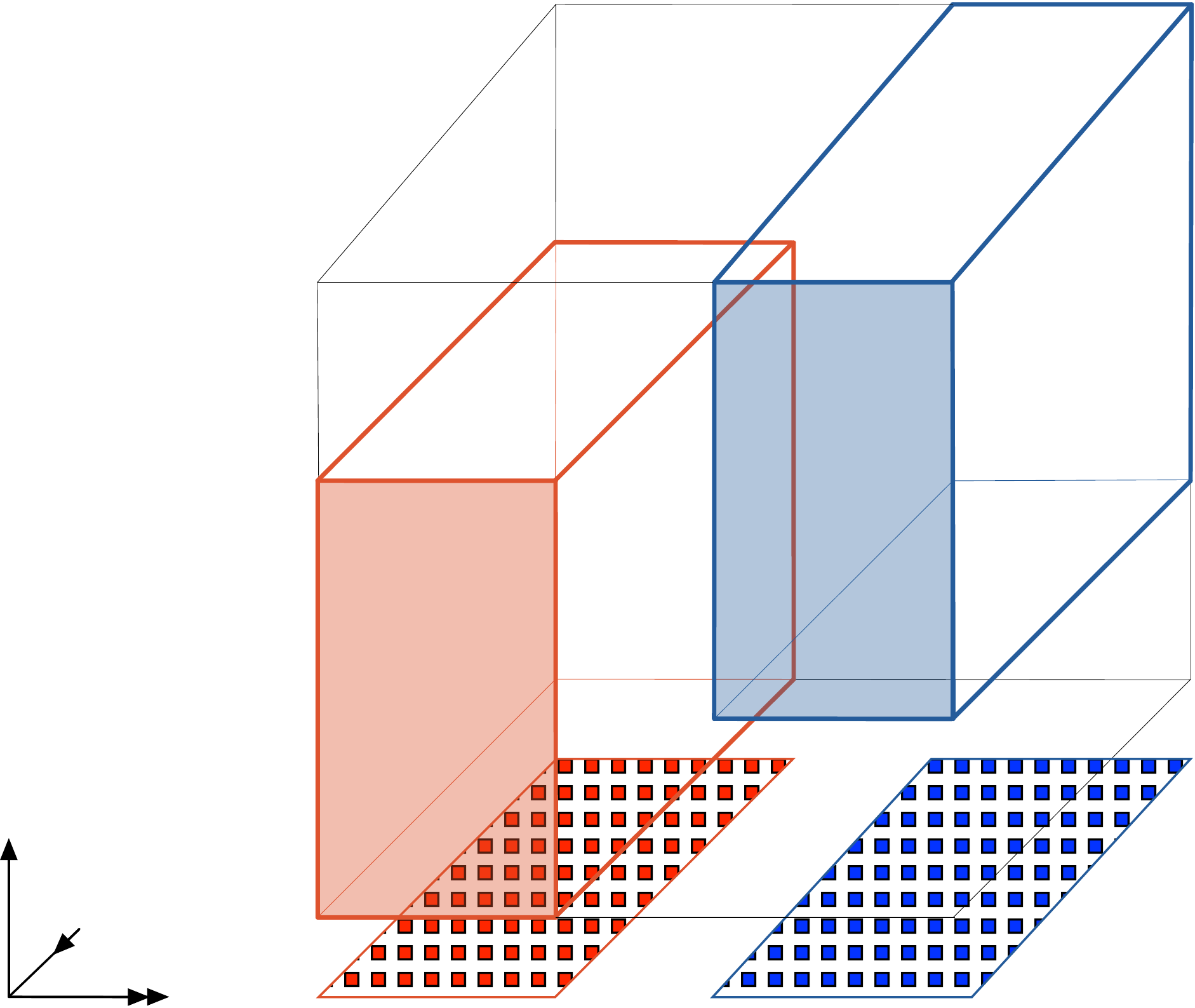}
        \put(17,0){\small $\xi^\u$}
         \put(-2,17){\small $x$}
         \put(8,10){\small $\xi^\s$}
         \put(32,-8){\small $\tilde D_1$}
         \put(65,-8){\small $\tilde D_2$}
 \end{overpic}
 $\overset{T_t}{\longrightarrow}$
 \begin{overpic}[scale=.22]{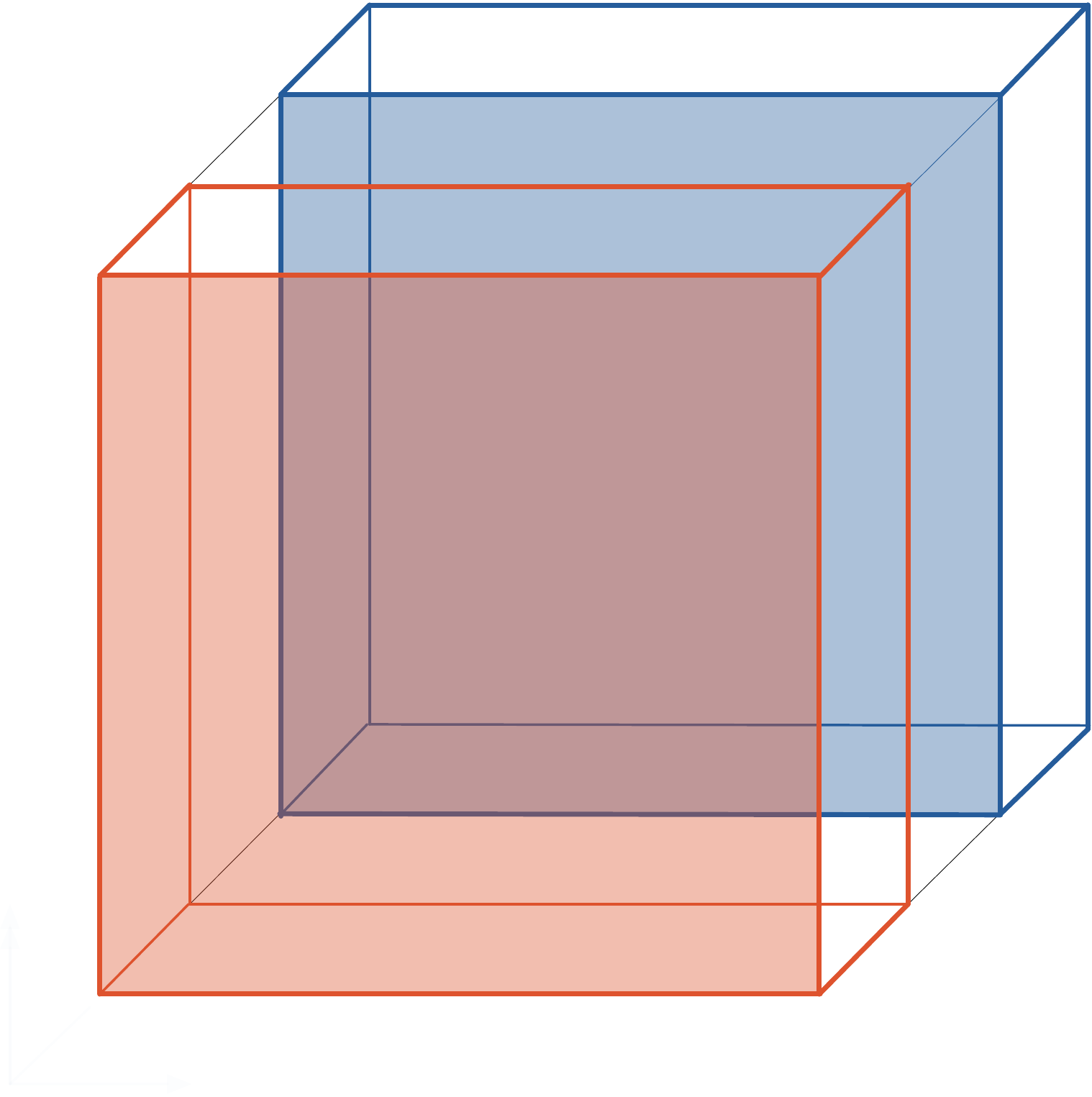} \end{overpic}
\caption{The iterated function system in \eqref{eq:rectanglesS01} (left figure)  and action of  $T_t$ (right figures).}
\label{fig.first}
\end{figure}
	
\begin{remark}[Lipschitz regularity]
	Since $\Phi_t$ consists of identical copies of $F_t$ over the Cantor set $C^\s$, to see the regularity of the graph  it suffices to study its restriction to any unstable leaf, that is, to study the structure of $F_t$. 
Note that this graph is a Lipschitz graph over the Cantor set $C^\u$ if, and only if, the unstable manifolds of the two fixed points of the iterated function system generating $F_t$ (i.e.~the maps $(\xi^u,x)\mapsto (\kappa\xi^\u,\lambda x)$ and  $(\xi^u,x)\mapsto (\kappa\xi^\u,\lambda x-t)$) coincide, that is, if and only if $t=0$ (compare~\cite[Proposition 2]{Bed:89}). 
\end{remark}

\begin{remark}[Coincidence of Hausdorff and box dimension]\label{rem:coincidenceHausBox}
The issue when Hausdorff dimension and box dimension coincide is in general a difficult task. There exist number-theoretic sufficient conditions on $\lambda$ to verify that both dimensions do not coincide. For example, if $\lambda\in(1,2)$ is the reciprocal of a Pisot-Vijayarghavan number, then the Hausdorff and box  dimension of $\Phi_t$, $t\ne0$ small, do not coincide (see~\cite{PrzUrb:89,PolWei:94}, for instance,  or~\cite{BarRamSim:16} for most recent results and further references).
\end{remark}

\begin{remark}[\emph{A priori} estimates of the dimension]\label{rem:differences}
To further explain the estimates by the numbers $D_1$ and $D_2$ defined in~\eqref{eq:opt2a} and~\eqref{eq:opt2}, note the graph $\Phi$ has a ``critical'' H\"older exponent in the sense of Lemma~\ref{lem:graLip} given by $\gamma=\log\lambda/\log\kappa$. The Cantor set $C^\u$ (and hence any unstable slice $\Xi\cap \cW^\u_{\rm loc}(\cdot,\tau)$) has box dimension $\log 2/\log \kappa$. Note that we have that $D_1(\gamma) = \log2/\log\lambda$ and $D_2(\gamma)=\log2/\log\kappa+1-\log\lambda/\log\kappa$ and hence $D_1(\gamma)=D_2(\gamma)$ if $\lambda=2$. Hence $\lambda\in(1,2)$ and $\lambda>2$ correspond to the cases $D_2(\gamma)<D_1(\gamma)$ and $D_1(\gamma)<D_2(\gamma)$, respectively.
\end{remark}

\subsubsection{The case  $\lambda\ge 2$}
Although this case is not covered by our methods, we remark  that when $t\ne0$ small and $\lambda\ge2$, by~\cite[Case 2]{PolWei:94} we have
\[
	\dim_{\rm H}(F_t)=\dim_{\rm B}(F_t)
	=\frac{\log2}{\log\lambda}
\]
and hence
\[
	\dim_{\rm H}(\Phi_t)=\dim_{\rm B}(\Phi_t)
	= \frac{\log2}{\lvert\log\mu\rvert}+\frac{\log2}{\log\lambda}.
\]

\begin{remark}[Failure of blender property] 
	The projection of $F_t$ to the fiber axis by the canonical projection $(\xi^\s,x)\mapsto x$ is a Cantor set. Hence, it follows that $\Phi_t$ is not a fibered blender with the germ property. Indeed, the germ property (see Definition~\ref{def:3}) is not satisfied.
\end{remark}
%

\subsection{Further examples of fractal graphs}

Finally, we want to point out that there exist close analogies between the
methods we employ here and those used in recent advances on
Weierstrass graphs, whose Hausdorff dimensions have been determined in
\cite{BarBarRom:14,Kel:14,She:}. The following comments may be helpful  to compare the
different approaches. A Weierstrass function is
given by a converging Fourier series
\[
\varphi(t) = \sum_{n=1}^\infty \lambda^{-n}\cos(2\pi b^n t) ,
\]
where $\lambda>1$, $b\in\bN$ and $\lambda<b$. It is easily checked that $\varphi$
 defines an invariant graph of the skew product system
\begin{equation}\label{eq:basemapwei}
	\tilde T\colon\bT^1\times \bR\to\bT^1\times\bR,\quad
	(t,x) \mapsto (b t\bmod 1,\lambda(x-\cos(2\pi t))) .
\end{equation}
In contrast to our setting, here the base transformation is an expanding map of the circle and, in particular, is not invertible. 
Following \cite{KapMalYor:84,Bed:89}, using the baker's map
\[
	\tau\colon\mathbb{T}^2\to\mathbb{T}^2,\quad
	 \xi=(\xi_1,\xi_2)\mapsto (b\xi_1\bmod 1,(\xi_2+\lfloor b\xi_2\rfloor)/b)
\]
as a canonical extension of the base in~\eqref{eq:basemapwei} leads to the invertible system
\[
	T\colon\mathbb{T}^2\times\bR \to \mathbb{T}^2\times\bR, \quad  
	(\xi,x) \mapsto (\tau(\xi),\lambda(x-\cos(2\pi \xi_1)))  ,
\]
whose unique invariant graph is given by $\Phi(\xi)=\varphi(\xi_1)$. In this situation $\Phi$ is constant along the stable leaves of $\tau$ given by vertical fibers $\{\xi_1\}\times\mathbb{T}^1$. Therefore, it
suffices to determine the dimensions of $\Phi$ restricted to the unstable
leaves $\mathbb{T}^1\times\{\xi_2\}$. For this reason, the discontinuity of the
baker's map along the circles $\{k/b\}\times\mathbb{T}^1$, $k=0,\ldots, b-1$, does not
play any role on the technical level and the setting is
analogous to the one in Theorems~\ref{t.anosov} and \ref{the:one-dimensional} with
diffeomorphisms in the base.

A crucial step in
\cite{BarBarRom:14,Kel:14} is
to show the absolute continuity of the projection of the canonical invariant
measure on $\Phi\cap(\mathbb{T}^1\times\{\xi_2\}\times\bR)$ (which is the projection of
the Lebesgue measure in the base onto $\Phi$) to the section
$\{(0,\xi_2)\}\times\bR$ along the strong unstable manifolds of
$T$. Using either Ledrappier-Young theory
\cite{BarBarRom:14,LedYou:85,Led:92} or more
direct elementary arguments \cite{Kel:14}, this allows
to determine the pointwise dimension of the canonical measure almost surely,
which then entails the result on the Hausdorff dimension.

As this discussion indicates, the strong unstable foliation equally plays a
crucial role in these arguments, similar to the considerations on the box
dimension presented here. We thus hope that adapting and expanding arguments from
\cite{BarBarRom:14,Kel:14}
will eventually  allow to determine the Hausdorff dimension of broader
classes of fractal graphs such as those in this paper. 
\section{Invariant manifolds and Markov structures}\label{sec:perlim}

We recall some well-known facts and properties of basic sets (see~\cite{HirPugShu:77,Bow:08,KatHas:95} for details). 

\subsection{Stable and unstable manifolds}\label{sec:2manif}

Recall that we assume that $\Xi\subset M$ and $\Phi\subset \Xi\times \bR$ are both basic sets (with respect to $\tau$ and $T$, respectively). Let $d_1$ be the metric in $M$.
The \emph{stable manifold} of a point $\xi\in\Xi$  (with respect to $\tau$) is defined by
\[\begin{split}
	\cW^\s(\xi,\tau)
	&\eqdef \{\eta\in M\colon d_1(\tau^{n}(\eta),\tau^{n}(\xi))\to 0\text{ if }n\to\infty\}
\end{split}\]
and is an injectively immersed $C^1$ manifold of dimension $\dim F^\s=1$ tangent to $F^\s$ on $\Xi$. The \emph{local stable manifold} of  $\xi\in\Xi$ (with respect to $\tau$ and a neighborhood $U$ of $\Xi$) is
\begin{equation}\label{def:unsmani}
	\cW^\s_{\rm loc}(\xi,\tau)
	\eqdef \big\{ \eta\in \cW^\s(\xi,\tau) 
		\colon \tau^{k}(\eta)\in U\text{ for every }k\ge 0\big\}.
\end{equation}
Note that there exists $\delta>0$ such that for every $\xi\in\Xi$ the local stable manifold of $\xi$ contains a $C^1$ disk centered at $\xi$ of radius $\delta$. The  \emph{local unstable manifold} at $\xi$, $\cW^\u_{\rm loc}(\xi,\tau)$,  is defined analogously considering $\tau^{-1}$ instead of $\tau$. For every $\xi\in\Xi$ we have
\[
	\tau(\cW^\s_{\rm loc}(\xi,\tau))\subset \cW^\s_{\rm loc}(\tau(\xi),\tau)	
		\quad \text{ and } \quad
	\tau^{-1}(\cW^\u_{\rm loc}(\xi,\tau))\subset \cW^\u_{\rm loc}(\tau^{-1}(\xi),\tau).
\]
The sets
\begin{equation}\label{dense}
	\bigcup_{k\ge1}\tau^{-k}\big(\cW^\s_{\rm loc}(\xi,\tau)\big)	
	\quad\text{ and }\quad
	\bigcup_{k\ge1}\tau^k\big(\cW^\u_{\rm loc}(\xi,\tau)\big)
\end{equation}
each are  dense in $\Xi$.

We equip $M\times \bR$ with the metric $d((\xi,x),(\eta,y))=\sup\{d_1(\xi,\eta),\lvert x-y\rvert\}$.

By the skew product structure~\eqref{e.skew-product-structure} and~\eqref{eq:constants}, the invariant bundle $E^\cu$ is tangent to the fiber direction. The \emph{strong unstable} subspace $E^\uu_X$ and the \emph{stable subspace} $E^\s_X$ vary H\"older continuously in $X\in\Phi$, that is, there exist $C>0$ and $\beta>0$ such that $\angle (E^i_X,E^i_Y) \le C d(X,Y)^\beta$ for all $X,Y\in\Phi$, $i=\uu,\s$. Indeed, the H\"older exponent $\beta$ can be controlled through the hyperbolicity estimates (see~\cite{HasPes:06}). 
At every point $X=(\xi,\Phi(\xi))$ 
the subspace $E^\uu_X$ ($E^\s_X$) projects to the \emph{unstable subspace} $F^\u_\xi$ (the \emph{stable subspace} $F^\s_\xi$) (with respect to $\tau$)  in the tangent bundle of the base $M$, which vary H\"older continuously in $\xi\in\Xi$.
Hence  the  functions $\varphi^\u,\varphi^\s\colon\Xi\to\bR$ defined in~\eqref{eq:defbasicpotentialvarphius} are H\"older continuous.

Analogously to the above, we define the \emph{stable manifold} $\cW^\s(X,T)$ and the \emph{unstable manifold} $\cW^\u(X,T)$ of $X\in\Phi$ as well as the \emph{local stable} and \emph{local unstable  manifold} of $X\in\Phi$ (with respect to $T$ and the open set $U\times I$, $I$ an interval) by
\[\begin{split}
	\cW^\s_{\rm loc}(X,T)
	&\eqdef \big\{Y\in \cW^\s(X,T)
		\colon T^{k}(Y)\in U\times I\text{ for every }k\ge 0\big\},\\
	\cW^\u_{\rm loc}(X,T)
	&\eqdef \big\{ Y\in \cW^\u(X,T)
	 	\colon T^{-k}(Y)\in U\times I\text{ for every }k\ge 0\big\},
\end{split}\]
respectively.
Note that $\cW^\s_{\rm loc}(\cdot,T)$ ($\cW^\u_{\rm loc}(\cdot,T)$) is  a lamination through $\Phi$ composed by a union of $C^1$ leaves tangent to $E^\s$ (to $E^\u=E^\cu\oplus E^\uu$). 
Given $X=(\xi,\Phi(\xi))$, we simply will work with
$$
	\cW^{\u}_{\rm loc}(X,T) 
          =  \cW^\u_{\rm loc}(\xi,\tau) \times I.
$$
Tangent to $E^\uu$ there is a lamination of \emph{local strong unstable manifolds} $\cW^\uu_{\rm loc}$ through $\Phi$ which subfoliates $\cW^\u_{\rm loc}$. Further, $\cW^\uu_{\rm loc}(X,T)$ is contained in the strong unstable manifold of $X$
\begin{equation}\label{eq:strstable}
	\cW^\uu(X,T)
	\eqdef \big\{Y\in M\times\bR \colon 
	\limsup_{n\to\infty}\frac1n\log d(T^{-n}(Y),T^{-n}(X)) \le -\log\kappa_\w\big\}.
\end{equation}
On the other hand the bundle $E^\cu$ is tangent to the fiber direction and so naturally also integrates to a lamination through $\Phi$, as well as $E^\s\oplus E^\cu$ integrates to a lamination through $\Phi$ which is subfoliated by $\cW^\s$.


\begin{remark}
Since $T$ is partially hyperbolic satisfying the Standing hypotheses, at each $X=(\xi,\Phi(\xi))$ each local strong unstable manifold $\cW^\uu_{\rm loc}(X,T)$ is a graph of a function with finite derivative uniformly bounded by some constant independent on $X$.
\end{remark}
%

\subsection{Markov rectangles}\label{sec:markov}

Markov rectangles will provide building blocks in our proofs. Let us recall some well-known facts. 

By hyperbolicity and local maximality of $\Xi$, there exists $\delta>0$ such that for every $\xi,\eta\in\Xi$ with $d_1(\xi,\eta)<\delta$ the intersection 
\begin{equation}\label{eq:locpro}
	[\xi,\eta]\eqdef \cW^\s_{\rm loc}(\xi,\tau)\cap \cW^\u_{\rm loc}(\eta,\tau)\in\Xi
\end{equation}
 contains exactly one point, which is in $\Xi$.

A nonempty closed set $\underline R\subset\Xi$ is called a \emph{rectangle}  if $\diam \underline R<\delta$, $\underline R=\overline{\interior (\underline R)}$ (relative to the induced topology on $\Xi$), and if for every $\xi,\eta\in \underline R$ we have $[\xi,\eta]\in \underline R$.
A finite cover of $\Xi$ by rectangles $\underline R_1,\ldots,\underline R_N$ is a \emph{Markov partition} of $\Xi$ (with respect to $\tau$) if the rectangles have pairwise disjoint interior and if $\xi\in\interior (\underline R_i)\cap \tau^{-1}(\interior (\underline R_j))$ for some $i,j$, then
\[\begin{split}
	\tau\big(\underline R_i\cap\cW^\s_{\rm loc}(\xi,\tau)\big)
	&\subset \underline R_j\cap\cW^\s_{\rm loc}(\tau(\xi),\tau),\\
	\underline R_j\cap\cW^\u_{\rm loc}(\tau(\xi),\tau)
	&\subset \tau\big(\underline R_i\cap\cW^\u_{\rm loc}(\xi,\tau)\big).
\end{split}\]
By~\cite[Chapter C]{Bow:08},  there  exists  a Markov partition with arbitrarily small diameter (where the \emph{diameter of the partition} is the largest diameter of a partition element).

Consider the shift space $\Sigma=\{0,\ldots,N-1\}^\bZ$ and the usual left shift
$\sigma\colon\Sigma\to\Sigma$ defined by $\sigma(\ldots
i_{-1}.i_0i_1\ldots)=(\ldots i_0.i_1i_2\ldots)$. We endow it with the standard metric $\rho(\underline i,\underline i')=2^{-n(\underline i,\underline i')}$,
where $n(\underline i,\underline i')=\sup\{\lvert \ell\rvert\colon
i_k=i'_k\text{ for }k=-\ell,\ldots,\ell\}$. Consider the associated \emph{transition matrix} $A=(a_{jk})_{j,k=1}^{N}$ defined by
\[
	a_{jk}
	=\begin{cases}
		1& \text{ if } \tau\big(\interior(\underline R_j)\big)\cap \interior(\underline R_k)\ne\emptyset,\\
		0&\text{ otherwise}
	\end{cases}
\]
and denote by $\Sigma_A\subset\Sigma$ the subshift of finite type for this transition matrix and consider the standard shift map $\sigma\colon\Sigma_A\to\Sigma_A$. 
For each $\underline i\in\Sigma_A$ the set $\bigcap_{n\in\bZ}\tau^{-n}(\underline R_{i_n})$ consists of a single point, we denoted it by $\chi(\underline i)$. 
The map $\chi\colon\Sigma_A\to\Xi$ is a H\"older continuous surjection, $\chi\circ\sigma=\tau\circ\chi$, and $\chi$ is one-to-one over a residual set.
If $\Xi$ is a Cantor set, then $\chi$ is a homeomorphism (see~\cite[Proposition 18.7.8]{KatHas:95}).

\begin{remark}
Since our dynamics are topologically mixing, without loss of generality we will from now on assume that $\Sigma_A=\Sigma$, that is, $a_{jk}=1$ for every index pair $jk$.
\end{remark}

Given $\xi\in\Xi$, denote by $\underline R(\xi)$ a Markov rectangle which contains $\xi$. Consider the \emph{Markov unstable rectangle}
\[
	\underline R^\u(\xi)\eqdef \underline R(\xi)\cap\cW^\u_{\rm loc}(\xi,\tau).
\]	 
Given $n\ge 1$, consider the \emph{$n$th level Markov unstable rectangle} defined by
\[
	 \underline R^\u_n(\xi)
	\eqdef   \underline R^\u(\xi)
		\cap \tau^{-1}\big(\underline R^\u(\tau(\xi))\big)
		\cap \ldots \cap
		\tau^{-n+1}\big(\underline R^\u(\tau^{n-1}(\xi))\big). 
\]

By the invariance of the continuous graph $\Phi$, given a Markov partition $\{\underline R_1,\ldots, \underline R_N\}$ of $\Xi$ (with respect to $\tau$), by defining
\begin{equation}\label{eq:inducedMarkovbasegraph}
	R_i
	\eqdef \{(\xi,\Phi(\xi))\colon \xi\in \underline R_i\}
\end{equation}
we obtain a Markov partition of $\Phi$ (with respect to $T$) (which is analogously defined, see~\cite{KatHas:95}). This partition shares the analogous properties as above and has the very same transition matrix. In particular, we point out that and if $X\in\interior ( R_i)\cap T^{-1}(\interior ( R_j))$ for some $i,j$, then
\begin{equation}\label{propsMar}\begin{split}
	T\big( R_i\cap\cW^\s_{\rm loc}(X,T)\big)
	&\subset  R_j\cap\cW^\s_{\rm loc}(T(X),T),\\
	 R_j\cap\cW^\u_{\rm loc}(T(X),T)
	&\subset T\big( R_i\cap\cW^\u_{\rm loc}(X,T)\big).
\end{split}\end{equation}
We also adopt the analogous notation of ($n$th level) \emph{Markov rectangles} $R_n$ and \emph{Markov unstable rectangles} $R^\u_n$.

\section{Critical regularity of the invariant graph}\label{sec:crireg}

We discuss now the regularity of the invariant graph $\Phi$. 
We start with the following  result that follows directly from~\cite{HirPug:70,HirPugShu:77}, see also~\cite{Sta:99}.  

\begin{lemma}\label{lem:graLip}
	Let $T$ be a $C^{1+\alpha}$ three-dimensional skew product diffeomorphism satisfying the Standing hypotheses.
	Then the associated invariant graph $\Phi\colon \Xi\to \bR$ is H\"older continuous with H\"older exponent $\gamma$ for every $\gamma\in(0,\alpha]$ satisfying $\gamma<\log\lambda_\w /\log\kappa_{\rm s}$. 
\end{lemma}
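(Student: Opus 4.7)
The plan is to realise $\Phi$ as the unique fixed point of a contracting graph transform and then check that an appropriate H\"older ball is invariant under iteration. Consider the operator
\[
(\cF\Psi)(\xi) \eqdef T^{-1}_\xi\bigl(\Psi(\tau(\xi))\bigr)
\]
acting on continuous graphs $\Psi\colon\Xi\to\bR$. Since $T$ is fiberwise expanding with $\lvert T'_\xi\rvert\geq\lambda_\w>1$, $\cF$ is a contraction in the sup norm with rate $\lambda_\w^{-1}$, and its unique fixed point is $\Phi$; this is the standard argument of \cite{HirPug:70,HirPugShu:77}. Given a smooth starting graph $\Psi_0$, the iterates $\Phi_N\eqdef\cF^N\Psi_0$ satisfy $\lVert\Phi-\Phi_N\rVert_\infty\leq C\lambda_\w^{-N}$.

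The crucial step is to track the Lipschitz constant of the iterates along unstable leaves of $\tau$. The telescoping
\[
(\cF\Psi)(\xi)-(\cF\Psi)(\eta) = \bigl[T^{-1}_\xi(\Psi(\tau\xi))-T^{-1}_\xi(\Psi(\tau\eta))\bigr] + \bigl[T^{-1}_\xi(\Psi(\tau\eta))-T^{-1}_\eta(\Psi(\tau\eta))\bigr],
\]
combined with $\lvert T^{-1}_\xi(y)-T^{-1}_\xi(y')\rvert\leq\lambda_\w^{-1}\lvert y-y'\rvert$, the $C^{1+\alpha}$ bound $\lvert T^{-1}_\xi(y)-T^{-1}_\eta(y)\rvert\leq L_0\,d_1(\xi,\eta)$, and the unstable-leaf estimate $d_1(\tau\xi,\tau\eta)\leq\kappa_\s\,d_1(\xi,\eta)$ valid whenever $\xi,\eta$ lie on a common $\cW^\u_{\rm loc}(\cdot,\tau)$, gives, for $\Psi$ with Lipschitz constant $K$ along unstable leaves,
\[
\lvert(\cF\Psi)(\xi)-(\cF\Psi)(\eta)\rvert \leq \lambda_\w^{-1}\kappa_\s K\,d_1(\xi,\eta) + L_0\,d_1(\xi,\eta).
\]
Iterating yields an unstable-leaf Lipschitz bound $\Lip^\u(\Phi_N)\leq C_1(\kappa_\s/\lambda_\w)^N+C_2$.

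For $\xi,\eta$ on a common unstable leaf at small distance, a Weierstrass-type balancing
\[
\lvert\Phi(\xi)-\Phi(\eta)\rvert \leq 2\lVert\Phi-\Phi_N\rVert_\infty + \Lip^\u(\Phi_N)\,d_1(\xi,\eta) \leq C_3\lambda_\w^{-N} + C_1(\kappa_\s/\lambda_\w)^N d_1(\xi,\eta),
\]
optimized by the choice $N\sim -\log d_1(\xi,\eta)/\log\kappa_\s$, produces a bound of order $d_1(\xi,\eta)^{\log\lambda_\w/\log\kappa_\s}$, hence $\gamma$-H\"older continuity along unstable leaves for every $\gamma<\log\lambda_\w/\log\kappa_\s$. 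Regularity along stable leaves of $\tau$ follows from $C^1$ stable manifold theory for the partially hyperbolic $T$: at $X=(\xi,\Phi(\xi))$, the leaf $\cW^\s_{\rm loc}(X,T)$ is a $C^1$ curve tangent to $E^\s$ and transverse to the fiber direction $E^\cu$, hence a $C^1$ graph over $\cW^\s_{\rm loc}(\xi,\tau)$, giving a uniform Lipschitz constant $L_\s$. Combining via the local product structure, for $\xi,\eta\in\Xi$ close set $\zeta=[\xi,\eta]$ and estimate
\[
\lvert\Phi(\xi)-\Phi(\eta)\rvert \leq L_\s d_1(\xi,\zeta) + K_\u\,d_1(\zeta,\eta)^\gamma \leq C\,d_1(\xi,\eta)^\gamma,
\]
using uniform transversality of the hyperbolic splitting to control $d_1(\xi,\zeta),d_1(\zeta,\eta)$ by $d_1(\xi,\eta)$.

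The main obstacle is securing the sharp exponent $\log\lambda_\w/\log\kappa_\s$ rather than a weaker one governed by the full operator norm $\lVert d\tau\rVert$, which can exceed $\kappa_\s$ whenever the hyperbolic splitting is nonorthogonal. Restricting the contraction step to unstable leaves—where $d\tau$ has norm at most $\kappa_\s$—and then reassembling through the local product decomposition is the decisive manoeuvre; any naive graph-transform argument run globally on $\Xi$ in the original metric would force an adapted Riemannian metric to reach the same conclusion.
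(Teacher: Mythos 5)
Your argument is correct and is essentially the approach the paper takes: the paper offers no written proof of Lemma~\ref{lem:graLip}, delegating it to \cite{HirPug:70,HirPugShu:77,Sta:99}, and your contraction-of-the-graph-transform plus Weierstrass-type balancing of $\lVert\Phi-\Phi_N\rVert_\infty$ against $\Lip^\u(\Phi_N)$ is precisely the standard argument behind those references. The only points to polish are routine: the sup-norm contraction rate is a priori only $\inf|T'_\xi|^{-1}<1$ rather than $\lambda_\w^{-1}$ away from the graph (one recovers $\lambda_\w-\varepsilon$ near the limit by continuity, which costs nothing since the claimed exponent is a strict inequality), and the inclusion of the graph over $\Xi\cap\cW^\s_{\rm loc}(\xi,\tau)$ in $\cW^\s_{\rm loc}(X,T)$ deserves the one-line verification via uniform continuity of $\Phi$.
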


In our setting, the graph is always regular in the stable leaves and has the following striking critical regularity in the unstable leaves.%
\footnote{Even though this situation is not studied in this paper, we recall that  if  $\kappa_\s<\lambda_\w$ and $\alpha=1$, then $\Phi$ would be Lipschitz on $\Xi$.}
We say that $\Phi\colon\Xi\to \bR$ is  \emph{Lipschitz on the local unstable manifold} of
 $\xi\in\Xi$ if there exists $L(\xi)>0$ such that for every  $\eta\in \cW^\u_{\rm loc}(\xi,\tau)$ we have $\lvert \Phi(\eta)-\Phi(\xi)\rvert\le L(\xi)d_1(\eta,\xi)$.  
We say that $\Phi$ is \emph{Lipschitz on local unstable manifolds} if $\Phi$ is Lipschitz on the local unstable manifold at every $\xi$ with a Lipschitz constant which does not depend on $\xi$. 
Analogously, we define Lipschitz continuity on  local stable manifolds.

\begin{proposition}\label{prop:critical}
	Let $T$ be a $C^{1+\alpha}$ three-dimensional skew product diffeomorphism satisfying the Standing hypotheses.
	The graph of $\Phi$ restricted to $\Xi\cap \cW^\s_{\rm loc}(\xi,\tau)$ is contained in the local strong stable manifold of $X=(\xi,\Phi(\xi))$ and hence $\Phi$ is  Lipschitz on local stable manifolds.
	Moreover, only one of the following two cases occurs:
\begin{itemize}
\item [(a)] $\Phi$ is  Lipschitz on local unstable manifolds.
\item [(b)] $\Phi$ is nowhere contained in local strong unstable manifolds in the sense that for every $\xi\in\Xi$ there is  a sequence $(\eta_k)_k\subset \Xi\cap\cW^\u_{\rm loc}(\xi,\tau)$, $\eta_k\to\xi$, such that $(\eta_k,\Phi(\eta_k))\notin\cW^\uu_{\rm loc}(X,T)$.
\end{itemize}
\end{proposition}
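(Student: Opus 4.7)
For the stable statement, I would fix $\xi \in \Xi$ and $\eta \in \Xi \cap \cW^\s_{\rm loc}(\xi,\tau)$ and show that $(\eta,\Phi(\eta))$ lies in the local stable manifold of $X=(\xi,\Phi(\xi))$ with respect to $T$. The base iterates satisfy $d_1(\tau^n(\eta),\tau^n(\xi)) \leq C\mu_\w^n$ by definition of the local stable manifold, and Lemma~\ref{lem:graLip} provides a H\"older exponent $\gamma>0$ for $\Phi$. Combining these,
$$d\bigl(T^n(\eta,\Phi(\eta)),T^n(\xi,\Phi(\xi))\bigr) \leq C'\mu_\w^{\gamma n},$$
which places $(\eta,\Phi(\eta))$ in $\cW^\s_{\rm loc}(X,T)$. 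In our partially hyperbolic setting $E^\s$ is one-dimensional and uniformly transverse to $E^\cu \oplus E^\uu$, so $\cW^\s_{\rm loc}(X,T)$ coincides with the local strong stable manifold and projects onto $\cW^\s_{\rm loc}(\xi,\tau)$ as a $C^1$ graph whose slope is uniformly bounded (compare the remark at the end of Section~\ref{sec:2manif}, applied in the stable direction). This gives the Lipschitz bound on $\Phi|_{\Xi \cap \cW^\s_{\rm loc}(\xi,\tau)}$ with a constant independent of $\xi$.

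For the dichotomy, I would suppose that case~(b) fails and deduce~(a). Negating~(b) produces $\xi_0 \in \Xi$ and a relatively open set $U_0 \subset \cW^\u_{\rm loc}(\xi_0,\tau) \cap \Xi$ containing $\xi_0$ such that $(\eta,\Phi(\eta)) \in \cW^\uu_{\rm loc}(X_0,T)$ for every $\eta \in U_0$, where $X_0=(\xi_0,\Phi(\xi_0))$. By the bounded-slope-graph property of $\cW^\uu_{\rm loc}$ recalled in Section~\ref{sec:2manif}, this forces $\Phi|_{U_0}$ to be $L_0$-Lipschitz for some constant $L_0$ depending only on the global bound on the slope.

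To propagate this local Lipschitz bound to an arbitrary local unstable leaf with a uniform constant, I would run a standard graph-transform estimate based on the $T$-invariance $\Phi(\tau\eta) = T_\eta(\Phi(\eta))$. A splitting of the form
$$T_{\eta_1}(\Phi(\eta_1))-T_{\eta_2}(\Phi(\eta_2)) = \bigl[T_{\eta_1}(\Phi(\eta_1))-T_{\eta_1}(\Phi(\eta_2))\bigr]+\bigl[T_{\eta_1}(\Phi(\eta_2))-T_{\eta_2}(\Phi(\eta_2))\bigr]$$
combined with the fiber bound $\lvert T'_\xi\rvert \leq \lambda_\s$, the unstable expansion $\lVert d\tau|_{F^\u}\rVert \geq \kappa_\w$, and the $C^1$ dependence $\lvert T_{\eta_1}(y)-T_{\eta_2}(y)\rvert \leq C\,d_1(\eta_1,\eta_2)$ shows that if $\Phi$ is $L$-Lipschitz on $V \subset \cW^\u_{\rm loc}(\xi,\tau)\cap\Xi$, then $\Phi$ is at most $(\lambda_\s L + C)/\kappa_\w$-Lipschitz on $\tau(V)$. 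The Standing hypothesis $\lambda_\s < \kappa_\w$ then makes the cone $\{L \leq L^*\}$ with $L^*=C/(\kappa_\w-\lambda_\s)$ forward-invariant, so each $\tau^k(U_0)$ carries $\Phi$ with Lipschitz constant at most $\max\{L_0,L^*\}$, uniformly in $k$. Finally, the density of $\bigcup_{k\geq 0}\tau^k(\cW^\u_{\rm loc}(\xi_0,\tau))$ in $\Xi$ from~\eqref{dense}, together with the Markov covering property~\eqref{propsMar} applied to a sufficiently fine Markov partition, implies that every local unstable rectangle $\cW^\u_{\rm loc}(\xi,\tau)\cap\Xi$ is contained in some forward iterate $\tau^k(U_0)$, proving~(a).

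The main technical obstacle I anticipate is keeping the additive error $C$ controlled uniformly along iterates in the graph-transform step, so that the Lipschitz bound does not deteriorate; the inequality $\lambda_\s < \kappa_\w$ from the Standing hypotheses is precisely the cone-contraction condition that makes this work. The topological-mixing/Markov covering of an arbitrary local unstable leaf by finitely many forward iterates of $U_0$ is then routine given the structure of Section~\ref{sec:markov}.
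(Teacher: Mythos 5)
Your stable-manifold argument and your graph-transform step are both sound, and together they take a genuinely different route from the paper: there, the dichotomy is obtained by constructing the functions $\gamma^\u_\xi$ as backward limits parametrizing local strong unstable manifolds (Lemma~\ref{lemmaclaim1}), measuring the obstruction by the continuous function $\Delta^\u_\delta$ from \eqref{eq:defDeltau}, and invoking Lemma~\ref{lem:intLip} (vanishing of $\Delta^\u_\delta$ at one point forces it to vanish identically). Your invariant-cone computation $L\mapsto(\lambda_\s L+C)/\kappa_\w$, with fixed point $L^*=C/(\kappa_\w-\lambda_\s)$ and contraction guaranteed by $\lambda_\s<\kappa_\w$, is a legitimate substitute for the uniform Lipschitz bound that the paper gets from the regularity of strong unstable leaves; the identification of the negation of (b) with the graph lying in $\cW^\uu_{\rm loc}(X_0,T)$ over a relative neighborhood $U_0$ is also correct.

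The one step that fails is the final covering claim. All the sets $\tau^k(U_0)$ lie in the single global unstable manifold of the orbit of $\xi_0$, so an arbitrary slice $\Xi\cap\cW^\u_{\rm loc}(\xi,\tau)$ is in general \emph{not contained in} any $\tau^k(U_0)$: take for $\xi$ a periodic point whose unstable manifold is disjoint from that of $\xi_0$. The density in \eqref{dense} only says that $\bigcup_k\tau^k(U_0)$ accumulates on every point of $\Xi$, and the Markov inclusion \eqref{propsMar} produces full unstable rectangles $\underline R^\u(\zeta)\subset\tau^k(U_0)$ only for points $\zeta$ on the iterated leaf itself, not for nearby leaves. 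To finish you must pass from a dense family of leaves to all leaves by a limit argument: given $\eta\in\Xi\cap\cW^\u_{\rm loc}(\xi,\tau)$, choose $\zeta_n\in\tau^{k_n}(U_0)$ with $\zeta_n\to\xi$, set $\xi_n=[\xi,\zeta_n]$ and $\eta_n=[\eta,\zeta_n]$ using the local product structure, note that $\xi_n,\eta_n$ lie on a common leaf where $\Phi$ is $\max\{L_0,L^*\}$-Lipschitz, and let $n\to\infty$ using continuity of $\Phi$ and of the unstable lamination. This continuity step is precisely what the paper's continuous obstruction function $\Delta^\u_\delta$ together with Lemma~\ref{lem:intLip} packages; with that replacement your proof is complete.
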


We will conclude the proof of the above proposition towards the end of this section.
In particular, we will derive the following sufficient condition for global Lipschitz continuity.

\begin{corollary}\label{cor:LipPerio}
	If  $\Phi$ is  Lipschitz on the local unstable manifold of some 
point, then $\Phi$ is  Lipschitz on local unstable manifolds.
\end{corollary}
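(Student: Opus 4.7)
The plan is to propagate the hypothesized Lipschitz bound forward under the dynamics, showing that thanks to the pinching $\lambda_\s<\kappa_\w$ built into the Standing hypotheses the constant stays uniformly bounded along the orbit, and then to use the Markov structure together with topological mixing to transfer this bound to every local unstable leaf.

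Assume that $|\Phi(\zeta)-\Phi(\omega)|\leq L\,d_1(\zeta,\omega)$ for every $\zeta,\omega\in\cW^\u_{\rm loc}(\xi_0,\tau)\cap\Xi$. Using the invariance relation $\Phi(\tau(\xi))=T_\xi(\Phi(\xi))$ and the triangle inequality,
\[
|T_\zeta(\Phi(\zeta))-T_\omega(\Phi(\omega))|\leq \lambda_\s|\Phi(\zeta)-\Phi(\omega)|+C_1\,d_1(\zeta,\omega),
\]
where $C_1$ bounds the $\xi$-derivative of $\xi\mapsto T_\xi$ on the invariant region (this is where the $C^2$ part of the Pinching hypothesis enters). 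Combined with $d_1(\tau\zeta,\tau\omega)\geq \kappa_\w d_1(\zeta,\omega)$, this yields that the Lipschitz constant $L_n$ of $\Phi$ restricted to $\tau^n(\cW^\u_{\rm loc}(\xi_0,\tau))\cap\Xi$ satisfies the affine recursion $L_{n+1}\leq (\lambda_\s/\kappa_\w)L_n+C_1/\kappa_\w$. Since $\lambda_\s/\kappa_\w<1$, this recursion is contracting and one gets the uniform bound $L_n\leq L^*:=\max\{L,C_1/(\kappa_\w-\lambda_\s)\}$ for all $n\geq 0$.

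Next, I would extend the bound to every local unstable manifold using the Markov structure of Section~\ref{sec:markov} together with topological mixing. For any $\xi_1\in\interior(\underline R_j)\cap\Xi$, mixing and the defining property of Markov rectangles produce $\xi'_k\in\cW^\u_{\rm loc}(\xi_0,\tau)\cap\underline R(\xi_0)\cap\Xi$ and $n_k\to\infty$ with $\tau^{n_k}(\xi'_k)\to\xi_1$ inside $\underline R_j$ and such that $\tau^{n_k}(\cW^\u_{\rm loc}(\xi_0,\tau))$ contains the entire unstable slice $\underline R^\u(\tau^{n_k}(\xi'_k))$. By the first step each of these slices carries the Lipschitz bound $L^*$, and by continuity of the unstable foliation they converge in Hausdorff distance to $\underline R^\u(\xi_1)$. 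Passing to the limit using the continuity of $\Phi$ and covering $\cW^\u_{\rm loc}(\xi_1,\tau)$ by its intersections with the finitely many Markov rectangles then produces a Lipschitz constant independent of $\xi_1$.

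I expect the propagation step to be the main technical obstacle, since the analogous computation under $T^{-1}$ yields the expansion factor $\kappa_\s/\lambda_\w>1$: uniformity can only be extracted from the forward dynamics, which is precisely why the argument genuinely needs the Standing hypothesis $\lambda_\s<\kappa_\w$. The Markov/limiting step is essentially routine, but it crucially relies on the fact that the Markov property gives containment of an \emph{entire} unstable slice inside $\tau^{n_k}(\cW^\u_{\rm loc}(\xi_0,\tau))$; otherwise the approximating inequalities would not pass to a Lipschitz estimate on the target leaf.
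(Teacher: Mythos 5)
Your proof is correct, but it takes a genuinely different route from the paper's. The paper deduces the corollary from Proposition~\ref{procor:period} and Lemma~\ref{lem:intLip}: it introduces the auxiliary graphs $\gamma^\u_\xi$ parametrizing local strong unstable manifolds (Lemma~\ref{lemmaclaim1}), encodes the failure of the coincidence $\Phi=\gamma^\u_\xi$ in the continuous obstruction function $\Delta^\u_\delta$, shows that $\Delta^\u_\delta$ vanishes at a point where $\Phi$ is Lipschitz along the unstable leaf (via periodic points and Lemma~\ref{lem:genloc}, which requires the Lipschitz bound along a subsequence of \emph{backward} iterates), and then spreads the vanishing using forward invariance, density of forward images of a leaf, and continuity of $\Delta^\u_\delta$. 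You instead propagate the Lipschitz constant itself: the affine recursion $L_{n+1}\le(\lambda_\s/\kappa_\w)L_n+C_1/\kappa_\w$ is contracting because $\lambda_\s<\kappa_\w$, so forward images of the leaf carry a uniform constant $L^*$, and the Markov property together with the density in \eqref{dense} lets you exhaust every unstable slice by limits of such images. Your argument is more elementary and self-contained; in particular it bypasses periodic points entirely, whereas the paper's one-line proof implicitly needs exactly your propagation-and-approximation step in order to transfer the hypothesis from an arbitrary point to a periodic one before Proposition~\ref{procor:period} applies. What the paper's heavier machinery buys is the stronger structural conclusion that in the Lipschitz case the graph over each unstable leaf coincides with a local strong unstable manifold, which is what is actually used later (Propositions~\ref{prop:critical} and~\ref{pro:dis}). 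Two minor corrections to your write-up: Lipschitz dependence of $\xi\mapsto T_\xi$ already follows from $T$ being $C^{1+\alpha}$ (this is exactly how it is used in the proof of Lemma~\ref{lemmaclaim1}), so neither the $C^2$ assumption nor the Pinching hypothesis enters here; and the estimate $\lvert T_\zeta(\Phi(\zeta))-T_\zeta(\Phi(\omega))\rvert\le\lambda_\s\lvert\Phi(\zeta)-\Phi(\omega)\rvert$ uses the fiber derivative off the graph, so one should replace $\lambda_\s$ by $\lambda_\s+\varepsilon<\kappa_\w$ valid on a neighborhood of $\Phi$.
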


This corollary will be a consequence of the following result  (which can be seen as a local version of~\cite[Proposition 2]{Bed:89}) and Lemma~\ref{lem:intLip} below.

\begin{proposition}\label{procor:period}
For $\xi\in \Xi$ periodic  the following facts are equivalent: 
\begin{itemize}
\item[(1)] $\Phi$ is Lipschitz on the local unstable manifold of $\xi$,
\item[(2)] $\Phi\cap\cW^\u_{\rm loc}(X,T)$ is contained in the local strong unstable manifold of $X=(\xi,\Phi(\xi))$.
\end{itemize}
\end{proposition}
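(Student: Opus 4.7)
The plan is to exploit the graph characterization of $\cW^\uu_{\rm loc}(X,T)$ together with the periodicity of $\xi$ to translate back and forth between a Lipschitz estimate on $\Phi$ and the asymptotic contraction rate that defines the strong unstable manifold.

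The implication (2) $\Rightarrow$ (1) is immediate from the remark preceding Section~\ref{sec:markov}. There it is observed that $\cW^\uu_{\rm loc}(X,T)$ is the graph (over $\cW^\u_{\rm loc}(\xi,\tau)$) of a function with derivative uniformly bounded by some $B>0$; in particular this graphing function is Lipschitz with constant at most $B$. If $\Phi\cap\cW^\u_{\rm loc}(X,T)\subset\cW^\uu_{\rm loc}(X,T)$, then $\Phi$ coincides on $\Xi\cap\cW^\u_{\rm loc}(\xi,\tau)$ with this Lipschitz function, so (1) holds. (Note that this direction does not use periodicity of $\xi$.)

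For (1) $\Rightarrow$ (2), let $L$ be the Lipschitz constant of $\Phi$ on $\Xi\cap\cW^\u_{\rm loc}(\xi,\tau)$, let $p\ge 1$ be the period of $\xi$, pick $\eta\in\Xi\cap\cW^\u_{\rm loc}(\xi,\tau)$ and set $Y=(\eta,\Phi(\eta))$. The invariance (\ref{e.invariant-graph}) and the skew product structure (\ref{e.skew-product-structure}) give
$$ T^{-n}(Y)=(\tau^{-n}(\eta),\Phi(\tau^{-n}(\eta))),\qquad T^{-n}(X)=(\tau^{-n}(\xi),\Phi(\tau^{-n}(\xi))).$$
Along the subsequence $n=kp$ one has $\tau^{-kp}(\xi)=\xi$, and iterating the inclusion $\tau^{-1}(\cW^\u_{\rm loc}(\xi,\tau))\subset\cW^\u_{\rm loc}(\tau^{-1}(\xi),\tau)$ from Section~\ref{sec:2manif} shows that $\tau^{-kp}(\eta)$ stays inside $\cW^\u_{\rm loc}(\xi,\tau)$. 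The lower bound $\lVert d\tau|_{F^\u_\zeta}\rVert\ge\kappa_\w$ from (\ref{eq:basehyp}) yields a constant $C>0$ with
$$ d_1(\tau^{-kp}(\eta),\xi)\le C\,\kappa_\w^{-kp},$$
and since $\tau^{-kp}(\eta)$ lies on the leaf where the Lipschitz bound applies,
$$ \lvert\Phi(\tau^{-kp}(\eta))-\Phi(\xi)\rvert\le L\,C\,\kappa_\w^{-kp}.$$
Using the supremum metric on $M\times\bR$ this gives $d(T^{-kp}(Y),T^{-kp}(X))\le C'\kappa_\w^{-kp}$. Interpolating to general $n=kp+r$ with $0\le r<p$ costs only the uniformly bounded factor $\max_{0\le r<p}\lVert DT^{-r}\rVert$, so after taking $\frac{1}{n}\log$ and using $kp/n\to 1$,
$$ \limsup_{n\to\infty}\frac{1}{n}\log d(T^{-n}(Y),T^{-n}(X))\le-\log\kappa_\w.$$
By (\ref{eq:strstable}) this puts $Y$ in $\cW^\uu(X,T)$, and since $(\eta,\Phi(\eta))$ stays in a prescribed neighborhood of $X$ (by the Lipschitz bound applied to $\eta$ near $\xi$), in fact $Y\in\cW^\uu_{\rm loc}(X,T)$.

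The main obstacle is the correct exploitation of periodicity. For a non-periodic $\xi$, the backward orbit of $\eta$ would visit distinct unstable leaves of $\tau$, on which hypothesis (1) provides no Lipschitz control, and the fiber estimate would fail. Periodicity forces the iterates $\tau^{-kp}(\eta)$ to remain on the single leaf $\cW^\u_{\rm loc}(\xi,\tau)$ where the Lipschitz constant $L$ is available; this is what allows the backward contraction rate $-\log\kappa_\w$ in the fiber to be inherited from the corresponding rate in the base.
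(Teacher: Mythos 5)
Your proof is correct in both directions, but your argument for $(1)\Rightarrow(2)$ follows a genuinely different route from the paper's. The paper disposes of the proposition in one line by applying Corollary~\ref{cor:genloc} with $n_k=kn$ ($n$ the period): periodicity converts the Lipschitz hypothesis into condition \eqref{eq:condition} with $\gamma=1$ along that subsequence, and the corollary (via Lemma~\ref{lem:genloc}) then forces $\Phi(\eta)=\gamma^\u_\xi(\eta)$, where $\gamma^\u_\xi$ is the parametrization of $\cW^\uu_{\rm loc}(X,T)$ built in Lemma~\ref{lemmaclaim1}. You bypass the auxiliary functions $\gamma^{\u,n}_\xi$ altogether and instead verify the defining rate estimate \eqref{eq:strstable} directly for $Y=(\eta,\Phi(\eta))$: periodicity keeps the returns $\tau^{-kp}(\eta)$ on the one leaf where the Lipschitz constant is available, so both coordinates of the backward iterates contract like $\kappa_\w^{-kp}$, and interpolation handles general $n$. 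The underlying mechanism (periodicity forcing the backward orbit back onto a single controlled leaf) is identical; the difference is the endgame. The paper's route reuses machinery that it needs anyway and works for any H\"older exponent $\gamma>\log\lambda_\s/\log\kappa_\w$, not just $\gamma=1$; yours is more self-contained but leans, in the final step, on the fact that a point satisfying \eqref{eq:strstable} whose backward orbit stays in $U\times I$ lies on the \emph{local} leaf $\cW^\uu_{\rm loc}(X,T)$ and not merely in the set $\cW^\uu(X,T)$. This is standard (a nonzero fiber deviation can only decay backwards at rate $\lambda_\s^{-n}$, which is strictly slower than $\kappa_\w^{-n}$ by \eqref{eq:constants}), and the paper makes the same jump without comment in the proof of Lemma~\ref{lemmaclaim1}, but a sentence to that effect would make your last step airtight. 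Your explicit treatment of $(2)\Rightarrow(1)$ via the uniform Lipschitz bound on strong unstable graphs supplies a direction the paper leaves implicit.
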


To prove the above proposition, we follow very closely and extend~\cite{HadNicWal:02}, in particular since proofs there are given in the particular case of $\tau$  Anosov.

\begin{remark}
	In the case that $\tau$ is an Anosov diffeomorphism if $\Phi$ is Lipschitz, then, in fact, $E^\s$ and $E^\uu$ are jointly integrable and the tangent bundle $E^\s\oplus E^\uu$ is tangent to $\Phi$. Indeed, as $\Phi$ inherits the regularity of local stable and local strong unstable manifolds, $\Phi$ is uniformly $C^1$ on local stable manifolds and on local strong unstable manifolds and hence  Journ\'e's theorem~\cite{Jou:88} applies.  
\end{remark}

\subsection{Parametrizing local strong unstable manifolds}

Below we will use the following notations
\[
	T^n_\xi = T_{\tau^{n-1}(\xi)}\circ\ldots\circ T_\xi,\quad
	T^{-n}_\xi 
	= T_{\tau^{-n}(\xi)}^{-1} \circ\ldots\circ T_{\tau^{-1}(\xi)}^{-1}.
\]
Consider  the following family of auxiliary functions. Given $\xi\in\Xi$, for $n\ge1$ define $\gamma^{\u,n}_\xi \colon  \Xi\cap\cW^\u_{\rm loc}(\xi,\tau) \to \bR$ by
\[
	\gamma_\xi^{\u,n}(\eta)
	\eqdef
	T^{n}_{\tau^{-n}(\eta)}\big(\Phi(\tau^{-n}(\xi))\big) 
	=  T^{n}_{\tau^{-n}(\eta)}\big(T^{-n}_\xi(\Phi(\xi))\big),
\]
where for the equality we used the invariance relation~\eqref{e.invariant-graph} of the graph.

\begin{lemma}\label{lemmaclaim1}
For every $\xi\in\Xi$  the sequence $(\gamma^{\u,n}_\xi)_n$ converges uniformly to a function $\gamma^\u_\xi\colon \Xi\cap\cW^\u_{\rm loc}(\xi,\tau)\to\bR$ which is Lip\-schitz continuous and has a 
backward invariant  graph, in the sense that 
\begin{equation}\label{stable_manifold_invariance}
	T^{-1}(\eta,\gamma^\u_\xi(\eta))=\big(\tau^{-1}(\eta),\gamma^\u_{\tau^{-1}(\xi)}(\tau^{-1}(\eta))\big),
\end{equation}
which is contained in the strong unstable manifold of $X=(\xi,\Phi(\xi))$ (with respect to $T$).
Moreover, the family $(\gamma^\u_\xi)_{\xi\in\Xi}$ is equicontinuous.
\end{lemma}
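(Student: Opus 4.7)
The plan is to prove all four claims in parallel via a telescoping estimate whose geometric ratio is $\lambda_\s/\kappa_\w < 1$; this is the only place where the Standing hypothesis~\eqref{eq:constants} enters in an essential way. First I would establish uniform convergence by a Cauchy argument. Writing
\[
	\gamma_\xi^{\u,n+1}(\eta) - \gamma_\xi^{\u,n}(\eta)
	= T^n_{\tau^{-n}(\eta)}(z_1) - T^n_{\tau^{-n}(\eta)}(z_2),
\]
with $z_1 = T_{\tau^{-n-1}(\eta)}(\Phi(\tau^{-n-1}(\xi)))$ and, by the invariance relation~\eqref{e.invariant-graph}, $z_2 = \Phi(\tau^{-n}(\xi)) = T_{\tau^{-n-1}(\xi)}(\Phi(\tau^{-n-1}(\xi)))$, one first bounds the base variation $\lvert z_1 - z_2\rvert \le L\, d_1(\tau^{-n-1}(\eta), \tau^{-n-1}(\xi)) \le L\kappa_\w^{-(n+1)}\, d_1(\eta,\xi)$ using $C^{1+\alpha}$ regularity of $T$ in the base argument and the unstable contraction of $\tau^{-1}$, then picks up a factor of at most $\lambda_\s^n$ from the outer fiber composition. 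The resulting difference is of order $(\lambda_\s/\kappa_\w)^n$, which sums to a uniform bound; hence there is a uniform limit $\gamma^\u_\xi$ with error estimates independent of $\xi$.

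The backward invariance~\eqref{stable_manifold_invariance} follows by separating the last iterate in the composition defining $\gamma^{\u,n+1}_\xi$, i.e.\ from $\gamma^{\u,n+1}_\xi(\eta) = T_{\tau^{-1}(\eta)}(\gamma^{\u,n}_{\tau^{-1}(\xi)}(\tau^{-1}(\eta)))$ and passage to the limit. To verify containment in $\cW^\uu(X,T)$ via the characterization~\eqref{eq:strstable}, iterate~\eqref{stable_manifold_invariance}:
\[
	T^{-n}(\eta, \gamma^\u_\xi(\eta)) = (\tau^{-n}(\eta), \gamma^\u_{\tau^{-n}(\xi)}(\tau^{-n}(\eta))).
\]
The base distance to $T^{-n}(X) = (\tau^{-n}(\xi), \Phi(\tau^{-n}(\xi)))$ is at most $\kappa_\w^{-n}d_1(\eta,\xi)$; and, since $\gamma^\u_\zeta(\zeta) = \Phi(\zeta)$ for every $\zeta$ (a direct consequence of $\gamma^{\u,n}_\zeta(\zeta) = \Phi(\zeta)$), the fiber distance equals the Lipschitz variation of $\gamma^\u_{\tau^{-n}(\xi)}$ over a base increment of size $\kappa_\w^{-n}d_1(\eta,\xi)$, hence also decays like $\kappa_\w^{-n}$, matching exactly the rate $-\log\kappa_\w$ required by~\eqref{eq:strstable}.

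The Lipschitz estimate invoked above, as well as the equicontinuity of the family $(\gamma^\u_\xi)_{\xi\in\Xi}$, both come from a second telescoping in the $\eta$ variable. Setting $p_k = T^k_{\tau^{-n}(\eta)}(\Phi(\tau^{-n}(\xi)))$ and $q_k$ the analogous iterate with $\eta$ replaced by $\eta'$, the triangle inequality combined with fiber expansion at most $\lambda_\s$ and base-Lipschitz continuity of $T$ produces the recursion
\[
	\lvert p_{k+1} - q_{k+1}\rvert
	\le \lambda_\s\, \lvert p_k - q_k\rvert + L\kappa_\w^{-(n-k)}\, d_1(\eta,\eta'),
\]
which, starting from $p_0 = q_0$, sums to a uniform bound $C\, d_1(\eta,\eta')$ with $C$ independent of $n$ and $\xi$; passing to the uniform limit gives the desired Lipschitz constant for $\gamma^\u_\xi$, and its uniformity in $\xi$ is precisely equicontinuity.

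The principal technical obstacle is to verify that the constants $\lambda_\s$ and $L$ may indeed be used as uniform bounds on $\lvert T'_\xi\rvert$ and on the base-Lipschitz norm of $(\xi,p)\mapsto T_\xi(p)$ not only on the graph $\Phi$ itself, but on a neighborhood containing all the backward iterates involved. This is a standard consequence of compactness of $\Xi$, continuity of $dT$, and the attracting character of $\Phi$ under backward iteration, but it forces an infinitesimal enlargement of the expansion/contraction rates that must be absorbed into the strict inequality $\lambda_\s < \kappa_\w$ supplied by the Standing hypothesis.
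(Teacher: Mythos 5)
Your proposal is correct, and the two key telescoping estimates you use (the Cauchy estimate in $n$ with ratio $\lambda_\s\kappa_\w^{-1}$, and the recursion $\lvert p_{k+1}-q_{k+1}\rvert\le\lambda_\s\lvert p_k-q_k\rvert+L\kappa_\w^{-(n-k)}d_1(\eta,\eta')$) are exactly the computations that appear in the paper's proof; in particular your second recursion is the paper's internal Claim bounding $\lvert T^\ell_\zeta(z)-T^\ell_{\zeta'}(z)\rvert$. Where you genuinely differ is in the logical order of the last two assertions. You prove the uniform Lipschitz bound (hence equicontinuity) first, directly from the $\eta$-telescoping applied to $\gamma^{\u,n}_\xi$ uniformly in $n$, and then obtain the containment in $\cW^{\uu}(X,T)$ by iterating the backward invariance and using $\gamma^\u_\zeta(\zeta)=\Phi(\zeta)$ to convert the fiber distance into a Lipschitz increment of size $\kappa_\w^{-n}d_1(\eta,\xi)$. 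The paper goes the other way: it first shows $(\eta,\gamma^\u_\xi(\eta))\in\cW^{\uu}(X,T)$ by splitting $d(T^{-n}(\eta,\gamma^\u_\xi(\eta)),T^{-n}(X))$ into a term involving $\gamma^{\u,n}_\xi$ (which lands exactly on $\Phi(\tau^{-n}(\xi))$ in the fiber) plus the approximation error, and then gets Lipschitz continuity and equicontinuity for free from the uniform $C^1$ regularity of the strong unstable leaves. Your route is slightly more self-contained, since it does not invoke the regularity of the $\cW^{\uu}$ lamination; the paper's is shorter on the Lipschitz step. Your closing remark about needing the bounds $\lambda_\s$ and $L$ on a neighborhood of $\Phi$ rather than on $\Phi$ itself is a legitimate point that the paper passes over silently, and your resolution by compactness and an arbitrarily small enlargement of the constants is the right one.
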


\begin{proof}
Since $T$ is $C^{1+\alpha}$, the maps   $T_\xi$ depend Lipschitz continuously on $\xi$ with some Lipschitz constant $L$. 
Given $\eta\in \Xi\cap\cW^\u_{\rm loc}(\xi,\tau)$, recalling~\eqref{eq:constants}, using the invariance~\eqref{e.invariant-graph}, and the Lipschitz continuity, we have
\[
\begin{split}
\big\lvert\gamma^{\u,n+1}_\xi(\eta)-\gamma^{\u,n}_\xi(\eta)\big\rvert
& = \big\lvert T^{n}_{\tau^{-n}(\eta)} \big(T_{\tau^{-n-1}(\eta)}(\Phi(\tau^{-n-1}(\xi)))\big)
		-  T^{n}_{\tau^{-n}(\eta)}\big(\Phi(\tau^{-n}(\xi))\big)  \big\rvert \\ 
& \leq \lambda_\s^{n}
	\left\lvert T_{\tau^{-n-1}(\eta)}(\Phi(\tau^{-n-1}(\xi))) 
		- \Phi(\tau^{-n}(\xi))\right\rvert\\ 
&=  \lambda_\s^{n}
	\left\lvert T_{\tau^{-n-1}(\eta)}(\Phi(\tau^{-n-1}(\xi))) 
		- T_{\tau^{-n-1}(\xi)}(\Phi(\tau^{-n-1}(\xi)) 
		\right\rvert\\ 		
&\le \lambda_\s^{n} L d_1(\tau^{-n-1}(\eta),\tau^{-n-1}(\xi))  \\
&\leq  \lambda^{n}_\s L \kappa^{-n-1}_\w d_1(\eta,\xi) 
=L\kappa_\w^{-1}(\lambda_\s\kappa_\w^{-1})^nd_1(\eta,\xi).
\end{split}
\]
Hence, for every $m\ge n\ge1$
\[
	\big\lvert\gamma^{\u,m+1}_\xi(\eta)-\gamma^{\u,n}_\xi(\eta)\big\rvert
	\le	L\kappa_\w^{-1}\frac{1}{1-\lambda_\s\kappa_\w^{-1}}(\lambda_\s\kappa_\w^{-1})^nd_1(\eta,\xi).	
\]
Since $\eta$ was arbitrary and since by~\eqref{eq:constants} the last expression  converges to zero exponentially fast as $n\to\infty$,  $(\gamma^{\u,n}_\xi)_n$ is a Cauchy sequence and converges uniformly to a continuous limit $\gamma^\u_\xi$ (note that $d_1(\cdot,\cdot)$ is uniformly bounded due to the compactness of $\Xi$). 
Let us postpone the proof of the Lipschitz continuity of $\gamma_\xi^\u$ for a moment and 
instead first prove that its graph is contained in a strong unstable manifold.

Take a point $Y=(\eta,\gamma_\xi^\u(\eta))$ in the graph of $\gamma_\xi^\u$. Observe that for every $n\ge1$
\[\begin{split}
	d\big( &T^{-n}(\eta,\gamma_\xi^\u(\eta)),T^{-n}(\xi,\Phi(\xi))\big)\\
	&\le d\big( T^{-n}(\eta,\gamma_\xi^\u(\eta)),T^{-n}(\eta,\gamma_\xi^{\u,n}(\eta))\big)
		+ d\big( T^{-n}(\eta,\gamma_\xi^{\u,n}(\eta)),T^{-n}(\xi,\Phi(\xi))\big).
\end{split}\]
For the latter term we have
\[
	 d\big( T^{-n}(\eta,\gamma_\xi^{\u,n}(\eta)),T^{-n}(\xi,\Phi(\xi))\big)
	 \le d_1(\tau^{-n}(\eta),\tau^{-n}(\xi))			
	\le \kappa_\w^{-n}d_1(\eta,\xi).
\]
We will now show that also the former term is of the order at most $\kappa_\w^{-n}$ and hence we will conclude that 
\begin{equation}\label{eq:concludeion}
	\limsup_{n\to\infty}
	\frac1n\log d\big( T^{-n}(\eta,\gamma_\xi^\u(\eta)),T^{-n}(\xi,\Phi(\xi))\big)
	\le -\log\kappa_\w.
\end{equation}
Thus, recalling~\eqref{eq:strstable}, we will obtain that $Y=(\eta,\gamma_\xi^\u(\eta))\in \cW^\uu_{\rm loc}(X,T)$. Since $Y$ was an arbitrary point in the graph of $\gamma_\xi^\u$, we will obtain that this graph is contained in the strong unstable manifold of $X$ and thus inherits all its regularity and, in particular, Lipschitz continuity. Moreover, it will also imply equicontinuity of the family $(\gamma^\u_\xi)_{\xi\in\Xi}$.
Indeed to estimate the former term note that
\[\begin{split}
	d\big( T^{-n}(\eta,\gamma_\xi^\u(\eta)),&T^{-n}(\eta,\gamma_\xi^{\u,n}(\eta))\big)
	 = \big\lvert T^{-n}_\eta(\gamma_\xi^\u(\eta)) 
			-	\Phi(\tau^{-n}(\xi)) \big\rvert \\
	& =\lim_{m\to\infty} \big\lvert T^{-n}_\eta(\gamma_\xi^{\u,m}(\eta))
			-	\Phi(\tau^{-n}(\xi)) \big\rvert \\
	& =\lim_{m\to\infty} \big\lvert T^{-n}_\eta \circ T_{\tau^{-m}(\eta)}^m(\Phi(\tau^{-m}(\xi)))
			-	\Phi(\tau^{-n}(\xi)) \big\rvert \\		
	& =\lim_{m\to\infty} \big\lvert T_{\tau^{-m}(\eta)}^{m-n}(\Phi(\tau^{-m}(\xi))) 
			-	T_{\tau^{-m}(\xi)}^{m-n}(\Phi(\tau^{-m}(\xi))) \big\rvert .	
\end{split}\]

\begin{claim}
	For every $\ell\ge1$, $\zeta\in\Xi$, $\zeta'\in\cW^\u_{\rm loc}(\zeta,\tau)$, and $z\in \bR$ we have
\[
	\big\lvert T^\ell_\zeta(z) - T^\ell_{\zeta'}(z)\big\rvert
	\le L\sum_{k=0}^{\ell-1}\lambda_\s^kd_1(\tau^{\ell-k+1}(\zeta),\tau^{\ell-k+1}(\zeta')).
\]		
\end{claim}

\begin{proof}
Note that
\[\begin{split}
	\big\lvert T^\ell_\zeta(z) - T^\ell_{\zeta'}(z)\big\rvert
	&= \big\lvert   T_{\tau^{\ell-1}(\zeta)}\circ T^{\ell-1}_\zeta(z) 
		-  T_{\tau^{\ell-1}(\zeta')}\circ T^{\ell-1}_{\zeta'}(z)\big\rvert\\
	&\le \big\lvert  T_{\tau^{\ell-1}(\zeta)}\circ T^{\ell-1}_\zeta(z) 
		-  T_{\tau^{\ell-1}(\zeta')}\circ T^{\ell-1}_{\zeta}(z)\big\rvert \\
	&\phantom{\le}+ \big\lvert T_{\tau^{\ell-1}(\zeta')}\circ T^{\ell-1}_{\zeta} (z) 
		-  T_{\tau^{\ell-1}(\zeta')}\circ T^{\ell-1}_{\zeta'}(z)\big\rvert\\
	&\le Ld_1(\tau^{\ell-1}(\zeta),\tau^{\ell-1}(\zeta'))
	+ \lambda_\s\big\lvert T^{\ell-1}_{\zeta}(z) - T^{\ell-1}_{\zeta'}(z)\big\rvert,
\end{split}\]	
where we used Lipschitz dependence of the fiber maps and uniform expansion by the \emph{common} fiber map $T_{\tau^{\ell-1}(\zeta')}$  by at most the factor $\lambda_\s$. 
Applying the same argument $\ell$ times implies the claim.
\end{proof}

Continuing with the above calculations,
with this claim we obtain
\[\begin{split}
	\big\lvert  T_{\tau^{-m}(\eta)}^{m-n}(\Phi(\tau^{-m}(\xi))) 
			&-	T_{\tau^{-m}(\xi)}^{m-n}(\Phi(\tau^{-m}(\xi))) \big\rvert \\
	&\le L\sum_{k=0}^{m-n-1}\lambda_\s^kd_1\big(\tau^{m-n-k+1}(\tau^{-m}(\eta)),\tau^{m-n-k+1}(\tau^{-m}(\xi))\big)	\\		
	&= L\sum_{k=0}^{m-n-1}\lambda_\s^kd_1\big(\tau^{-n-k+1}(\eta),\tau^{-n-k+1}(\xi)\big)	\\		
	&\le L\sum_{k=0}^{m-n-1}\lambda_\s^k\kappa_\w^{-(n+k-1)}d_1(\eta,\xi)
	\le L\kappa_\w^{-n}\kappa_\w\sum_{k=0}^{\infty}(\lambda_\s\kappa_\w^{-1})^kd_1(\eta,\xi)\\
	&=\kappa_\w^{-n}\frac{L \kappa_\w}{1-\lambda_\s\kappa_\w^{-1}}d_1(\eta,\xi).
\end{split}\]
Thus,  we obtain~\eqref{eq:concludeion}.

Invariance~\eqref{stable_manifold_invariance} is easily verified. Thus the lemma is proved.
\end{proof}

\begin{remark}
	By compactness of $\Phi$, regularity, and hyperbolicity of $T$, all functions $\gamma^\u_\xi$, $\xi\in\Xi$, have a common Lipschitz constant. 
Following the steps in the proof of Lemma~\ref{lemmaclaim1}, one can actually determine this constant; however we refrain from doing so.
\end{remark}

\subsection{Lipschitz regularity -- sufficient conditions}

\begin{lemma} \label{lem:genloc}
	Let $\gamma>\log\lambda_\s/\log\kappa_\w$. 
	Let $\xi\in\Xi$ and $\eta\in \Xi\cap\cW^\u_{\rm loc}(\xi,\tau)$. 
	If there exist  $C'>0$ and a sequence $n_k\to\infty$ so that for every $k\ge1$ we have
\begin{equation}\label{eq:condition}
	\big\lvert\Phi(\tau^{-n_k}(\eta))-\Phi(\tau^{-n_k}(\xi))\big\rvert
	\le C'd_1(\tau^{-n_k}(\eta),\tau^{-n_k}(\xi))^\gamma,
\end{equation}
then $\Phi(\eta)=\gamma^\u_\xi(\eta)$.
\end{lemma}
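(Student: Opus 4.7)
The plan is to compare $\Phi(\eta)$ directly with the approximating functions $\gamma^{\u,n_k}_\xi(\eta)$ along the given subsequence and exploit the invariance of $\Phi$ to rewrite both values as iterates under $T^{n_k}_{\tau^{-n_k}(\eta)}$ of nearby base points, so that the hypothesis~\eqref{eq:condition} can be fed into the estimate. Concretely, by the invariance relation~\eqref{e.invariant-graph}, we can write
\[
\Phi(\eta) \;=\; T^{n_k}_{\tau^{-n_k}(\eta)}\big(\Phi(\tau^{-n_k}(\eta))\big),
\qquad
\gamma^{\u,n_k}_\xi(\eta) \;=\; T^{n_k}_{\tau^{-n_k}(\eta)}\big(\Phi(\tau^{-n_k}(\xi))\big),
\]
so that the two quantities are images under the \emph{same} fiber map of two points on the fiber over $\tau^{-n_k}(\eta)$.

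Next I would invoke the Standing hypotheses to control the forward fiber expansion. Since $|T'_\zeta(\Phi(\zeta))| \le \lambda_\s$ for every $\zeta \in \Xi$, the chain rule yields a global bound $\lVert (T^{n_k}_{\tau^{-n_k}(\eta)})' \rVert \le \lambda_\s^{n_k}$ on the relevant fiber (strictly speaking one needs to know that the graph values stay on the graph, which is exactly what the invariance relation guarantees for the point corresponding to $\eta$; for $\xi$ one uses uniform Lipschitz regularity of the fiber maps together with the fact that the orbit stays in a neighborhood where the derivative is comparably bounded, as in the proof of Lemma~\ref{lemmaclaim1}). Combining this with the hypothesis~\eqref{eq:condition} and the contraction estimate $d_1(\tau^{-n_k}(\eta), \tau^{-n_k}(\xi)) \le \kappa_\w^{-n_k} d_1(\eta,\xi)$ coming from $\eta \in \cW^\u_{\rm loc}(\xi,\tau)$ and~\eqref{eq:basehyp}, I get
\[
\bigl\lvert \Phi(\eta) - \gamma^{\u,n_k}_\xi(\eta) \bigr\rvert
\;\le\; \lambda_\s^{n_k}\, C'\, d_1\bigl(\tau^{-n_k}(\eta),\tau^{-n_k}(\xi)\bigr)^\gamma
\;\le\; C'\, d_1(\eta,\xi)^\gamma\, \bigl(\lambda_\s \kappa_\w^{-\gamma}\bigr)^{n_k}.
\]

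Finally, the hypothesis $\gamma > \log\lambda_\s/\log\kappa_\w$ gives $\lambda_\s \kappa_\w^{-\gamma} < 1$, so the right-hand side tends to zero as $k\to\infty$. Since by Lemma~\ref{lemmaclaim1} we also have $\gamma^{\u,n_k}_\xi(\eta) \to \gamma^{\u}_\xi(\eta)$, the triangle inequality yields $\Phi(\eta) = \gamma^\u_\xi(\eta)$, as desired.

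The only genuinely delicate point is the a priori bound on the derivative of $T^{n_k}_{\tau^{-n_k}(\eta)}$: a priori $\lambda_\s$ only controls the derivative \emph{on the graph}, whereas the two intermediate points $\Phi(\tau^{-n_k}(\xi))$ and $\Phi(\tau^{-n_k}(\eta))$ lie on possibly different fibers of $T$. This is handled by the fact that both values are assumed very close (by~\eqref{eq:condition}) and by continuity of $T'$ together with $\Phi$ being the graph of the unique invariant repeller; the distortion along the connecting segment in each fiber is uniformly bounded and can be absorbed into $C'$, which does not affect the exponential decay argument.
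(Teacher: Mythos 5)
Your proposal is correct and follows essentially the same route as the paper: rewrite both $\Phi(\eta)$ and $\gamma^{\u,n_k}_\xi(\eta)$ as images under the common fiber map $T^{n_k}_{\tau^{-n_k}(\eta)}$, bound the expansion by $\lambda_\s^{n_k}$, feed in the hypothesis~\eqref{eq:condition} together with the backward contraction $d_1(\tau^{-n_k}(\eta),\tau^{-n_k}(\xi))\le\kappa_\w^{-n_k}d_1(\eta,\xi)$, and conclude via $\lambda_\s\kappa_\w^{-\gamma}<1$ and Lemma~\ref{lemmaclaim1}. Your closing caveat about $\lambda_\s$ only being a derivative bound \emph{on} the graph is a fair observation, but the paper uses the same bound without further comment, so your treatment is if anything slightly more careful.
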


\begin{proof}
Since  $\Phi$ is invariant and $T$ is  fiberwise expanding~\eqref{eq:basehyp}, we have
\[\begin{split}
	\big\lvert T^{n_k}_{\tau^{-n_k}(\eta)}(\Phi(\tau^{-n_k}(\xi)))-\Phi(\eta)\big\rvert
	=&
	\big\lvert T^{n_k}_{\tau^{-n_k}(\eta)}(\Phi(\tau^{-n_k}(\xi)))-
		T^{n_k}_{\tau^{-n_k}(\eta)}(\Phi(\tau^{-n_k}(\eta)))\big\rvert\\
	\le& \lambda_\s^{n_k}\big\lvert \Phi(\tau^{-n_k}(\xi)) -\Phi(\tau^{-n_k}(\eta))\big\rvert.
\end{split}\]
By our hypothesis on the $\gamma$-H\"older regularity of $\Phi$ at $\tau^{-n_k}(\xi)$ we have
\[
	\lvert\Phi(\tau^{-n_k}(\eta))-\Phi(\tau^{-n_k}(\xi))\rvert
	\le C'd_1\big(\tau^{-n_k}(\eta),\tau^{-n_k}(\xi)\big)^\gamma
\]
and from the fact that  $\eta$ is in the local unstable manifold of $\xi$ we obtain
\[
	d_1\big(\tau^{-n_k}(\eta),\tau^{-n_k}(\xi)\big)
	\le \kappa_\w^{-n_k} d_1(\eta,\xi).
\]
So, these three facts and the definition of $\gamma_\xi^{\u,n_k}$ together imply
\[
	\big\lvert \gamma_\xi^{\u,n_k}(\eta)-\Phi(\eta)\big\rvert
	\le C'\big(\lambda_\s\kappa_\w^{-\gamma}\big)^{n_k} d_1(\eta,\xi)^\gamma.
\]
Hence, if $\lambda_\s\kappa_\w^{-\gamma}<1$ and $n_k\to\infty$, then together with Lemma~\ref{lemmaclaim1} we can conclude that $\gamma^\u_\xi(\eta)=\lim_{k\to\infty}\gamma_\xi^{\u,n_k}(\eta)=\Phi(\eta)$. 
\end{proof}

Recall that for $\delta>0$ sufficiently small every local unstable manifold contains a disk of radius $\delta$. Denote
\[
	\cW^\u_\delta(\xi,\tau)
	\eqdef \cW^\u_{\rm loc}(\xi,\tau)\cap B_\delta(\xi).
\]
The following now is an immediate consequence of Lemma~\ref{lem:genloc}.

\begin{corollary} \label{cor:genloc}
	Let $\gamma>\log\lambda_\s/\log\kappa_\w$. 
	Let $\xi\in\Xi$. 
	If there exist $C'>0$ and $\delta>0$ such that  for every $\eta\in \Xi\cap\cW^\u_\delta(\xi,\tau)$ there is a sequence $n_k\to\infty$ such that for every $k\ge1$ we have~\eqref{eq:condition}, then $\Phi(\eta)=\gamma^\u_\xi(\eta)$ for every $\eta\in  \Xi\cap\cW^\u_\delta(\xi,\tau)$. Hence, in particular, the graph of $\Phi$ restricted to $\Xi\cap\cW^\u_\delta(\xi,\tau)$ is contained in the local strong unstable manifold of $X=(\xi,\Phi(\xi))$.
\end{corollary}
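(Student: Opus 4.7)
The corollary is essentially the pointwise statement of Lemma~\ref{lem:genloc} promoted to hold uniformly over the disk $\cW^\u_\delta(\xi,\tau)$, combined with the fact that the graph of $\gamma^\u_\xi$ lies in a strong unstable manifold (Lemma~\ref{lemmaclaim1}). So my plan is simply to unpack the hypothesis at each point and invoke the two preceding lemmas.

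First I would fix an arbitrary $\eta\in\Xi\cap\cW^\u_\delta(\xi,\tau)$. Since $\cW^\u_\delta(\xi,\tau)=\cW^\u_{\rm loc}(\xi,\tau)\cap B_\delta(\xi)$, the point $\eta$ lies in $\Xi\cap\cW^\u_{\rm loc}(\xi,\tau)$, so the pair $(\xi,\eta)$ satisfies the standing ambient hypothesis of Lemma~\ref{lem:genloc}. The hypothesis of the corollary furnishes a constant $C'$ (uniform in $\eta$, although this uniformity is not needed here) and a sequence $n_k\to\infty$ (depending on $\eta$) such that the H\"older-type bound~\eqref{eq:condition} holds at every $n_k$. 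Thus Lemma~\ref{lem:genloc} applies directly and yields $\Phi(\eta)=\gamma^\u_\xi(\eta)$.

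Since $\eta$ was arbitrary in $\Xi\cap\cW^\u_\delta(\xi,\tau)$, we obtain
\[
\Phi|_{\Xi\cap\cW^\u_\delta(\xi,\tau)}\ =\ \gamma^\u_\xi|_{\Xi\cap\cW^\u_\delta(\xi,\tau)}.
\]
To conclude the geometric statement, I would then quote Lemma~\ref{lemmaclaim1}: the graph of $\gamma^\u_\xi$ (over all of $\Xi\cap\cW^\u_{\rm loc}(\xi,\tau)$) is contained in the local strong unstable manifold $\cW^\uu_{\rm loc}(X,T)$ of $X=(\xi,\Phi(\xi))$. Restricting this inclusion to the subdomain $\Xi\cap\cW^\u_\delta(\xi,\tau)$ and using the equality above, the graph of $\Phi$ over $\Xi\cap\cW^\u_\delta(\xi,\tau)$ is contained in $\cW^\uu_{\rm loc}(X,T)$, as claimed.

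There is essentially no obstacle here: the work has already been done in Lemmas~\ref{lemmaclaim1} and~\ref{lem:genloc}. The only point that deserves a brief remark is that the sequence $n_k$ along which~\eqref{eq:condition} holds is allowed to depend on $\eta$; this causes no problem because Lemma~\ref{lem:genloc} is itself a pointwise statement and does not require any uniformity of $(n_k)$ in the point $\eta$.
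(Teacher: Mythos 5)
Your proof is correct and follows exactly the route the paper intends: the paper states the corollary as an immediate consequence of Lemma~\ref{lem:genloc} applied pointwise to each $\eta\in\Xi\cap\cW^\u_\delta(\xi,\tau)$, with the geometric conclusion supplied by Lemma~\ref{lemmaclaim1}. Your remark that the sequence $(n_k)$ may depend on $\eta$ without causing any issue is also accurate.
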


\begin{proof}[Proof of Proposition~\ref{procor:period}]
Given $\xi$ periodic with period $n$, we apply Corollary~\ref{cor:genloc} to $\xi$ taking $n_k=kn$.
\end{proof}

It is convenient to define the following function (see also~\cite{HadNicWal:02}) which measures in a way the ``obstructions'' to the regularity of the invariant graph $\Phi$ on local unstable manifolds. Given $\xi\in\Xi$ let
\begin{equation}\label{eq:defDeltau}
	\Delta^\u_\delta(\xi)
	\eqdef\sup_{\eta\in \Xi\cap\cW^\u_\delta(\xi,\tau)}\lvert\Phi(\eta)-\gamma^\u_\xi(\eta)\rvert.
\end{equation}

\begin{lemma}
	$\Delta^\u_\delta\colon\Xi\to\bR$ is continuous.
\end{lemma}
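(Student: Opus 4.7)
The plan is to deduce continuity of $\Delta^\u_\delta$ at each $\xi\in\Xi$ by combining joint continuity of the integrand $(\xi,\eta)\mapsto \lvert\Phi(\eta)-\gamma^\u_\xi(\eta)\rvert$ with continuous dependence on $\xi$ of the domain $\Xi\cap\cW^\u_\delta(\xi,\tau)$ over which the supremum is taken.

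First, I would show that $\gamma^\u_{\xi_k}(\eta_k)\to\gamma^\u_\xi(\eta)$ whenever $\xi_k\to\xi$ and $\eta_k\to\eta$ with $\eta_k\in\cW^\u_{\rm loc}(\xi_k,\tau)$ and $\eta\in\cW^\u_{\rm loc}(\xi,\tau)$. Each finite approximation
$\gamma^{\u,N}_\xi(\eta)=T^N_{\tau^{-N}(\eta)}(\Phi(\tau^{-N}(\xi)))$
is jointly continuous in $(\xi,\eta)$ on $\Xi\times\Xi$ by continuity of $\tau$, $\Phi$, and the map $(\zeta,z)\mapsto T^N_\zeta(z)$. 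The estimate obtained in the proof of Lemma~\ref{lemmaclaim1},
\[
\lvert\gamma^{\u,N}_\xi(\eta)-\gamma^\u_\xi(\eta)\rvert\le C(\lambda_\s\kappa_\w^{-1})^N d_1(\eta,\xi),
\]
involves a constant $C$ independent of $\xi,\eta$; since $d_1$ is bounded on the compact set $\Xi$, this gives uniform convergence of $\gamma^{\u,N}$ to $\gamma^\u$, hence joint continuity of $\gamma^\u$ in the indicated sense. Together with continuity of $\Phi$, the integrand is jointly continuous as well.

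Next, I would invoke the standard fact that local unstable manifolds depend continuously on their base point $\xi\in\Xi$. Concretely, the stable holonomy $h^\s_{\xi,\xi'}(\eta)\eqdef[\xi',\eta]$ from~\eqref{eq:locpro} is, for $\xi'$ close to $\xi$, a continuous bijection $\Xi\cap\cW^\u_{\rm loc}(\xi,\tau)\to\Xi\cap\cW^\u_{\rm loc}(\xi',\tau)$ that converges uniformly to the identity as $\xi'\to\xi$. In particular, the slices $\Xi\cap\cW^\u_\delta(\xi,\tau)$ vary continuously with $\xi$ in the Hausdorff sense.

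Finally, I would split the argument into lower and upper semicontinuity at $\xi$. For the lower bound, given $\xi_n\to\xi$ and $\varepsilon>0$, pick $\eta\in\Xi\cap\cW^\u_\delta(\xi,\tau)$ with $\lvert\Phi(\eta)-\gamma^\u_\xi(\eta)\rvert\ge \Delta^\u_\delta(\xi)-\varepsilon$ and $d_1(\eta,\xi)<\delta$, transport $\eta$ via $\eta_n\eqdef h^\s_{\xi,\xi_n}(\eta)$, which lies in $\Xi\cap\cW^\u_\delta(\xi_n,\tau)$ for $n$ large by the strict inequality, and apply joint continuity to conclude $\liminf_n \Delta^\u_\delta(\xi_n)\ge \Delta^\u_\delta(\xi)-\varepsilon$. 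For the upper bound, select near-maximizers $\eta_n\in\Xi\cap\cW^\u_\delta(\xi_n,\tau)$, extract a convergent subsequence $\eta_n\to\eta^*\in\Xi$, and observe that $\eta^*\in\cW^\u_{\rm loc}(\xi,\tau)$ by continuous dependence of unstable manifolds, with $d_1(\eta^*,\xi)\le\delta$. The main obstacle is the boundary case $d_1(\eta^*,\xi)=\delta$; this is handled using that $\Xi\cap\cW^\u_{\rm loc}(\xi,\tau)$ is a perfect set (since $\Xi$ is a nontrivial topologically mixing basic set), so $\eta^*$ can be approximated from the interior of the $\delta$-ball, and continuity of the integrand transfers the estimate.
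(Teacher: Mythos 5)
Your proposal is correct and follows essentially the same route as the paper's (much terser) proof: joint continuity of $(\xi,\eta)\mapsto\gamma^\u_\xi(\eta)$ obtained from the uniform convergence of the jointly continuous approximants $\gamma^{\u,n}_\xi$ in Lemma~\ref{lemmaclaim1}, combined with continuity of $\Phi$ and the continuous dependence of $\cW^\u_{\rm loc}(\cdot,\tau)$ on the base point. You supply more detail than the paper does (the holonomy transport for the two semicontinuity inequalities and the boundary case $d_1(\eta^*,\xi)=\delta$, which the paper silently ignores), so no gap relative to the published argument.
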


\begin{proof}
This follows from uniform convergence of the  sequence $(\gamma^{\u,n}_\xi)_n$ in Lemma~\ref{lemmaclaim1}. Indeed, the distance between $\gamma^{\u,n}_\xi$ and $\gamma^\u_\xi$ varies equicontinuously in $n$ and $\xi$. Now, observe that $\gamma^{\u,n}_\xi$ varies continuously in  $\xi$ and recall continuity of  the unstable manifolds $\cW^\u_{\rm loc}(\cdot,\tau)$ and continuity of the graph $\Phi$. 
\end{proof}

\begin{lemma}\label{lem:intLip}
	Assume that  $\Delta^\u_\delta(\xi)=0$ for some $\xi\in\Xi$.
	Then $\Delta^\u_\delta=0$ and hence $\Phi$ is  Lipschitz on local unstable manifolds.
\end{lemma}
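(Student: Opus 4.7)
The strategy is to combine invariance of $\Phi$ under $T$ with forward invariance of the strong unstable foliation in order to propagate the local equality $\Phi=\gamma^\u_\xi$ from the single point $\xi_0$ with $\Delta^\u_\delta(\xi_0)=0$ to a dense subset of $\Xi$, then to close the argument using continuity of $\Delta^\u_\delta$ and the uniform Lipschitz bound on the family $(\gamma^\u_\xi)_{\xi\in\Xi}$ noted after Lemma~\ref{lemmaclaim1}.

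First I would rephrase the hypothesis: $\Delta^\u_\delta(\xi_0)=0$ is equivalent to $\Phi(\eta)=\gamma^\u_{\xi_0}(\eta)$ for every $\eta\in\Xi\cap\cW^\u_\delta(\xi_0,\tau)$, which by Lemma~\ref{lemmaclaim1} says exactly that the graph of $\Phi$ over $\Xi\cap\cW^\u_\delta(\xi_0,\tau)$ is contained in the local strong unstable manifold $\cW^\uu_{\rm loc}(X_0,T)$, where $X_0\eqdef(\xi_0,\Phi(\xi_0))$. Next I would apply $T$: the invariance $T(\eta,\Phi(\eta))=(\tau(\eta),\Phi(\tau(\eta)))$ together with forward invariance of the strong unstable foliation under $T$ implies that the graph of $\Phi$ over $\Xi\cap\tau(\cW^\u_\delta(\xi_0,\tau))$ also lies in a local strong unstable manifold, namely $\cW^\uu_{\rm loc}(T(X_0),T)$. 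Iterating, for every $n\ge0$ and every $\eta\in\Xi\cap\tau^n(\cW^\u_\delta(\xi_0,\tau))$, the restriction of $\Phi$ to a small unstable neighborhood of $\eta$ lies on the strong unstable manifold through $(\eta,\Phi(\eta))$, that is, $\Delta^\u_\delta(\eta)=0$. Thus the closed set $S\eqdef\{\xi\in\Xi:\Delta^\u_\delta(\xi)=0\}$ contains $\bigcup_{n\ge0}\tau^n(\Xi\cap\cW^\u_\delta(\xi_0,\tau))$.

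By the density statement~\eqref{dense}, this union is dense in $\Xi$, and $S$ is closed by the preceding continuity lemma, hence $S=\Xi$. Consequently, for every $\xi\in\Xi$ one has $\Phi\equiv\gamma^\u_\xi$ on $\Xi\cap\cW^\u_\delta(\xi,\tau)$, and by the remark after Lemma~\ref{lemmaclaim1} the functions $\gamma^\u_\xi$ share a common Lipschitz constant $L$. A chaining argument, covering each compact local unstable manifold by finitely many $\delta$-pieces and adding up the Lipschitz estimates, then yields a uniform Lipschitz bound for $\Phi$ on local unstable manifolds.

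The main technicality I expect is the bookkeeping of scales in the propagation step: iterating by $\tau$ enlarges the unstable piece on which equality with the strong unstable manifold has been established, but one still has to verify that for $n$ large enough $\tau^n(\Xi\cap\cW^\u_\delta(\xi_0,\tau))$ contains the full ball $\cW^\u_\delta(\eta,\tau)$ around each of its points $\eta$, so that $\Delta^\u_\delta(\eta)$ (not merely some smaller-scale version) vanishes. This is precisely where the uniform expansion bound $\kappa_\w>1$ from the Standing hypotheses is used, together with the finiteness of the Markov structure from Section~\ref{sec:markov} to handle the geometry of unstable leaves near the boundary of partition elements.
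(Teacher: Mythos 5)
Your proposal is correct and follows essentially the same route as the paper's proof: propagate the vanishing of $\Delta^\u$ along forward iterates using invariance of $\Phi$ and of the strong unstable leaves, use the density statement~\eqref{dense}, and close with continuity of $\Delta^\u_\delta$; the scale issue you flag is exactly what the paper handles by working at scale $\delta/2$ and using $\tau^n(\cW^\u_{\delta/2}(\eta,\tau))\supset\cW^\u_{\delta/2}(\tau^n(\eta),\tau)$. No substantive differences.
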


\begin{proof}
By hypothesis, $\Phi(\eta)=\gamma_\xi^\u(\eta)$ for every $\eta\in \Xi\cap\cW^\u_{\delta/2}(\xi,\tau)$ and, in particular, $\Phi(\eta)$ is contained in the strong unstable manifold of $X=(\xi,\Phi(\xi))$.

Clearly, $\Delta^\u_\delta(\xi)=0=\Delta^\u_{\delta/2}(\eta)$ for every $\eta\in \Xi\cap\cW^\u_{\delta/2}(\xi,\tau)$.
 If $\delta$ was small enough, then for every $n\ge1$ we have $\tau^n(\cW^\u_{\delta/2}(\eta,\tau))\supset \cW^\u_{\delta/2}(\tau^n(\eta),\tau)$ and from invariance of the graph $\Phi$ we conclude $\Delta^\u_{\delta/2}(\tau^n(\eta))=0$.
 
 By hyperbolicity (recall~\eqref{dense}), the union of all images of $\Xi\cap\cW^\u_{\delta/2}(\xi,\tau)$ is dense in $\Xi$. Hence we obtain $\Delta^\u_{\delta/2}=0$ densely, and continuity implies $\Delta^\u_{\delta/2}=0$. 
 This proves the lemma.
\end{proof}

\begin{proof}[Proof of Corollary~\ref{cor:LipPerio}]
Is a consequence of Proposition~\ref{procor:period} and Lemma~\ref{lem:intLip}.
\end{proof}

\begin{proof}[Proof of Proposition~\ref{prop:critical}]
The proof of the first claim is as in~\cite{HadNicWal:02}. The second claim is a consequence of Lemma~\ref{lem:intLip}.
\end{proof}

Analogously to~\eqref{eq:defDeltau}, we can define a function $\Delta^\s_\delta\colon\Xi\to\bR$ considering local stable manifolds instead of local unstable manifolds. This function is also continuous and we have $\Delta^\s_\delta=0$.

\begin{corollary}
Assume that  $\Delta^\u(\xi)=0$ for some $\xi\in\Xi$.
	Then $\Phi\colon\Xi\to\bR$ is Lipschitz. 
\end{corollary}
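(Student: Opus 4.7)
The plan is to combine the two leafwise Lipschitz estimates that are already in hand and use the local product structure of $\Xi$ to bootstrap them to a global estimate on $\Phi$.

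First, from the hypothesis $\Delta^\u_\delta(\xi)=0$, Lemma~\ref{lem:intLip} gives $\Delta^\u_\delta\equiv 0$ on $\Xi$, so $\Phi$ coincides with $\gamma^\u_\zeta$ on $\Xi\cap\cW^\u_\delta(\zeta,\tau)$ for every $\zeta\in\Xi$. By Lemma~\ref{lemmaclaim1} and the remark following it, the family $\{\gamma^\u_\zeta\}_{\zeta\in\Xi}$ admits a common Lipschitz constant $L^\u$ (this comes from the uniformly bounded slope of strong unstable manifolds). Thus $\Phi$ is Lipschitz on every local unstable slice with constant $L^\u$. Symmetrically, Proposition~\ref{prop:critical} tells us the graph of $\Phi$ over any local stable slice is contained in a local strong stable manifold, so by the same type of uniform bound on slopes there is $L^\s$ such that $\Phi$ is Lipschitz on every local stable slice with constant $L^\s$.

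Next, I would pass from leafwise to global estimates via the bracket. Choose $\delta_0>0$ small enough that the local product map $[\cdot,\cdot]$ of \eqref{eq:locpro} is defined on every pair $\xi,\eta\in\Xi$ with $d_1(\xi,\eta)<\delta_0$. Uniform transversality of $F^\s$ and $F^\u$ (which follows from the Standing hypotheses) yields a constant $C>0$ such that for every such pair the point $\zeta\eqdef [\xi,\eta]\in\Xi$ satisfies
\[
d_1(\xi,\zeta)\le C\,d_1(\xi,\eta),\qquad d_1(\zeta,\eta)\le C\,d_1(\xi,\eta),
\]
with $\zeta\in\cW^\s_{\rm loc}(\xi,\tau)$ and $\zeta\in\cW^\u_{\rm loc}(\eta,\tau)$. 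The leafwise estimates then give
\[
\lvert\Phi(\xi)-\Phi(\eta)\rvert\le\lvert\Phi(\xi)-\Phi(\zeta)\rvert+\lvert\Phi(\zeta)-\Phi(\eta)\rvert\le(L^\s+L^\u)C\,d_1(\xi,\eta).
\]
For pairs with $d_1(\xi,\eta)\ge\delta_0$, continuity of $\Phi$ on the compact set $\Xi$ provides $M=\sup_\Xi\lvert\Phi\rvert<\infty$ and we trivially get $\lvert\Phi(\xi)-\Phi(\eta)\rvert\le 2M\le(2M/\delta_0)\,d_1(\xi,\eta)$. Choosing $L\eqdef\max\{(L^\s+L^\u)C,\,2M/\delta_0\}$ concludes the proof.

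The only nontrivial step is the uniform Lipschitz constant on stable leaves; this was not spelled out explicitly earlier but is completely analogous to the unstable case handled after Lemma~\ref{lemmaclaim1}, using the backward iteration replaced by forward iteration and the strong stable instead of the strong unstable manifold. Everything else is a routine application of the local product structure and is essentially forced by the hyperbolicity of $\tau$.
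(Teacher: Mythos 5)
Your proof is correct and follows essentially the same route as the paper's: obtain leafwise Lipschitz continuity on unstable leaves via Lemma~\ref{lem:intLip} and on stable leaves via Proposition~\ref{prop:critical} (the paper records the latter as $\Delta^\s_\delta\equiv 0$ just before the corollary), and then combine the two through the bracket $[\xi,\eta]$ using the Lipschitz regularity of the local product structure. You merely write out in detail the triangle-inequality step that the paper leaves implicit, so there is nothing to add.
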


\begin{proof}
	By Lemma~\ref{lem:intLip} and the above we have $\Delta^\u=\Delta^\s\equiv 0$ everywhere on $ \Xi $, that is, the graph is Lipschitz along unstable manifolds and along stable manifolds. Note that the local product structure of unstable and stable local manifolds  $[\xi,\eta]=\cW^\s_{\rm loc}(\xi,\tau)\cap \cW^\u_{\rm loc}(\eta,\tau)$ for $\eta$ sufficiently close to $\xi$ (see Section~\ref{sec:markov}) has the property that $\eta\mapsto d_1(\xi,[\xi,\eta])$ is Lipschitz. Thus the graph is Lipschitz on the whole $\Xi$. 
\end{proof}

For further reference in Section~\ref{sec:sizeMarkov} we  formulate the following immediate consequence of  Corollary~\ref{cor:genloc}   (recalling that assumption~\eqref{eq:constants} gives $1>\log\lambda_\s/\log\kappa_\w$, we put $\gamma=1$).

\begin{corollary}\label{cor:erika}
	Assume that $\Phi$ is not 
	 Lipschitz continuous on local unstable manifolds.  	
	Then for every $\delta>0$ there exists $C=C(\delta)>0$ such that
$
	\Delta^\u_\delta\ge C.
$	
\end{corollary}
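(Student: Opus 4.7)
The plan is to derive the uniform lower bound as an essentially immediate consequence of the previous two facts: the continuity of $\Delta^\u_\delta\colon\Xi\to\bR$ and Lemma~\ref{lem:intLip}, together with compactness of $\Xi$. The underlying input from the regularity theory (namely Corollary~\ref{cor:genloc} with the choice $\gamma=1$, which is legal because the Standing hypotheses force $\log\lambda_\s/\log\kappa_\w<1$) has already been absorbed into Lemma~\ref{lem:intLip}, so it suffices to invoke that lemma.

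First, I would observe that by Lemma~\ref{lem:intLip}, if $\Delta^\u_\delta(\xi)=0$ holds at even a single point $\xi\in\Xi$, then $\Phi$ is automatically Lipschitz on local unstable manifolds. The contrapositive of this statement, under our standing assumption that $\Phi$ is \emph{not} Lipschitz on local unstable manifolds, yields
\[
	\Delta^\u_\delta(\xi)>0 \quad\text{for every }\xi\in\Xi.
\]
This gives a pointwise positivity but not yet a uniform bound, which is the only genuine content of the corollary beyond Lemma~\ref{lem:intLip}.

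Next I would upgrade pointwise positivity to a uniform bound by a standard compactness argument. Suppose for contradiction that there existed a sequence $(\xi_k)_k\subset\Xi$ with $\Delta^\u_\delta(\xi_k)\to 0$. By compactness of $\Xi$ we could extract a subsequence $\xi_{k_j}\to\xi_\ast\in\Xi$, and the continuity of $\Delta^\u_\delta$ established in the preceding lemma would force $\Delta^\u_\delta(\xi_\ast)=0$. Invoking Lemma~\ref{lem:intLip} once more, this would contradict the hypothesis that $\Phi$ is not Lipschitz on local unstable manifolds. Hence
\[
	C=C(\delta)\eqdef \inf_{\xi\in\Xi}\Delta^\u_\delta(\xi)>0,
\]
and $C$ is in fact attained by compactness. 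This value of $C$ provides the required uniform lower bound $\Delta^\u_\delta\ge C$.

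The argument has no real obstacle: every nontrivial ingredient (the critical regularity dichotomy, the link between $\Delta^\u_\delta$ vanishing and Lipschitz regularity, and the continuity of $\Delta^\u_\delta$) is already in place, and what remains is only the standard compactness-plus-continuity passage from pointwise positivity to a uniform positive minimum. The only point deserving a moment of care is to verify that the dependence of $C$ on $\delta$ is permissible; this is clear from the definition of $\Delta^\u_\delta$, which depends on $\delta$, and no uniformity in $\delta$ is claimed.
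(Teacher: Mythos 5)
Your argument is correct and is exactly the route the paper intends: the paper gives no explicit proof (it asserts the corollary is immediate from the preceding regularity results), and the substance is precisely your combination of Lemma~\ref{lem:intLip} (pointwise positivity of $\Delta^\u_\delta$ under the non-Lipschitz hypothesis) with the continuity of $\Delta^\u_\delta$ and the compactness of $\Xi$. The only cosmetic point worth adding is that Lemma~\ref{lem:intLip} and the continuity lemma are established for sufficiently small $\delta$, so for arbitrary $\delta$ one should invoke the monotonicity $\Delta^\u_\delta\ge\Delta^\u_{\delta'}$ for $\delta\ge\delta'$ to reduce to that case.
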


\subsection{Size of  Markov unstable rectangles}\label{sec:sizeMarkov}

Assume that $\underline R_1,\ldots,\underline R_N$ is a Markov partition of $\Xi$ (with respect to $\tau$) and that $R_1,\ldots,R_N$ is a corresponding Markov partition of $\Phi$ (with respect to $T$) as in Section~\ref{sec:markov}, see~\eqref{eq:inducedMarkovbasegraph}.
For every $\eta\in\Xi$ consider the fiber 
\[
	I_\eta\eqdef \{\eta\}\times \bR.
\]
Given $X=(\xi,\Phi(\xi))$ and $n\ge1$, to define the ``size'' of an unstable rectangle $R^\u_n(X)$ (note that  its projection to the base is either a Cantor set
or a smooth curve where the latter case occurs when $\tau$ is an Anosov map  or 
$\Xi$ is a one-dimensional attractor), let $R^\u(\xi,n)$ be the minimal curve contained in $\cW^\u_{\rm loc}(\xi,\tau)$ containing $\underline R^\u_n(\xi)$. Let 
\[
	\widehat R^\u_n(X)
	\eqdef \big\{\cW^{\uu}_{\rm loc}(Y,T)\colon Y\in R^\u_n(X)\big\}
		\cap \big\{I_\zeta\colon \zeta\in R^\u(\xi,n)\big\}	
\]	
(compare Figure~\ref{fig.1}), which is the smallest set containing the Markov unstable rectangle (with respect to $T$) of level $n$ containing $X$ which is ``foliated'' by local strong unstable manifolds of points in this rectangle and which is bounded by fibers which project to points in the base bounding the Markov unstable rectangle (with respect to $\tau$).
Let
\[
	\widehat I_n(\xi)
	\eqdef I_\xi\cap \widehat R^\u_n(X).
\]
\begin{figure}
\begin{overpic}[scale=.35]{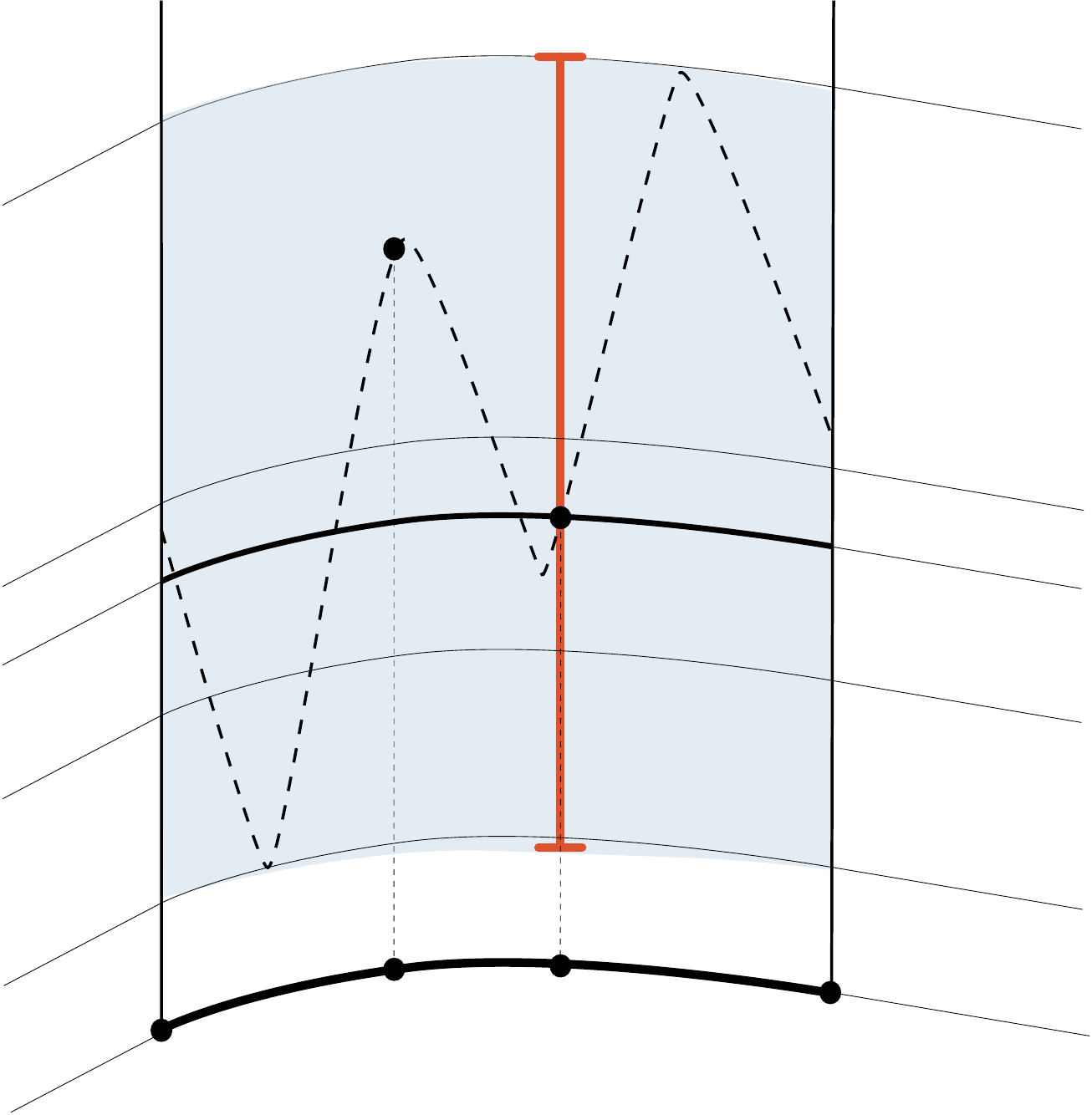}
        \put(101,4){\small $\cW^\u_{\rm loc}(\xi)$}
	\put(49,5){\small $\xi$}
	\put(34,5){\small $\eta$}
	\put(14,2){\small $\zeta$}
	\put(28,76){\small $Y$}
	\put(52,47){\small $X$}
	\put(35,85){\small \textcolor{red}{$\widehat I_n(\xi)$}}
	\put(77,82){\small $\widehat R_n^\u(X)$}
	\put(56,14){\small $R^\u(\xi,n)$}
	\put(8,70){\small $I_\zeta$}
	\put(101,45){\small $\cW^\uu_{\rm loc}(X,T)$}
\end{overpic}
\caption{Definition of the set $\widehat R^\u_n(X)$ (shaded region)}
\label{fig.1}
\end{figure}
\begin{remark}
Notice that the  segment $\widehat I_n(\xi)$, by definition, is bounded by points which are on the local strong unstable manifolds of some points in $\Phi\cap\widehat R^\u_n(X)$.
\end{remark}

Given $Y=(\eta,\Phi(\eta))\in R^\u_n(X)$, denote by 
$\ell_{\rm h}(Y)$ the minimal length of a segment in the fiber containing the set $I_\eta\cap \widehat R^\u_n(X)$. 
Define the \emph{height} of an $n$th level Markov unstable  rectangle by
\[
	\lvert R^\u_n(X)\rvert_{\rm h}\eqdef \max_{Y\in R^\u_n(X)}\ell_{\rm h}(Y).
\]
Define the \emph{width} of an $n$th level Markov unstable  rectangle to be
\[
	\lvert R^\u_n(X)\rvert_\w \eqdef 
	\lvert \underline R^\u_n(\xi)\rvert_\w\eqdef 
	\lvert   R^\u(\xi,n)\rvert,
\]
where $\lvert\cdot\rvert$ denotes the  length of a curve in $M$.

The following estimate of the width and height of a Markov unstable rectangle involves a bounded distortion argument and the invariance of strong unstable manifolds. Recall the definitions of the potentials $\varphi^\u$ and $\varphi^\cu$ in~\eqref{eq:defbasicpotentialvarphius} and~\eqref{e.potentials}.

\begin{proposition}\label{pro:dis0}
	There exists $c>1$ such that for every $X=(\xi,\Phi(\xi))$, $n\ge1$, and $\eta\in \underline R^\u_n(\xi)$ we have
\[
	\frac1c\le
	\frac{\lvert R^\u_n(X)\rvert_\w }{\exp(S_n\varphi^\u(\eta))}
	\le c.
\]	
\end{proposition}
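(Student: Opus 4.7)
\emph{Strategy.} This is a classical bounded-distortion estimate. The plan is to show that $\tau^{n-1}$ maps the curve $R^\u(\xi,n)$ onto a curve whose length lies in a fixed interval $[L_1,L_2]$, independent of $n$, and that the one-dimensional derivative of $\tau^{n-1}$ along the unstable direction is almost constant on $R^\u(\xi,n)$. Combining these two facts via the mean value theorem will give $\lvert R^\u(\xi,n)\rvert$ comparable to $\lVert d\tau^{n-1}|_{F^\u_\eta}\rVert^{-1}$ for any $\eta\in\underline R^\u_n(\xi)$. Since $\lVert d\tau|_{F^\u}\rVert$ is pinched between $\kappa_\w$ and $\kappa_\s$, this is the same as $\exp(S_n\varphi^\u(\eta))$ up to a multiplicative constant.

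\emph{Bounded-length image and MVT.} By iterating the Markov unstable property for $\tau$ from Section~\ref{sec:markov}, and using $\underline R^\u_n(\xi)=\bigcap_{k=0}^{n-1}\tau^{-k}(\underline R^\u(\tau^k(\xi)))$, one sees that $\tau^{n-1}(R^\u(\xi,n))$ is the minimal curve in $\cW^\u_{\rm loc}(\tau^{n-1}(\xi),\tau)$ containing $\underline R^\u(\tau^{n-1}(\xi))$. There being only finitely many Markov rectangles, each of nonempty interior relative to $\Xi$, the length of this image lies in a universal interval $0<L_1\le L_2<\infty$. Applying the mean value theorem to the smooth monotone one-dimensional map $\tau^{n-1}\colon R^\u(\xi,n)\to\tau^{n-1}(R^\u(\xi,n))$, we obtain some $\zeta^*\in R^\u(\xi,n)$ with
\[
	\lvert R^\u(\xi,n)\rvert\cdot \lVert d\tau^{n-1}|_{F^\u_{\zeta^*}}\rVert \in [L_1,L_2].
\]

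\emph{Bounded distortion and conclusion.} Fix $\eta\in\underline R^\u_n(\xi)$. For every $0\le k\le n-1$, the points $\tau^k(\eta)$ and $\tau^k(\zeta^*)$ lie in the same local unstable leaf through $\tau^k(\xi)$, so by the uniform backward contraction $\lVert d\tau^{-1}|_{F^\u}\rVert\le\kappa_\w^{-1}$ we have
\[
	d_1(\tau^k(\eta),\tau^k(\zeta^*))\le \kappa_\w^{-(n-1-k)}\,d_1(\tau^{n-1}(\eta),\tau^{n-1}(\zeta^*))\le \kappa_\w^{-(n-1-k)} L_2.
\]
Since $\tau$ is $C^{1+\alpha}$ and each unstable leaf is $C^{1+\alpha}$-smooth, the function $\zeta\mapsto \log\lVert d\tau|_{F^\u_\zeta}\rVert$ extends from $\Xi$ to each unstable leaf and is $\alpha$-Hölder along it. Telescoping and summing the resulting geometric series yields
\[
	\bigl\lvert\log\lVert d\tau^{n-1}|_{F^\u_\eta}\rVert-\log\lVert d\tau^{n-1}|_{F^\u_{\zeta^*}}\rVert\bigr\rvert\le C'\sum_{k=0}^{n-1}\kappa_\w^{-\alpha(n-1-k)}\le \frac{C'}{1-\kappa_\w^{-\alpha}},
\]
uniformly in $n$ and $\eta$. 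Combining this with the previous step, and noting that $\lVert d\tau|_{F^\u_{\tau^{n-1}(\eta)}}\rVert\in[\kappa_\w,\kappa_\s]$ implies $\lvert S_n\varphi^\u(\eta)-S_{n-1}\varphi^\u(\eta)\rvert\le\log\kappa_\s$, concludes the proof.

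\emph{Main obstacle.} The only genuine difficulty is the distortion bound: the MVT point $\zeta^*$ need not lie in $\Xi$, so one must observe that $\varphi^\u$ extends canonically to every unstable leaf as the tangential expansion of $\tau$ and that this extension is $\alpha$-Hölder along the leaves. Once this is in place, the remainder is a standard geometric-series summation exploiting the uniform backward contraction inside a Markov unstable rectangle.
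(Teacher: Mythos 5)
Your proposal is correct and follows essentially the same route as the paper: a mean value theorem argument reducing the width to the derivative of an iterate at one point on the minimal curve, a Hölder/geometric-series distortion bound transferring this to any $\eta\in\underline R^\u_n(\xi)$, and uniform upper and lower bounds on the widths of the level-zero Markov unstable rectangles. Your explicit remark that the MVT point need not lie in $\Xi$, so that $\varphi^\u$ must be understood via its canonical Hölder extension along unstable leaves, is a point the paper's proof passes over silently, and your handling of it is correct.
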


\begin{proof}
Recall that there is $\theta>0$ such that $\varphi^\u$ is $\theta$-H\"older continuous.
	By~\eqref{eq:basehyp} for every $\eta\in\underline R^\u_n(\xi)$ we have $d_1(\eta,\xi)\le c_2 \kappa_\w^{-n}$, where $c_2$ denotes the maximal diameter of a Markov rectangle $\underline R_i$. Hence,  there exists $c_3>0$ such that 
	for every $i=0,\ldots,n-1$ we have
\[
	\lvert\varphi^\u(\tau^i(\eta))-\varphi^\u(\tau^i(\xi))\rvert
	\le c_3\kappa_\w^{-\theta(n-i)}.
\]	
This implies
\[
	\lvert S_n\varphi^\u(\eta)-S_n\varphi^\u(\xi)\rvert
	\le c_3\sum_{i=0}^{n-1}\kappa_\w^{-\theta(n-i)}
	<c_3\sum_{i=0}^\infty\kappa_\w^{-\theta i}
	=: c_4
	<\infty.
\]
Thus, by the mean value theorem and the above, there exists $\eta'\in \cW^\u_{\rm loc}(\xi,\tau)\cap R^\u(\xi,n)$ such that 
\[
	\lvert R^\u_n(X)\rvert_\w
	=\lvert R^\u_0(T^n(X))\rvert_\w\,\lVert d\tau^n|_{F^\u_{\eta'}}\rVert^{-1}
	=\lvert R^\u_0(T^n(X))\rvert_\w\, e^{S_n\varphi^\u(\eta')}.
\]
Since there are only finitely many Markov rectangles and each of them has nonempty interior, the widths of Markov unstable rectangles $\lvert R^\u(\cdot)\rvert_\w$ are uniformly bounded from below and above by positive numbers. 
This proves the proposition.
\end{proof}

We also have the following estimate for the height of Markov unstable rectangles. Its proof follows an alternative, perhaps more conceptual, way to control the size of Markov rectangles in comparison to the approach in~\cite{Bed:89}. 

\begin{proposition}\label{pro:dis}
If $\Phi$ is Lipschitz on local unstable manifolds, then for every $X\in\Phi$ and $n\ge0$ we have 
\[
	\lvert R^\u_n(X)\rvert_{\rm h}=0.
\]	
Otherwise, if $\Phi$ is not Lipschitz on local unstable manifolds, then there exists $c>1$ such that for every $X=(\xi,\Phi(\xi))$, $n\ge1$, and $\zeta\in \underline R^\u_n(\xi)$ we have
\[
		\frac1c
	\le \frac{\lvert R^\u_n(X)\rvert_{\rm h}}{\exp(S_n\varphi^\cu(\zeta))}
	\le c.
\]
\end{proposition}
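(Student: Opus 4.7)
My plan is to treat the two alternatives in the statement separately. For the Lipschitz case, I would combine Proposition~\ref{prop:critical} with Lemma~\ref{lem:intLip}: being Lipschitz on local unstable manifolds rules out case (b), so by Lemma~\ref{lem:intLip} we have $\Delta^\u_\delta\equiv 0$ on $\Xi$. Hence for every $\xi\in\Xi$ the graph of $\Phi$ restricted to $\Xi\cap\cW^\u_{\rm loc}(\xi,\tau)$ coincides with $\gamma^\u_\xi$, i.e.\ lies on the single strong unstable leaf $\cW^\uu_{\rm loc}(X,T)$ through $X=(\xi,\Phi(\xi))$. Therefore all strong unstable leaves $\cW^\uu_{\rm loc}(Y,T)$ appearing in the definition of $\widehat R^\u_n(X)$ collapse to one and the same leaf, so $\widehat R^\u_n(X)$ intersects each fiber $I_\zeta$ in a single point and $\lvert R^\u_n(X)\rvert_{\rm h}=0$.

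For the non-Lipschitz alternative, my strategy is to use the Markov and skew-product structure to iterate $n$ times forward and reduce to estimating the height of a zeroth-level rectangle. The property~\eqref{propsMar} iterated gives $T^n(R^\u_n(X))=R^\u(T^n(X))$. Because $T$ is a skew product (preserving the fiber foliation) and the strong unstable foliation is $T$-invariant, one obtains $T^n(\widehat R^\u_n(X))=\widehat R^\u_0(T^n(X))$; in particular $T^n$ sends $\widehat I_n(\xi)$ to $\widehat I_0(\tau^n(\xi))$. The mean value theorem applied to the fiber map $T^n_\xi$ on $\widehat I_n(\xi)$ then yields
\[
\lvert R^\u_n(X)\rvert_{\rm h}\;=\;\lvert (T^n_\xi)'(y^*)\rvert^{-1}\,\lvert R^\u_0(T^n(X))\rvert_{\rm h}
\]
for some $y^*\in\widehat I_n(\xi)$. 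The bounded distortion needed to replace $(T^n_\xi)'(y^*)$ by $e^{-S_n\varphi^\cu(\zeta)}$ for an arbitrary $\zeta\in\underline R^\u_n(\xi)$ follows in the standard way, precisely as in Proposition~\ref{pro:dis0}: $\varphi^\cu$ is H\"older continuous (since $T$ is $C^{1+\alpha}$ and $\Phi$ is H\"older by Lemma~\ref{lem:graLip}), and on $\underline R^\u_n(\xi)$ one has $d_1(\tau^k(\zeta),\tau^k(\xi))\le c\kappa_\w^{-(n-k)}$, so that $|S_n\varphi^\cu(\zeta)-S_n\varphi^\cu(\xi)|$ is bounded uniformly in $n$ by a summable geometric series. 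The $\alpha$-H\"older control on $T'_\xi$ similarly controls the difference between $\log\lvert(T^n_\xi)'(y^*)\rvert$ and $-S_n\varphi^\cu(\xi)$.

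What remains, and is the main obstacle, is to show that $\lvert R^\u_0(Y)\rvert_{\rm h}$ is uniformly bounded above and below by positive constants for $Y\in\Phi$. The upper bound is immediate from the finite diameters of the Markov rectangles. For the lower bound I would fix $\delta_0>0$ small enough that $\Xi\cap\cW^\u_{\delta_0}(\eta,\tau)\subset\underline R^\u(\eta)$ for every $\eta\in\Xi$ (possible by compactness, since there are finitely many Markov rectangles each with nonempty interior). Corollary~\ref{cor:erika} then furnishes $C>0$ with $\Delta^\u_{\delta_0}\ge C$ everywhere. For $Y=(\eta,\Phi(\eta))$ this produces $\eta'\in\underline R^\u(\eta)$ such that the two strong unstable leaves through $(\eta,\Phi(\eta))$ and through $(\eta',\Phi(\eta'))$ are separated vertically by at least $C$ at the point $\eta'$. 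The delicate step is to transfer this separation back to the fiber over $\eta$, where it contributes to $\ell_{\rm h}(Y)$ and hence to $\lvert R^\u_0(Y)\rvert_{\rm h}$; this I would do using the equicontinuity of the family $(\gamma^\u_\xi)_\xi$ from Lemma~\ref{lemmaclaim1} together with the fact that distinct leaves of the $\cW^\uu$-lamination are disjoint graphs with uniformly bounded slope, so two distinct leaves cannot close up over a $\delta_0$-neighborhood. This uniform transverse transfer within the partially hyperbolic lamination is the most technical ingredient of the proof.
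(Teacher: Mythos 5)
Your overall strategy coincides with the paper's: the Lipschitz case is handled identically via Proposition~\ref{prop:critical} and Lemma~\ref{lem:intLip}, and in the non-Lipschitz case both arguments reduce, through the Markov property and a bounded-distortion estimate for the fiber derivatives (as in Proposition~\ref{pro:dis0}), to a uniform positive lower bound on the vertical separation inside a level-zero rectangle obtained from Corollary~\ref{cor:erika}. You push the whole set forward by $T^n$ and divide by the fiber derivative; the paper locates the separation at time $n$ and pulls the two separated points back by $T^{-n}$ along a common fiber. These are equivalent.

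Two points in your write-up need repair. First, your choice of $\delta_0$ is impossible as stated: there is no uniform $\delta_0>0$ with $\Xi\cap\cW^\u_{\delta_0}(\eta,\tau)\subset\underline R^\u(\eta)$ for \emph{every} $\eta\in\Xi$, since points on (or arbitrarily near) the boundary of a Markov rectangle have unstable $\delta_0$-neighborhoods meeting the adjacent rectangle; compactness does not help. The standard repair is to work with the rectangle-relative quantity $\widetilde\Delta(\eta)\eqdef\sup_{\eta'\in\underline R^\u(\eta)}\lvert\Phi(\eta')-\gamma^\u_\eta(\eta')\rvert$: it is strictly positive for every $\eta$ (if it vanished, $\underline R^\u(\eta)=\overline{\interior(\underline R^\u(\eta))}$ would contain some set $\Xi\cap\cW^\u_{\delta'}(\eta'',\tau)$ with $\Delta^\u_{\delta'}(\eta'')=0$, and Lemma~\ref{lem:intLip} would force $\Phi$ to be Lipschitz), and a uniform lower bound then follows by a compactness argument over the finitely many closed rectangles using the equicontinuity from Lemma~\ref{lemmaclaim1}. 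Second, the step you single out as the most technical ingredient, namely transferring the separation from the fiber $I_{\eta'}$ back to the fiber $I_\eta$, is unnecessary and should be dropped: the height $\lvert R^\u_0(Y)\rvert_{\rm h}$ is the \emph{maximum} of $\ell_{\rm h}$ over all points of the rectangle, and both $(\eta',\Phi(\eta'))$ and $(\eta',\gamma^\u_\eta(\eta'))$ already lie in $I_{\eta'}\cap\widehat R^\u_0(Y)$ (the first on its own $\cW^\uu$-leaf, the second on the leaf of $Y$), so $\ell_{\rm h}\big((\eta',\Phi(\eta'))\big)\ge\widetilde\Delta(\eta)$ directly. This is fortunate, because the uniform non-closing of distinct $\cW^\uu$-leaves over a whole neighborhood that you invoke is essentially the quantitative separation the proposition is trying to establish, and proving it independently would be circular or at best quite delicate.
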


\begin{proof}
By Proposition~\ref{prop:critical}, $\Phi$ is Lipschitz on local unstable manifolds if, and only if, there exists $\delta>0$ such that $\Delta^\u_\delta(\xi)=0$ for every $\xi\in\Xi$ and, in particular, the graph is contained in the local strong unstable manifold at every point $X=(\xi,\Phi(\xi))$. 
This immediately implies that the above defined height of an Markov unstable rectangle is $0$ at every $\xi\in\Xi$.

If $\Phi$ is not Lipschitz on local unstable manifolds, then by Corollary~\ref{cor:erika} for every $\delta>0$ there is $C(\delta)>0$ such that $\Delta^\u_\delta(\xi)\ge C(\delta)$ for every $\xi$. 
Now given $X=(\xi,\Phi(\xi))$ and $n\ge 1$, by the Markov property~\eqref{propsMar} we have
\[
	T^n\big(R^\u_n(X)\big)
	= T^n\big(R_n(X)\cap\cW^\u_{\rm loc}(X,T)\big)
	\supset R(T^n(X))\cap\cW^\u_{\rm loc}(T^n(X),T).
\]
In particular, it  contains some point $Y'=(\eta',\Phi(\eta'))\in \cW^\u_{\rm loc}(X',T)$,   where $X'=(\xi',\Phi(\xi'))=T^n(X)$, so that $d_1(\eta',\xi')\le\delta$ and $\lvert\gamma^\u_{\xi'}(\eta')-\Phi(\eta')\rvert\ge C(\delta)$. Note that 
$\lvert\gamma^\u_{\xi'}(\eta')-\Phi(\eta')\rvert$ is the distance between the point of intersection of the local strong unstable manifold through $X'$ with the fiber $I_{\eta'}$ and the point $Y'$. Preimages by $T^{-k}$ of these points are both in the common fiber $I_{\tau^{-k}(\eta')}$, for any $k\ge1$.

Since the fiber maps are uniformly H\"older and $T^{-1}$ is uniformly fiber contracting, we can find a constant $D>1$ (independent of $\xi\in\Xi$) such that 
\[
	\lvert T^{-n}_{\eta'}(\gamma^\u_{\xi'}(\eta'))-T^{-n}_{\eta'}(\Phi(\eta'))\rvert
	\ge D^{-1}\cdot \lvert (T^{-n}_{\eta'})'(\Phi(\eta'))\rvert
		\cdot \lvert \gamma^\u_{\xi'}(\eta')-\Phi(\eta')\rvert.
\]
And since $\tau^{-k}$ exponentially contracts the distance between $\eta'$ and $\xi'$, with $\eta=\tau^{-n}(\eta')$  we also obtain
\[
	\lvert  (T^{-n}_{\eta'})'(\Phi(\eta'))\rvert
	= \lvert  (T^{n}_\eta)'(\Phi(\eta))\rvert^{-1}
	\ge D^{-1}\lvert  (T^n_\xi)'(\Phi(\xi))\rvert^{-1}.
\]
In fact, in this inequality we can replace $\xi$ by any point $\zeta$ in $\underline R^\u_n(\xi)$. Finally, recalling the definition~\eqref{e.potentials} of $\varphi^\cu$ we obtain
\[
	\lvert R^\u_n(X)\rvert_\h
	\ge \lvert T^{-n}_{\eta'}(\gamma^\u_{\xi'}(\eta'))-T^{-n}_{\eta'}(\Phi(\eta'))\rvert
	\ge  e^{S_n\varphi^\cu(\zeta)}\cdot D^{-2}\cdot C(\delta).
\]
The upper bound follows analogously recalling that $\Phi$ is compact and hence the height of the initial Markov rectangles is uniformly bounded from above.
\end{proof}

\section{Fibered blenders}\label{sec:Blenders}

A \emph{blender} (see~\cite{BonDia:96} and~\cite{BocBonDia:}) is a hyperbolic and partially hyperbolic set $\Lambda$ of a diffeomorphism $\cT$ with splitting $E^\s\oplus E^{\cu}\oplus E^{\uu}$
(where $E^\s$ is the stable bundle and $E^{\cu}\oplus E^{\uu}$ the unstable one)
being locally maximal in an open neighborhood which has an additional special structure. Namely there is a strong unstable (expanding) cone field $\Cuu$ around the strong unstable bundle $E^{\uu}$ and an open  family $\cD$ of disks, called \emph{blender plaques} or simply \emph{plaques}, tangent to $\Cuu$ that satisfies the following invariance and covering properties: every $D\in\cD$ contains a subset $D_0$ such that $\cT (D_0)\in\cD$. 

Note that every plaque of a blender intersects the local stable manifold of $\Lambda$ (defined analogously to~\eqref{def:stamaniset} below), see Lemma~\ref{l.bobodi} below and its versions in~\cite{BocBonDia:}. 
Though there are points in $\Lambda$ whose strong unstable manifold has nothing to do with the blender in the sense that it does not contain a blender plaque.%
\footnote{An example for the hyperbolic set $\Phi_t$, $t\ne0$ small, in Section~\ref{sss.fiberedblenders} is given by the ``boundary" strong unstable manifold of the fixed point $P_0^t=(0,0,0)$ (compare Figure~\ref{fig.2}).} 
It is essential in our arguments that the family of plaques of the blender is sufficiently big assuring that the ``dynamics of the plaques" and the dynamics of the blender  are related and that the plaques capture an essential part of its dynamics.  This leads to a blender with the  \emph{germ property} defined below. In Remark~\ref{rem:blenders} we will compare these notions with other related ones in the literature.

In the definition of a blender, the family $\cD$ is open in the ambient space. However, here for our purpose it is enough to consider a (sub-)family of discs in the unstable manifold of $\Lambda$. This leads to a \emph{fibered blender} defined in Section~\ref{sec:germprop}.

In Section~\ref{ss.fiberedbh}, following the definition of a blender-horseshoe in~\cite[Section 3]{BonDia:12}, we will introduce a class of fibered blenders which have the germ property and are topologically conjugate to a shift in $N$ symbols. We will call them \emph{fibered blender-horseshoes}. 
It is easy to verify that
the horseshoes $\Phi_t$, $t\ne 0$, in 
Section~\ref{sss.fiberedblenders} are   examples of such objects (indeed they are the paradigmatic examples), see also Example~\ref{ex:blender:1}.
We note that the fibered blender-horsehoes are nonaffine generalizations of these affine horseshoes  (as were also the blender-horseshoes in \cite{BonDia:12}).

\begin{figure}
 \begin{overpic}[scale=.3]{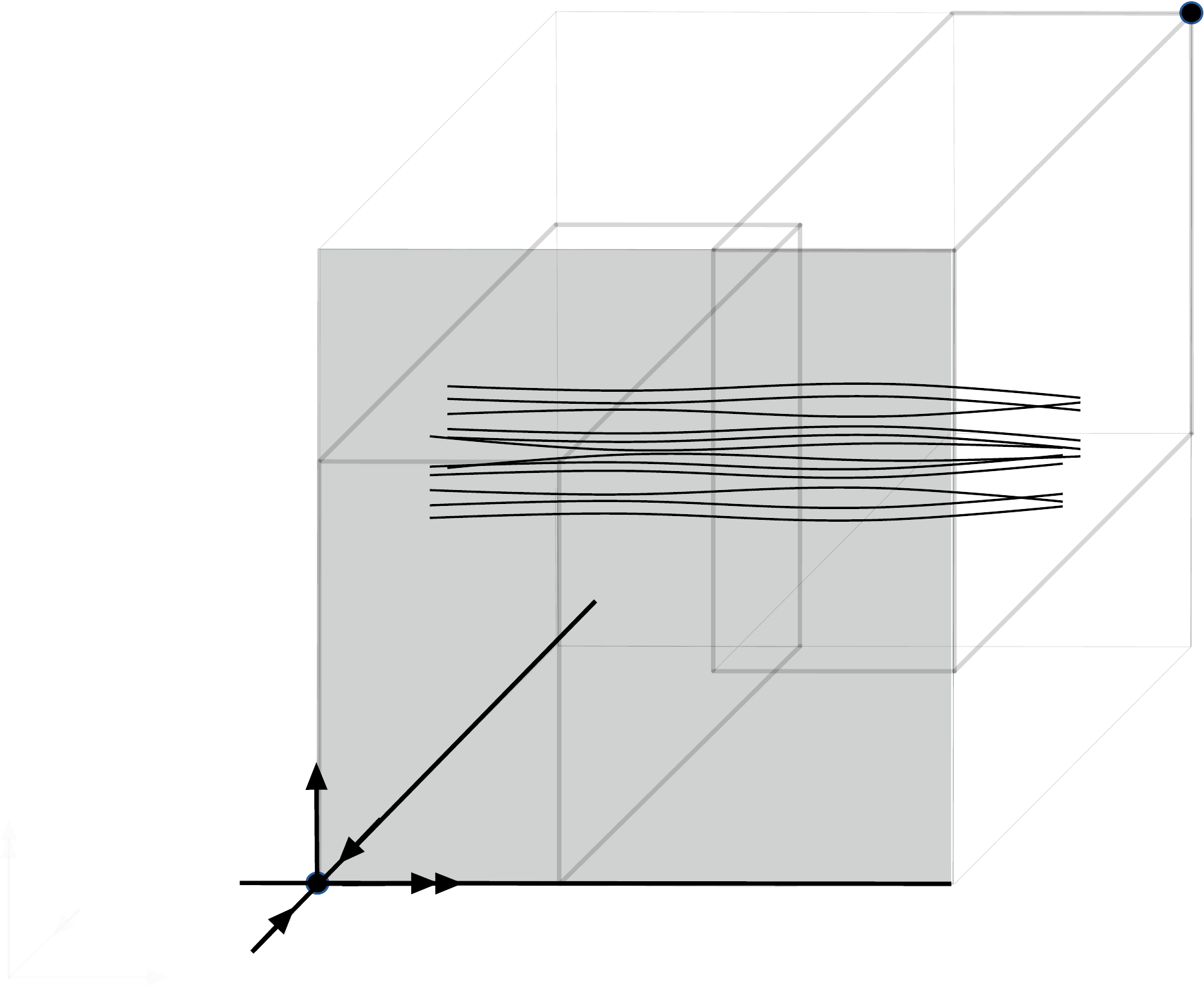} 
 	\put(95,43){\small blender plaques}
         \put(26,3){\small $P$}
         \put(81,8){\small $\cW^{\uu}_{\rm loc}(P,T)$}
         \put(1,56){\small $\cW^{\u}_{\rm loc}(P,T)$}
 \end{overpic}
\caption{A blender for the example in Section~\ref{sss.fiberedblenders}: the almost horizontal plaques cover the intersection region; the local  unstable manifold of $P=P_0^t$ (shaded region) contains the local strong unstable manifold of $P$ (horizontal line $\xi^\s=0$) which does not contain any blender plaque.}
\label{fig.2}
\end{figure}

In what follows, we continue with the fibered setting from Section~\ref{sec:introduction}.

\subsection{Fibered blender}\label{sec:unstblend}

As in Section~\ref{sec:introduction}, let $U\subset M$ be neighborhood of $\Xi$ such that $\Xi=\bigcap_{k\in\bZ}\tau^k(U)$. Recall the definition of a local unstable manifold $\cW^\u_{\rm loc}(\xi,\tau)$ of a point $\xi\in\Xi$ (with respect to $\tau$ and $U$) in~\eqref{def:unsmani}. Recall that the set $\Phi$ is an invariant graph. Given $X=(\xi,\Phi(\xi))$, recalling Section~\ref{sec:2manif}, let
$$
	\cW^{\u}_{\rm loc}(X,T) 
	\eqdef  \cW^\u_{\rm loc}(\xi,\tau) \times I,
$$
where $I\subset\bR$ is some open interval.
Observe that this set indeed is contained in a local unstable manifold of $\Phi$ (with respect to $T$). Let
\begin{equation}\label{def:stamaniset}
	\cW^{\u}_{\rm loc}(\Phi,T)
	\eqdef \bigcup_{X\in \Phi}  \cW^{\u}_{\rm loc}(X,T) .
\end{equation}

We fix a \emph{cone field} $\Cuu$ around the bundle $E^\uu$
which is strictly invariant and uniformly expanding. More precisely, for every $X\in\Phi$ the open cone $\Cuu_X\subset T_X(M\times \bR)$ contains $E^\uu_X$ and the image of its closure under $dT_X$ is contained in $\Cuu_{T(X)}$ and $dT_X$ uniformly expands vectors in $\Cuu_X$. We assume that this cone field can be extended to the neighborhood $U\times I$ keeping the $dT$ invariance and expansion properties (here we assume that $X$ and $T(X)\in U\times I$).%
\footnote{Note that this cone field can be extended to \emph{any} small neighborhood of $\Phi$ keeping the invariance and expansion properties. The point here is that the neighborhood $U$ is fixed \emph{a priori}.} We continue to denote the extension
of this cone field by $\Cuu$.

A \emph{plaque} associated to the cone field $\Cuu$ is a finite union $D=\bigsqcup_{i=1}^m D_i$   of pairwise disjoint closed $C^1$-curves $D_i$ 
(the \emph{decomposition} of $D$)
homeomorphic to  closed intervals and tangent to the cone field $\Cuu$
(i.e., $T_X D \subset \Cuu_X$).
Given a curve $D_i$, we denote by $\lvert D_i\rvert$ its length and define the length of a plaque $D$ by $\lvert D\rvert=\sum_{i=1}^m\lvert D_i\rvert$.

\begin{definition}[Fibered blender]
A family $\fB=(\Phi,U\times I,\Cuu,\cD)$  is a \emph{fibered blender} for $T$ if it satisfies the following properties:
the set $\Phi$ is hyperbolic, partially hyperbolic, and locally maximal in $U\times I$. The cone field $\Cuu$ is a strong unstable expanding one-dimensional cone field defined on $U\times I$  which is forward invariant. The family $\cD$ is a family of 
 plaques associated to $\Cuu$  satisfying the following \emph{relative}, \emph{open and covering}, and \emph{expanding} properties:

\begin{enumerate}
\item[\textbf{FB1}] (relative) Every plaque $D\in \cD$ is contained in $\cW^{\u}_{\rm loc}(\Phi,T)$ and 
its decomposition $D=\bigsqcup_{i=1}^m D_i$ is such that $D_i\subset \cW^{\u}_{\rm loc}(X_i,T)$ for some $X_i\in \Phi$;
\item[\textbf{FB2}] (open and covering) There is $\varepsilon_{\cD}>0$ such that for every plaque $D'$ 
$\varepsilon_\cD$-close to some plaque $D \in \cD$ and contained in
$\cW^{\u}_{\rm loc}(\Phi,T)$ the set $T(D')$ contains a plaque in $\cD$.
\item[\textbf{FB3}] (expanding) There is $\varkappa> 1$ such that for every  plaque $D\in \cD$ there is  $D_0\subset D$ such that  $\lvert D_0\rvert \le \varkappa^{-1} \lvert D\rvert$ and $T(D_0)\in \cD$.
\end{enumerate}
\end{definition}

\begin{remark}[Blenders and fibered blenders]\label{rem:blenders}
The term \emph{fibered} refers to the fact that we consider the fibered setting from Section~\ref{sec:introduction}. 
The term \emph{relative} refers to the fact that we consider only plaques contained in local unstable manifolds.
As in \cite{BocBonDia:}, a fibered blender is persistent by perturbations preserving the fiber structure
(see also Proposition~\ref{pro:persistent}). We will provide in Section~\ref{ss.fiberedbh} important examples where a 
fibered blender is persistent.

Note that the role of the set $\Phi$ in the definition of a fibered blender is in some sense only instrumental and the important objects are the plaques. In general, the set of plaques could be small in the sense of
not capturing all the dynamics of $\Phi$. For instance, 
there could be unstable leaves of $\Phi$ that do not contain any plaque.

Note that in our setting of hyperbolic graphs  the important object is the  set $\Phi$.
Bearing this in mind,
below for fibered blenders we will introduce a  \emph{germ  property} that guarantees that 
the hyperbolic set of a
fibered blender  has a sufficiently rich family of plaques that captures all the dynamics
of the hyperbolic set and  is also sufficiently rich to go on in the dimension arguments. 
 We need that essentially every leaf  $\cW^\u_{\rm loc}(X,T)$, $X\in \Phi$, contains some plaque
(this is the meaning of the term ``capture").
\end{remark}

The following key result is well known in the realm of blenders, see
~\cite[Remark 3.12 and Lemma 3.14]{BocBonDia:}, for completeness we include its proof.

\begin{lemma}\label{l.bobodi}
	Let $\fB=(\Phi,U\times I,\Cuu,\cD)$ be a fibered blender. Then every $D\in \cD$ intersects $\cW^\s_{\rm loc}(\Phi, T)$.
\end{lemma}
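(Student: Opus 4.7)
The plan is to run the standard blender pull-back argument: iterate property FB3 inside $D$ to build a decreasing sequence of nonempty compact subplaques whose forward iterates are captured by $\cD$, and then invoke hyperbolicity and local maximality of $\Phi$ to identify a point in the intersection as lying in some local stable manifold.

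Concretely, I would construct recursively a nested family $D\supset D_0\supset D_1\supset\cdots$ of nonempty compact subsets of $D$ such that $T^{n+1}(D_n)\in\cD$ for every $n\ge 0$. The base step is a direct application of FB3 to $D$, which yields $D_0\subset D$ with $T(D_0)\in\cD$. For the inductive step, one applies FB3 to the plaque $E_n\eqdef T^{n+1}(D_n)\in\cD$ to produce a subset $E_n'\subset E_n$ with $T(E_n')\in\cD$, and then sets $D_{n+1}\eqdef T^{-(n+1)}(E_n')\cap D_n$. Each $D_n$ is a nonempty compact subset of the compact plaque $D$ (it is the preimage of a nonempty compact set under a diffeomorphism, restricted to $D_n$), and by construction $T^{n+2}(D_{n+1})=T(E_n')\in\cD$. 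The finite intersection property then yields $\bigcap_{n\ge 0}D_n\ne\emptyset$.

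Pick any $Y\in\bigcap_{n\ge 0}D_n$. For every $k\ge 1$, the iterate $T^k(Y)$ lies in the plaque $T^k(D_{k-1})\in\cD$, which by FB1 is contained in $\cW^\u_{\rm loc}(\Phi,T)\subset U\times I$; and $Y$ itself lies in $D\subset\cW^\u_{\rm loc}(\Phi,T)\subset U\times I$. Hence the entire forward orbit of $Y$ is trapped in the isolating neighborhood $U\times I$. By local maximality of $\Phi$ in $U\times I$ together with the uniform hyperbolicity and partial hyperbolicity assumed in the Standing hypotheses, a standard shadowing (or stable-graph-transform) argument produces a unique $X\in\Phi$ with $d(T^n(Y),T^n(X))\to 0$ exponentially, and the fact that the full forward orbit of $Y$ stays in $U\times I$ upgrades this to $Y\in\cW^\s_{\rm loc}(X,T)\subset\cW^\s_{\rm loc}(\Phi,T)$, proving the lemma.

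The only delicate point I anticipate is the last step: from the fact that the forward orbit of $Y$ is trapped in $U\times I$, one must invoke the hyperbolic structure to produce an actual $X\in\Phi$ with $Y\in\cW^\s_{\rm loc}(X,T)$, rather than merely asserting that $Y$ belongs to the global stable set. This is classical once $U\times I$ is known to be an isolating neighborhood for the hyperbolic set $\Phi$, which is part of the definition of a fibered blender. Note that the openness property FB2 plays no role here; it is needed only when establishing persistence of the fibered blender property under perturbations, as in Proposition~\ref{pro:persistent}.
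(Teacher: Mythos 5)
Your proposal is correct and follows essentially the same route as the paper: a nested sequence of subplaques obtained by iterating FB3, compactness to extract a point of the intersection, and the observation that its forward orbit is trapped in $U\times I$, whence it lies in $\cW^\s_{\rm loc}(\Phi,T)$ by local maximality. The paper states the final step more tersely (it simply concludes $z\in\cW^\s_{\rm loc}(\Phi,T)$ from the trapped forward orbit), but your more careful justification via the shadowing/stable-set characterization of locally maximal hyperbolic sets is exactly what is implicitly being used.
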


\begin{proof}
Let $D\in\cD$. We define a nested sequence of subsets of $D$ as follows. Let $D_0$ be a subset given by item FB3) in the definition of a fibered blender. Assume that we have already defined subplaques $D_k\subset D_{k-1}\subset\ldots\subset D_0$  such that $T^{i+1}(D_i)\in\cD$, for every $i=0,\ldots,k$. Then let $\hat D_{k+1}$ be a subset of $T^{k+1}(D_k)\in\cD$ given by FB3), that is $T(\hat{D}_{k+1})\in \cD$, and let $D_{k+1}=T^{-(k+1)}(\hat D_{k+1})$. By construction and FB3), there exists a point $z\in\bigcap_{k\ge0}D_k$ such that its forward orbit $\{z,T(z),\ldots\}$ is contained in $U\times I$.
  Hence $z\in\cW^\s_{\rm loc}(\Phi,T)$.  
\end{proof}

The following result is an immediate consequence of the above lemma based
on the relative property of a fibered blender.

\begin{corollary}\label{c.bobodi}
		Let $\fB=(\Phi,U\times I,\Cuu,\cD)$ be a fibered blender. Then every $D\in \cD$ contains a point in $\Phi$.
\end{corollary}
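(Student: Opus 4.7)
The plan is to combine the conclusion of Lemma \ref{l.bobodi} with the relative property FB1 and with the local maximality of $\Phi$. Given $D \in \cD$, Lemma \ref{l.bobodi} provides a point $z \in D$ that belongs to $\cW^\s_{\rm loc}(\Phi, T)$, that is, $z \in \cW^\s_{\rm loc}(Y, T)$ for some $Y \in \Phi$. On the other hand, by property FB1 the plaque $D$ decomposes as $D = \bigsqcup_{i=1}^{m} D_i$ with $D_i \subset \cW^\u_{\rm loc}(X_i, T)$ for some $X_i \in \Phi$. Hence $z \in \cW^\u_{\rm loc}(X_j, T) \cap \cW^\s_{\rm loc}(Y, T)$ for some index $j$.

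Next I would unfold the definitions of the local invariant manifolds (see~\eqref{def:stamaniset} and Section~\ref{sec:2manif}) to conclude that $T^{-k}(z) \in U \times I$ for every $k \ge 0$ (since $z$ is in the local unstable manifold of $X_j$, which in our skew product setting equals $\cW^\u_{\rm loc}(\pi(X_j), \tau) \times I$ and is $T^{-1}$-invariant up to staying inside $U \times I$), and $T^{k}(z) \in U \times I$ for every $k \ge 0$ (since $z \in \cW^\s_{\rm loc}(Y, T)$). Therefore the full orbit of $z$ remains in $U \times I$.

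The conclusion follows from the local maximality of $\Phi$: by hypothesis $\Phi = \bigcap_{k \in \bZ} T^k(U \times I)$, so the fact that the entire orbit $\{T^k(z)\}_{k \in \bZ}$ is contained in $U \times I$ forces $z \in \Phi$. Since $z \in D$, the plaque $D$ meets $\Phi$, which proves the corollary.

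There is no real obstacle here; the statement is essentially an immediate consequence of Lemma \ref{l.bobodi} once one invokes the relative property FB1 to place the point $z$ simultaneously on a local stable and a local unstable manifold of $\Phi$ and then appeals to local maximality. The only subtlety to double-check is that the local unstable leaves appearing in FB1 correspond (via the skew product structure) to the same neighborhood $U \times I$ used in the definition of local maximality, but this is built into the setting at the beginning of Section~\ref{sec:Blenders}.
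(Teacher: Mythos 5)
Your proposal is correct and follows essentially the same route as the paper: apply Lemma~\ref{l.bobodi} to place a point of $D$ on $\cW^\s_{\rm loc}(\Phi,T)$, use the relative property FB1 to place it simultaneously on $\cW^\u_{\rm loc}(\Phi,T)$, and conclude by local maximality of $\Phi$ in $U\times I$. The paper's proof is just a condensed version of your argument (it does not spell out the orbit-containment step), so there is nothing to add.
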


\begin{proof}
Given $D\in\cD$, by the relative property we have $D\subset \cW^\u_{\rm loc}(\Phi,T)$. By  Lemma~\ref{l.bobodi}, $D$ contains a point $X$ in the local stable manifold of $\Phi$. Hence $X\in \cW^\u_{\rm loc}(\Phi,T)\cap \cW^\s_{\rm loc}(\Phi,T)$. Since  $\Phi$ is locally maximal in  $U\times I$, we get $X\in\Phi$.
\end{proof}

\subsection{Germ property}\label{sec:germprop}

We will now explore in more detail the Markov structure in the fibered blender.
Recall the notation in 
 Section~\ref{sec:markov}.
Given $\xi\in\Xi$,  recall that $R^\u(\xi,m)$ denotes the minimal curve contained in $\cW^\u_{\rm loc}(\xi,\tau)$ containing the $m$th level Markov unstable rectangle $\underline R^\u_m(\xi)$. Consider the \emph{$m$th level $\u$-box} of $\xi$ defined by
\[
	B^\u_m(\xi)
	\eqdef R^\u(\xi,m)\times I.
\]
\begin{figure}
\begin{overpic}[scale=.35]{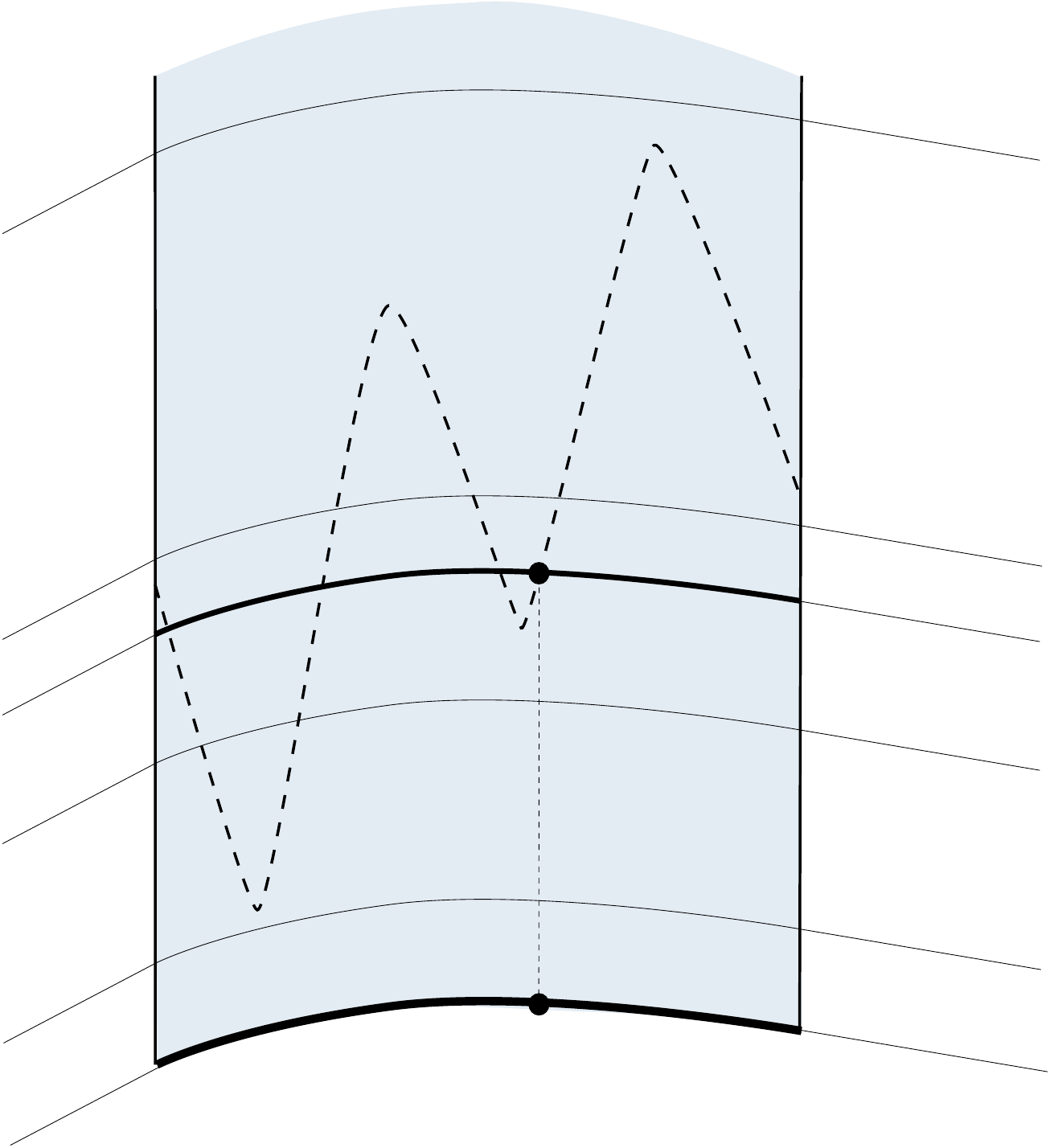}
        \put(44,5){\small $\xi$}
        \put(47,43){\small $X$}
        \put(85,1.7){\small $\cW^\u_{\rm loc}(\xi,\tau)$}
        \put(57,5){\small $B^\u_m(\xi)$}	
 \end{overpic}
 \hspace{1cm}
\begin{overpic}[scale=.35]{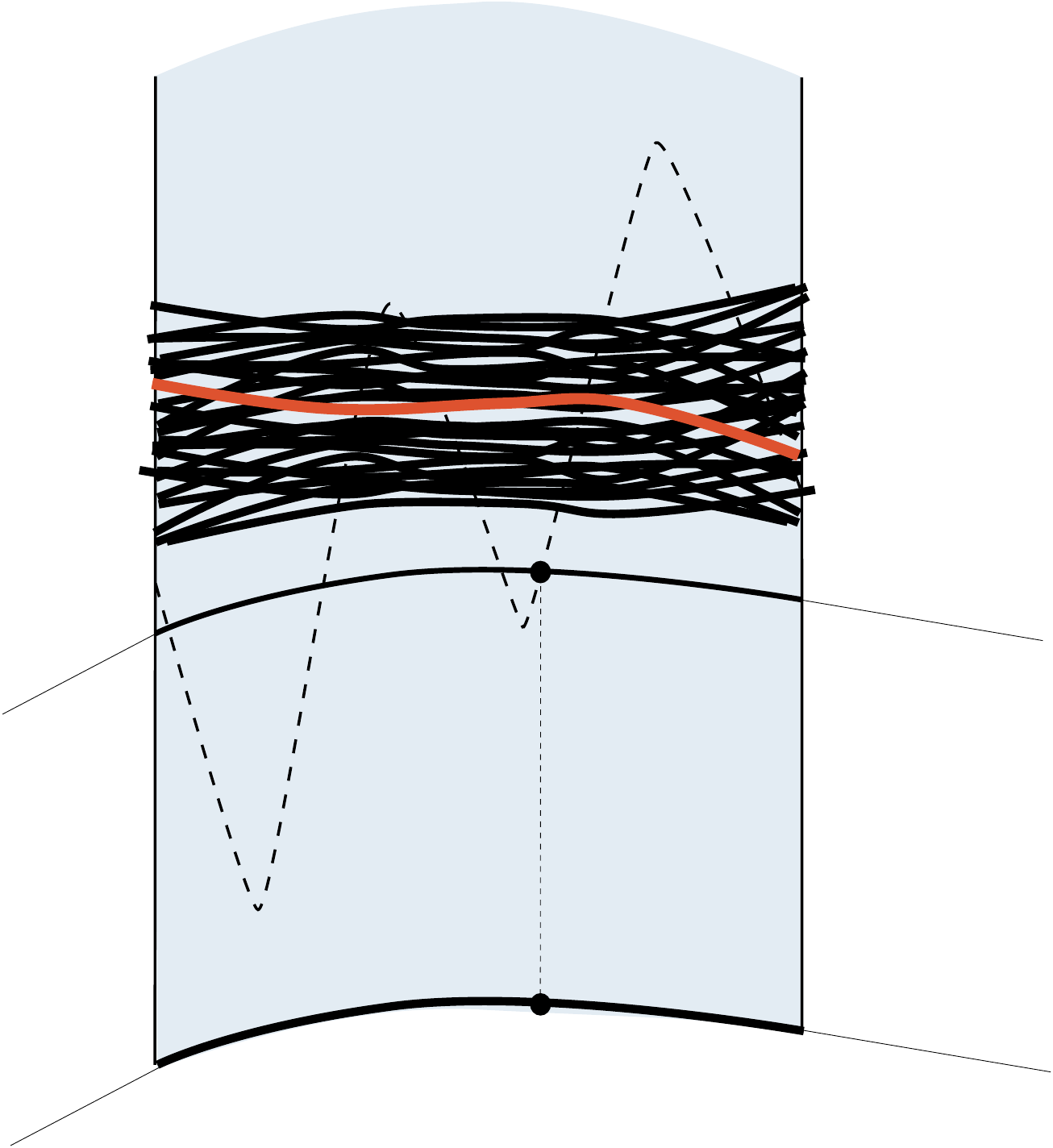}
        \put(44,5){\small $\xi$}
        \put(5,66){\small \textcolor{red}{$\widehat D$}}
 \end{overpic}
\caption{Left: $m$th level $\u$-box of $\xi$, $B^\u_m(\xi)$ (shaded region). Right: germ plaque $\widehat D$ of a well-placed germ plaque in a relative blender}
\label{fig.3}
\end{figure}

Given $X\in\Phi$, for a set $C\subset \cW^\u_{\rm loc}(X,T)$ and $Y=(\eta,y)\in C$, let $I(C,Y)$ denote the connected component of $C\cap I_\eta$ which contains $Y$, where $I_\eta=\{\eta\}\times \bR$. Let 
\[
	\lvert C\rvert_\h
	\eqdef \inf\big\{\lvert I(C,Y)\rvert\colon Y\in C\big\}.
\]

\begin{definition}[Germ property]\label{def:3}
	A fibered blender $\fB=(\Phi, U\times I,\Cuu,\cD)$ has the \emph{germ property}  if there  is $\delta>0$ such that
	for every $\xi\in\Xi$ 
and every $m\ge0$ the $m$th level $\u$-box $B=B^\u_m(\xi)$ satisfies the following properties:
\begin{itemize}
\item[(a)] 
   There is a closed curve $J_m=J_m(\xi)\subset I_\xi$ such that every $Z\in J_m$ is in some plaque of $\cD$.
\item[(b)] 
Let $D_Z\in\cD$ be any plaque containing a point $Z\in B$ and denote by $\hat D_Z$  the connected component of $D_Z\cap B$ containing $Z$.
There is a family of plaques $ D_Z$, $Z\in J_m$ such that $\hat\cD_\fR\eqdef \{\hat D_Z \colon Z\in J_m\}$  is a continuous foliation of the set
\[
	\fR
	\eqdef \bigcup_{Z\in J_m}\hat D_Z.
\]	
\item[(c)] We have	
\begin{itemize}
\item $\lvert T^m(\fR)\rvert_\h\ge\delta$,
\item $T^i(\fR)\subset U\times I$ for all $i=0,\ldots,m$, and
\item for every $\hat D_Z$, $Z\in J_m$, we have $T^m(\hat D_Z)\in\cD$  
\end{itemize}
\end{itemize}
(compare Figure~\ref{fig.3}). We call any such $\fR$ a \emph{germ rectangle} and $\hat D_Z$ a \emph{germ plaque} (with respect to $B$).
\end{definition}

\begin{remark}
Property (a) says that the fibered blender is sufficiently rich capturing the dynamics of the hyperbolic set in the sense that for \emph{every} $\xi\in\Xi$ the local unstable leaf of $X=(\xi,\Phi(\xi))$ (with respect to $T$) contains blender plaques.

Note also that the choice of a plaque $D_Z$ in $\cD$ is arbitrary, we require that the above holds for any such 
a choice and some continuation to a  foliation by plaques in $\cD$.

Finally,  (b) and (c) will imply (see Corollary~\ref{c.germcorollary}) that the fibered blender is sufficiently rich in the sense that (locally) in every $m$th level $\u$-box the projection of the points of $\Phi$ in this box is an interval whose image  blows up to a uniform minimal size (height, that is, its size in the center unstable fiber direction).  
\end{remark}

As an immediate consequence of  Corollary~\ref{c.bobodi} we get the following result which is the key ingredient to show the lower bound for the box dimension (see Claim~\ref{cl:claimkey}). Given $\xi\in\Xi$, $m\ge0$, $B^\u_m(\xi)$, $J_m$, and a germ rectangle $\fR=\bigcup_Z \hat D_Z$ and its associated foliation $\hat \cD_\fR=\{\hat D_Z\}$  as in Definition~\ref{def:3}, denote by $\pi_{\hat\cD_\fR}\colon \fR\to J_m$ the projection along the leaves of this foliation. Then we have
\[	
	\pi_{\hat\cD_\fR}\big(\Phi\cap \fR\big)=J_m.
\]	
We get the following corollary.

\begin{corollary}\label{c.germcorollary}
	Let $\fB=(\Phi, U\times I,\Cuu,\cD)$ be a fibered blender with the germ property.
Then any   germ  plaque of a germ rectangle  contains a point of $\Phi$.
\end{corollary}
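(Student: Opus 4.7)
The plan is to prove this directly by combining property (c) of the germ property (Definition~\ref{def:3}) with Corollary~\ref{c.bobodi} and the $T$-invariance of $\Phi$. The argument should be very short because the germ property is cooked up precisely so that germ plaques, after applying $T^m$, land in $\cD$, and Corollary~\ref{c.bobodi} already tells us that every plaque in $\cD$ meets $\Phi$.

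Concretely, I would first fix a germ rectangle $\fR$ associated to a box $B = B^\u_m(\xi)$, together with its foliation $\hat\cD_\fR = \{\hat D_Z\}_{Z\in J_m}$, and pick any germ plaque $\hat D_Z$. Item (c) of Definition~\ref{def:3} gives $T^m(\hat D_Z) \in \cD$, so $T^m(\hat D_Z)$ is a genuine blender plaque. Applying Corollary~\ref{c.bobodi} to this plaque yields a point $Y \in \Phi \cap T^m(\hat D_Z)$. Since $\Phi$ is $T$-invariant, $T^{-m}(Y) \in \Phi$, and by construction $T^{-m}(Y) \in \hat D_Z$, which gives the desired point of $\Phi$ in the germ plaque.

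Finally, I would note that the displayed equality $\pi_{\hat\cD_\fR}(\Phi \cap \fR) = J_m$ stated just before the corollary is obtained at the same time: the projection $\pi_{\hat\cD_\fR}$ collapses each germ plaque $\hat D_Z$ to the single point $Z \in J_m$ by definition, so producing a point of $\Phi$ in each $\hat D_Z$ yields $Z \in \pi_{\hat\cD_\fR}(\Phi \cap \fR)$; the reverse inclusion is immediate from the codomain of $\pi_{\hat\cD_\fR}$.

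I do not anticipate any substantial obstacle here, since the germ property was designed to reduce exactly to Corollary~\ref{c.bobodi} after one iterate of $T^m$; the only mild point to keep straight is that $\hat D_Z$ itself need not be a blender plaque (it is only a connected piece of $D_Z \cap B$), so one really must use the forward image $T^m(\hat D_Z) \in \cD$ rather than $D_Z$ directly, and then pull the resulting point of $\Phi$ back by $T^{-m}$ using invariance.
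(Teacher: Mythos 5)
Your proof is correct and is exactly the argument the paper intends: the paper presents this as an ``immediate consequence'' of Corollary~\ref{c.bobodi} without writing out the details, and your filled-in version (apply item (c) of the germ property to get $T^m(\hat D_Z)\in\cD$, invoke Corollary~\ref{c.bobodi}, and pull the resulting point of $\Phi$ back by $T^{-m}$ using invariance) is precisely that consequence. Your remark that one must pass through $T^m(\hat D_Z)$ rather than work with $D_Z$ directly is the right point to flag, and your derivation of the displayed equality $\pi_{\hat\cD_\fR}(\Phi\cap\fR)=J_m$ is likewise correct.
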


\subsection{Fibered blender-horseshoes}\label{ss.fiberedbh}

In this section we translate the definition of a blender-horseshoe in \cite{BonDia:12} to our 
 fibered setting and provide examples of fibered blenders. This translation follows closely the exposition in \cite[Section 3.2]{BonDia:12} (though here we consider a horseshoe conjugate to a shift of $N$ symbols instead of one conjugate to a shift of two symbols only, 
moreover (as observed above) we will focus on local unstable manifolds only, see FBH3) and FBH4). The rough idea of our definition is that every blender-horseshoe with a skew product structure is a fibered blender
with the germ property and 
hence provides a class of examples of the objects considered in Sections~\ref{sec:unstblend} and
\ref{sec:germprop}.

We assume that the hyperbolic set $\Phi$ and the open sets $U\subset M$, $I\subset \bR$ are as
in Section~\ref{sec:2manif}. In what follows we assume that $\Xi$ (and hence $\Phi$)  is conjugate to the full shift in
$N$ symbols. We will discuss  more general cases at the end of this section. We consider the following properties.

\begin{itemize}
\item [\textbf{FBH1}] There are two fixed points $P_0$ and $P_{1}\in\Phi$, called \emph{reference saddles},  such that for
any point $X$ in $\Phi$ the 
local stable manifolds $\cW^\s_{\rm loc}(P_i,T)$ each intersects 
$\cW^\u_{\rm loc}(X,T)$ in just one point that we denote by $X_i$, $i=0,1$.
 
\item [\textbf{FBH2}] 
There is a continuous cone field $\Cuu$  around the bundle $E^\uu$ defined on $U\times I$
which is strictly invariant and uniformly expanding. 
\end{itemize}

A curve contained in $\cW^\u_{\rm loc}(X,T)$ for some $X\in\Phi$ and tangent to $\Cuu$ is called a \emph{$\Cuu$-curve}.

\begin{itemize}
\item [\textbf{FBH3}]  There is a continuous family $\{\fR(X)\}_{X\in\Phi}$ of closed sets (rectangles) such that
\begin{itemize}
\item[(i)] $\fR(X)\subset\cW^\u_{\rm loc}(X,T)$ and $\fR(Y)=\fR(X)$ if $\cW^\u_{\rm loc}(Y,T)=\cW^\u_{\rm loc}(X,T)$,
\item[(ii)] each rectangle is bounded by four curves: $D^{+,-}=D^{+,-}(X)$ are $\Cuu$-curves and $F^{\ell,r}=F^{\ell,r}(X)$ are contained in fibers,
\item[(iii)] $X_0,X_1$ are contained in the interior of $\fR(X)$, and
\item[(iv)] every $\Cuu$-curve containing $X_0$ or $X_1$ is disjoint from $D^-\cup D^+$.
\end{itemize}
\end{itemize}

A $\Cuu$-curve $D\subset \fR(X)$ for some $X\in\Phi$ is  \emph{$\uu$-complete} if it  intersects both (boundary) fibers $F^\ell(X)$ and $F^r(X)$ of $\fR(X)$. A $\Cuu$-curve in $\fR(X)$ is \emph{well located} if it is $\uu$-complete and disjoint from $D^\pm(X)$.

\begin{itemize}
\item [\textbf{FBH4}] 
For  every $X\in \Phi$ any pair of  $\Cuu$-curves containing $X_0$ and $X_1$, respectively, are disjoint.
In particular, for every $X\in\Phi$, there is no $\uu$-complete $\Cuu$-curve in $\fR(X)$ simultaneously containing $X_0$ and $X_1$.
\end{itemize}

Every well located $\Cuu$-curve $D$ in $\fR(X)$ splits 
this rectangle into two connected components which we denote by $C^\pm= C^\pm(D)$,
$\fR(X)\setminus D=C^+(D)\cup C^-(D)$.
There  are three pairwise different possibilities:
\begin{itemize}
\item 
either
 $X_0\in D$ or
$X_1\in D$,
\item 
$X_0$ and $X_1$ are in the same component $C^\pm$, and
\item
$X_0$ and $X_1$ are in different components $C^-$ and $C^+$.
\end{itemize}
A   $\Cuu$-curve which is well located and satisfies the first possibility is \emph{extremal}. One satisfying the last possibility is \emph{in between} $\cW^\s_{\rm{loc}} (P_0,T)$ and $\cW^\s_{\rm{loc}} (P_1,T)$, or simply, \emph{in between}. 

We denote by $\cD$ the family of all well located $\Cuu$-curves in between. This family will turn out to be   the invariant family of plaques in a fibered blender.
 We denote by $\cD^\ext$ the family of 
$\Cuu$-curves in $\cD$ together with all extremal $\Cuu$-curves.

We will need a geometrically more precise version  of the covering property in the definition of a fibered blender ``every plaque $D\in\cD$ contains a subplaque $\hat D$ such that $T(\hat D)\in\cD$''.
Let us state this condition more precisely. We call a \emph{strip} a set which is homeomorphic to a rectangle foliated by curves in $\cD^\ext$. To emphasize the role of the foliating curves for a strip $S$, we write $S=\{D_t\}_{t\in[0,1]}$.
  A strip is \emph{$\u$-complete} or simply \emph{complete} if its boundary contains a pair of (different) extremal $\Cuu$-curves.  The \emph{height} of a strip $S$ is $h(S)\eqdef \max\{ |I_\xi\cap S|_\h\colon \xi \in\Xi\}$. A set $S'$ is a \emph{substrip} if it is a subset of some strip and  $T(S')$ is again a strip. 

Given a strip $S=\{D_t\}_{t\in[t_1,t_2]}$ we consider an extension of it to a complete strip $S'=\{D_t'\}_{t\in[0,1]}$ (i.e., $D_t'=D_t$ for every $t\in [t_1,t_2]$). If the strip $S$ is not complete, its extension to a complete strip is not unique. Note that these extensions always exist. Note that, by construction, every strip contains at least one substrip.

\begin{remark}
\label{r.hsize}
By condition FBH4),
	there is $\nu>0$ such that the height of every complete strip is at least $\nu$.  
\end{remark}

\begin{itemize}
\item [\textbf{FBH5}] 
For every $D\in\cD^\ext$ and every complete strip $S=\{D_t\}_{t\in[0,1]}$ 
such that $D=D_s$ for some $s\in [0,1]$ there  are parameters
$0=t_0<t_1<t_2<\cdots <t_N=1$ such that:
\begin{itemize}
\item for each $i=0,\ldots, N-1$ there exists a family of subsets $D_t^i$ of $D_t$, $t\in [t_i,t_{i+1}]$ such  that 
$\bigcup_{t\in [t_i,t_{i+1}]} T(D^i_t)$ is a complete strip,
\item	the union of  (disjoint) substrips 
\[
S_i\eqdef\bigcup_{t\in[t_i,t_{i+1}]}D_t^i, 
\quad i=0,\ldots, N-1
\] 
covers $\Phi\cap\cW^\u_{\rm loc}(X,T)$ for some $X\in \Phi$.
\end{itemize}
\end{itemize}

\begin{remark}\label{r.cuuexp}
Since the foliating $\Cuu$-curves are tangent to the uniformly expanding cone field $\Cuu$
we have $\lvert D^i_t\rvert\le \varkappa^{-1} \lvert D_t\rvert$
 for some $\varkappa>1$.
\end{remark}

\begin{remark}\label{r.BMarkov}
Given any $X=(\xi,\Phi(\xi))$ and its rectangle $\fR(X)$, 
for every complete strip $S$ in
$\fR(X)$ and every $m\ge 0$ it holds
$B^\u_m(\xi)\cap \Phi\subset S_i$ for some $i=0,\ldots, N-1$. 
Note also that $(B^\u_m(\xi)\cap \Phi)\cap  S_j=\emptyset$ for every $j\ne i$.
\end{remark}

\begin{definition}[Fibered blender-horseshoe]
A hyperbolic and partially hyperbolic set $\Phi$ 
satisfying the Standing hypotheses  in Section~\ref{ss:setttting}
is a \emph{fibered blender-horseshoe} if there are 
an open set $U\times I$, a cone field $\Cuu$, and a family of $\Cuu$-curves $\cD$ satisfying conditions FBH1)--FBH5). We call the family $\cD$ the family of \emph{plaques} of the fibered blender-horseshoe.
\end{definition}

\begin{remark}[Fibered blender-horseshoes \emph{versus} blender-horseshoes]
We briefly compare the definition of a fibered blender-horseshoe with 
the original blender-horseshoe in \cite{BonDia:12}.
Our definition is a translation of that definition to our setting where some conditions are 
written in a more suitable way adapted to our needs.
Properties FBH1)--5) are related to properties BH1)--6) in the definition of a blender-horseshoe in~\cite{BonDia:12}
as follows:
property FBH1) is a consequence of the Markov properties in BH1). 
Properties FBH2) and FBH4) correspond exactly to BH2) and BH4), respectively.
FBH3) is a consequence of BH3)--4).
Finally, FBH5) joins BH5) and BH6). 

Let us observe that property FBH5) implies that a fibered blender-horseshoe is a
fibered blender.
Properties FBH1)-FBH4) are used to guarantee the germ property. Also observe that condition 
FBH1) has only an instrumental role to define the family of plaques of the blender. 
\end{remark}

Recall that given a hyperbolic set $\Phi$ of a diffeomorphism $T$ in the manifold $M\times \bR$ there is a $C^1$ neighborhood $\cU$ of $T$ such that every  $\tilde T\in \cU$ has a hyperbolic set $\Phi_{\tilde T}$ which is close to $\Phi$ and such that the restriction of $\tilde T$ to 
$\Phi_{\tilde T}$ is conjugate to the restriction of $T$ to $\Phi$, we call $\Phi_{\tilde T}$ the \emph{continuation} of $\Phi$ for $\tilde T$. Analogously, we denote by $P_{\tilde T}$ the continuation of a saddle $P$ for $\tilde T$.
We are interested in the particular fibered context. Denote by $\Diff^1_{\rm skew} (M\times \bR)$
the subset of the space of $C^1$ diffeomorphisms $\Diff^1 (M\times \bR)$ consisting
of diffeomorphisms with the skew product structure in \eqref{e.skew-product-structure}.
Following the arguments in \cite[Lemma 3.9]{BonDia:12} we get the following.

\begin{proposition}\label{pro:persistent}
Fibered blender-horseshoes exist and are persistent in the sense that for a  given fibered blender-horseshoe 
$\Phi$ of $T\in \Diff^1_{\rm skew} (M\times \bR)$ with reference saddles $P$ and $Q$  there is a neighborhood $\cU$ of $T$ in $\Diff^1_{\rm skew}(M\times \bR)$ such that for all 
$\tilde T\in \cU$ the continuation of $\Phi$ for $\tilde T$ is a fibered blender-horseshoe with reference saddles $P_{\tilde T}$ and $Q_{\tilde T}$.
\end{proposition}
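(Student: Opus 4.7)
The plan is to establish existence and persistence separately, viewing each of the defining conditions FBH1)--FBH5) as essentially an open condition within $\Diff^1_{\rm skew}(M\times\bR)$.

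For existence, I would verify that the affine horseshoes $\Phi_t$ built in Section~\ref{sec:ex1}, with $t\ne 0$ small and $\lambda\in(1,2)$, satisfy FBH1)--FBH5).  The fixed points $P_0^t$ and $P_1^t$ serve as reference saddles, the vertical cone around the strong unstable direction is strictly $T_t$-invariant and uniformly expanding, and the rectangles $\fR(X)$ can be taken bounded by two fibres and two $\Cuu$-curves lying $C^1$-close to the local strong unstable segments of $P_0^t$ and $P_1^t$.  Condition FBH5) then follows from the geometric overlap of the two rectangles $S_0^t,S_1^t$ in the fibre direction on the interval $[t/\lambda,t/(\lambda(\lambda-1))]$ noted in Section~\ref{sss.fiberedblenders}; the splitting $0=t_0<t_1<t_2=1$ with $N=2$ is read off directly from the two-leg iterated function system, and the disjointness condition FBH4) is ensured because $P_0^t$ and $P_1^t$ sit in the interior of different vertical components of $\fR(X)$ cut out by the extremal $\Cuu$-curves.

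For persistence, given $T$ with fibered blender-horseshoe $\Phi$ and a small $C^1$-neighborhood $\cU\subset \Diff^1_{\rm skew}(M\times\bR)$, I would proceed as follows.  Structural stability of the hyperbolic basic set yields, for each $\tilde T\in\cU$, a continuation $\Phi_{\tilde T}$ with a conjugacy $h_{\tilde T}\colon\Phi\to\Phi_{\tilde T}$ close to the identity, and in particular continuations $P_{\tilde T},Q_{\tilde T}$ whose local stable and unstable manifolds depend continuously on $\tilde T$ in the $C^1$-topology; this gives FBH1).  Strict invariance and uniform expansion of $\Cuu$ are $C^1$-open conditions, so after possibly shrinking the cones slightly, FBH2) persists.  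The rectangles $\fR(X)$ are redefined for $\tilde T$ using the continuations of their bounding $\Cuu$-curves and fibres; the interior inclusion (iii) and the disjointness statement (iv) in FBH3), as well as the disjointness of $\Cuu$-curves through $X_0$ and $X_1$ in FBH4), are expressed by strict inequalities among finitely many transverse objects in a compact set and therefore persist.

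The hard part will be FBH5), which encodes the Markov-like covering property of the blender and is the most structural condition.  My plan is to fix, for $T$, a complete strip $S$ together with its combinatorial data $0=t_0<\ldots<t_N=1$ and the substrips $S_0,\ldots,S_{N-1}$, noting that completeness of each image $T(S_i)$ is determined by a finite collection of transverse intersections of $\Cuu$-curves with fibres inside a compact subset of $U\times I$.  Continuous dependence of all these objects on the diffeomorphism then allows one to define, for every sufficiently $C^1$-close $\tilde T$, substrips $\tilde S_i$ with the same combinatorics such that each $\tilde T(\tilde S_i)$ is still a complete strip; the covering of $\Phi_{\tilde T}\cap\cW^\u_{\rm loc}(\tilde X,\tilde T)$ by the $\tilde S_i$ follows from the conjugacy $h_{\tilde T}$ combined with the continuous dependence of unstable leaves on $\tilde T$, while the $\Cuu$-expansion of Remark~\ref{r.cuuexp} is preserved through FBH2).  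Combined with the preceding items, this shows that $\Phi_{\tilde T}$ is a fibered blender-horseshoe with reference saddles $P_{\tilde T}$ and $Q_{\tilde T}$, as required.
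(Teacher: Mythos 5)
Your outline is correct and is, in substance, the argument the paper itself relies on: the paper gives no details for this proposition beyond the remark that it follows ``the arguments in \cite[Lemma 3.9]{BonDia:12}'', with existence supplied by Example~\ref{ex:blender:1} exactly as you describe. The persistence part is the standard openness argument, and your treatment of FBH1)--FBH4) (transversality, strict cone invariance, strict inclusions and disjointness in a compact setting) is fine. The one place where your write-up needs strengthening is FBH5): as stated, that condition is quantified over \emph{every} $D\in\cD^{\ext}$ and \emph{every} complete strip $S$ containing it, an infinite (continuum) family, so ``fixing a complete strip $S$ together with its combinatorial data'' and perturbing verifies the condition only for the continuation of that one strip. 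To conclude you must make the openness uniform over the whole family, which is done in the usual way: the rectangles $\fR(X)$, $X\in\Phi$, form a compact continuous family, the covering/overlap in FBH5) and the disjointness in FBH4) are witnessed by quantities (such as the uniform height $\nu$ of Remark~\ref{r.hsize} and a uniform expansion constant $\varkappa>1$) that are bounded away from the degenerate values uniformly in $X$, and these uniform bounds persist under a sufficiently small $C^1$-perturbation preserving the skew product structure. With that compactness step made explicit, your plan yields the proposition.
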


\begin{theorem}\label{t.fbhisgerm}
	Every fibered blender-horseshoe is a fibered blender which has the germ property.
\end{theorem}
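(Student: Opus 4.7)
The strategy is to first verify the three axioms FB1--FB3 of a fibered blender for the family $\cD$ of well-located in-between $\Cuu$-curves, and then to construct, for every $\xi\in\Xi$ and $m\ge 0$, an explicit germ rectangle inside the level-$m$ unstable box $B^\u_m(\xi)$ by iterating FBH5 backwards starting from a carefully chosen complete strip.

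The blender axioms are largely bookkeeping. FB1 is immediate since each $D\in\cD$ is by definition a single $\Cuu$-curve lying in some $\cW^\u_{\rm loc}(X,T)$. For FB3, given $D\in\cD$, embed $D$ as a central plaque of a complete strip $S$ and apply FBH5 together with Remark~\ref{r.cuuexp} to obtain $D_0\subset D$ with $\lvert D_0\rvert\le\varkappa^{-1}\lvert D\rvert$; the FBH5 branch index $i$ can be chosen so that $T(D_0)$ sits strictly between $\cW^\s_{\rm loc}(P_0,T)$ and $\cW^\s_{\rm loc}(P_1,T)$ inside the complete strip $T(S_i)$, which is possible by FBH4 since the two reference saddles define disjoint branches. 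FB2 follows from the openness of both ``well located'' and ``in between'' in the $C^1$ topology on $\Cuu$-curves, combined with the continuous dependence of the FBH5 decomposition on its central curve.

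The heart of the argument is the germ property. Fix $\xi\in\Xi$ and $m\ge 0$, set $X=(\xi,\Phi(\xi))$, and let $(i_0,i_1,\dots)$ denote the Markov itinerary of $X$ in the symbolic coding provided by FBH5. Choose a complete strip $S^{(m)}\subset R(T^m(X))$ whose two bounding $\Cuu$-curves are themselves in between (rather than coinciding with the extremal boundary of $R(T^m(X))$); by Remark~\ref{r.hsize} its fiber height is at least $\nu$. Iterate FBH5 backwards: at each step $k=m-1,\dots,0$, the strip $S^{(k+1)}$ together with the symbol $i_k$ singles out a substrip $S^{(k)}\subset R(T^k(X))$ with $T(S^{(k)})=S^{(k+1)}$. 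By Remark~\ref{r.BMarkov} the final strip $\fR\eqdef S^{(0)}$ lies inside $B^\u_m(\xi)$. Set $J_m\eqdef I_\xi\cap\fR$; this is a closed curve by connectedness of $\fR$ together with transversality of the fibers to $\Cuu$-curves coming from the splitting $E^\cu\oplus E^\uu$. Each foliating $\Cuu$-curve of $\fR$ through $Z\in J_m$ serves as $\hat D_Z$, extending along the invariant cone field to a full plaque $D_Z\in\cD$. Then properties (a)--(c) of Definition~\ref{def:3} hold: the foliation covers $J_m$; continuity descends from FBH3; and $\lvert T^m(\fR)\rvert_\h=h(S^{(m)})\ge\nu\eqdef\delta$, while $T^k(\fR)\subset R(T^k(X))\subset U\times I$ and $T^m(\hat D_Z)\subset S^{(m)}\subset\cD$.

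The main obstacle is verifying that the $\Cuu$-curves foliating $\fR$ extend to genuine plaques in $\cD$, since the ``in between'' condition is a global property of the extended curve rather than of its restriction to $B^\u_m(\xi)$. This is handled by the initial choice of $S^{(m)}$ being bounded by in-between plaques: because the backward FBH5 branches preserve the relative position of foliating leaves with respect to the local stable manifolds of $P_0$ and $P_1$, the in-between property propagates to all pullbacks, so each leaf of $\fR$ is the restriction of a bona fide plaque in $\cD$ after extension. Continuity of the resulting foliation follows from the continuous dependence of $\{R(Y)\}_{Y\in\Phi}$ in FBH3 and the $C^1$ regularity of $T$.
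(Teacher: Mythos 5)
Your verification of FB1--FB3 is fine and essentially matches the paper's, but your construction of the germ rectangle runs in the wrong direction, and this creates a genuine gap. You fix a strip $S^{(m)}$ in $\fR(T^m(X))$ at the top level and then ``iterate FBH5 backwards'' to produce substrips $S^{(k)}$ with $T(S^{(k)})=S^{(k+1)}$. But FBH5) is a purely forward statement: given a complete strip $S$ at level $k$, it produces substrips $S_i$ whose images $T(S_i)$ are \emph{some} complete strips at level $k+1$ --- it does not assert that an arbitrarily prescribed complete strip at level $k+1$ arises as such an image. Nothing in the axioms gives you a preimage substrip of your chosen $S^{(k+1)}$; indeed the cone field $\Cuu$ is only \emph{forward} invariant, so $T^{-1}$ of a $\Cuu$-curve need not be a $\Cuu$-curve, and $T^{-1}(S^{(k+1)})\cap \fR(T^k(X))$ need not even be a strip. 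A second problem is your choice of $S^{(m)}$: you ask that its two bounding curves be ``in between,'' but by definition a \emph{complete} strip is one whose boundary contains a pair of distinct \emph{extremal} curves, and Remark~\ref{r.hsize} (height at least $\nu$) is exactly the consequence of FBH4) applied to those extremal boundaries. A strip bounded by two in-between curves is not complete and can have arbitrarily small height, so your lower bound $\delta=\nu$ evaporates.

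The argument should go forward. Start from \emph{any} complete strip $S^{(0)}$ in $\cW^\u_{\rm loc}(X,T)$; by Remark~\ref{r.BMarkov} the set $B^\u_m(\xi)\cap\Phi$ lies in a single substrip $S^{(0)}_{i_0}$, and $S^{(1)}\eqdef T(S^{(0)}_{i_0})$ is complete by FBH5). Repeat with $S^{(1)}$ and $B^\u_{m-1}(\tau(\xi))$, and so on; after $m$ steps you arrive at a complete strip $S^{(m)}$, which has height at least $\nu$ by Remark~\ref{r.hsize} and is foliated by curves of $\cD^\ext$ by construction. The germ rectangle is then $\fR=T^{-m}(S^{(m)})$, whose leaves are sub-curves of the leaves of $S^{(0)}$, and conditions (a)--(c) hold with $\delta=\nu$ and $J_m=\fR\cap I_\xi$. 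In particular the ``main obstacle'' you single out (whether the leaves of $\fR$ extend to genuine plaques of $\cD$) largely dissolves: condition (c) of the germ property only asks that $T^m(\hat D_Z)\in\cD$, which is automatic because $T^m(\hat D_Z)$ is a foliating leaf of the complete strip $S^{(m)}$.
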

 
\begin{proof}
The fact that a fibered blender-horseshoe is a fibered blender follows immediately from
FBH5): just embed a plaque $D_t\in \cD$ in a complete strip and consider any subset $D^i_t$ 
as in FBH5) 
 and note that by Remark~\ref{r.cuuexp}
$\lvert D^i_t\rvert \le \varkappa^{-1}  \lvert D_t\rvert$, $\varkappa>1$.
 
To get the germ property
take any $\xi\in\Xi$ and any $m\ge1$ and consider $B^\u_m(\xi)$. 
This set is contained in some rectangle $\fR(X)\subset \cW^{\u}_{\rm loc}(X,T)$. 
We consider a complete strip $S$ in $\cW^{\u}_{\rm loc}(X,T)$ and its associated  substrips $S_i$, $i=0,\ldots,N-1$, as in FBH5).
Then the strips $T(S_i)$ are complete by definition.
By Remark~\ref{r.BMarkov} we have
$B^\u_m(\xi)\cap \Phi\subset S_{i_0}$ for some $i_0$ and we denote by $S^{(0)}_{i_0}=S_{i_0}$. We let $S^{(1)}\eqdef T(S^{(0)}_{i_0})$ and
 note that, by construction, $S^{(1)}$ is complete. Thus by Remark~\ref{r.hsize} we have  $\lvert S^{(1)}\rvert_\h \ge \nu$. Moreover, $S^{(1)}$ intersects $\Phi$.
 
We can argue analogously with the complete strip $S^{(1)}$ and the 
set $B^\u_{m-1}(\tau(\xi))$ and consider the corresponding substrip $S^{(1)}_{i_1}$ that contains the set $B^\u_{m-1}(\tau(\xi))\cap\Phi$.  Arguing recursively, for $j=0,\dots,m-1$, 
we obtain complete strips $S^{(j)}$ and substrips $S^{(j)}_{i_j}$ of $S^{(j)}$ such that $S^{(j+1)} = T (S^{(j)}_{i_j})$.
Consider the final strip $S^{(m)}$ and let $\fR\eqdef S^{(0)}_{i_0}=T^{-m}(S^{(m)})$.
By construction, the set $\fR$ is a rectangle satisfying conditions (b) and (c) in the germ property
with $\delta=\nu$, the interval $J_m=\fR\cap I_\xi$, and 
$\hat D_Z= T^{-m}(D_Z)$, where the strip $S^{(m)}$ is foliated by plaques $D_Z$.
\end{proof} 

	
\subsection{Examples}

\begin{figure}
a) \begin{overpic}[scale=.2]{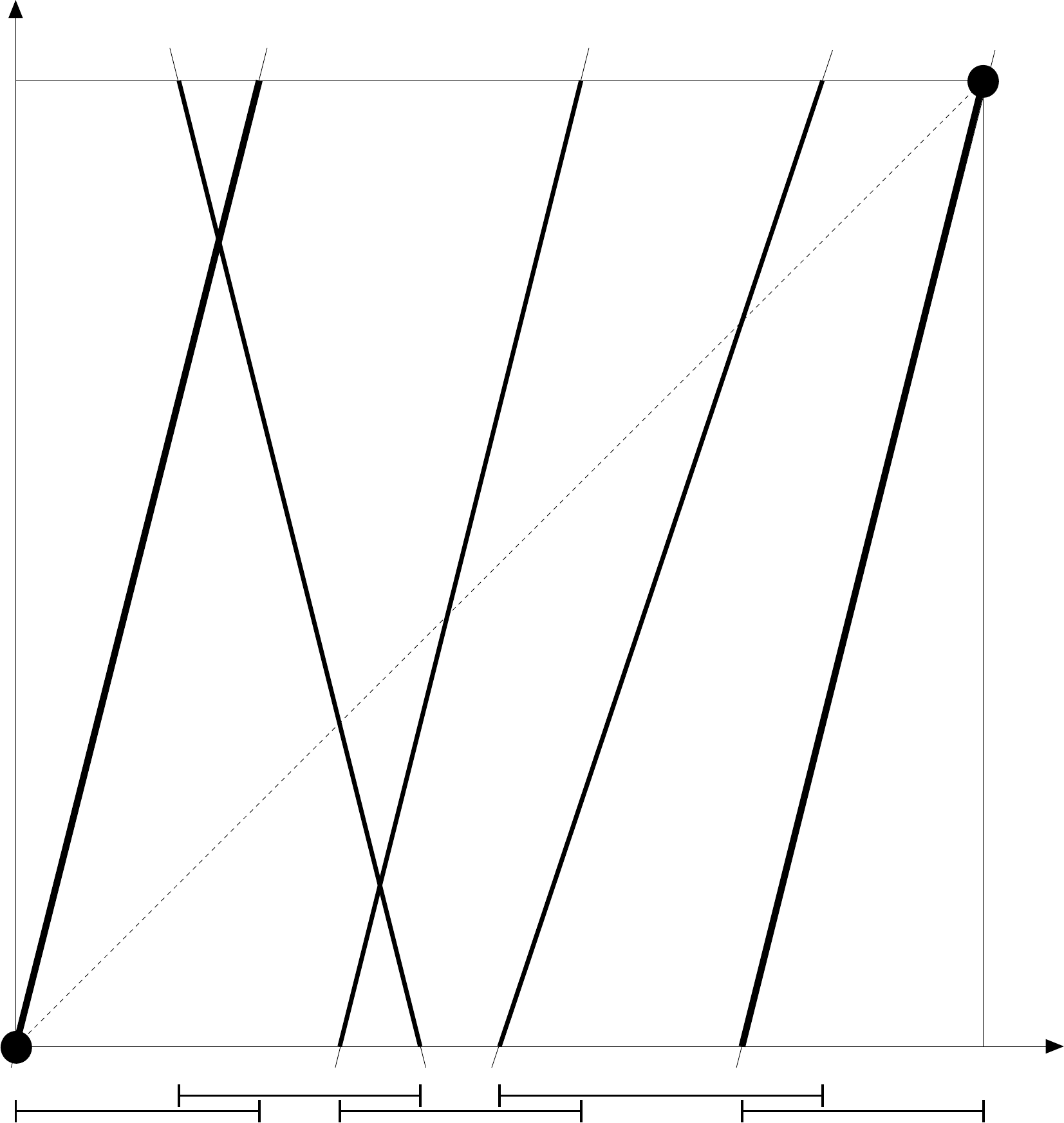} 
\end{overpic}
\hspace{0.1cm}
 b) \begin{overpic}[scale=.2]{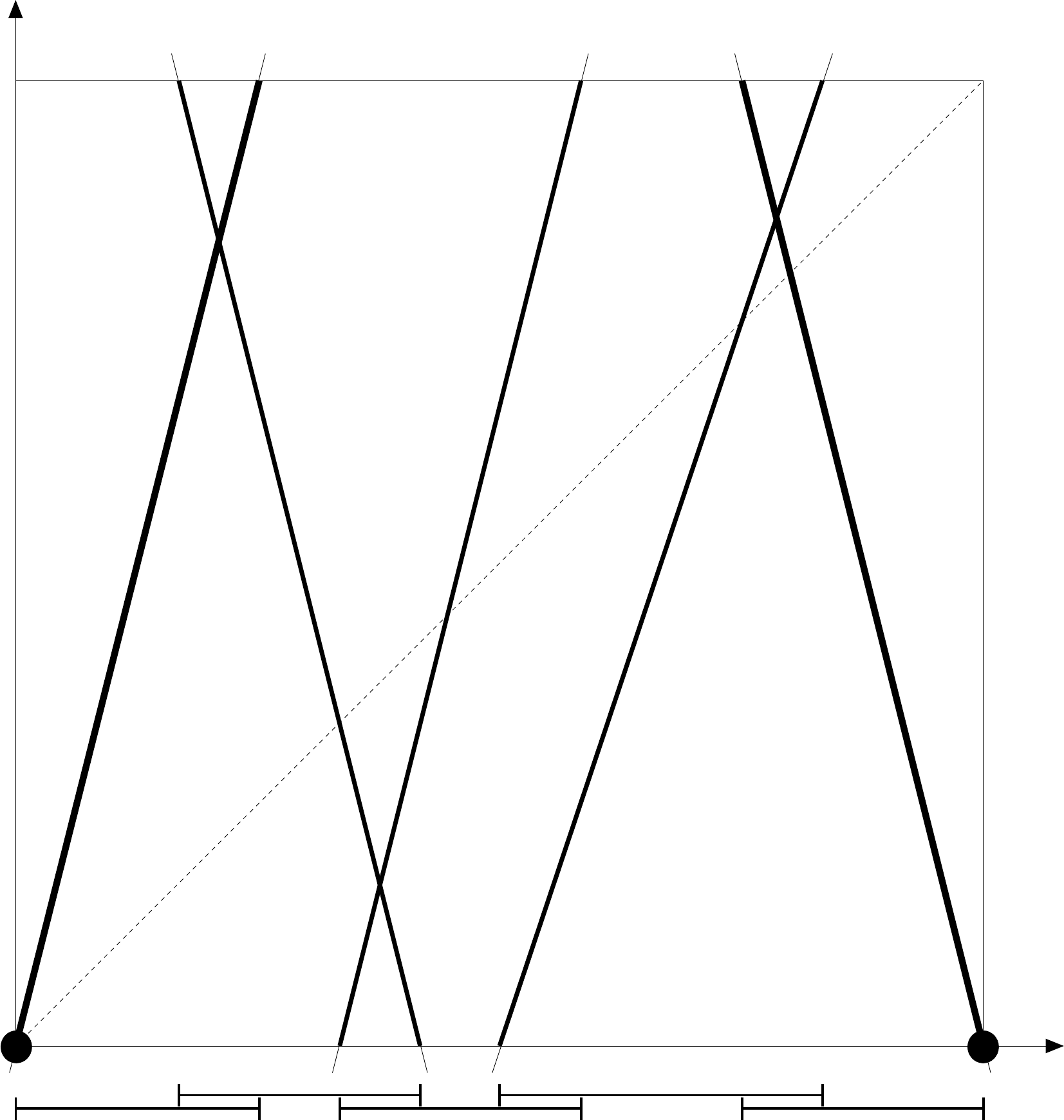} 
 \put(10,37){\small $T_0$}
 \put(70,37){\small $T_{N-1}$}
 \put(0,-5){\small $p_0$}
 \put(83,-5){\small $x_1$}
\end{overpic}
\hspace{0.1cm}
c) \begin{overpic}[scale=.2]{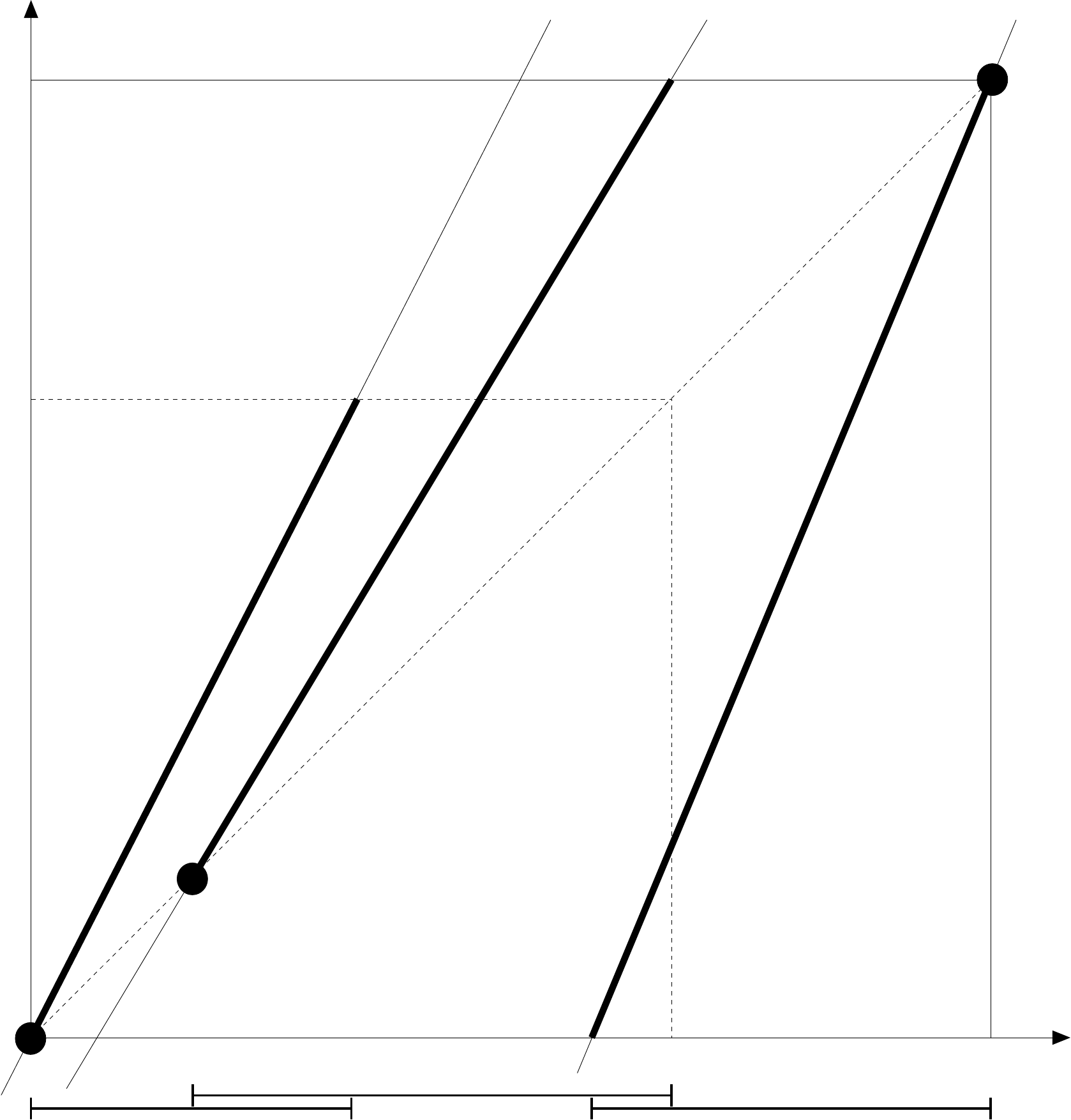} 
 \put(10,37){\small $T_0$}
 \put(70,37){\small $T_1$}
 \put(30,37){\small $T_2$}
 \put(0,-5){\small $x_0$}
 \put(12,-5){\small $x_2$}
 \put(85,-5){\small $x_1$}
\put(60,-5){\small $y_2$}
\put(50,-5){\small $y_1$}
\put(28,-5){\small $y_0$}
\end{overpic}
\caption{Fiber maps for examples of step skew products}
\label{f.nopreservation}
\end{figure}

As mentioned already, our definition of a fibered blender-horseshoe does not aim for full generality. In what follows we present some examples and also some variations of these fibered blender-horseshoes satisfying its essential properties. 

For simplicity, in the following examples, we consider affine models and study maps of the form
$T\colon \Sigma\times \bR \to \Sigma\times \bR$ (that is, instead of~\eqref{e.skew-product-structure} already associating the symbolic code $\underline i=\chi^{-1}(\xi)$ of a point $\xi\in\Xi$) with step skew product structure
$$	
T(\underline i,x)
	=(\sigma(\underline i),T_{i_0}(x)),
$$
where $\Sigma=\{0,\ldots,N-1\}^{\bZ}$, $N\in\bN$ and $\sigma$ is the shift map. We also refrain from giving the details of the construction.

\begin{example}[Example in Section~\ref{sec:ex1} continued]\label{ex:blender:1}
Assuming $t\ne0$ and  $\lambda\in(1,2)$, $\Phi_t$
 satisfies all properties of a fibered blender-horseshoe. Clearly, it defines a step skew product with hyperbolic and partially hyperbolic dynamics and immediately provides the associated cone fields and plaque families. If $\lambda\in(1,2)$, then any complete strip contains two substrips whose projections to the $(\xi^\u,x)$-plane are the rectangles $S_0^t$ and $S_1^t$ defined in~\eqref{eq:rectanglesS01}. Note that the projection of these rectangles to the (vertical) $x$-axis overlap in the interval $[t/\lambda,t/(\lambda(\lambda-1))]$. In this case we have that both fixed points $P_0^t=(0^\bZ,0)$ and $P_1^t=(1^\bZ,t/(\lambda-1))$ have disjoint local strong unstable manifolds and in particular satisfy property FBH3) item (iv) whenever the cone field is chosen small enough. 
\end{example}

\begin{example}
Figure~\ref{fig.11} gives variations of  Example~\ref{ex:blender:1}  with three symbols  (Figure~\ref{fig.11} b) and c)) and where a fibered blender-horseshoe can only be found in some genuine subset (Figure~\ref{fig.11} a)).
\end{example}

Let us point out that our conditions do not explicitly require that the fiber maps preserve  orientation.
Although the fact that the hyperbolic set is confined between the local stable manifolds of $P_0$ and $P_1$, respectively, (compare property FBH3) item (iii)) implies that the fiber maps of the skew product, defining the fibered blender-horseshoe
and relating these reference saddles, do preserve the orientation, as for example depicted in the Figure~\ref{f.nopreservation}$\,$a).

\begin{example}
We can adapt  the definition of a fibered blender-horseshoe to a model as in Figure~\ref{f.nopreservation}$\,$b).
Let $p_0$ be the fixed point of the fiber map $T_0$ and 
$P_0=(0^{\mathbb{Z}}, p_0)$. Given $X\in\Phi$, we
define the point $X_0$ as before and let $X_1=(\underline i, x_1)$ be the point in 
$T^{-1} (\cW^\s_{\rm loc} (P_0, T)\cap \fR(T(X)))$ with $T_{N-1}(x_1)=p_0$, compare Figure~\ref{f.nopreservation}$\,$b), where $\fR(\cdot)$ denote the corresponding rectangles for FBH3). 
The fact that the hyperbolic invariant set in $\Sigma\times \bR$ is a fibered blender with the germ property in such a model follows now exactly as in the proof of
Theorem~\ref{t.fbhisgerm}.
\end{example}

\begin{figure}
a) \begin{overpic}[scale=.3]{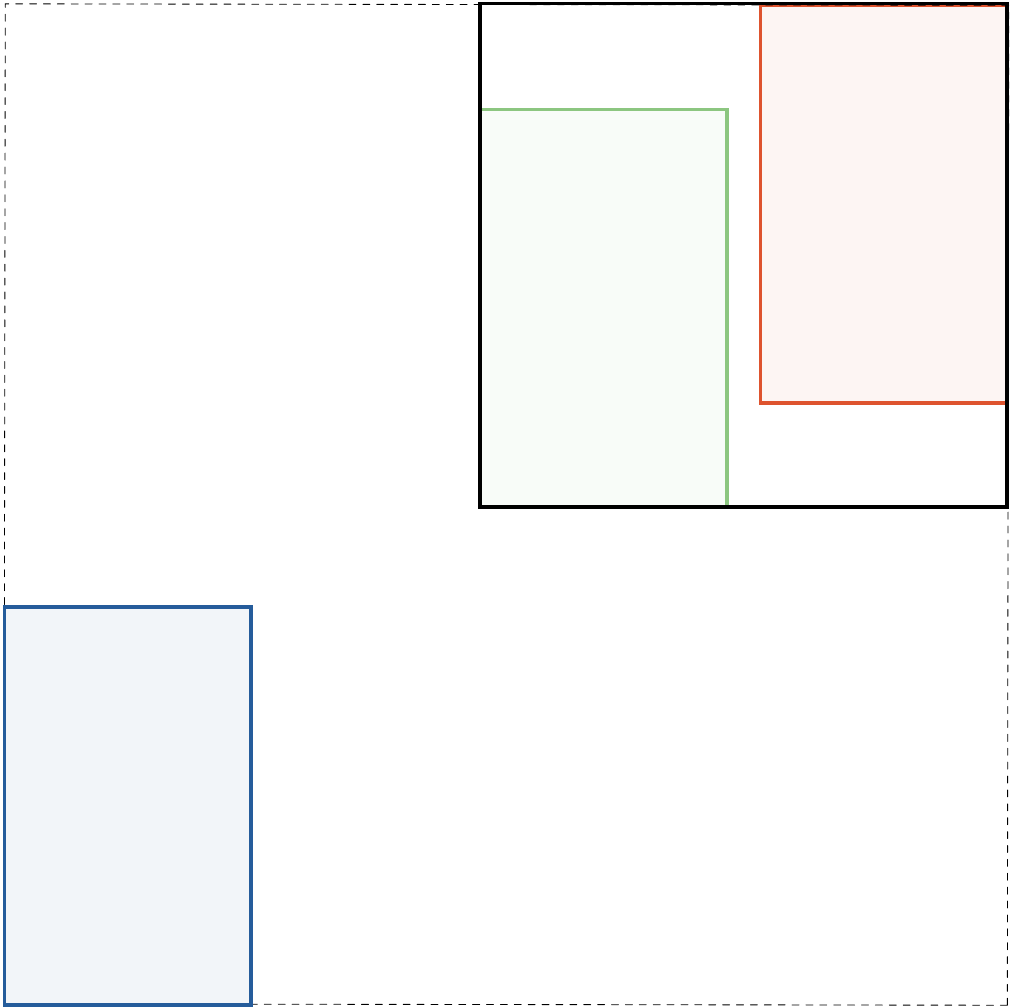} 
\end{overpic}
\hspace{0.5cm}
 b) \begin{overpic}[scale=.3]{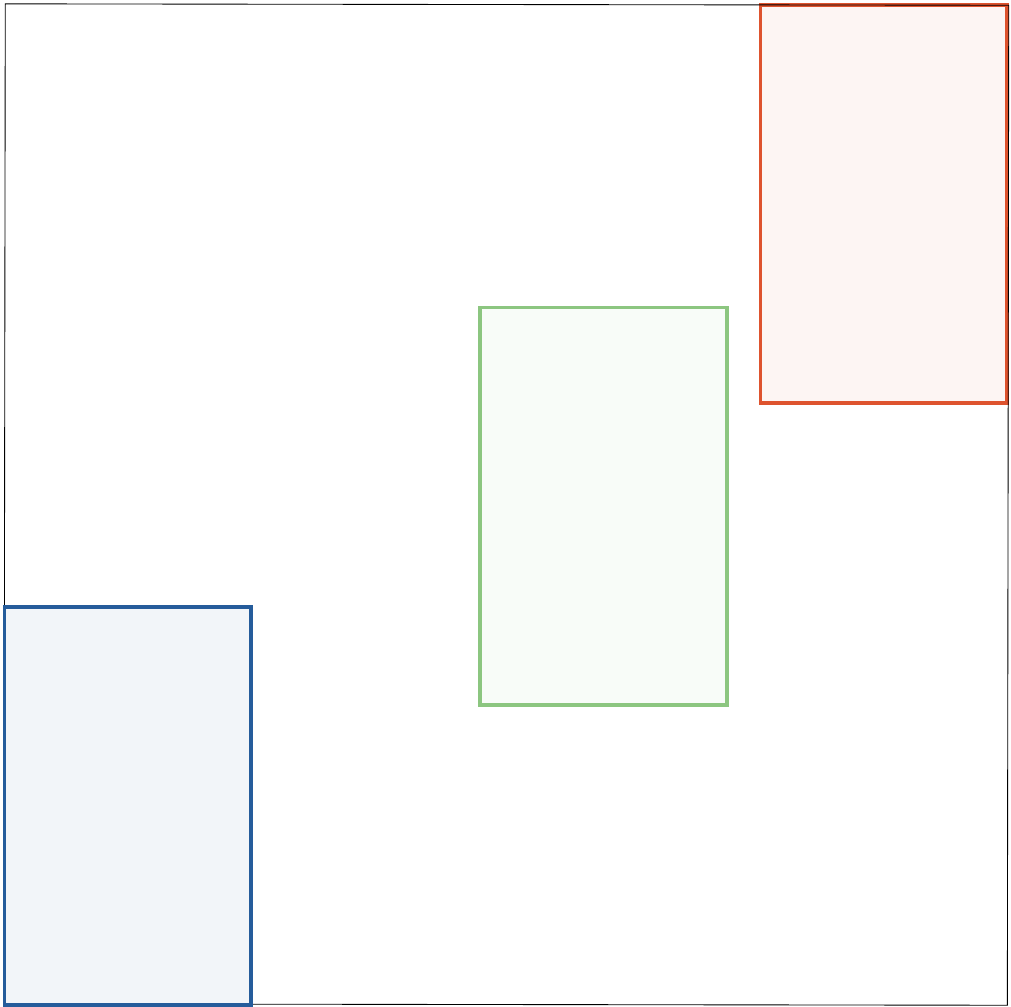} 
\end{overpic}
\hspace{0.5cm}
 c) \begin{overpic}[scale=.3]{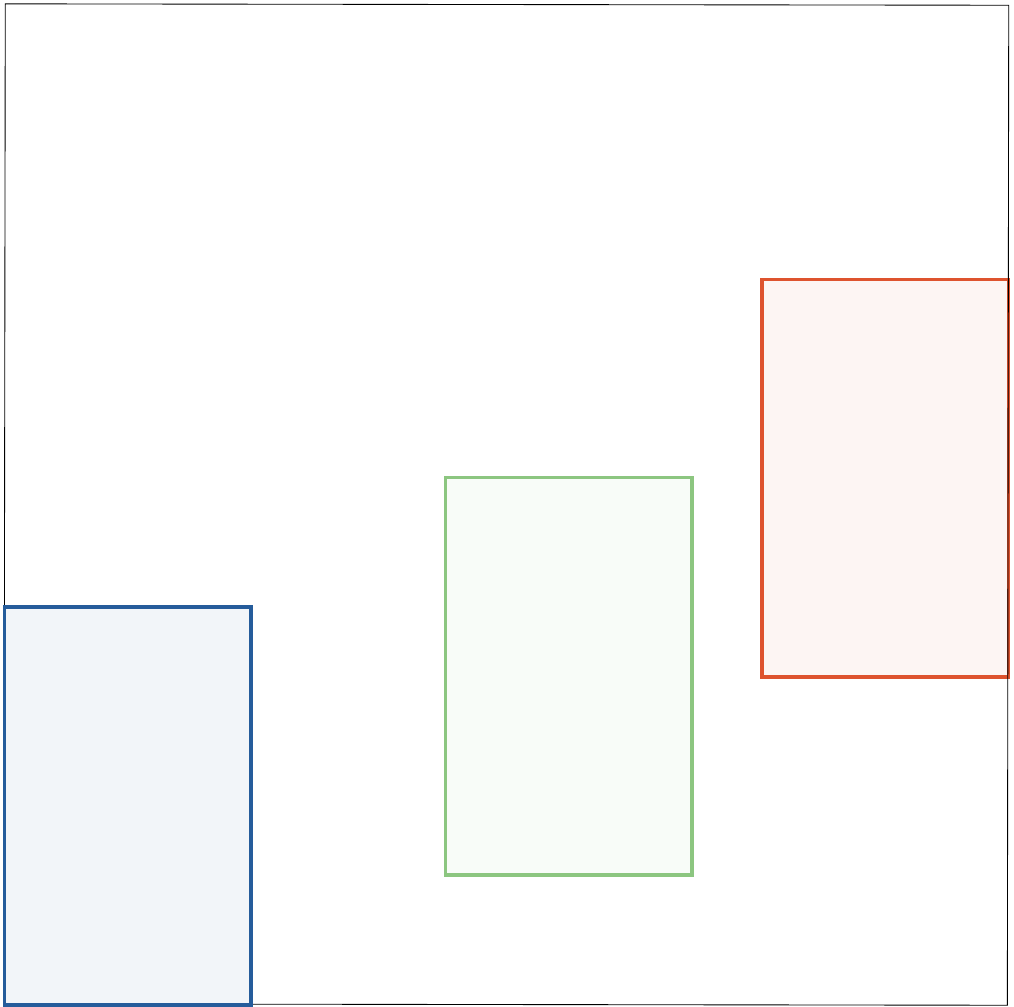} 
\end{overpic}
\caption{Fibered blender-horseshoes in a  subset (Figure a)) and  in the full invariant set (Figures b) and c))}
\label{fig.11}
\end{figure}

\begin{example}\label{exa:6}
We can also consider mixing subshifts $\Sigma_A\subset\Sigma$ as, for example, the
one associated to the transition matrix
$$
A=\left( \begin{matrix}
1& 1& 0\\
0& 1& 1\\
1& 1& 1
\end{matrix} \right)
$$
with $N=3$ and the fiber maps $T_0$, $T_1$, and $T_2$ 
 depicted in Figure~\ref{f.nopreservation}$\,$c). 
 Consider intervals $I_0=[x_0,y_0]$, $I_1=[y_1,x_1]$, and 
$I_2=[x_2,y_2]$, with $x_0<x_2<y_0<y_1<y_2<x_1$,
and expanding maps $T_0$, $T_1$, and $T_2$ defined on these intervals
such that $T_i(x_i)=x_i$, $T_0(I_0)=I_0\cup I_2=[x_0, y_2]$, 
$T_1(I_1)=I_0\cup I_1\cup I_2=[x_0, x_1]$,
and $T(I_2)=I_1\cup I_2=[x_2,x_1]$.

We only provide a picture proof indicating the 
definition of the complete strips and substrips (compare FBH5)). 
The two fixed points are $X_0=(0^{\bZ},x_0)$ and $X_1=(1^{\bZ},x_1)$.
To prove that  we have a fibered blender-horseshoe, 
consider 
$\u$-boxes $B^\u_2(jk)=\{\underline i \in \Sigma_A \colon i_0=j, i_1=k\}$.
The set of plaques $\cD$ is depicted in Figure~\ref{f.plaquessubshift} according to the coordinate
$i_0$. Note that each plaque $D$ contains a $\hat D$ such that $T(\hat D)$ is a plaque.
\begin{figure}
\begin{overpic}[scale=.5,
]{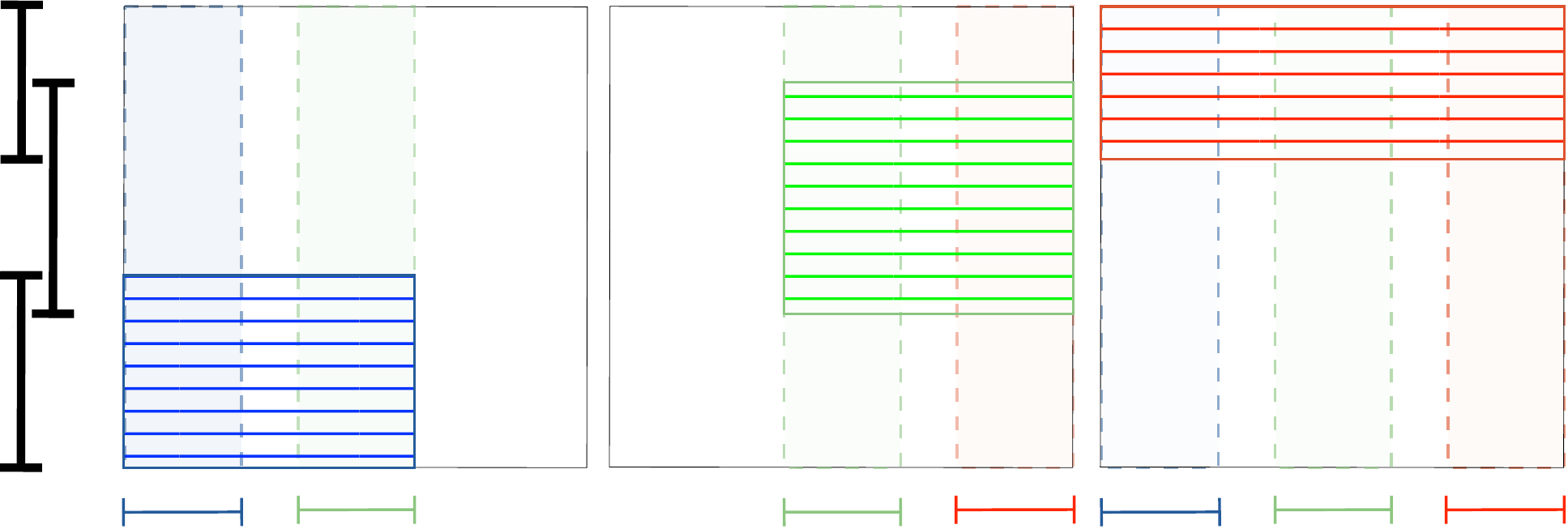} 
 \put(-3.5,10){\small $I_0$}
  \put(-3.5,20){\small $I_2$}
   \put(-3.5,28){\small $I_1$}
 \put(7,-3){\tiny $B^\u_2(00)$}
  \put(18,-3){\tiny $B^\u_2(02)$}
 \put(49,-3){\tiny $B^\u_2(22)$}
  \put(60,-3){\tiny $B^\u_2(21)$}
   \put(70,-3){\tiny $B^\u_2(10)$}
  \put(81,-3){\tiny $B^\u_2(12)$}
 \put(92,-3){\tiny $B^\u_2(11)$}
\end{overpic}\\[1cm]
\hspace{0.6cm}
\begin{overpic}[scale=.5
]{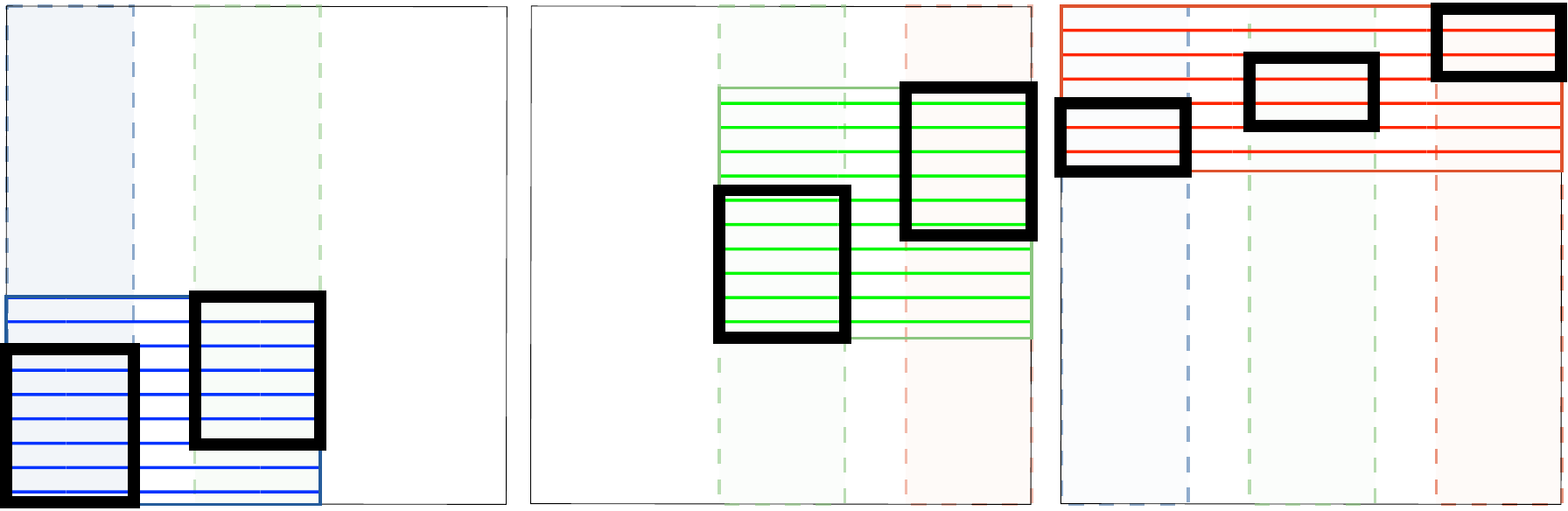} 
 \put(10,16){\small $S^0$}
  \put(4,-4){\small $S^0_0$}
  \put(15,0){\small $S^0_2$}
  \put(50,28){\small $S^2$}
  \put(49,6){\small $S^2_2$}
  \put(59,13){\small $S^2_1$}
  \put(75,33){\small $S^1$}
  \put(70,17){\small $S^1_0$}
  \put(82,20){\small $S^1_2$}
   \put(93,23){\small $S^1_1$}
\end{overpic}
\caption{Strips and $\u$-boxes in Example~\ref{exa:6}}
\label{f.plaquessubshift}
\end{figure}
As for the fibered blender-horseshoes, we can consider strips:
there are three possibilities of complete strips $S^0$, $S^1$, and $S^2$ (also depending on the coordinate $i_0$) which
are also depicted in that figure. We also define substrips $S^i_j$ such that $T(S^i_j)\supset S^j$
when $S^i_j$ is nonempty.
Note that (since there are no transitions from $0$ to $1$ and from $2$ to $0$) the sets
$S^0_1$ and $S^2_0$ are both empty. The complete strips are the sets
$$
	S^0 = B^\u_1(0) \times [x_0,y_0],\quad
	 S^1 = B^\u_1(1) \times [y_1,x_1],\quad
	S^2 = B^\u_1(2) \times [x_2,y_2].
 $$
\end{example}

\begin{remark}[Example in Section~\ref{sec:ex2} continued]
We observe that a hyperbolic and partially hyperbolic graph $\Phi$ over an Anosov diffeomorphism of a surface  $M=\Xi$ as in Section~\ref{sec:ex2} can be considered as a ``boundary case'' of  a fibered blender with germ property. Indeed, consider the \emph{one} plaque family $\cD$ provided by the family of all local strong unstable manifolds of points in $\Phi$. They clearly provide a continuous foliation of every local unstable manifold by plaques which are contained in local unstable manifolds (property FB1)). As~\eqref{propsMar} analogously holds for all local strong unstable manifolds $\cW^\uu_{\rm loc}$ we verify property FB3).
\end{remark}

\section{Some thermodynamics in shift spaces}\label{sec:shiftspace}

We recall some basic facts about thermodynamic objects in shift spaces. Standard
references are, for example,~\cite{Bow:08,Pes:97}, though we provide an almost
self-contained pedestrian approach.

\subsection{Future and past shifts}

Consider the shift space $\Sigma=\{0,\ldots,N-1\}^\bZ$ and the usual left shift
$\sigma\colon\Sigma\to\Sigma$ as above (Section~\ref{sec:markov}).
We denote by $\Sigma^+=\{0,\ldots,N-1\}^\bN$ the space of right-sided infinite
sequences on $N$ symbols and consider the shift
$\sigma^+\colon\Sigma^+\to\Sigma^+$ defined by
$\sigma^+(i_0i_1\ldots)=(i_1i_2\ldots)$. Denote by
$\pi^+\colon\Sigma\to\Sigma^+$ the projection $\pi^+(\underline i)=\underline
i^+=(i_0i_1\ldots)$. Note that the projection of any $\sigma$-invariant set is
$\sigma^+$-invariant. 
Given $n\ge1$, we denote by $\Sigma_n^+\eqdef\{0,\ldots,N-1\}^n$ the set of
one-sided finite sequences of length $n$. Given $n,m\in\bZ$, $n\le m$, and a
finite sequence $(j_n\ldots j_m)$, we denote by $[j_n\ldots j_m]=\{\underline
i\in\Sigma\colon i_k=j_k \text{ for }k=n,\ldots,m\}$ the corresponding
\emph{cylinder set} of length $m-n+1$. Analogously, we define one-sided infinite
cylinders $[\ldots j_{-2}j_{-1}.]=\{\underline i\in\Sigma\colon i_k=j_k\text{
  for }k\le-1\}$ and $[j_0j_1\ldots]=\{\underline i\in\Sigma\colon i_k=j_k\text{
  for }k\ge0\}$. If $0\le n\le m$, let $[j_n\ldots j_m]^+=\pi^+([j_n\ldots
  j_m])$.

For symbolic systems and continuous potentials, the topological
pressure can be computed in a simplified way by choosing representatives from all
the cylinders of a given length. More precisely, if $\psi:\Sigma^+\to \bR$ is
continuous, then 
\begin{equation}\label{eq:pressureinshift}
	P_{\sigma^+}(\psi) 
	= \lim_{n\to\infty} \frac{1}{n} \log \sum_{(i_0\ldots i_{n-1})\in\Sigma_n^+}
		\exp\big(S_n\psi(\underline i^+)\big) ,
\end{equation}
where for each finite sequence $(i_0\ldots i_{n-1})$ the sequence $\underline
i^+\in[i_0\ldots i_{n-1}]^+$ is any  completion of the given prefix to
an infinite sequence.

\subsection{Joint Birkhoff averages}

Given a continuous function $\psi\colon\Sigma\to\bR^2$, $\psi(\underline
i)=(\psi_1(\underline i),\psi_2(\underline i))$, and a vector
$\va=(a_1,a_2)\in\bR^2$, we consider the \emph{level set of joint Birkhoff
  averages}
\[
	L(\psi,\va) \eqdef\Big\{\underline i\in\Sigma \colon 
	\lim_{n\to\infty}\frac{1}{ n}S_n\psi_1(\underline i)=
	a_1, \lim_{n\to\infty}\frac{1}{ n}S_n\psi_2(\underline i)=a_2\Big\}.
\]
Note that $L(\psi,\va)$ is $\sigma$-invariant.  Let $L_{\rm irr}(\psi)$ be the
set of all sequences at which at least one of the averages does not
converge (which is also $\sigma$-invariant).  Let 
\[
	D(\psi) \eqdef\{\va\in\bR^2\colon L(\psi,\va)\ne\emptyset\}.
\]
Note that $D(\psi)$ is compact.
Observe the so-called
\emph{multifractal decomposition} of $\Sigma$
\[
	\Sigma = \bigcup_{\va\in D(\psi)}L(\psi,\va)\cup L_{\rm irr}(\psi).
\]

\subsection{Multifractal analysis on the future shift}\label{subsec:mfa}

As later on we will focus on Markov \emph{unstable} rectangles, we now restrict
our considerations to $\sigma^+\colon\Sigma^+\to\Sigma^+$.
For that, given a potential $\psi\colon\Sigma\to\bR$, we now recall a classical way to define associate thermodynamical quantities ($\widehat\psi$ the unstable part of the potential $\psi$ and its pressure $P_{\sigma^+}(\widehat\psi)$ associated to $\psi$ and $P_\sigma(\psi)$).

First, given $\psi\colon\Sigma\to\bR$ and $n\ge0$ define
\[
	\var_n\psi
	\eqdef \sup\{\lvert\psi(\underline j)-\psi(\underline i)\rvert
		\colon \underline i\in[j_{-n}\ldots j_n]\}
\]
and let $\cF$ denote the family of all continuous functions
$\psi\colon\Sigma\to\bR$ for which $\var_n\psi\le b\alpha^n$ for all $n\ge0$ for
some positive constants $b$ and $\alpha\in(0,1)$.  It is easy to see that if $\psi$ is H\"older  continuous, then it is in $\cF$.

Given a H\"older continuous function $\psi\colon\Sigma\to\bR$, by~\cite[1.6
  Lemma]{Bow:08} there is $\phi\in\cF$ such that $\phi$ is \emph{homologous} to
$\psi$, that is, there is $u\in\cF$ such that
\[
	\phi = \psi - u+u\circ\sigma^{-1},
\]
and that for every $\underline j\in\Sigma$ we have
\[
	\phi(\underline i)=\phi(\underline j)\quad \text{ for every }
	\underline i\in [ j_{0}j_{1}\ldots].
\]
Hence, the function $\widehat\psi\colon\Sigma^+\to\bR$, $\widehat\psi(\underline i^+)\eqdef \phi(\underline i)$ 
for any $\underline i\in[i_0i_1\ldots]$, is well-defined and called  the \emph{unstable part} of $\psi$.%
\footnote{Note that there is an equivalent way to define $\widehat\psi$ by
\[
	\log\widehat\psi(\underline i^+)
	\eqdef \lim_{n\to\infty}\log\frac{\mu([i_0\ldots i_n])}{\mu([i_1\ldots i_n])},
\]
where $\mu$ is the Gibbs equilibrium state of $\psi$ (with respect to $\sigma$)
(see~\cite{Bow:08} and~\cite[Appendix II]{Pes:97}).}

\begin{remark}\label{rem:cohomobir}
By~\cite[Proposition A2.2]{Pes:97}  we have
\[
	P_{\sigma}(\psi)
	= P_{\sigma^+}(\widehat\psi).
\] 
Note that the homology relation immediately implies for every $\underline i\in\Sigma$
\[
	\lim_{n\to\infty}\frac1nS_n\psi(\underline i)
	=\lim_{n\to\infty}\frac1nS_n\widehat\psi(\underline i^+)	
\] 
(where we simultaneously use the symbol $S_n$ for the Birkhoff sum with respect
to $\sigma$ and $\sigma^+$, respectively) whenever the limit on either side
exists. More precisely, for every $n\ge1$ we have
\[
	S_n (\widehat\psi \circ \pi^+)
	= S_n\psi - u + u\circ\sigma^{-n} ,
\]
which immediately implies the following lemma.

\begin{lemma}\label{l.cohomology}
	For every $\varepsilon>0$ there is $n_0=n_0(\varepsilon,\psi)\ge1$ such that for every $n\ge n_0$ we have
\[
	\lvert S_n (\widehat\psi\circ \pi^+) -S_n\psi\rvert<\varepsilon.
\]	
\end{lemma}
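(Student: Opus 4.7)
The plan is to derive the estimate directly from the cohomological identity displayed immediately above the lemma. Since $\phi = \psi - u + u\circ\sigma^{-1}$ with $\phi=\widehat\psi\circ\pi^+$, an elementary telescoping of the Birkhoff sum along the orbit produces
\[
	S_n(\widehat\psi\circ\pi^+) - S_n\psi \;=\; u\circ\sigma^{-n} - u,
\]
so the entire proof reduces to estimating this two-term residue pointwise in $\underline i\in\Sigma$.

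The key step is to exploit the regularity of the transfer function. By construction $u\in\cF$, so in particular $u$ is continuous on the compact metric space $\Sigma$ and hence uniformly bounded, with $\lVert u\rVert_\infty<\infty$ depending only on $\psi$ via Bowen's lemma. This yields at once the a priori uniform control
\[
	\big\lvert S_n(\widehat\psi\circ\pi^+) - S_n\psi\big\rvert
	\;=\; \lvert u\circ\sigma^{-n}(\underline i) - u(\underline i)\rvert
	\;\le\; 2\lVert u\rVert_\infty,
\]
valid for every $n\ge 1$ and every $\underline i \in \Sigma$, independently of where in the orbit one evaluates.

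To finish, one absorbs the constant $2\lVert u\rVert_\infty$ into the threshold $n_0=n_0(\varepsilon,\psi)$ permitted by the statement; more refined information about the decay of $\var_n u \le b\alpha^n$ coming from membership in $\cF$ can be invoked if a quantitative rate in $n$ is needed, but for the stated conclusion compactness alone already suffices. I do not expect any serious obstacle: once the cohomology equation from Bowen's lemma is in hand, the argument is essentially bookkeeping combined with the boundedness of $u$. The only subtlety — already anticipated by the wording of the lemma — is that the transfer function $u$, and hence the threshold $n_0$, inherit their dependence on the particular potential $\psi$ from Bowen's regularization step.
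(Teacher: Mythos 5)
Your telescoping identity and the resulting uniform bound $\lvert S_n(\widehat\psi\circ\pi^+)-S_n\psi\rvert=\lvert u\circ\sigma^{-n}-u\rvert\le 2\lVert u\rVert_\infty$ are exactly the computation the paper performs in Remark~\ref{rem:cohomobir} just before stating the lemma, so up to that point you and the authors run the same argument. The problem is your final step. A bound by the fixed constant $2\lVert u\rVert_\infty$, valid for every $n$, cannot be ``absorbed into the threshold $n_0$'': the quantity $\lvert u(\sigma^{-n}(\underline i))-u(\underline i)\rvert$ does not decay as $n\to\infty$, because $\sigma^{-n}(\underline i)$ and $\underline i$ are unrelated points of $\Sigma$ and nothing forces $u$ to take nearby values at them; enlarging $n_0$ does not make a constant smaller than an arbitrary $\varepsilon$. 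The regularity $\var_n u\le b\alpha^n$ does not rescue this either, since it controls the oscillation of $u$ over cylinders $[j_{-n}\ldots j_n]$, not the difference of $u$ at two points far apart along an orbit. Indeed the displayed inequality is false for small $\varepsilon$ in general: take $\psi(\underline i)=f(i_{-1})$ with $f$ nonconstant, so that $u$ is nonconstant and $u\circ\sigma^{-n}-u$ stays bounded away from $0$ on a nonempty set of sequences for every $n$.

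What your argument actually establishes is the statement the paper needs and uses, namely $\lvert S_n(\widehat\psi\circ\pi^+)-S_n\psi\rvert\le 2\lVert u\rVert_\infty$ uniformly in $n$ and $\underline i$, equivalently $\tfrac1n\lvert S_n(\widehat\psi\circ\pi^+)-S_n\psi\rvert\to 0$. That is precisely what enters the only application, Lemma~\ref{lem:Markovunstabpot}, where the additive bound is exponentiated into the multiplicative constant $C$. The lemma as printed (with $<\varepsilon$ and no $1/n$) is a misstatement, and the paper's own claim that the identity ``immediately implies'' it shares the same defect; but since you set out to prove the statement as written, you should either read it with the $1/n$ normalization or replace $\varepsilon$ by a constant $C(\psi)$, rather than assert that boundedness alone yields the $\varepsilon$-version.
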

\end{remark}

Given two H\"older continuous functions $\psi_1,\psi_2\colon\Sigma\to\bR$,
$\psi=(\psi_1,\psi_2)$ and their unstable parts
$\widehat\psi_1,\widehat\psi_2\colon\Sigma^+\to\bR$, $\widehat\psi=(\widehat\psi_1,\widehat\psi_2)$, for
$\va=(a_1,a_2)\in D(\psi)$ consider
\[
	L^+(\widehat\psi,\va) \eqdef \Big\{\underline i^+\in\Sigma^+ \colon 
	\lim_{n\to\infty}\frac{1}{ n}S_{n}\widehat\psi_1(\underline i^+)=a_1,
	\lim_{n\to\infty}\frac{1}{ n}S_{n}\widehat\psi_2(\underline i^+)=a_2
    	\Big\}
\]
and let
\[
	D^+(\widehat\psi)
	\eqdef \{\va\in\bR^2\mid L^+(\widehat\psi,\va)\neq\emptyset\}.
\]
By Remark~\ref{rem:cohomobir}, it is easy to see that the domains for nonempty
level sets coincide, that is, $D^+(\widehat\psi)=D(\psi)$.  Moreover,
$L^+(\widehat\psi,\va)=\pi^+(L(\psi,\va))$ for every $\va\in D(\psi)$.

Since we work mainly on the one-sided shift space $\Sigma^+$ from now on, we
skip the notation $\widehat{\,}$ and simply assume that $\psi_1,\psi_2\colon
\Sigma^+\to\bR$.
Given
$\psi=(\psi_1,\psi_2)$, $\va\in D^+(\psi)$, $\theta>0$, and $n\ge1$, let
\[\begin{split}
	M(\psi,\va,\theta,n) 
	\eqdef \card\Big\{ (j_0\ldots j_{n-1})&\in\Sigma_n^+\colon
	 \exists\,\underline i^+\in[j_0\ldots j_{n-1}]^+\\
&	\text{ with } \Big\lvert\frac1nS_n\psi_\ell(\underline i^+) -a_\ell\Big\rvert<\theta \textrm{ for } \ell=1,2\Big\} .
\end{split}\]
Further, we define
\[\begin{split}
	\underline \cS(\psi,\va,\theta) 
	&\eqdef \liminf_{n\to\infty} \frac1n \log M(\psi,\va,\theta,n),
\\
	\overline \cS(\psi,\va,\theta) 
	&\eqdef \limsup_{n\to\infty} \frac1n\log M(\psi,\va,\theta,n)  .
\end{split}\]

\begin{proposition}\label{p.joint-limits}
	For every $\va\in D^+(\psi)$ we have 
\[
	\lim_{\theta\to 0} \overline \cS(\psi,\va,\theta) 
	= \lim_{\theta\to 0}\underline \cS(\psi,\va,\theta)\eqdef \ch(\psi,\va).
\] 
\end{proposition}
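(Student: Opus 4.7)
My plan is to establish the equality of the two limits by proving a sub-multiplicativity-type estimate $M(\psi,\va,\theta+C_0/n,Kn)\ge M(\psi,\va,\theta,n)^K$ through concatenation of witness sequences; the standing assumption from Section~\ref{sec:markov} that $\Sigma_A=\Sigma$ will be essential, since it guarantees that any finite concatenation of admissible prefixes is itself admissible.

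First I would observe that $\theta\mapsto M(\psi,\va,\theta,n)$ is non-decreasing for each $n$, so both functions $\theta\mapsto\underline{\cS}(\psi,\va,\theta)$ and $\theta\mapsto\overline{\cS}(\psi,\va,\theta)$ are monotone non-decreasing and bounded above by $\log N$. Hence both limits as $\theta\to 0^+$ exist as infima, and the trivial inequality $\underline{\cS}\le\overline{\cS}$ gives one direction. Only the reverse inequality
\[
	\lim_{\theta\to 0}\overline{\cS}(\psi,\va,\theta)\le\lim_{\theta'\to 0}\underline{\cS}(\psi,\va,\theta')
\]
requires real work.

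For this, fix $\theta>0$ and a subsequence $n_k\to\infty$ along which $\tfrac{1}{n_k}\log M(\psi,\va,\theta,n_k)\to\overline{\cS}(\psi,\va,\theta)$. For each $n=n_k$ and each of the $M\eqdef M(\psi,\va,\theta,n)$ cylinders $[w_0\ldots w_{n-1}]^+$ counted, I select a witness $\underline{i}^{(w)+}\in[w_0\ldots w_{n-1}]^+$ with $|\tfrac{1}{n}S_n\psi_\ell(\underline{i}^{(w)+})-a_\ell|<\theta$ for $\ell=1,2$. Because we are on the full shift, every concatenation $(w^{(1)}_0\ldots w^{(1)}_{n-1}w^{(2)}_0\ldots w^{(K)}_{n-1})$ is admissible and produces $M^K$ distinct prefixes of length $Kn$. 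The key estimate compares the Birkhoff sum along the concatenated sequence with the sum of per-block contributions: exploiting that $\psi_\ell\in\cF$ with $\var_n\psi_\ell\le b\alpha^n$, the boundary deviations at each of the $K$ junctions are controlled by a geometric tail, yielding a uniform bound
\[
	\Big|S_{Kn}\psi_\ell(\underline{i}^+)-\sum_{j=1}^K S_n\psi_\ell(\underline{i}^{(w_j)+})\Big|\le KC_0,\qquad C_0\eqdef \frac{b}{1-\alpha}.
\]
Dividing by $Kn$, the concatenated averages lie within $\theta+C_0/n$ of $a_\ell$, which gives $M(\psi,\va,\theta+C_0/n,Kn)\ge M^K$. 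Taking logarithms, dividing by $Kn$, and sending $K\to\infty$ produces $\underline{\cS}(\psi,\va,\theta+C_0/n)\ge\tfrac{1}{n}\log M(\psi,\va,\theta,n)$. Passing to the limit along the chosen subsequence $n=n_k$ (so the right-hand side approaches $\overline{\cS}(\psi,\va,\theta)$ and $C_0/n\to 0$) and finally sending $\theta\to 0$ yields the desired inequality.

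The main obstacle, and the only delicate point, is the boundary-effect estimate at the concatenation junctions: it relies essentially on the exponential decay $\var_n\psi_\ell\le b\alpha^n$, i.e.\ on $\psi_\ell\in\cF$. This is precisely why the passage to the unstable part of Section~\ref{subsec:mfa} is made via Bowen's lemma, which produces $\cF$-representatives for the H\"older continuous potentials and thus transfers the summable-variation property from $\Sigma$ to $\Sigma^+$. Once this estimate is in hand, the concatenation argument is completely standard and the common limit $\ch(\psi,\va)$ is well defined.
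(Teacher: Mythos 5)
Your approach is essentially the paper's: both arguments establish an almost-super-multiplicativity estimate for $M$ by concatenating witness sequences from the counted cylinders, control the junction errors by a bounded-distortion constant (your $C_0$ plays the role of the constant $C$ in Lemma~\ref{l.shift-distortion}), let the number of blocks tend to infinity, and then shrink $\theta$. The reliance on the full-shift assumption and on the summable variations of the unstable parts is exactly as in the paper.

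There is, however, one step that does not follow as written. From $M(\psi,\va,\theta+C_0/n,Kn)\ge M(\psi,\va,\theta,n)^K$ you conclude, after sending $K\to\infty$, that $\underline{\cS}(\psi,\va,\theta+C_0/n)\ge\tfrac1n\log M(\psi,\va,\theta,n)$. But $\underline{\cS}$ is a $\liminf$ over \emph{all} lengths $L$, whereas your estimate only controls $M$ along the arithmetic progression $L=Kn$; the $\liminf$ over all $L$ could a priori be strictly smaller than the $\liminf$ along that subsequence. You need an interpolation for $L=Kn+j$ with $0\le j<n$: discarding the last $j$ symbols changes each Birkhoff sum by at most $j\lVert\psi\rVert$, so $M(\psi,\va,\theta',L)\ge M(\psi,\va,\theta'-j\lVert\psi\rVert/L,L-j)$, and since $j<n$ is bounded while $L\to\infty$ this perturbation is absorbed into $\theta'$. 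This is precisely the paper's cut-off estimate \eqref{e.cut-off}. With that inserted your argument closes; the remaining differences from the paper (perturbing $\theta$ upward by $C_0/n$ rather than downward by $C/r$, and running the limit along a subsequence realizing the $\limsup$) are cosmetic.
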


\begin{remark}[Pedestrian approach]\label{rem:pedest}
The letter $\ch$ indicates that the number $\ch(\psi,\va)$ is closely related to the \emph{topological entropy} of $\sigma^+$ on the level set $L^+(\psi,\va)$ (see~\cite{Bow:73} for the definition of topological entropy on noncompact sets).  Proposition~\ref{p.joint-limits} as well as the further results in the
remainder of this section are contained in a similar form in \cite{Bed:89},
where they are proved by invoking thermodynamic formalism. However, this leads to some technical restrictions that
we want to avoid here, so we take a more direct and elementary and also more general approach and provide a self-contained proof.%
\footnote{In fact, Proposition~\ref{p.joint-limits} remains true even
  if $\psi$ is not H\"older continuous, but just continuous. This can be shown
  by approximating $\psi$ with a H\"older continuous function $\widetilde \psi$ and
  then using the statement for $\widetilde \psi$ together with the fact that if
  $\|\psi-\widetilde\psi\|<\delta$, then for any $\va\in\bR^2$, $\theta>0$ and $n\in\bN$
  \[
   M(\widetilde\psi,\va,\theta,n) \geq M(\psi,\va,\theta-\delta,n)
	\quad\text{ and }\quad
   M(\psi,\va,\theta,n) \geq M(\widetilde\psi,\va,\theta-\delta,n).	
  \]} 
  In particular, we do not require 
$\va\in\interior(D^+(\psi))$.
\end{remark}

We immediately note the following direct consequence of Proposition~\ref{p.joint-limits}.

\begin{corollary}\label{c.growth-rates}
  For any $\va\in D^+(\psi)$ and $\varepsilon>0$ there exists $\theta_0=\theta_0(\va,\varepsilon)>0$ and for every $\theta\in(0,\theta_0)$ there exists $n_0=n_0(\va,\varepsilon,\theta)\ge1$ such that for  $n\geq n_0$ we have
\[
n(\ch(\psi,\va)-\varepsilon) \leq \log M(\psi,\va,\theta,n) \leq
n(\ch(\psi,\va)+\varepsilon) .
\]
\end{corollary}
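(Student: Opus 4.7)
The plan is to deduce the corollary as a straightforward unwinding of Proposition~\ref{p.joint-limits}, so the real substance lies in the proof of that proposition. First observe that both $\underline\cS(\psi,\va,\theta)$ and $\overline\cS(\psi,\va,\theta)$ are monotone nondecreasing in $\theta$, since the counting function $M(\psi,\va,\theta,n)$ is. Consequently $\lim_{\theta\to 0}\underline\cS = \inf_{\theta>0} \underline\cS$, $\lim_{\theta\to 0}\overline\cS = \inf_{\theta>0} \overline\cS$, and the trivial bound $\underline\cS \le \overline\cS$ passes to the limit. Granting the proposition, for given $\varepsilon > 0$ one selects $\theta_0 = \theta_0(\va,\varepsilon)$ so small that $\overline\cS(\psi,\va,\theta) < \ch(\psi,\va) + \varepsilon/2$ for all $\theta \in (0,\theta_0)$; monotonicity simultaneously forces $\underline\cS(\psi,\va,\theta) \ge \ch(\psi,\va)$ on the same range. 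The definitions of $\liminf$ and $\limsup$ then furnish $n_0 = n_0(\va,\varepsilon,\theta)$ such that for $n \ge n_0$ the quantity $\frac{1}{n}\log M(\psi,\va,\theta,n)$ is sandwiched in $[\ch(\psi,\va)-\varepsilon,\,\ch(\psi,\va)+\varepsilon]$, which is exactly the claim of the corollary.

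For the proposition itself, the substantive task is to establish the reverse inequality $\lim_{\theta\to 0}\overline\cS \le \lim_{\theta\to 0}\underline\cS$. The plan is a concatenation / approximate supermultiplicativity argument, exploiting that by the standing reduction the system is a full shift and arbitrary concatenations are admissible. The key input is H\"older regularity of $\psi = (\psi_1,\psi_2)$: since $\psi_\ell \in \cF$, summing a geometric series of H\"older oscillations yields a constant $C>0$ with
\[
	\lvert S_n\psi_\ell(\underline i^+) - S_n\psi_\ell(\underline j^+) \rvert \le C
	\quad\text{whenever}\quad \underline i^+,\underline j^+ \in [j_0\ldots j_{n-1}]^+,
\]
for $\ell=1,2$. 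Combined with $S_{n+m}\psi = S_n\psi + S_m\psi\circ(\sigma^+)^n$, this implies that whenever words $W \in \Sigma_n^+$ and $W' \in \Sigma_m^+$ each carry a point whose normalized Birkhoff averages lie within $\theta$ of $\va$, every point of the cylinder $[WW']^+$ satisfies $\lvert\frac{1}{n+m}S_{n+m}\psi_\ell(\underline i^+) - a_\ell\rvert \le \theta + \frac{2C}{n+m}$.

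Given $\theta > 0$ and $\varepsilon > 0$, select along a subsequence realizing $\overline\cS(\psi,\va,\theta)$ an integer $N$ so large that $\frac{1}{N}\log M(\psi,\va,\theta,N) \ge \overline\cS(\psi,\va,\theta) - \varepsilon$ and $C/N < \varepsilon$. Fix $\theta' > \theta + 2\varepsilon$. For any $m = qN + r$ with $0 \le r < N$, concatenate $q$ of the admissible $N$-words and append an arbitrary $r$-word; the distinctness of the underlying $N$-blocks produces at least $M(\psi,\va,\theta,N)^q$ distinct $m$-cylinders, each contributing to $M(\psi,\va,\theta',m)$ provided $m$ is large enough that $r\lVert\psi\rVert_\infty / m < \varepsilon$. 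Hence
\[
	\frac{1}{m}\log M(\psi,\va,\theta',m) \ge \frac{qN}{m}\,\big(\overline\cS(\psi,\va,\theta)-\varepsilon\big),
\]
and taking $\liminf_{m\to\infty}$ yields $\underline\cS(\psi,\va,\theta') \ge \overline\cS(\psi,\va,\theta) - \varepsilon$. Letting first $\theta,\varepsilon \to 0$ (for each fixed $\theta'$) and then $\theta' \to 0$, together with monotonicity, closes the loop. The main obstacle is the careful bookkeeping of error terms in the concatenation: the H\"older constant $C$ appears additively in each building block, so to absorb $qC$ into the slack $m\varepsilon$ in the width $\theta'-\theta$ one must choose $N$ large relative to $1/\varepsilon$ \emph{before} letting $m\to\infty$. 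Everything else -- monotonicity in $\theta$, the existence of at least one contributing cylinder guaranteed by $\va \in D^+(\psi)$, full-shift admissibility of concatenations, and the standard $\liminf/\limsup$ manipulations -- is routine.
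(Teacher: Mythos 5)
Your deduction of the corollary from Proposition~\ref{p.joint-limits} (monotonicity of $M$ in $\theta$, hence $\underline\cS(\theta)\ge\ch(\psi,\va)$ for all $\theta$, plus the definitions of $\liminf/\limsup$) is exactly the ``immediate consequence'' the paper intends, and your proof of the proposition itself is the same almost-supermultiplicativity argument via the bounded-distortion constant $C$ and a cut-off for the remainder block that the paper uses. Correct, and essentially the paper's route.
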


To prove Proposition~\ref{p.joint-limits}, we need the following elementary distortion result, which we state without proof.

\begin{lemma}[Bounded distortion]\label{l.shift-distortion}
There exists
        a constant $C>0$ such that for all $(j_0\ldots j_{n-1})\in\Sigma_n^+$ and $\underline
        i^+,\underline k^+\in[j_0\ldots j_{n-1}]^+$ we have
\[
	\lvert S_n\psi_\ell(\underline i^+) - S_n\psi_\ell(\underline k^+)\rvert \leq C ,\quad
	\ell=1,2.
\]
\end{lemma}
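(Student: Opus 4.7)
The plan is to exploit the fact that each $\psi_\ell$ is the unstable part $\widehat\psi_\ell$ of a H\"older continuous function on the two-sided shift, so $\psi_\ell$ extends to a function $\phi_\ell\in\cF$ that depends only on the future and thus satisfies an exponential decay of variation of the form $\var_n\phi_\ell\le b\alpha^n$ for some $b>0$ and $\alpha\in(0,1)$. The bounded distortion estimate will then be a standard telescoping calculation combined with a geometric series.

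First, I would translate the variation bound on $\Sigma$ into a bound on $\Sigma^+$. Fix $\ell\in\{1,2\}$ and, for brevity, write $\phi=\phi_\ell$ and $\psi=\psi_\ell=\widehat\phi$. Since $\phi$ only depends on positive coordinates, if $\underline i,\underline j\in\Sigma$ coincide on positions $0,\dots,m-1$, then I may modify the past of $\underline i$ to agree with that of $\underline j$ without changing $\phi$; hence $\underline i$ may be chosen in $[j_{-m+1}\ldots j_{m-1}]$, which by $\phi\in\cF$ gives
\[
|\phi(\underline j)-\phi(\underline i)|\le b\alpha^{m-1}.
\]
Passing to $\Sigma^+$ via $\psi(\underline i^+)=\phi(\underline i)$, this translates into
\[
|\psi(\underline i^+)-\psi(\underline k^+)|\le b\alpha^{m-1}
\quad\text{whenever }\underline i^+,\underline k^+\in [j_0\ldots j_{m-1}]^+.
\]

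Next, given $(j_0\ldots j_{n-1})\in\Sigma_n^+$ and $\underline i^+,\underline k^+\in [j_0\ldots j_{n-1}]^+$, I would apply this to the shifts $\sigma^m\underline i^+$ and $\sigma^m\underline k^+$, which lie in $[j_m\ldots j_{n-1}]^+$ and hence agree on the first $n-m$ coordinates. Telescoping and summing yields
\[
|S_n\psi(\underline i^+)-S_n\psi(\underline k^+)|\le\sum_{m=0}^{n-1}b\alpha^{n-m-1}
\le\frac{b}{1-\alpha}=:C_\ell,
\]
and taking $C:=\max\{C_1,C_2\}$ concludes the proof.

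The estimate is entirely routine once the key observation is made that the unstable part $\widehat\psi$, despite being obtained through the cohomological change $\phi=\psi-u+u\circ\sigma^{-1}$, inherits exponential decay of variation from $\phi\in\cF$; the only point that requires a moment of care is the conversion between the two-sided variation $\var_n$ (indexed symmetrically around $0$) and the one-sided control needed on $\Sigma^+$, which is handled by the future-only dependence noted above. No deep ingredient is required.
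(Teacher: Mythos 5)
Your proof is correct. The paper in fact states this lemma explicitly without proof (calling it ``elementary''), so there is nothing to compare against; your argument is the standard one. The two ingredients you use --- that the unstable part $\psi_\ell=\widehat{\phi_\ell}$ inherits from $\phi_\ell\in\cF$ a one-sided variation bound $\lvert\psi_\ell(\underline i^+)-\psi_\ell(\underline k^+)\rvert\le b\alpha^{m-1}$ on cylinders of length $m$ (because $\phi_\ell$ depends only on future coordinates, so the symmetric two-sided variation $\var_{m-1}$ applies after adjusting the past), followed by telescoping the Birkhoff sum and summing the geometric series --- are exactly what is needed, and the bookkeeping of indices (the $m$-th term of the telescoped sum lies in a cylinder of length $n-m$, giving $b\alpha^{n-m-1}$) is handled correctly. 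One could alternatively observe that $\psi_\ell$ is simply H\"older continuous on $\Sigma^+$ with respect to the standard metric and run the same computation, but that is the same argument in different clothing.
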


The above lemma implies the following one which we also state without proof.

\begin{lemma}\label{lem:inextremis}
	Given $\theta>0$ there exists $n_1=n_1(\theta)\ge1$ such that for every $n\ge n_1$ for every $\underline i^+\in\Sigma^+$ there exists $\va=(a_1,a_2)\in D^+(\psi)$ satisfying
\[
	\Big\lvert \frac1nS_n\psi_\ell(\underline i^+)-a_\ell\Big\rvert<\theta,\quad
	\ell=1,2.
\]	
\end{lemma}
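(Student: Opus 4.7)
The plan is to build, for each $\underline i^+ \in \Sigma^+$, an achievable average $\va \in D^+(\psi)$ by means of the standard periodic extension trick, and then invoke the bounded distortion lemma (Lemma~\ref{l.shift-distortion}) to compare the two finite Birkhoff sums.

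More concretely, fix $\theta>0$ and $n\ge 1$, and let $\underline i^+ = (i_0 i_1 \ldots) \in \Sigma^+$ be arbitrary. Consider the periodic extension of the prefix $(i_0 \ldots i_{n-1})$, namely the sequence
\[
	\underline j^+ \eqdef (i_0 i_1 \ldots i_{n-1} \, i_0 i_1 \ldots i_{n-1} \, i_0 i_1 \ldots)\in\Sigma^+,
\]
which is $\sigma^+$-periodic of period dividing $n$. For such a periodic point the Birkhoff averages of $\psi_1$ and $\psi_2$ are well defined and equal the averages over one period, so setting
\[
	a_\ell \eqdef \frac1n S_n\psi_\ell(\underline j^+),\quad \ell=1,2,\qquad \va\eqdef (a_1,a_2),
\]
we get $\underline j^+ \in L^+(\psi,\va)$ and hence $\va \in D^+(\psi)$.

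Since by construction both $\underline i^+$ and $\underline j^+$ lie in the same cylinder $[i_0 \ldots i_{n-1}]^+$, the bounded distortion estimate of Lemma~\ref{l.shift-distortion} yields a constant $C>0$ (independent of $n$ and of $\underline i^+$) such that
\[
	\lvert S_n\psi_\ell(\underline i^+) - S_n\psi_\ell(\underline j^+)\rvert \le C,\quad \ell=1,2.
\]
Dividing by $n$, we obtain
\[
	\Big\lvert \frac1n S_n\psi_\ell(\underline i^+) - a_\ell\Big\rvert \le \frac{C}{n},\quad \ell=1,2.
\]
It therefore suffices to choose $n_1 = n_1(\theta)$ so large that $C/n_1 < \theta$; any $n\ge n_1$ then works uniformly in $\underline i^+$.

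The argument is genuinely straightforward once one has Lemma~\ref{l.shift-distortion} at hand; there is no real obstacle. The only modest subtlety is conceptual: the lemma would be false if we required $\va$ to lie in the interior of $D^+(\psi)$ (for an orbit of length $n$ with extremal averages, the approximating periodic point necessarily has an extremal average as well), and it is precisely the formulation with $\va \in D^+(\psi)$ (not its interior) together with the periodic-extension construction that allows the statement to hold uniformly in $\underline i^+$.
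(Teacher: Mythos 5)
Your proof is correct and is precisely the argument the authors intend: the paper states this lemma without proof, remarking only that it follows from the bounded distortion Lemma~\ref{l.shift-distortion}, and your periodic-extension construction (whose Birkhoff averages exist and equal the $n$-block average, hence lie in $D^+(\psi)$) combined with that uniform-in-$n$ distortion bound supplies exactly the missing details. Note that the periodic extension of the prefix is an admissible sequence because the paper has reduced to the full shift $\Sigma_A=\Sigma$, so no connecting-word modification is needed.
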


\begin{proof}[Proof of Proposition~\ref{p.joint-limits}]

  Given $\bi=(i_0\ldots i_{n-1})\in\Sigma_n^+$ and $\underline j^+\in\Sigma^+$, we denote the one-sided
  infinite sequence $(i_0\ldots i_{n-1}j_0j_1\ldots)$ by $\bi\,\underline j^+$ and adopt the
  analogous notation for pairs of finite sequences. For any $\underline
  i^+\in[\bi]^+$, by Lemma~\ref{l.shift-distortion} 
  we  have
    \[
    \left\lvert S_{n+m}\psi(\bi\,\underline j^+) - S_n\psi(\underline i^+) -
    S_m\psi(\underline j^+)\right\rvert = \left|S_n\psi(\bi\,\underline
    j^+)-S_n\psi(\underline i^+)\right| \leq C .
  \]
  Given $m,r\ge1$, applying the same argument repeatedly for finite sequences $\bi^1$, $\ldots$,
  $\bi^m\in\Sigma_r^+$ and corresponding one-sided infinite sequences $\underline
  i^k\in[\bi^k]^+$, $k=1,\ldots,m$, we obtain
  \[
   \left| S_{mr}\psi(\bi^1\bi^2\ldots \bi^{m-1}\underline i^m)
   	 - \sum_{k=1}^m S_r\psi(\underline i^k)\right| 
	 \leq mC .
  \]
    This implies that 
  \begin{equation}\label{e.almost_super-multiplicativity}
   M(\psi,\va,\theta,mr) \geq M(\psi,\va,\theta-C/r,r)^m .
    \end{equation}
Moreover, for any $L\ge1$ and $n\in\{0,\ldots,L-1\}$ as
\[
	\frac1LS_L\psi
	=\frac1L\big(S_{L-n}\psi+S_n\psi\circ(\sigma^+)^n\big)
\]
it is immediate that
  \begin{equation} \label{e.cut-off}
    M(\psi,\va,\theta,L)\geq M(\psi,\va,\theta-n\|\psi\|/L,L-n) ,
  \end{equation}
where $\lVert\psi\rVert\eqdef\sup\,\lvert\psi\rvert$.
    
  Now, fix $\va\in D^+(\psi)$, $\theta>0$, and $\delta>0$ and  take $r\in\bN$ such
  that $C/r<\theta/4$ and 
  \[
     \frac1r\log M(\psi,\va,\theta/2,r) \geq \overline \cS(\psi,\va,\theta/2)-\delta .
  \]
  Consider $L\ge1$ large enough such that $r\lVert\psi\rVert/L\leq \theta/4$ and take $m=m(L)\ge1$ and $n=n(L)\in\{0,\ldots,r-1\}$ satisfying $L=mr+n$. Then with~\eqref{e.cut-off}  we have
\[
	M(\psi,\va,\theta,L)
	\ge M\big(\psi,\va,\theta-\frac{n\lVert\psi\rVert}{L},mr\big)
	\ge M\big(\psi,\va,\theta-\frac\theta4,mr\big).
\]
Thus, with~\eqref{e.almost_super-multiplicativity} we obtain
\[
	M\big(\psi,\va,\frac{3\theta}{4},mr\big)
	\ge M\big(\psi,\va,\frac{3\theta}{4}-\frac Cr,r\big)^m
	\ge M\big(\psi,\va,\frac{\theta}{2},r\big)^m.
\]
Hence, taking the limit $L\to\infty$ and hence $m(L)\to\infty$, we obtain
\begin{eqnarray*}
	\underline \cS(\psi,\va,\theta) 
     	& = & \liminf_{L\to\infty} \frac1L\log M(\psi,\va,\theta,L) \\ 
   	& \geq &  \liminf_{L\to\infty} \frac{\lfloor L/r\rfloor}{L} \log M(\psi,\va,\theta/2,r)\\ 
   	& = & \frac1r\log M(\psi,\va,\theta/2,r)\ \geq \ \overline \cS(\psi,\va,\theta/2)-\delta .
\end{eqnarray*}
As $\delta>0$ was arbitrary, this shows $\underline
\cS(\psi,\va,\theta)\geq\overline \cS(\psi,\va,\theta/2)$. The latter, in turn,
immediately implies $\lim_{\theta\to 0} \underline
\cS(\psi,\va,\theta)=\lim_{\theta\to 0}\overline \cS(\psi,\va,\theta)$ and thus
completes the proof.
\end{proof}

\begin{proposition}\label{prolem:lemma}
Given a H\"older continuous function $\psi\colon\Sigma^+\to (-\infty,0)^2$, the number
\[
	t \eqdef \sup_{\va=(a_1,a_2)\in D^+(\psi)}\frac{\ch(\psi,\va)+a_2}{-a_1}
\]	
satisfies $P_{\sigma^+}(t\psi_1+\psi_2)=0$ and is uniquely determined by this equation.
\end{proposition}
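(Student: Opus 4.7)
\emph{Proof strategy.} The plan is to establish the intermediate identity
\[
P_{\sigma^+}(t\psi_1+\psi_2)\;=\;\sup_{\va=(a_1,a_2)\in D^+(\psi)}\bigl(\ch(\psi,\va)+ta_1+a_2\bigr),\qquad t\in\bR,
\]
and then derive both existence, uniqueness, and the supremum formula from it. Uniqueness is in fact independent of the identity: since $\psi_1$ is continuous and strictly negative on the compact space $\Sigma^+$, there is $c>0$ with $\sup\psi_1\le -c$. The elementary inequality $P_{\sigma^+}(\phi+\chi)\le P_{\sigma^+}(\phi)+\sup\chi$, which is immediate from \eqref{eq:pressureinshift}, gives
\[
P_{\sigma^+}(s\psi_1+\psi_2)\le P_{\sigma^+}(t\psi_1+\psi_2)-c(s-t)\qquad\text{for every }s>t,
\]
so that $t\mapsto P_{\sigma^+}(t\psi_1+\psi_2)$ is continuous, strictly decreasing, and tends to $\mp\infty$ as $t\to\pm\infty$; in particular, there is a unique $t^\ast$ with $P_{\sigma^+}(t^\ast\psi_1+\psi_2)=0$.

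\emph{For the lower bound in the identity}, fix $\va\in D^+(\psi)$ and $\theta>0$. Any cylinder $\bi\in\Sigma_n^+$ counted by $M(\psi,\va,\theta,n)$ admits, by Lemma~\ref{l.shift-distortion}, the uniform estimate $S_n(t\psi_1+\psi_2)(\underline j^+)\ge n(ta_1+a_2)-n\theta(|t|+1)-C(|t|+1)$ for every $\underline j^+\in[\bi]^+$. Restricting the sum in \eqref{eq:pressureinshift} to such cylinders and invoking Proposition~\ref{p.joint-limits} yields, after sending $\theta\to 0$ and taking the supremum over $\va$, the inequality ``$\ge$''. \emph{For the upper bound}, fix $\varepsilon>0$ and use compactness of $D^+(\psi)$ to choose $\va_1,\ldots,\va_K$ whose $\theta$-balls cover $D^+(\psi)$, with $\theta<\tfrac14\min_k\theta_0(\va_k,\varepsilon)$ from Corollary~\ref{c.growth-rates}. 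By Lemma~\ref{lem:inextremis}, for all $n$ large enough every cylinder $\bi$ has its chosen representative $\underline j^+_\bi$ with $\tfrac1n S_n\psi(\underline j^+_\bi)$ within $2\theta$ of some $\va_k$, so grouping the cylinders by $k$ gives
\[
\sum_{\bi\in\Sigma_n^+}e^{S_n(t\psi_1+\psi_2)(\underline j^+_\bi)}\le K\max_k M(\psi,\va_k,2\theta,n)\,e^{n(ta_{k,1}+a_{k,2})+2n\theta(|t|+1)+C(|t|+1)}.
\]
Applying Corollary~\ref{c.growth-rates} ($M\le e^{n(\ch(\psi,\va_k)+\varepsilon)}$), taking $\tfrac1n\log$, and sending $n\to\infty$ and then $\varepsilon,\theta\to 0$ delivers the opposite inequality ``$\le$''.

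\emph{Conclusion.} Inserting $t=t^\ast$ in the identity gives $\sup_{\va}(\ch(\psi,\va)+t^\ast a_1+a_2)=0$, which says $\ch(\psi,\va)+a_2\le(-a_1)\,t^\ast$ for all $\va\in D^+(\psi)$, with equality attained in the supremum. Since $-a_1\ge c>0$ uniformly on $D^+(\psi)$, division is harmless and a maximizing sequence $\va_n$ in the original supremum remains a maximizing sequence after the rearrangement (the denominator is bounded above and below); hence $t^\ast=\sup_{\va}(\ch(\psi,\va)+a_2)/(-a_1)$, proving the claim. The main obstacle is the orchestration of quantifiers in the upper bound of the identity: the finite cover size $K$, the value of $\theta$ permitted by $\theta_0(\va_k,\varepsilon)$, the distortion constant $C$ from Lemma~\ref{l.shift-distortion}, and the threshold from Lemma~\ref{lem:inextremis} must be ordered so that first $\varepsilon$ is fixed, then the finite cover and $\theta$, and finally $n$ is sent to infinity before $\theta,\varepsilon\to 0$; once this order is respected the estimate glues together cleanly.
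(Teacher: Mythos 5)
Your proposal is correct, and the two core estimates are exactly the paper's: the lower bound by restricting the pressure sum \eqref{eq:pressureinshift} to the cylinders counted by $M(\psi,\va,\theta,n)$ (with Lemma~\ref{l.shift-distortion} to pass to arbitrary representatives), and the upper bound by covering the compact set $D^+(\psi)$ with finitely many $\theta$-balls, assigning every cylinder to a ball via Lemma~\ref{lem:inextremis}, and controlling each group with Corollary~\ref{c.growth-rates}. The difference is organizational. The paper never states the identity $P_{\sigma^+}(t\psi_1+\psi_2)=\sup_{\va}(\ch(\psi,\va)+ta_1+a_2)$; instead it defines $t$ by the supremum of the quotients and runs the two estimates as one-sided sign statements, showing $P_{\sigma^+}(t'\psi_1+\psi_2)\ge 0$ for $t'<t$ and $\le 0$ for $t'>t$, then concludes by continuity of the pressure in $t'$, with uniqueness dispatched in one line from the negativity of $\psi_1$. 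You prove the exact Legendre-type identity for every $t$, establish uniqueness separately via the explicit monotonicity bound $P_{\sigma^+}(s\psi_1+\psi_2)\le P_{\sigma^+}(t\psi_1+\psi_2)-c(s-t)$, and then invert $\sup_{\va}(\ch(\psi,\va)+t^\ast a_1+a_2)=0$ into the quotient form; your observation that $-a_1$ is bounded between $c$ and $\lVert\psi_1\rVert$ is exactly what makes that last inversion legitimate. Your route yields a strictly stronger, reusable statement (the full conditional variational principle) and makes the uniqueness argument explicit where the paper is terse; the paper's route is marginally shorter because building the quotient into the choice of $\va$ from the start avoids the final division step. Your remarks on the order of quantifiers mirror the care the paper itself takes in its second estimate.
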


\begin{proof}
First, observe that compactness of $D^+(\psi)$ implies that $t$ is finite.

Fix $\varepsilon>0$. First, suppose $t'<t$. Choose some $\va \in D^+(\psi)$ such that 
\[
	\frac{\ch(\psi,\va)+a_2}{-a_1} > t'
\]
and note that since $a_1<0$ this implies
\begin{equation}\label{e.entropybound1}
  \ch(\psi,\va) > -t'a_1-a_2 .
\end{equation}
Choose $\theta_0=\theta_0(\va,\varepsilon)>0$ according to Corollary~\ref{c.growth-rates}. Taking $\theta\in(0,\theta_0)$, choose also $n_0=n_0(\va,\varepsilon,\theta)\ge1$ according to this corollary such that for every $n\ge n_0$ we have
\[
	n(\ch(\psi,\va)-\varepsilon) 
	\leq \log M(\psi,\va,\theta,n) \leq n(\ch(\psi,\va)+\varepsilon) .
\]
Assume first that $t'\geq 0$. With~\eqref{eq:pressureinshift}, but only taking the sum over sequences which contribute to $M(\psi,\va,\theta,n)$, we have
\begin{eqnarray*}
  P_{\sigma^+}(t'\psi_1+\psi_2) 
  & \geq & \lim_{n\to\infty} \frac1n\log \Big( M(\psi,\va,\theta,n)
  	\exp\big(n(t'(a_1-\theta)+a_2-\theta)\big) \Big)\\ 	
  &\ge& \ch(\psi,\va)-\varepsilon+t'(a_1-\theta)+a_2-\theta\\
  \text{by~\eqref{e.entropybound1} }&>&-t'a_1-a_2-\varepsilon+t'(a_1-\theta)+a_2-\theta\\	
 & =& -\varepsilon-\theta(t'+1).
\end{eqnarray*}
As $\theta>0$ and $\varepsilon>0$ were arbitrarily small, this shows  $P_{\sigma^+}(t'\psi_1+\psi_2)\geq 0$. 
Assuming now $t'<0$, analogously we come to the same conclusion. 

Second, suppose now that $t'>t$. Let $A=\frac{1}{3}\min\{|a_1|\colon \va=(a_1,a_2)\in D^+(\psi)\}$ and
fix some $\delta$ sufficiently small.
Cover the set $D^+(\psi)$ by  $2\theta_j$-squares $(Q_j)_{j=1}^\ell$
\[
		Q_j=(a^j_1-\theta_j,a^j_1+\theta_j)\times(a^j_2-\theta_j,a^j_2+\theta_j) 
\]
for appropriately chosen $\va^j=(a^j_1,a^j_2)\in D^+(\psi)$ and $\theta_j\in(0,\delta)$. 
Thereby, we assume that the $\theta_j>0$ are such that $\theta_j<\theta_0(\va^j,\varepsilon)$ where the latter number is as in Corollary~\ref{c.growth-rates}.
Recalling that $D^+(\psi)$ is  compact, such a finite cover by open squares exists. 
For every $j$ let then $n_j\ge n_0(\va^j,\varepsilon,\theta_j)$, where the latter is as in Corollary~\ref{c.growth-rates} and let $n_0\ge\max_{j=1,\ldots,\ell}n_j$.
Hence, for every $j=1,\ldots,\ell$ and $n\ge n_0$ we have  
\[
	n(\ch(\psi,\va^j)-\varepsilon) 
	\leq \log M(\psi,\va^j,\theta_j,n) 
	\leq n(\ch(\psi,\va^j)+\varepsilon).
\]
Note that by the definition of $t$, for all $j=1,\ldots,\ell$ we have 
\begin{equation}\label{e.entropybound2}
 \ch(\psi,\va^j) \leq  -ta_1^j-a_2^j.
\end{equation}
Moreover,  by Lemma~\ref{lem:inextremis} we can assume that
\[
	\Big(\frac1n S_n\psi_1(\underline i^+),\frac1n S_n\psi_2(\underline i^+)\Big)
	\in\bigcup_{j=1}^\ell Q_j
\] 
for all $\underline i^+\in\Sigma^+$ if $n\geq n_0$. Then, assuming first that $t'\geq 0$, we have
\[\begin{split}
	P_{\sigma^+}&(t'\psi_1+\psi_2) 
	\leq  \lim_{n\to\infty}\frac1n\log\sum_{j=1}^\ell M(\psi,\va^j,\theta_j,n)
   		\exp(n(t'(a^j_1+\theta_j)+a^j_2+\theta_j)) \\ 
   	& \leq \lim_{n\to\infty}\frac1n\log \sum_{j=1}^\ell
 		\exp\left(n\left(\ch(\psi,\va^j)+\varepsilon+ta^j_1+a^j_2
 						+t'\theta_j+\theta_j\right)\right)\\ 
 \text{by~\eqref{e.entropybound2}}
 	&\le  \lim_{n\to\infty} 
		\frac1n\log\left(\ell\exp(n(\varepsilon+(1+t')\delta))\right).
\end{split}\]
As $\delta$ and $\varepsilon$ were arbitrarily small, this implies $P_{\sigma^+}(t'\psi_1+\psi_2) \le 0$.
Again, the case $t'<0$ can be treated in the same way and leads to the same
result. 

Altogether, we thus obtain that $P_{\sigma^+}(t'\psi_1+\psi_2)$ is nonnegative if $t'<t$ and nonpositive if $t'>t$.  By the continuity of
the pressure function, this implies $P_{\sigma^+}(t\psi_1+\psi_2)=0$ as required. Further, since $\psi_1,\psi_2$ are negative functions the number $t$
is unique with that property. This proves  the proposition.
\end{proof}

\section{Dimension of stable and unstable slices}
\label{newsec:proofss}
 
The aim of this section is to prove the following two partial results towards the proof of Theorems~\ref{t.anosov},~\ref{the:one-dimensional}, and~\ref{the:1}.

\begin{proposition}\label{pro:localdimension-stable}
	Let $T$ be a three-dimensional skew product diffeomorphism satisfying the Standing hypotheses. Then for every $X\in\Phi$ we have	
\[
	\dim_{\rm H}(\Phi\cap\cW^\s_{\rm loc}(X,T))
	= \dim_{\rm B}(\Phi\cap\cW^\s_{\rm loc}(X,T))	
	= d^\s,
\]	
where $d^\s$ is as in~\eqref{eq:localdimension}.
\end{proposition}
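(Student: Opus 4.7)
The plan is to identify $\Phi\cap\cW^\s_{\rm loc}(X,T)$ with the image of $\Xi\cap\cW^\s_{\rm loc}(\xi,\tau)$ under the graph map $\eta\mapsto(\eta,\Phi(\eta))$ via a bi-Lipschitz equivalence, and then to invoke Theorem~\ref{teo:seminal} on the base.

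Writing $X=(\xi,\Phi(\xi))$, I would first establish the set identity
$$\Phi\cap\cW^\s_{\rm loc}(X,T)=\{(\eta,\Phi(\eta))\colon\eta\in\Xi\cap\cW^\s_{\rm loc}(\xi,\tau)\}.$$
The inclusion ``$\subset$'' uses that every point of $\Phi$ has the form $(\eta,\Phi(\eta))$ with $\eta\in\Xi$, together with the fact that under the skew product structure $E^\s$ projects onto $F^\s$, so membership in $\cW^\s_{\rm loc}(X,T)$ forces $\eta\in\cW^\s_{\rm loc}(\xi,\tau)$. The inclusion ``$\supset$'' follows from $T$-invariance of $\Phi$: the iterates $T^n(\eta,\Phi(\eta))=(\tau^n(\eta),\Phi(\tau^n(\eta)))$ remain in $U\times I$ for all $n\ge0$ and converge to the orbit of $X$, because $\tau^n(\eta)\to\tau^n(\xi)$ exponentially fast and $\Phi$ is continuous.

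Next I would apply Proposition~\ref{prop:critical}: the graph of $\Phi$ restricted to $\Xi\cap\cW^\s_{\rm loc}(\xi,\tau)$ lies inside the local strong stable manifold of $X$, so the graph map $\eta\mapsto(\eta,\Phi(\eta))$ is Lipschitz on $\Xi\cap\cW^\s_{\rm loc}(\xi,\tau)$; its inverse, the canonical projection to the base, is $1$-Lipschitz. Hence the map is bi-Lipschitz, and by the bi-Lipschitz invariance of Hausdorff and box dimension (Section~\ref{sec:defdim}),
$$\dim_{\rm H}\bigl(\Phi\cap\cW^\s_{\rm loc}(X,T)\bigr)=\dim_{\rm H}\bigl(\Xi\cap\cW^\s_{\rm loc}(\xi,\tau)\bigr),$$
and the analogous equality holds for $\dim_{\rm B}$.

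Finally, because $\Xi$ is a basic set of the $C^{1+\alpha}$ surface diffeomorphism $\tau$, Theorem~\ref{teo:seminal} identifies both right-hand sides with $d^\s$, the unique zero of $t\mapsto P_{\tau|_\Xi}(t\varphi^\s)$, which yields the proposition. I do not foresee any serious obstacle: the statement reduces, thanks to Proposition~\ref{prop:critical}, to the classical two-dimensional dimension formula for basic sets of surface diffeomorphisms; the only minor point to check carefully is the interplay between the local and non-local definitions of the stable manifolds when establishing the set identity above.
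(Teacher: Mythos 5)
Your proposal is correct and follows essentially the same route as the paper: invoke Proposition~\ref{prop:critical} to get Lipschitz continuity of the graph map on stable slices, note that the base projection is Lipschitz, conclude by bi-Lipschitz invariance of the dimensions and Theorem~\ref{teo:seminal}. The only difference is that you make explicit the set identity between $\Phi\cap\cW^\s_{\rm loc}(X,T)$ and the graph over $\Xi\cap\cW^\s_{\rm loc}(\xi,\tau)$, which the paper leaves implicit.
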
 

\begin{proof}
Let $X=(\xi,\Phi(\xi))$, $\xi\in\Xi$, be an arbitrary point in the graph. By Theorem~\ref{teo:seminal}, we have 
\[
	\dim_{\rm H}(\Xi\cap\cW^\s_{\rm loc}(\xi,\tau))
	= \dim_{\rm B}(\Xi\cap\cW^\s_{\rm loc}(\xi,\tau))	
	= d^\s.
\]	
By Proposition~\ref{prop:critical},  $\Phi$ is Lipschitz on local stable manifolds. Notice also that the projection $(\xi,\Phi(\xi))\mapsto\xi$ is Lipschitz.  Hence, as the dimensions are invariant under bi-Lipschitz maps (see Section~\ref{sec:defdim}), the claim follows.
\end{proof}
 
 \begin{proposition}\label{pro:localdimension-unstable}
	Let $T$ be a three-dimensional skew product diffeomorphism satisfying
the Standing hypotheses. 
Assume that $\Phi\colon\Xi\to\bR$ is not Lipschitz continuous,	where $\Xi\subset M$ is a basic set (with respect to $\tau$).  
		Then for every $X\in\Phi$ we have	
\[
	\overline\dim_{\rm B}(\Phi\cap\cW^\u_{\rm loc}(X,T))	
	\le d,
\]	
where $d$ is the unique real number satisfying
\[
	P_{\tau|_\Xi}(\varphi^\cu+(d-1)\varphi^\u)=0.
\]
If, in addition, $T$ satisfies the hypotheses of either Theorems~\ref{t.anosov},~\ref{the:one-dimensional}, or~\ref{the:1} then for every $X\in\Phi$ we have	
\[
	\dim_{\rm B}(\Phi\cap\cW^\u_{\rm loc}(X,T))	
	= d.
\]	
\end{proposition}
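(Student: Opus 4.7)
The approach splits into an upper bound, valid under the Standing hypotheses alone, and a matching lower bound that needs the extra structure of each of the three main theorems. For the upper bound I would build a Moran-type cover by Markov unstable rectangles. Fix $X=(\xi,\Phi(\xi))\in\Phi$. Through the symbolic coding $\chi\colon\Sigma\to\Xi$ of Section~\ref{sec:markov}, the slice $\Phi\cap\cW^\u_{\rm loc}(X,T)$ is parametrised by forward continuations of the fixed past of $\xi$. Propositions~\ref{pro:dis0} and~\ref{pro:dis} together with bounded distortion give that the Markov unstable rectangle of level $n$ labelled by a word $\bi$ has width comparable to $\exp(S_n\varphi^\u)$ and height comparable to $\exp(S_n\varphi^\cu)$, and~\eqref{eq:constants} ensures that the latter dominates the former. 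Given $\delta>0$, I would Moran-stop at the first level $n(\bi)$ where the width drops below $\delta$ and cover the resulting rectangle by a vertical column of $\asymp \exp(S_{n(\bi)}(\varphi^\cu-\varphi^\u))$ squares of side $\delta$, so that
\[
	N(\delta)\cdot\delta^{d'}
	\asymp \sum_{\bi}\exp\bigl(S_{n(\bi)}(\varphi^\cu+(d'-1)\varphi^\u)\bigr)
\]
for any candidate exponent $d'$. Choosing $d'>d$ and using that the pressure in~\eqref{e.pressure_basicpiece} is strictly decreasing in $d'$ (as $\varphi^\u<0$), the right-hand side stays uniformly bounded, giving $\overline\dim_{\rm B}(\Phi\cap\cW^\u_{\rm loc}(X,T))\le d'$ for every $d'>d$ and hence the first claim.

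For the matching lower bound I would follow the multifractal strategy of Bedford~\cite{Bed:89} set up in Section~\ref{sec:shiftspace}. Let $\psi=(\varphi^\u\circ\chi,\varphi^\cu\circ\chi)$ on $\Sigma$ and pass to its unstable part $\widehat\psi$ on $\Sigma^+$. Proposition~\ref{prolem:lemma} writes $d-1$ as the supremum of $(\ch(\widehat\psi,\va)+a_2)/(-a_1)$ over $\va\in D^+(\widehat\psi)$, and by compactness this is attained at some $\va^\ast=(a_1^\ast,a_2^\ast)$. Fixing $\varepsilon,\theta>0$ small and applying Corollary~\ref{c.growth-rates}, for all sufficiently large $n$ there exist at least $\exp(n(\ch(\widehat\psi,\va^\ast)-\varepsilon))$ length-$n$ words $\bi$ admitting an infinite extension whose Birkhoff averages lie in the $\theta$-box around $\va^\ast$; bounded distortion then makes the associated Markov rectangles essentially homogeneous, with width $\asymp e^{na_1^\ast}$ and height $\asymp e^{na_2^\ast}$. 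Setting $\delta=e^{na_1^\ast}$, each such rectangle requires at least $e^{n(a_2^\ast-a_1^\ast)}$ squares of side $\delta$, and dividing by $-\log\delta=n(-a_1^\ast)$ and sending $n\to\infty$, then $\varepsilon,\theta\to 0$, would deliver the desired lower bound $\underline\dim_{\rm B}\ge d$---provided one knows that the squares contributed by distinct rectangles are genuinely distinct.

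Establishing this last distinctness is the main obstacle and is where the three cases of the theorems enter. When $\tau$ is Anosov or $\Xi$ is a one-dimensional attractor, the slice projects to a $C^1$ arc in the base, so Markov rectangles of level $n$ are separated horizontally by intervals of length $\gtrsim\delta$; combined with Corollary~\ref{cor:erika} and Proposition~\ref{pro:dis} (which force the fibrewise obstruction $\Delta^\u_\delta$ to be uniformly positive in the non-Lipschitz regime), the $e^{n(a_2^\ast-a_1^\ast)}$ squares stacked inside each rectangle are new. In the Cantor base case the horizontal separation is lost and vertical overlaps of distinct rectangles cannot be ruled out \emph{a priori}; this is exactly what the germ property of a fibered blender repairs. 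Corollary~\ref{c.germcorollary} guarantees that inside every $m$th-level $\u$-box $B^\u_m(\xi)$ the fibre projection of $\Phi\cap B^\u_m(\xi)$ contains an interval of length uniformly bounded below. Applying this inside each almost-homogeneous rectangle of the optimal level set replaces the missing connected structure in the base by a connected structure in the fibre direction and produces the required number of distinct $\delta$-squares. Without the germ property one cannot stop distinct Markov rectangles from piling up over nearly identical fibre regions and collapsing the count to the estimate $D_1(\gamma)$ of~\eqref{eq:opt2a} rather than $D_2(\gamma)$ of~\eqref{eq:opt2}, which is precisely the gap in~\cite{Wal:07} described in Section~\ref{sub.hyperbolic_base}; everything else---the symbolic passage through $\chi$, the Moran stopping, the distortion control, and the multifractal selection of $\va^\ast$---is routine given the machinery of Sections~\ref{sec:perlim}--\ref{sec:shiftspace}.
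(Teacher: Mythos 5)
Your plan is correct, and the lower bound follows the paper's argument essentially verbatim: selecting an (optimal) $\va$, counting $\ge e^{n(\ch(\psi,\va)-\varepsilon)}$ almost-homogeneous Markov unstable rectangles via Corollary~\ref{c.growth-rates}, and then justifying the count of $\asymp e^{n(a_2-a_1)}$ squares \emph{inside} each rectangle by the two case distinctions you name (full arc in the base for Theorems~\ref{t.anosov} and~\ref{the:one-dimensional}; germ property plus Corollary~\ref{c.germcorollary} for Theorem~\ref{the:1}) --- this is exactly Claim~\ref{cl:claimkey}. Your upper bound, however, takes a genuinely different and more economical route: you feed the Moran cover directly into a Bowen-equation estimate, bounding $\sum_{\bi}\exp\bigl(S_{n(\bi)}(\varphi^\cu+(d'-1)\varphi^\u)\bigr)$ over the stopping antichain by $\sum_{n\ge n_1(\delta)}Ce^{n(P+\varepsilon)}$, which is finite once $d'>d$ makes the pressure negative. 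The paper instead decomposes the Moran cover into subfamilies $\cC(r,\theta_j,j)$ according to approximate Birkhoff averages, bounds each by $\exp(n_j(\ch(\psi,\va^j)+\varepsilon))$ (Claim~\ref{cla:atmos}), and only converts the resulting $\max_j(\ch(\psi,\va^j)+a_2^j)/(-a_1^j)$ into the pressure equation at the very end via Proposition~\ref{prolem:lemma}. Your version is shorter and relies only on the elementary exponential bound on partition sums implicit in~\eqref{eq:pressureinshift} together with the distortion Lemma~\ref{l.shift-distortion}; the paper's version is longer but keeps both bounds inside the single self-contained multifractal framework it builds anyway for the lower estimate. Two small imprecisions worth fixing in a write-up: the issue in the lower bound is not distinctness of squares \emph{across} rectangles (the level-$n$ Markov rectangles already have disjoint interiors) but whether the graph actually meets $\asymp$ height/width squares \emph{within} each one; and the germ property gives a fibre interval of length $\ge\delta$ only for the image $T^m(\fR)$, so inside $B^\u_m(\xi)$ itself the interval has length $\gtrsim e^{S_m\varphi^\cu}$, not a uniform constant --- which is precisely what the height estimate of Proposition~\ref{pro:dis} requires.
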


To prove this proposition, we  study the Birkhoff averages of the potentials $\varphi^\u$ and $\varphi^\cu$ 
which control the
size of Markov unstable rectangles. Covering the 
restriction of the invariant graph to a local unstable manifold, with a suitable collection of such
rectangles,  will allow to compute the box dimension. Thereby, it turns out
that we can prove the proposition (under any of the three additional hypotheses of Theorems~\ref{t.anosov},~\ref{the:one-dimensional}, or~\ref{the:1}), since the differences are
marginal and can easily be discussed alongside. We split the proof in a natural
way into the upper and the lower estimate on the box dimension. 

\medskip
\noindent\textbf{Caveat:} Throughout the
remaining section, we assume that $\Phi$ is not Lipschitz.

\subsection{Symbolic coding of local unstable manifolds.}

We first recall the basic facts concerning the symbolic coding.
Recall that a Markov partition of $\Xi$ provides us with a H\"older semi-conjugacy (conjugacy if $\Xi$ is a Cantor set) $\chi\colon\Sigma\to\Xi$ (recall all ingredients in Section~\ref{sec:markov}). 
Fix $X=(\xi,\Phi(\xi))$. Choose $\underline i=(\ldots
i_{-1}.i_0\ldots)\in\Sigma$ such that $\chi(\underline i)=\xi$.  
Note that the symbolic coding of every $\eta\in \underline R(\xi)\cap\cW^\u_{\rm loc}(\xi,\tau)$ starts with the same symbol $i_0$ and that the local unstable
manifold of $\xi$ contains the $\chi$-image of the cylinder $[\ldots
  i_{-1}.i_0]$,
\[
	\underline R^\u(\xi)
	= \underline R(\xi)\cap\cW^\u_{\rm loc}(\xi,\tau)
	= \chi\big([\ldots i_{-1}.i_0]\big).
\]
Correspondingly, for every $\eta=\chi(\ldots i_{-1}.i_0j_1\ldots)$, $Y=(\eta,\Phi(\eta))$, $n\ge0$ we have
\[
	\underline R^\u_n(\eta)
	= \chi\big([\ldots i_{-2}i_{-1}.i_0j_1\ldots j_{n-1}]\big).
\]
 So there is a natural coding between $\Sigma^+$ and the Markov unstable rectangle $\underline R^\u(\xi)$.
The same carries over to the unstable rectangle $R^\u(X)= R(X)\cap\cW^\u_{\rm loc}(X,T)$.

Given the H\"older continuous functions $\varphi^\u$ and $\varphi^\cu$ 
defined in~\eqref{eq:defbasicpotentialvarphius} and~\eqref{e.potentials}, 
we consider the lifted functions $\varphi^\u\circ\chi, \varphi^{\cu}\circ\chi\colon\Sigma\to\bR$.
Note that both functions are strictly negative.  Also note that they are  H\"older continuous  (though possibly with different H\"older exponents than $\varphi^\u,\varphi^\cu$). 
Denote by $\psi_1,\psi_2\colon\Sigma^+\to\bR$ their unstable parts 
\[
	\psi_1
	\eqdef \widehat{\varphi^\u\circ\chi},\quad
	\psi_2
	\eqdef \widehat{\varphi^\cu\circ\chi}
\]
and recall that they are also H\"older continuous. In the following we will use the methods developed in Section~\ref{subsec:mfa}.
In particular, by Remark~\ref{rem:cohomobir} for every $t\in\bR$
\begin{equation}\label{eq:equalityallpressure}
 	P_{\sigma^+}(\psi_2+t\psi_1)
	= P_{\sigma}(\varphi^\cu\circ\chi+t\varphi^\u\circ\chi)
	= P_{\tau|_\Xi}(\varphi^\cu+t\varphi^\u)
\end{equation}
(for the latter equality see also~\cite[Chapter 4]{Bow:08}).
Moreover, by Proposition~\ref{prolem:lemma} the number
\begin{equation}\label{eq:deftfinal}
	t = \sup_{\va=(a_1,a_2)\in D^+(\psi)}\frac{\ch(\psi,\va)+a_2}{-a_1}
\end{equation}
satisfies $P_{\sigma^+}(t\psi_1+\psi_2)=0$ and is uniquely determined by this equation.

The coding naturally induces the level sets of
\emph{one-sided} Birkhoff averages studied in Section~\ref{subsec:mfa}. 
Given $\va=(a_1,a_2)$ let
\[
	\cL(\varphi^\u,\varphi^\cu,\va) 
	\eqdef \Big\{\xi\in\Xi \colon 
		\lim_{ n\to\infty}\frac{1}{n}S_n\varphi^\u(\xi)=a_1, 
		\lim_{ n\to\infty}\frac{1}{n}S_n\varphi^{\cu}(\xi)=a_2\Big\}
\]
and note that with $\psi=(\psi_1,\psi_2)$ we have $L^+(\psi,\va)=(\pi^+\circ\chi^{-1})(\cL(\varphi^\u,\varphi^\cu,\va))$.

Finally, as we assume that $\Phi$ is not Lipschitz,  Propositions~\ref{pro:dis0} and \ref{pro:dis} provide the following estimates of the sizes of Markov unstable rectangles
\begin{equation}\label{e.width}
	\frac1c\le
	\frac{\lvert R^\u_n(Y)\rvert_\w }{\exp(S_n\varphi^\u(\zeta))}
	\le c,\quad
		\frac1c
	\le \frac{\lvert R^\u_n(Y)\rvert_{\rm h}}{\exp(S_n\varphi^\cu(\zeta'))}
	\le c,
\end{equation}
where $c>1$ is a fixed constant, 
 $Y=(\eta,\Phi(\eta))$, and $\zeta,\zeta'\in \underline R^\u_n(\eta)$ are arbitrary points.
Finally also note that, due to Lemma~\ref{l.cohomology}, the 
Birkhoff sums can be equally controlled in terms of the sums of the unstable parts of the symbolic potentials. Hence, the lemma below follows directly from~\eqref{e.width}.

\begin{lemma}\label{lem:Markovunstabpot}
  There exists a constant $C>1$ such that for any $X=(\xi,\Phi(\xi))$, $n\ge1$, and $Y=(\eta,\Phi(\eta))\in R^\u_n(X)$, we have
\[
   	\frac{1}{C} 
	 \leq  \frac{\lvert R^\u_n(Y)\rvert_\w}
			{\exp(S_n\psi_1(\underline i^+))} 
			\leq C , \quad
   	\frac{1}{C} 
	 \leq  \frac{\lvert R^\u_n(Y)\rvert_\h}
			{\exp(S_n\psi_2(\underline i^+))} \leq C ,
\]
where $\underline i^+=\pi^+(\underline i)$ and $\underline i\in\chi^{-1}(\eta)$  
 is an arbitrary preimage of $\eta$. 
\end{lemma}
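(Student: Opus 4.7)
The plan is to combine three ingredients already assembled in the paper. First, equation~\eqref{e.width} (coming from Propositions~\ref{pro:dis0} and~\ref{pro:dis}) already bounds $\lvert R^\u_n(Y)\rvert_\w$ and $\lvert R^\u_n(Y)\rvert_\h$ by $\exp(S_n\varphi^\u(\zeta))$ and $\exp(S_n\varphi^\cu(\zeta'))$, respectively, where $\zeta,\zeta'$ are \emph{arbitrary} points of $\underline R^\u_n(\eta)$. I would specialize $\zeta=\zeta'=\eta$, which trivially belongs to its own Markov rectangle.

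Next, I would translate these Birkhoff sums into the symbolic world via the semi-conjugacy $\chi\circ\sigma=\tau\circ\chi$. For any $\underline i\in\chi^{-1}(\eta)$ one has $\tau^k(\eta)=\chi(\sigma^k(\underline i))$ for all $k\ge0$, hence
\[
  S_n\varphi^\u(\eta) = S_n(\varphi^\u\circ\chi)(\underline i), \qquad
  S_n\varphi^\cu(\eta) = S_n(\varphi^\cu\circ\chi)(\underline i).
\]

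The remaining step is to pass from $\varphi^\u\circ\chi$ and $\varphi^\cu\circ\chi$ to their unstable parts $\psi_1,\psi_2$. Here I would invoke the identity displayed just above Lemma~\ref{l.cohomology}, namely $S_n(\widehat\psi\circ\pi^+)=S_n\psi-u+u\circ\sigma^{-n}$, applied to each of the two H\"older continuous lifted potentials. Since the corresponding transfer functions $u_1,u_2\in\cF$ are continuous (hence bounded) on the compact space $\Sigma$, this yields
\[
  \bigl\lvert S_n\psi_\ell(\underline i^+)-S_n(\varphi^\bullet\circ\chi)(\underline i)\bigr\rvert
  \le 2\lVert u_\ell\rVert_\infty,
\]
for $\ell=1,2$ (with $\bullet=\u$ for $\ell=1$ and $\bullet=\cu$ for $\ell=2$), uniformly in $n\ge 1$ and in the choice of preimage $\underline i$. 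After exponentiating, this bounded error gets absorbed into a multiplicative factor which, combined with the constant $c$ from~\eqref{e.width}, yields the desired constant $C=c\cdot\exp\bigl(2\max\{\lVert u_1\rVert_\infty,\lVert u_2\rVert_\infty\}\bigr)$.

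No step presents a genuine obstacle: the geometric content is entirely encoded in Propositions~\ref{pro:dis0} and~\ref{pro:dis}, the translation to symbolic Birkhoff sums is tautological via $\chi$, and the control in terms of $\psi_1,\psi_2$ is cohomological bookkeeping with uniformly bounded coboundary. Independence of the chosen preimage $\underline i\in\chi^{-1}(\eta)$ is automatic, since the final estimate depends only on the sup-norms of the $u_\ell$ and not on which $\underline i$ is selected.
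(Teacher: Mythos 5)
Your argument is correct and follows exactly the route the paper takes: the geometric estimates are \eqref{e.width} (Propositions~\ref{pro:dis0} and~\ref{pro:dis}, specialized to $\zeta=\zeta'=\eta$, which is legitimate since $\eta\in\underline R^\u_n(\eta)$), the passage to symbolic Birkhoff sums is the semi-conjugacy, and the passage to the unstable parts $\psi_1,\psi_2$ is the uniformly bounded coboundary behind Lemma~\ref{l.cohomology}, absorbed into the multiplicative constant after exponentiating. The paper merely asserts that the lemma "follows directly" from these ingredients; your write-up supplies the same bookkeeping explicitly, including the (correct) observation that the estimate is uniform in the choice of preimage $\underline i\in\chi^{-1}(\eta)$.
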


\subsection{Estimating box dimension from above}

The aim of this section is to prove that $d$ as in Proposition~\ref{pro:localdimension-unstable} provides an upper bound for the box dimension of $\Phi\cap\cW^\u_{\rm loc}(X,T)$ for $X\in\Phi$.
In order to do so, we will first cover the
local unstable manifold $\cW^\u_{\rm loc}(\xi,\tau)$ of the point $\xi\in\Xi$ with $X=(\xi,\Phi(\xi))$ by
a collection of Markov rectangles (or rather Markov intervals) of approximately
the same size $r>0$. This is often referred to as a {\em Moran cover}. The
difficulty is then to estimate the vertical size of the corresponding Markov
rectangle in $\cW^\u_{\rm loc}(X,T)= \cW^\u_{\rm loc}(\xi,\tau)\times I$. 

The main problem is that the level of the Markov
rectangles in the Moran cover is not constant. Indeed, since we require that the
rectangles have approximately the same size, the level of each Markov rectangle depends on the local expansion
and hence on the behavior of the Birkhoff average. Nevertheless, we can divide our Moran cover into collections of
rectangles on which the Birkhoff averages of both $\varphi^\u$ and $\varphi^\cu$
take approximately the same values. Then we consider the symbolic cylinders
corresponding to these rectangles and use 
Corollary~\ref{c.growth-rates} to estimate their maximal
number. Since the Birkhoff average of $\varphi^\cu$ provides us with an estimate
for the height of the rectangles via (\ref{e.width}), we can thus obtain a
bound on the number of squares of side length $r$ that we need to cover the graph
in each rectangle. Summing up over all elements of the Moran cover will then
yield the desired upper bound on $\dim_{\rm B}(\Phi\cap\cW^\u_{\rm loc}(X,T))$.

\subsubsection{Construction of the Moran covers} \label{subsubsec:Morancover}

For  a given parameter $r>0$,  which will be the approximate size
 of Markov rectangles in $\cW^\u_{\rm loc}(\xi,\tau)$,  we define an
 appropriate partition $\cC(r)$ of $\Sigma^+$ that we call {\em symbolic Moran
   cover} of parameter $r$ (relative to the potential $\psi_1$).  For every
 $\underline i^+\in\Sigma^+$ let $n(\underline i^+)\ge1$ be the smallest positive
 integer $n$ such that
\[
	 S_n\psi_1(\underline i^+)<\log r.
\]
Note that, as $\psi_1$ is continuous and negative, there exist positive integers
$n_1=n_1(r)$ and $n_2=n_2(r)$ such that $n_1\le n(\underline i^+)\le n_2$ for all
$\underline i^+\in\Sigma^+$. Given $n\in\{n_1,\dots,n_2\}$, let
$\widetilde\cC_n(r)$ denote the family of (disjoint) cylinders $[i_1\ldots i_n]^+$ which
contain an infinite sequence $\underline i^+\in \Sigma^+$ with $n(\underline i^+)=n$. Let $\ell_n=\card\widetilde\cC_n(r)$.

We now define the partition $\cC(r)$ recursively.
We start with index $n=n_1$. Let $\cs_{n_1}=\Sigma^+$ and $\cC_{n_1}(r)=\widetilde\cC_{n_1}(r)$. Assuming that all these objects are already defined for $k=n_1,\ldots,n$ and that $S_n\ne\emptyset$, let\begin{eqnarray*}
  \cs_{n+1} 
  & \eqdef & \cs_n\setminus  \bigcup\{C\colon C\in\widetilde\cC_n(r)\},\\ 
  \cC_{n+1}(r) 
  & \eqdef & \big\{C\colon C\in\widetilde\cC_{n+1}(r),C\subset \cs_{n+1}\big\}.
\end{eqnarray*}
Since $n(\cdot)\le n_2$, we eventually arrive at $\cs_{n^*}=\emptyset$ for some
$n^*\leq n_2$. Then we stop the recursion and define the family
\begin{equation}\label{eq:morann}
	\cC(r)\eqdef
	\{C\colon C\in\cC_n(r),n=n_1,\ldots,n^\ast\}
\end{equation}
which partitions $\Sigma^+$ into pairwise disjoint cylinders. Note that each $\cC_n(r)$ contains exactly those cylinders $C\in\cC(r)$ that have length $n$.

\subsubsection{Cardinality of the Moran covers} \label{subsubsec:cardMoran}
We  fix $\varepsilon>0$. Let $A\eqdef\min_\xi\{\lvert\psi_1(\xi)\rvert,\lvert\psi_2(\xi)\rvert\}$ and $\lVert\psi\rVert\eqdef\max\{\lVert\psi_1\rVert,\lVert\psi_2\rVert\}$. 

For every $\va\in D^+(\psi)$ let $\theta_0(\va)=\theta_0(\va,\varepsilon)>0$ as in  Corollary~\ref{c.growth-rates}. Thus, since $D^+(\psi)$ is compact there is a finite cover with cardinality $m$, for some $m=m(\varepsilon)\ge1$, 
\begin{equation}\label{eq:cover}\begin{split}
	D^+(\psi)
	&\subset\bigcup_{j=1}^m
		Q_j,
		\quad\text{ where }\quad\\
	Q_j&\eqdef\Big(a^j_1-\frac{\theta_j}{2},a^j_1+\frac{\theta_j}{2}\Big)
		\times\Big(a^j_2-\frac{\theta_j}{2},a^j_2+\frac{\theta_j}{2}\Big)
\end{split}\end{equation}
with $\va^j=(a^j_1,a^j_2)\in D^+(\psi)$ and $\theta_j=\theta_0(\va^j)$. 
Define $\widehat\theta=\widehat\theta(\varepsilon)>0$ and $\widetilde\theta=\widetilde\theta(\varepsilon)>0$ by
\[
	\widehat\theta\eqdef\min_{j=1,\ldots,m}\theta_j,\quad
	\widetilde\theta\eqdef\max_{j=1,\ldots,m}\theta_j
\] 
and observe that $\widehat\theta(\varepsilon)\to0$ and $\widetilde\theta(\varepsilon)\to0$ as $\varepsilon\to0$.

Let $\theta\in(0,\widehat\theta)$ be small enough such that
\[
	\frac{4\theta}{A} \, \lVert\psi\rVert
	< \frac{\widehat\theta}{3}
\]
and for every index  $j$ choose $n_j=n_0(\va^j,\varepsilon,\theta)\ge1$ as in  Corollary~\ref{c.growth-rates} and let $n_0=\max_jn_j$. 

Recall  that $\log r$ and all $a_\ell+\theta$, $\ell=1,2$, are all negative numbers.
Let
\[
	K(r)
	\eqdef\left\lvert\frac{\log r}{\log r-\lVert\psi\rVert}\right\rvert
\]
and observe that $\lim_{r\to0}K(r)=1$.
Now choose $r=r(\varepsilon,\theta)>0$ small enough to ensure that
\begin{equation} \label{eq:defKthetar}
	\frac{\lvert\log r\rvert}{\lVert\psi\rVert+\theta} > n_0
\end{equation}
and that 
\begin{equation}\label{eq:theta0comparisonfinal1}
	2\lVert\psi\rVert\Big(1-\big(1-\frac{2\theta}{A})K(r)\Big)
	\le \frac{\widehat\theta}{2}.
\end{equation}

For this choice of $r$ we now consider  the Moran cover $\cC(r)$ of order $r$  as in Section~\ref{subsubsec:Morancover}. Recall that a Moran cover consists of pairwise disjoint cylinders of variable length, see~\eqref{eq:morann}. For every index  $j$ in the cover~\eqref{eq:cover} let
\[ \begin{split}
	\cC(r,\theta_j,j) 
	\eqdef \Big\{[i_1\ldots i_n]^+&\in\cC(r) \colon \\
	\exists\,
        \underline i^+ \in [i_1\ldots i_n]^+ 
       & \text{ with }n(\underline i^+)=n,
       \Big( \frac1nS_n\psi_1(\underline i^+),\frac1nS_n\psi_2(\underline i^+)\Big)
       	\in Q_j\Big\},
\end{split}\]
that is, we select only a certain number of cylinders from the Moran cover. 
Note that  for different pairs  this selection is not necessarily disjoint, but this will not matter for our purposes. We only need an upper bound on their cardinality stated in Claim~\ref{cla:atmos} below.

For that end, for every $j$ let 
 \[
    n_j
    \eqdef \left\lceil\frac{\log r - \lVert\psi_1\rVert}{a_1^j+\theta_j}\right\rceil .
\]
Given $[i_1\ldots i_n]^+\in\cC(r,\theta_j,j)$ and $\underline i^+\in [i_1\ldots i_n]^+$
with  $(\frac1nS_n\psi_1(\underline i^+),\frac1n S_n\psi_2(\underline i^+))\in Q_j$, by definition we have
\begin{equation}\label{eq:estimatea}
	n(a_1^j-\theta_j)
	\le S_n\psi_1(\underline i^+)
	<n(a_1^j+\theta_j),\quad
	n(a_2^j-\theta_j)
	\le S_n\psi_2(\underline i^+)
	< n(a_2^j+\theta_j).
\end{equation}
Since $n(\underline i^+)=n$, 
\[
	S_n\psi_1(\underline i^+)<\log r,\quad
	S_{n-1}\psi_1(\underline i^+)\ge\log r.
\]
Further (recalling that $\psi_1<0$) note that
\[	
	\log r \le S_{n-1}\psi_1(\underline i^+) 
	\le S_n\psi_1(\underline i^+) +  \lVert\psi_1\rVert 
	<n(a_1^j+\theta_j) + \lVert\psi_1\rVert.
\]
Hence, with~\eqref{eq:defKthetar} and~\eqref{eq:estimatea} (recall that $a_1^j+\theta<0$) we obtain 
\begin{equation}\label{eq:waersc}
	n_0
	<\frac{\lvert\log r\rvert}{\lVert\psi_1\rVert+\theta}
	\le \left\lvert\frac{\log r}{ a_1^j-\theta_j}\right\rvert 
	\le n 
	\le \left\lvert\frac{\log r-\lVert\psi_1\rVert}{a_1^j+\theta_j}\right\rvert 
	\le n_j.
\end{equation}

We now state the following estimate.

\begin{claim}\label{cla:atmos}
With the above choice of quantifiers, for every index pair $jk$ we have 
\[
	\card\cC(r,\theta_j,j)
	\le \exp(n_j(\ch(\psi,\va^j)+\varepsilon)),
\] 
where $\ch(\psi,\va^j)$ is defined as in Proposition~\ref{p.joint-limits}. 
\end{claim}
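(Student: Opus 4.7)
The plan is to inject $\cC(r,\theta_j,j)$ into the collection of length-$n_j$ cylinders counted by $M(\psi,\va^j,\theta_j,n_j)$ and then invoke Corollary~\ref{c.growth-rates}. For each $[i_1\ldots i_n]^+\in\cC(r,\theta_j,j)$ I would fix a witness $\underline i^+\in[i_1\ldots i_n]^+$ with $n(\underline i^+)=n$ and $(\tfrac{1}{n}S_n\psi_1(\underline i^+),\tfrac{1}{n}S_n\psi_2(\underline i^+))\in Q_j$, and then send the cylinder to the length-$n_j$ prefix $[i_1\ldots i_{n_j}]^+$ of $\underline i^+$. Since $n\le n_j$ by \eqref{eq:waersc}, this prefix sits inside $[i_1\ldots i_n]^+$; because the Moran cover is a partition into pairwise disjoint cylinders, the prefixes arising from distinct elements of $\cC(r,\theta_j,j)$ are themselves disjoint, so the assignment is injective.

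The central step is then to verify that each such prefix is indeed counted by $M(\psi,\va^j,\theta_j,n_j)$, that is, $\lvert\tfrac{1}{n_j}S_{n_j}\psi_\ell(\underline i^+)-a^j_\ell\rvert<\theta_j$ for $\ell=1,2$. Splitting $S_{n_j}\psi_\ell(\underline i^+)=S_n\psi_\ell(\underline i^+)+S_{n_j-n}\psi_\ell(\sigma^n\underline i^+)$ and writing $b_\ell=\tfrac{1}{n}S_n\psi_\ell(\underline i^+)$, so that $\lvert b_\ell-a^j_\ell\rvert<\theta_j/2$ by the definition of $Q_j$, a direct rearrangement yields
\[
\bigl\lvert\tfrac{1}{n_j}S_{n_j}\psi_\ell(\underline i^+)-a^j_\ell\bigr\rvert
\;\le\; \tfrac{\theta_j}{2} \;+\; 2\lVert\psi\rVert\cdot\tfrac{n_j-n}{n_j},
\]
since both the rescaling of $b_\ell$ from $1/n$ to $1/n_j$ and the fresh Birkhoff contribution of length $n_j-n$ contribute at most $\lVert\psi\rVert(n_j-n)/n_j$ each. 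The task therefore reduces to making the drift $2\lVert\psi\rVert(n_j-n)/n_j$ strictly smaller than $\theta_j/2$.

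The hard part will be precisely this drift estimate, which is exactly what the delicate calibration of $\theta$, $r$, and the sizes of the boxes $Q_j$ was designed to provide. Combining \eqref{eq:waersc} with the explicit formula $n_j=\lceil(\log r-\lVert\psi_1\rVert)/(a_1^j+\theta_j)\rceil$ and exploiting $\lvert a_1^j\rvert\ge A$, I expect to obtain
\[
\tfrac{n_j-n}{n_j}\;\le\; 1-\bigl(1-\tfrac{2\theta}{A}\bigr)K(r)+o_r(1),
\]
with the correction negligible for $r$ small by \eqref{eq:defKthetar}; condition \eqref{eq:theta0comparisonfinal1} then forces $2\lVert\psi\rVert(n_j-n)/n_j\le\widehat\theta/2\le\theta_j/2$, which closes the bound from the previous paragraph. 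Injectivity of the prefix assignment together with Corollary~\ref{c.growth-rates}, applied at $\theta_j=\theta_0(\va^j,\varepsilon)$ and $n_j\ge n_0(\va^j,\varepsilon,\theta_j)$ (valid because $n_j\ge n\ge n_0$ by \eqref{eq:defKthetar}), finally yields $\card\cC(r,\theta_j,j)\le M(\psi,\va^j,\theta_j,n_j)\le\exp(n_j(\ch(\psi,\va^j)+\varepsilon))$, as claimed.
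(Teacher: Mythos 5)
Your proposal is correct and follows essentially the same route as the paper: reassign each variable-length Moran cylinder in $\cC(r,\theta_j,j)$ to the length-$n_j$ prefix of its witness, control the drift of the level-$n_j$ Birkhoff average by $2\lVert\psi\rVert(1-n/n_j)$, bound $1-n/n_j$ via \eqref{eq:waersc} and \eqref{eq:theta0comparisonfinal1}, and conclude with Corollary~\ref{c.growth-rates}. The only (welcome) difference is that you make explicit the injectivity of the prefix assignment, which the paper's proof leaves implicit.
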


\begin{proof} 
Observe that by our choice of quantifiers, by Corollary~\ref{c.growth-rates} for every index  $j$  there are at most $\exp(n_j(\ch(\psi,\va^j)+\varepsilon))$ cylinders of length $n_j$ which contain a sequence with finite  Birkhoff averages of $\psi_\ell$ of level $n_j$ being roughly $a^j_\ell\pm\theta_j$, for $\ell=1,2$, respectively. Hence, if all cylinders in $\cC(r,\theta_j,j)$ would have length $n_j$, the claim would follow immediately (since $n_j\ge n_0$ by~\eqref{eq:waersc}). 
As this is in general not the case, we need to relate the  cylinders in $\cC(r,\theta_j,j)$ to those of length $n_j$. 

To that end,
  let $C\in \cC(r,\theta_j,j)$ be of length $n$ and
  choose some $\underline i^+\in C$ with $n(\underline i^+)=n$ and $(\frac1n S_n\psi_1(\underline i^+),\frac1n S_n\psi_2(\underline i^+))\in
  Q_j$ as above. We have (recall again $n_j\ge n$)
  \[\begin{split}
    \Big\lvert \frac{1}{n_j} S_{n_j}\psi_1(\underline i^+) 
    &- \frac1n S_n\psi_1(\underline i^+)\Big\rvert \\
	&\leq  \frac{1}{n_j} \left|S_{n_j}\psi_1(\underline i^+)-S_n\psi_1(\underline i^+)\right| 
				+\left|\left(\frac{1}{n_j}-\frac1n\right)S_n\psi_1(\underline i^+)\right| \\ 
	&\leq \frac{ n_j-n}{n_j} \lVert\psi_1\rVert + \left|\frac{1}{n_j}-\frac1n\right|
    n\|\psi_1\| \ \leq \ 2 \|\psi_1\| \left(1-\frac{n}{n_j}\right) .
  \end{split}\]
The analogous estimates can be obtained for $\psi_2$. Using (\ref{eq:waersc}), this yields
\begin{eqnarray*}
  1-\frac{n}{n_j} 
  &\leq & 1 
  	- \frac{\lvert a_1^j+\theta_j\rvert}{\lvert a_1^j-\theta_j\rvert} \cdot 
		\left\lvert\frac{\log r}{\log r - \|\psi_1\|}\right\rvert\\
	&=& 1-\Big(1-\frac{2\theta_j}{	\lvert a_1^j-\theta_j\rvert}\Big)\cdot 
		\left\lvert\frac{\log r}{\log r - \|\psi_1\|}\right\rvert 
	\le 1- \Big(1-\frac{2\theta}{A}\Big)	\cdot K(r).
\end{eqnarray*}
By~\eqref{eq:estimatea} and~\eqref{eq:theta0comparisonfinal1} we obtain 
\[
	\Big\lvert \frac{1}{n_j} S_{n_j}\psi_1(\underline i^+) 
   	- a^j_1\Big\rvert 
	\le \frac{\theta_j}{2}+\frac{\widehat\theta}{2}
	\le \theta_j.
\]
The analogous estimates are true for $\psi_2$.
Hence,  every cylinder in $\cC(r,\theta_j,j)$ contains a sequence with finite Birkhoff averages of $\psi_\ell$ of level $n_j$ being roughly $a^j_\ell\pm\theta_j$, $\ell=1,2$.
But by the above there are at most  $\exp(n_j(\ch(\psi,\va^j)+\varepsilon))$ such $n_j$-level cylinders and hence at most that number of cylinders in $\cC(r,\theta_j,j)$
as claimed.
\end{proof}

\subsubsection{Final estimates} \label{subsubsec:finallower}
Recall that in Section~\ref{subsubsec:cardMoran} we fixed $\varepsilon>0$ and then in~\eqref{eq:cover} chose a finite cover $\{Q_j\}_{j=1}^m$ of $D^+(\psi)$ by sufficiently small open squares of sizes $\theta_j$ bounded between $\widehat\theta(\varepsilon)=\min_{j}$ and $\widetilde\theta(\varepsilon)=\max_{j}\theta_j$ with $\widehat\theta(\varepsilon)\to0,\widetilde\theta(\varepsilon)\to0$ as $\varepsilon\to0$. Then  for sufficiently small $r>0$ depending on those choices we verified Claim \ref{cla:atmos}.

 Consider now the Moran cover $\cC(r)$ of $\Sigma^+$ for such $r$. Observe that, by construction
\[
	\Sigma^+
	= \bigcup_{j=1,\ldots,m}\bigcup\{C\colon C\in\cC(r,\theta_j,j)\}.
\]
For any index  $j$, a cylinder $[i_1\ldots i_n]^+\in\cC(r,\theta_j,j)$ projects to a Markov unstable rectangle $\underline R^\u_n(\eta)$, where $\eta\in\chi([i_1\ldots
  i_n]^+)$. Hence, the collection of the corresponding Markov rectangles $R^\u_n(Y)$, $Y=(\eta,\Phi(\eta))$, 
  forms a cover of $R^\u(X)\subset\Phi\cap\cW^\u_\mathrm{loc}(X,T)$. For the width and height of
these rectangles, Lemma~\ref{lem:Markovunstabpot} yields the following estimates
\[
	 \lvert  R^\u_n(Y)\rvert_\w 
	\le Ce^{S_n\psi_1(\underline i^+)}
	\le Cr
	\quad\text{ and }\quad
	\lvert  R^\u_n(Y)\rvert_\h 
	\le Ce^{S_n\psi_2(\underline i^+)}
	\le Ce^{n(a_2^j+\theta_j)}.
\]
Hence, we can cover $R^\u_n(Y)$ by at most $\exp(n(a_2^j+\theta_j))/r$ balls of
radius $Cr$.  Using Claim~\ref{cla:atmos}, this implies that we can cover the
union of all Markov rectangles in the Moran cover, and hence all of
$R^\u(X)$, 
by at most $N(Cr) $  balls of radius $Cr$, where
\[\begin{split}
	N(Cr) 
	& \eqdef \sum_{j=1}^m\exp( n_j(\ch(\va^j)+\varepsilon)) \cdot
        \frac{\exp(n(a_2^j+\theta_{j}))}{r}\\ 
        &\le m\cdot\frac1r\cdot \max_{j=1,\ldots,m}
        		\exp\Big(n_j(\ch(\va^j)+\varepsilon)+n(a_2^j+\theta_{j})\Big).
\end{split}\]
Thus, %
\[\begin{split}
  \overline\dim_{\rm B}(R^\u(X))
  & \leq  \limsup_{r\to 0} \frac{\log    N(Cr)}{-\log r} \\ 
  &\leq 1 + \limsup_{r\to0} 
  	\max_{j=1,\ldots,m}\left( \frac{n_j}{-\log r}(\ch(\psi,\va^j)+\varepsilon) 
						+ \frac{n}{-\log r}(a_2^j+\widetilde\theta)\right).
\end{split}\]
By the definition of $n_j$ and the relation for $n$ in~\eqref{eq:waersc} we hence obtain 
\[
	\overline\dim_{\rm B}(R^\u(X))
	\le 1+ \max_{j=1,\ldots,m}
		\left(\frac{-1}{a_1^j+\widehat\theta}(\ch(\psi,\va^j)+\varepsilon)
			+\frac{-1}{a_1^j-\widetilde\theta}(a_2^j+\widetilde\theta)\right).
\]
Taking the limits $\widehat\theta,\widetilde\theta\to0$ and $\varepsilon \to 0$, we finally get
\[
  \overline\dim_{\rm B}(R^\u(X))
  \leq 1+ \max_{\va=(a_1,a_2)\in D^+(\psi)}\frac{\ch(\psi,\va)+a_2}{-a_1} .
\]

By~\eqref{eq:deftfinal} together with~\eqref{eq:equalityallpressure} the right hand side is the unique number $d$ with $P_{\sigma^+}(\psi_2+(d-1)\psi_1)=0=P_{\tau|_\Xi}(\varphi^\cu+(d-1)\varphi^\u)$. 
This finishes the proof of the upper bound for the upper box dimension of $R^\u(X)\subset \Phi\cap\cW^\u_{\rm loc}(X,T)$. As this estimate holds for every $X\in\Phi$, the upper bound for the box dimension of $ \Phi\cap\cW^\u_{\rm loc}(X,T)$  in Proposition~\ref{pro:localdimension-unstable} follows. 

\subsection{Estimating box dimension from below}
The lower estimate for the box dimension is now easier, since we can restrict to just one vector $\va\in D^+(\psi)$.

\begin{lemma}\label{lem:heresnow}
	For every $\va=(a_1,a_2)\in D^+(\psi)$ and every $X\in\Phi$ we have
\[
	\underline\dim_{\rm B}\left(\Phi\cap\cW^\u_\mathrm{loc}(X,T)\right)
	\ge \frac{\ch(\psi,\va)+a_2}{-a_1}+1 .
\]	
\end{lemma}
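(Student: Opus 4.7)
The plan is to establish the lower bound by exhibiting, at scale $r\approx e^{na_1}$, a family of roughly $e^{n\ch(\psi,\va)}$ disjoint Markov unstable rectangles each requiring roughly $e^{n(a_2-a_1)}$ squares of side $r$ to cover their intersection with $\Phi$. First I would fix $\va=(a_1,a_2)\in D^+(\psi)$ and $\varepsilon>0$. Applying Corollary~\ref{c.growth-rates} to the H\"older potential $\psi=(\psi_1,\psi_2)$, I choose $\theta>0$ small and then $n_0\ge 1$ large such that for every $n\ge n_0$ there exist at least $\exp(n(\ch(\psi,\va)-\varepsilon))$ cylinders $[i_0\ldots i_{n-1}]^+\in\Sigma_n^+$ containing a sequence $\underline i^+$ with $|\tfrac{1}{n}S_n\psi_\ell(\underline i^+)-a_\ell|<\theta$ for $\ell=1,2$. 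Via the Markov coding of Section~\ref{sec:markov}, these cylinders correspond to pairwise disjoint $n$th level Markov unstable rectangles $R^\u_n(Y_k)$ inside $R^\u(X)\subset\Phi\cap\cW^\u_{\rm loc}(X,T)$.

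Next I would use Lemma~\ref{lem:Markovunstabpot} to translate the Birkhoff average bounds into geometric bounds: each such rectangle satisfies
\[
C^{-1}e^{n(a_1-\theta)}\le \lvert R^\u_n(Y_k)\rvert_\w\le Ce^{n(a_1+\theta)},\qquad C^{-1}e^{n(a_2-\theta)}\le \lvert R^\u_n(Y_k)\rvert_\h\le Ce^{n(a_2+\theta)}.
\]
I then set $r=r(n)\eqdef e^{na_1}$, so that the width of each rectangle is comparable to $r$ (up to factor $Ce^{n\theta}$).

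The key step is to show that covering $\Phi\cap R^\u_n(Y_k)$ requires at least of the order of $e^{n(a_2-a_1)}$ squares of side $r$. Since the rectangles are pairwise disjoint this will yield $N(r)\gtrsim e^{n(\ch(\psi,\va)-\varepsilon+a_2-a_1-2\theta)}$ squares of side $r$ to cover $\Phi\cap\cW^\u_{\rm loc}(X,T)$, from which $\underline\dim_{\rm B}\ge (\ch(\psi,\va)+a_2)/(-a_1)+1$ follows by letting first $n\to\infty$ (hence $r\to 0$), then $\theta\to 0$ and $\varepsilon\to 0$. This is the main obstacle, because a priori $\Phi\cap R^\u_n(Y_k)$ could be a Cantor-like set concentrated on a set of diameter much smaller than the full height. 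Here the three structural scenarios enter: when $\tau$ is Anosov on $\Xi=M$ or $\Xi$ is a one-dimensional attractor, every $R^\u(\xi,n)$ is a genuine interval, so the restriction $\Phi|_{R^\u(\xi,n)}$ is a continuous curve whose vertical oscillation is, by Proposition~\ref{pro:dis} (and the non-Lipschitz caveat via Corollary~\ref{cor:erika}), at least $C^{-1}e^{S_n\varphi^\cu(\cdot)}\approx e^{na_2}$; consequently any cover by squares of side $r$ needs at least $e^{n(a_2-a_1)}/C'$ of them.

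In the Cantor case I would invoke the germ property (Definition~\ref{def:3}) together with Corollary~\ref{c.germcorollary}: the $n$th level $\u$-box $B^\u_n(\pi(Y_k))$ contains a germ rectangle $\fR$ foliated by germ plaques each containing a point of $\Phi$, and $|T^n(\fR)|_\h\ge\delta$ uniformly. Pulling this back by $T^{-n}$ and using bounded distortion in the center-unstable direction, the projection of $\Phi\cap R^\u_n(Y_k)$ along strong unstable leaves onto a fiber covers an interval of length comparable to $e^{na_2}$, so again at least $C^{-1}e^{n(a_2-a_1)}$ squares of side $r$ are required. Combining both scenarios completes the argument.
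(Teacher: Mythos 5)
Your proposal is correct and follows essentially the same route as the paper: the same count of $\exp(n(\ch(\psi,\va)-\varepsilon))$ disjoint Markov unstable rectangles from Corollary~\ref{c.growth-rates}, the same size estimates via Lemma~\ref{lem:Markovunstabpot}, and the same key claim that each rectangle needs $\gtrsim e^{n(a_2-a_1)}$ squares, proved by the identical two-case split (full vertical stretching of the continuous graph over an interval in the Anosov/one-dimensional-attractor case, and the germ property with Corollary~\ref{c.germcorollary} plus the distortion argument of Proposition~\ref{pro:dis} in the blender case). No gaps; the only deviations (e.g.\ taking $r=e^{na_1}$ rather than $C^{-1}e^{n(a_1-\theta)}$) are cosmetic.
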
 

\begin{proof} 
We fix $\varepsilon>0$ and choose $\theta=\theta_0(\va,\varepsilon)$ and $n_0=n_0(\va,\varepsilon,\theta)$ as in Corollary~\ref{c.growth-rates}.
Then, given $n\geq n_0$, there exist at least $\exp(n(\ch(\psi,\va)-\varepsilon))$ (mutually
disjoint) cylinders of length $n$ containing some $\underline i^+\in\Sigma^+$
with $\lvert\frac1nS_n\psi_\ell(\underline i^+)-a_\ell\rvert\le\theta$, $\ell=1,2$. Each such cylinder 
corresponds to an unstable Markov rectangle $\underline R^\u_n(\eta)$, where
$\eta$ is in the preimage under $\chi$ of some sequence $\underline i$ with
$\pi^+(\underline i)=\underline i^+$. We label these points $\eta$ by
$\xi_1,\ldots,\xi_m$, where $m\geq \exp(n(\ch(\psi,\va)-\varepsilon))$, and obtain $m$
Markov rectangles $\underline R^\u_n(\xi_1),\ldots,\underline R^\u_n(\xi_m)$ with mutually disjoint interiors. Let $X_k=(\xi_k,\Phi(\xi_k))$.

\begin{claim}\label{cl:claimkey} 
There exist constants $C,\widetilde C>1$ (independent of $\varepsilon,\va,\theta,n$) such that for every $k=1,\ldots,m$, at least $\widetilde Ce^{n(a_2-a_1)}$
squares of diameter $C^{-1}e^{n(a_1-\theta)}$ each are needed to cover
$\Phi\cap R^\u_n(X_k)$. 
\end{claim}

We now distinguish two different cases corresponding to either i) the hypotheses of Theorems~\ref{t.anosov} or ~\ref{the:one-dimensional} or ii) the hypotheses of
Theorem~\ref{the:1}.

\begin{proof}[Proof of Claim~\ref{cl:claimkey} in case i)]
If either $\Xi=M$ or if $\Xi$ is a one-dimensional attractor, then observe that for every $\xi\in\Xi$, the Markov unstable rectangle $\underline R^\u(\xi)$ is a closed curve contained in the local unstable manifold of $\xi$ and with $X=(\xi,\Phi(\xi))$ the Markov unstable rectangle $R^\u(X)$ is the graph of a continuous closed curve. By Lemma~\ref{lem:Markovunstabpot}, for every $X_k\in R^\u(X)$, the widths and heights of the Markov unstable rectangles of level $n$ can be estimated by 
\[
	 \lvert  R^\u_n(X_k)\rvert_\w 
	\ge C^{-1}e^{n(a_1-\theta)}
	\quad\text{ and }\quad
	\lvert  R^\u_n(X_k)\rvert_\h 
	\ge C^{-1}e^{n(a_2-\theta)},
\]
respectively. 
As the graph stretches fully over the entire rectangle (there are no gaps since there are no gaps in its projection to the base $\cW^\u_{\rm loc}(\xi,\tau)$), subdividing we yield that for each $R^\u_n(X_k)$ there are at least $C^{-1}e^{n(a_2-\theta)}/(C^{-1}e^{n(a_1-\theta)})$ squares with pairwise disjoint interior of size $C^{-1}e^{n(a_1-\theta)}$ which each contain a point in the rectangle. 
\end{proof}

Note that the crucial argument in the above proof is its very last sentence. In the case of a general Cantor set $\Xi$ this argument does not anymore apply (and in fact would overestimate the number of elements used to cover the fractal graph $\Phi$).

\begin{proof}[Proof of Claim~\ref{cl:claimkey} in case ii)]
For $\Phi$ being a fibered blender with the germ property, by item (c) of the Germ property there exists $\delta>0$ such that for every $X_k=(\xi_k,\Phi(\xi_k))$ and for every $n\ge1$ the $n$th level $\u$-box of $\xi_k$,
\[
	B^\u_n(\xi_k)
	=R^\u(\xi_k,n)\times I,
\]
where $R^\u(\xi_k,n)$ denoted the minimal curve containing $\underline R^\u_n(\xi_k)$,
contains a set $\fR$ of the form
\[
	\fR=\bigcup_{Z\in J_n}\hat D_Z
\]
which is continuously foliated by a family  $\{\hat D_Z\colon Z\in J_n\}$ of germ plaques such that $\lvert T^n(\fR)\rvert_\h\ge\delta$. Moreover, by Corollary~\ref{c.germcorollary}, each such germ plaque contains a point of $\Phi$.
Applying the same arguments as in the proof of Proposition~\ref{pro:dis}, we get the following estimate for the width and height of $\fR$:
\[
	\lvert \fR\rvert_\w\ge C^{-1}e^{n(a_1-\theta)}
	\quad\text{ and }\quad
	\lvert \fR\rvert_\h\ge e^{n(a_2-\theta)}\cdot D^{-2}\cdot\delta,
\]
where $D$ is the distortion constant as in this proof. Hence, to cover $\Phi\cap \fR^\u_n(X_k)$, we need at least $D^{-2}\delta e^{n(a_2-\theta)}/(C^{-1}e^{n(a_1-\theta)}$ squares, each of size $C^{-1}e^{n(a_1-\theta)}$.
\end{proof}

With Claim~\ref{cl:claimkey},  we need at least $N(\va,\varepsilon,\theta,n)$ squares to cover $R^\u(X)$ by squares of size $C^{-1}e^{n(a_1-\theta)}$, where
\[
	N(\va,\varepsilon,\theta,n)
	\eqdef
	e^{n(\ch(\psi,\va)-\varepsilon)}
	\widetilde Ce^{n(a_2-a_1)}.
\]
Thus,
\[
	\underline\dim_{\rm B}(R^\u(X))
	\ge \liminf_{n\to0}\frac{\log N(\va,\varepsilon,\theta,n)}{-\log (C^{-1}e^{n(a_1-\theta)})}
	\ge \frac{\ch(\psi,\va)-\varepsilon+a_2-a_1}{-a_1+\theta}.
\]
Letting $\theta\to0$ and then $\varepsilon\to0$ proves the lemma.
\end{proof}

As $\va\in D^+(\psi)$ was arbitrary in Lemma~\ref{lem:heresnow}, with the same observations as at the end of Section~\ref{subsubsec:finallower}, we have shown that $d$ as in Proposition~\ref{pro:localdimension-unstable} provides a lower bound for the box dimension of $\Phi\cap\cW^\u_{\rm loc}(X,T)$. This finishes the proof of Proposition~\ref{pro:localdimension-unstable}.

\section{Regular (local) product structure and proof of the main theorems}\label{sec:locpro}

The formula for the box dimension of the full set $\Phi$ is based on two crucial facts. First,  the dimension of direct products of sets we recalled in Section~\ref{sec:defdim}. Second, we rely on the fact that the local product structure (locally) enables to describe $\Phi$ as a direct product up to a ``sufficiently regular change of coordinates''.
To be more precise, given $X\in\Phi$ let $Y, Z\in\Phi$ 
be points sufficiently close to $X$. Recall that by~\eqref{eq:locpro} the local stable manifold of $Y$ intersects the local unstable manifold of $Z$ in a point  in $\Phi$,
\[
	[Y,Z]
	\eqdef \cW^\s_{\rm loc}(Y,T)\cap\cW^\u_{\rm loc}(Z,T) 
	 \in\Phi
\]
(compare Figure~\ref{fig.5}). 
\begin{figure}
\begin{overpic}[scale=.45]{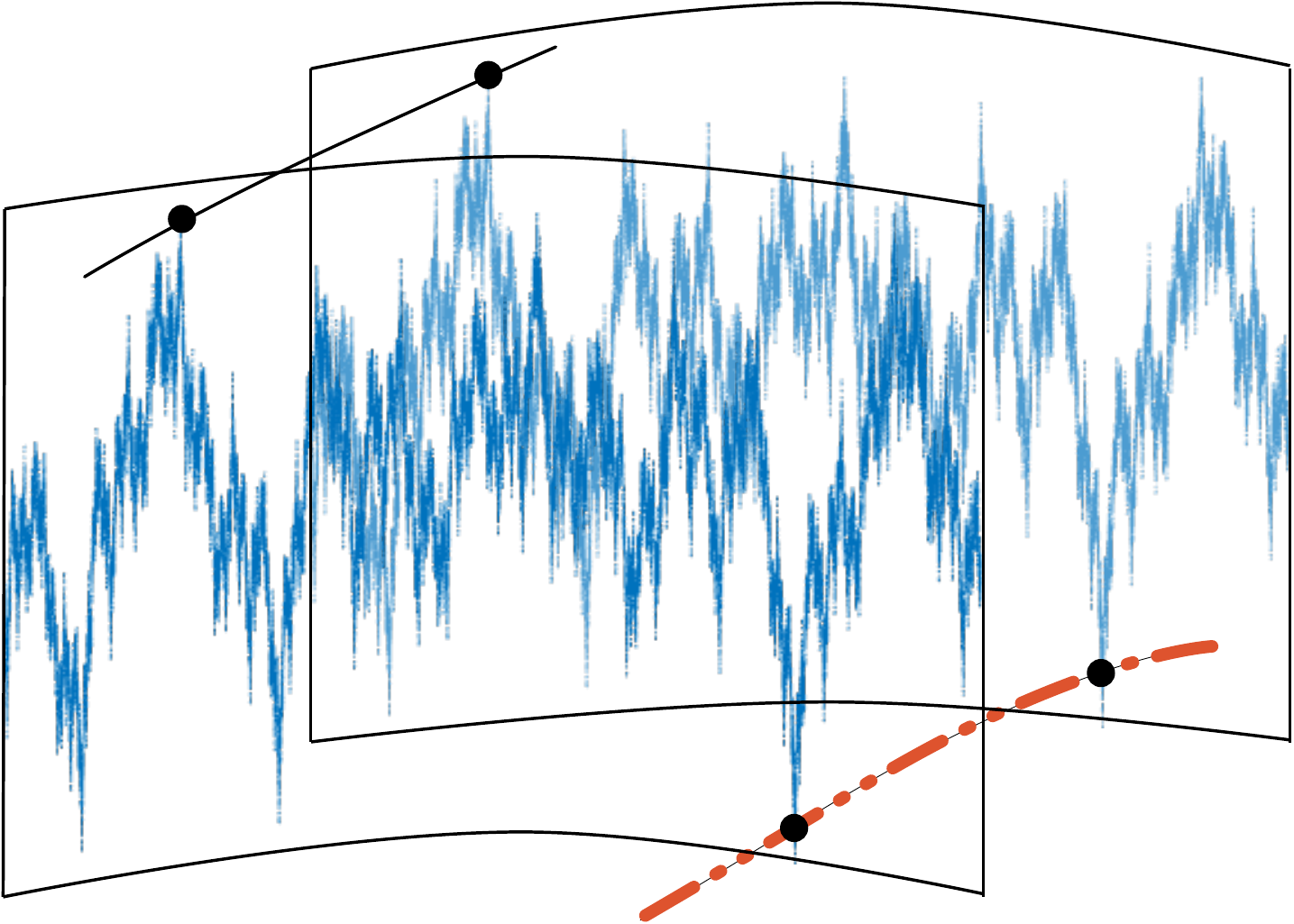}
        \put(85,14){\small $Z$}
        \put(61,2){\small $X$}
        \put(15,49.5){\small $Y$}
        \put(27,0){\small $\cW^\s_{\rm loc}(X,T)$}
        \put(80,1){\small $\cW^\u_{\rm loc}(X,T)$}
        \put(101,12){\small $\cW^\u_{\rm loc}(Z,T)$}
 \end{overpic}
\caption{Local stable holonomy $h^\s_{X,Z}$}
\label{fig.5}
\end{figure}
Fixing a point $X\in\Phi$, $\Phi\cap\cW^\u_{\rm loc}(X,T)$ lies in a small two-dimensional disk which is transverse to the stable lamination.
Hence, taking $Z\in\cW^\s_{\rm loc}(X,T)$ and varying $Y\in \Phi\cap\cW^\u_{\rm loc}(X,T)$, we obtain that locally $\Phi\cap\cW^\u_{\rm loc}(Z,T)$ is obtained as the image of $\Phi\cap\cW^\u_{\rm loc}(X,T)$ under the \emph{local stable holonomy} $h_{X,Z}^\s\colon\Phi\cap\cW^\u_{\rm loc}(X,T)\to\Phi\cap\cW^\u_{\rm loc}(Z,T)$ obtained by sliding   $Y$ along its local stable manifold to $h_{X,Z}^\s(Y)$. 
Analogously, one defines the \emph{local unstable holonomy} $h_{X,Y}^\u\colon \Phi\cap\cW^\s_{\rm loc}(X,T)\to\Phi\cap\cW^\s_{\rm loc}(Y,T)$ obtained by sliding along local unstable manifolds. 

\begin{remark}[H\"older holonomies]\label{rem:pinchemadre}
Since the stable subbundle is one-dimensional, the local unstable holonomies $h^\u_{X,Y}$ and their inverses are both H\"older continuous with H\"older exponent $\theta$ arbitrarily close to $1$ (see~\cite[Theorem C]{PalVia:88}). 

Note that the Pinching hypothesis
\[
	\kappa_\s\mu_\w\le\lambda_\w
\] 
is merely~\eqref{eq:constants} with the factor $\kappa_\s>1$ included.  In particular it implies that for any $\theta\in(0,1)$ we have $\kappa_\s^\theta\mu_\w<\lambda_\w$. Hence, assuming also that $T$ is $C^2$, then by~\cite[Theorems A and A$'$]{PugShuWil:97} (see also the more detailed results in~\cite[Section 4]{PugShuWil:12}) we have that the local stable holonomies $h^\s_{X,Z}$ together with their inverses are both H\"older with  any 
H\"older exponent $\theta\in(0,1)$. 

The hypothesis that $T$ is $C^2$ has been relaxed to $C^{1+\varepsilon}$ assuming $\kappa_\s^\theta\mu_\w<\mu_\w^\theta\lambda_\w$ (see~\cite{Wil:13} for details). Note also precursors of  results of this type  in~\cite{Has:97} if $\theta>1$ and in~\cite{SchSig:92}.
\end{remark}

\begin{proof}[Proof of Theorems~\ref{t.anosov},~\ref{the:one-dimensional}, and~\ref{the:1}]
Given $X\in\Phi$, consider the product space 
\[
	A_X
	\eqdef (\Phi\cap\cW^\s_{\rm loc}(X,T))\times (\Phi\cap\cW^\u_{\rm loc}(X,T)).
\]	 
By Proposition~\ref{pro:localdimension-stable} we have $\dim_{\rm B}(\Phi\cap\cW^\s_{\rm loc}(X,T))=d^\s$. By Proposition~\ref{pro:localdimension-unstable} we have $\dim_{\rm B}(\Phi\cap\cW^\u_{\rm loc}(X,T))=d$. 
Then, by~\eqref{eq:product} we have $\dim_{\rm B} (A_X)=d^\s+d$.

It remains to show that  the direct product $A_X$ has the same dimension as $\Phi$.
For that we follow the arguments in~\cite{PalVia:88}. We consider $h_X\colon A_X\to\Phi$  given by $h_X(Y,Z)\eqdef [Y,Z]$ which is a homeomorphism of $A_X$ onto a neighborhood of $X$ in $\Phi$. We will show that $h_X$ and $h_X^{-1}$ both are H\"older continuous with H\"older exponent $\theta$ arbitrarily close to $1$. Hence, for $V_X=h_X(A_X)$ we will conclude $\dim_{\rm B}( V_X)\in[\theta,\theta^{-1}]\dim_{\rm B}( A_X)$ for every $\theta\in(0,1)$ and thus $\dim_{\rm B}( V_X)=d^\s+d$. 
Given $W_1,W_2\in A_X$, let $W_i=h_X(Y_i,Z_i)=[Y_i,Z_i]$, $i=1,2$.
Consider the auxiliary point $W=[Y_2,Z_1]$ and observe  that $W\in \cW^\u_{\rm loc}(W_1,T)$ and $W\in\cW^\s_{\rm loc}(W_2,T)$.  By H\"older continuity of the holonomies, there is some positive constant $C_\theta$ such that
\[\begin{split}
	d(W_1,W_2)
	&\le \varrho^\u(W_1,W) + \varrho^\s(W,W_2)
	\le C_\theta \varrho^\u(Y_1,Y_2)^\theta+C_\theta \varrho^\s(Z_1,Z_2)^\theta\\
	&\le 2C_\theta (\max\{ \varrho^\u(Y_1,Y_2),\varrho^\s(Z_1,Z_2)\})^\theta,
\end{split}\]
where $\varrho^\s,\varrho^\u$ denote the induced distances in the stable and unstable local manifolds, respectively.
Hence $h_X$ is $\theta$-H\"older. The proof that $h_X^{-1}$ is $\theta$-H\"older is analogous.

Finally, since $\{V_X\}_{X\in\Phi}$ is an open cover of the compact set $\Phi$ each having the same dimension $d^\s+d$, we can select a finite subcover and by stability of box dimension we obtain the claimed property $\dim_{\rm B}(\Phi)=d^\s+d$.
This finishes the proof.
\end{proof}

\bibliographystyle{amsplain}

\end{document}